\newtheorem{thm}{Theorem}[section]
\newtheorem{corr}[thm]{Corollary}
\newtheorem{lem}[thm]{Lemma}
\newtheorem{prop}[thm]{Proposition}
\newtheorem{claim}[thm]{Claim}
\theoremstyle{definition}
\newtheorem{rem}[thm]{Remark}
\numberwithin{equation}{section}
\def\R{\mathbb R}
\def\d{\mathrm d}
\def\ra{\rightarrow}
\def\pt{\partial}
\DeclareMathOperator\GG{G}
\DeclareMathOperator\GL{GL}
\DeclareMathOperator\SO{SO}
\DeclareMathOperator\Div{div}
\DeclareMathOperator\curl{curl}
\DeclareMathOperator\vol{vol}
\begin{document}
\title[Laplacian flow for closed \texorpdfstring{$\GG_2$}{G2} structures]{Laplacian flow for closed \texorpdfstring{{\boldmath $\GG_2$}}{G2} structures: Shi-type estimates, uniqueness and compactness}
\author{Jason D. Lotay}
\author{Yong Wei}
\address{Department of Mathematics, University College London, Gower Street, London, WC1E 6BT, United Kingdom}
\email{j.lotay@ucl.ac.uk; yong.wei@ucl.ac.uk}
\subjclass[2010]{{53C44}, {53C25}, {53C10}}
\keywords{Laplacian flow, $\GG_2$ structure, Shi-type estimates, uniqueness, compactness, soliton}
\thanks{This research was supported by EPSRC grant EP/K010980/1.}

%
\begin{abstract}
We develop foundational theory for the Laplacian flow for closed $\GG_2$ structures which will be essential for future study.
(1).~We prove Shi-type derivative estimates for the Riemann curvature tensor $Rm$ and torsion tensor $T$ along the flow, i.e.~that a bound on
 \begin{equation*}
  \Lambda(x,t)=\left(|\nabla T(x,t)|_{g(t)}^2+|Rm(x,t)|_{g(t)}^2\right)^{\frac 12}
\end{equation*}
will imply bounds on all covariant derivatives of $Rm$ and $T$.
(2).~We show that $\Lambda(x,t)$ will blow up at a finite-time singularity, so the flow will exist as long as $\Lambda(x,t)$ remains bounded.
(3).~We give a new proof of forward uniqueness and prove backward uniqueness of the flow, and give some applications. (4).~We prove a compactness theorem for the flow  and use it to strengthen our long time existence result from (2)
 to show that the flow will exist as long as the velocity of the flow remains bounded.
(5).~Finally, we study  soliton solutions of the Laplacian flow.
\end{abstract}
\maketitle
\tableofcontents
\section{Introduction}

In this article we analyse the Laplacian flow for closed $\GG_2$ structures, which provides a potential tool for studying the challenging problem of existence of
torsion-free $\GG_2$ structures, and thus Ricci-flat metrics with exceptional holonomy $\GG_2$, on a 7-dimensional manifold.  We develop foundational results for the flow,
both in terms of analytic and geometric aspects.

\subsection{Basic theory} Let $M$ be a $7$-manifold. A $\GG_2$ structure on $M$ is defined by a $3$-form $\varphi$ on $M$ satisfying a certain nondegeneracy
 condition. To any such $\varphi$, one associates a unique metric $g$ and orientation on $M$, and thus a Hodge star operator $*_{\varphi}$. If $\nabla$ is the
 Levi-Civita connection of $g$, we  interpret $\nabla\varphi$ as the torsion of the $\GG_2$ structure $\varphi$. Thus, if $\nabla\varphi=0$, which is equivalent
 to $d\varphi=d\!*_{\varphi}\!\varphi=0$, we say $\varphi$ is torsion-free and $(M,\varphi)$ is a $\GG_2$ manifold.

The key property of torsion-free $\GG_2$ structures is that the holonomy group of the associated metric satisfies $\textrm{Hol}(g)\subset \GG_2$, and
hence $(M,g)$ is Ricci-flat. If $(M,\varphi)$ is a compact $\GG_2$ manifold, then $\textrm{Hol}(g)= \GG_2$ if and only if $\pi_1(M)$ is finite, and thus finding torsion-free $\GG_2$ structures is essential for constructing compact manifolds with holonomy $\GG_2$.  Notice that the torsion-free condition is a nonlinear PDE on $\varphi$, since $*_{\varphi}$ depends on $\varphi$, and thus finding torsion-free $\GG_2$ structures is a challenging problem.

Bryant \cite{bryant1987} used the theory of exterior differential systems to first prove the local existence of holonomy $\GG_2$ metrics.  This was soon followed by the first explicit complete holonomy $\GG_2$ manifolds in work of Bryant--Salamon \cite{bryant-salamon}. In ground-breaking work, Joyce
\cite{joyce96-1} developed a fundamental existence theory for torsion-free $\GG_2$ structures by perturbing \emph{closed} $\GG_2$ structures with ``small''
 torsion which, together with a gluing method, led to the first examples of compact $7$-manifolds with holonomy $\GG_2$.  This theory has formed
 the cornerstone of the programme for constructing compact holonomy $\GG_2$ manifolds, of which there are now many examples (see \cite{CHNP, Kov}).

Although the existence theory of Joyce is powerful, it is a perturbative result and one has to work hard to find suitable initial
data for the theory.  In all known examples such data is always close to ``degenerate'', arising from a gluing procedure, and thus gives little
sense of the general problem of existence of torsion-free $\GG_2$ structures.  In fact, aside from some basic topological constraints, we
have a primitive understanding of when a given compact 7-manifold could admit a torsion-free $\GG_2$ structure, and this seems far out of reach of
current understanding.  However, inspired by Joyce's work, it is natural to study the problem of deforming a \emph{closed} $\GG_2$ structure, not necessarily with any smallness assumption on its torsion, to a torsion-free one, and to see if any obstructions arise to this procedure.  A proposal to tackle this
problem, due to Bryant (c.f.~\cite{bryant2005}), is to use a geometric flow.

Geometric flows are important and useful tools in geometry and topology.  For example, Ricci flow was instrumental in proving the Poincar\'e conjecture and the $\frac{1}{4}$-pinched differentiable sphere theorem, and K\"ahler--Ricci flow has proved to be a useful tool in
K\"ahler geometry, particularly in low dimensions.
In 1992, in order to study 7-manifolds admitting closed $\GG_2$ structures, Bryant
(see \cite{bryant2005}) introduced the Laplacian flow for closed $\GG_2$ structures:
\begin{equation}\label{Lap-flow-def}
  \left\{\begin{array}{rcl}
         \frac{\pt}{\pt t}\varphi &=&\Delta_{\varphi}\varphi,\\
           d\varphi &=&  0, \\
           \varphi(0) &=&\varphi_0,
         \end{array}\right.
\end{equation}
where $\Delta_{\varphi}\varphi=dd^*\varphi+d^*d\varphi$ is the Hodge Laplacian of $\varphi$ with respect to the metric $g$ determined by $\varphi$ and $\varphi_0$ is an initial closed $\GG_2$ structure.  The stationary
points of the flow are harmonic $\varphi$, which on a compact
manifold are the torsion-free $\GG_2$ structures.
The goal is to understand the long time behaviour of the flow; specifically, to find conditions
 under which the flow converges to a torsion-free $\GG_2$ structure. A reasonable conjecture (see \cite{bryant2005}), based on the work of Joyce described above, is that if the initial
$\GG_2$ structure $\varphi_0$ on a compact manifold is closed and has sufficiently small torsion, then the flow will exist for all time and converge to a torsion-free $\GG_2$ structure.

Another motivation for studying the Laplacian flow comes from work of Hitchin \cite{hitchin2000} (see also \cite{bryant-xu2011}), which demonstrates its relationship to
a natural volume functional.
Let $\bar{\varphi}$ be a closed $\GG_2$ structure on a compact 7-manifold $M$ and let
$[\bar{\varphi}]_+$ 
be the open subset of the cohomology class $[\bar{\varphi}]$ consisting of $\GG_2$ structures. The volume functional $\mathcal{H}:[\bar{\varphi}]_+\ra\R^+$ is defined by
\begin{equation*}
  \mathcal{H}(\varphi)=\frac 17\int_M\varphi\wedge *_{\varphi}\varphi=\int_M*_{\varphi}1.
\end{equation*}
Then $\varphi\in[\bar{\varphi}]_+$ is a critical point of $\mathcal{H}$ if and only if $d *_{\varphi}\!\varphi=0$, 
i.e.~$\varphi$ is torsion-free,   and the Laplacian flow can be viewed as the gradient flow for $\mathcal{H}$, with respect to a non-standard $L^2$-type metric on $[\bar{\varphi}]_+$
(see e.g.~\cite{bryant-xu2011}).

We note that there are other proposals for geometric flows of $\GG_2$ structures in various settings, which may also potentially
find torsion-free $\GG_2$ structures (e.g.~\cite{Grig,Kar,weiss-witt}).  The study of these flows is still in development.

An essential ingredient in studying the Laplacian flow \eqref{Lap-flow-def} is a short time existence result: this was claimed in \cite{bryant2005} and the proof given in \cite{bryant-xu2011}.
\begin{thm}
\label{thm-bryant-xu}
For a compact $7$-manifold $M$, the initial value problem \eqref{Lap-flow-def} has a unique solution for a short time $t\in[0,\epsilon)$ with $\epsilon$ depending on $\varphi_0$.
\end{thm}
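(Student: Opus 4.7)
The plan is to implement a DeTurck-type trick, analogous to Hamilton's approach to short-time existence for Ricci flow. The obstacle is that $\varphi\mapsto\Delta_\varphi\varphi$ (which equals $dd^*\varphi$ when $d\varphi=0$) is not strictly parabolic in $\varphi$: its linearization has a large kernel at the symbolic level, containing in particular the infinitesimal gauge directions $\mathcal{L}_X\varphi$ that arise from the $\Diff(M)$-equivariance of \eqref{Lap-flow-def}.

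First I would linearize $\varphi\mapsto\Delta_\varphi\varphi$ at a fixed closed $\GG_2$-structure, carefully tracking how the induced metric $g_\varphi$ and Hodge star $*_\varphi$ depend on $\varphi$, and extract the principal symbol $\sigma(\xi)$ of the resulting second-order operator on $3$-forms. The expected outcome is that $\sigma(\xi)$ is non-negative but with kernel at each point exactly equal to the image of the infinitesimal gauge map $X\mapsto\mathcal{L}_X\varphi$, and strictly positive on a complementary subspace.

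Second, I would introduce a DeTurck-type vector field $V(\varphi)$ built from the $1$-jet of $\varphi$ (essentially a contraction of the torsion) and consider the modified flow
\begin{equation*}
\partial_t\td\varphi=\Delta_{\td\varphi}\td\varphi+\mathcal{L}_{V(\td\varphi)}\td\varphi,\qquad\td\varphi(0)=\varphi_0.
\end{equation*}
The Lie derivative term is engineered so that its symbol contribution exactly fills the gauge kernel, making the combined second-order operator strictly elliptic on all $3$-forms at each point. Verifying this symbol cancellation is the main analytical obstacle; once accomplished, standard short-time existence for quasilinear strongly parabolic equations on a compact manifold yields a unique smooth solution $\td\varphi(t)$ on some interval $[0,\epsilon)$ with $\td\varphi(0)=\varphi_0$.

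Third, I would integrate the time-dependent vector field by solving the ODE $\partial_t\phi_t=-V(\td\varphi(t))\circ\phi_t$ with $\phi_0=\mathrm{id}$, which produces a short-time family of diffeomorphisms by compactness of $M$. The pullback $\varphi(t):=\phi_t^*\td\varphi(t)$ then satisfies $\partial_t\varphi=\Delta_\varphi\varphi$. Closedness is preserved along the modified flow, since $d\td\varphi$ satisfies a linear second-order evolution with vanishing initial data, and is transferred to $\varphi(t)$ by pullback. For uniqueness, I would reverse the construction: given any solution $\varphi(t)$ of \eqref{Lap-flow-def}, an ODE-generated family of diffeomorphisms produces a solution of the DeTurck flow with the same initial data, so uniqueness for the parabolic DeTurck equation and the generating ODE together imply uniqueness for \eqref{Lap-flow-def}.
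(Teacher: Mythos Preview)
The paper does not prove this theorem itself; it cites Bryant--Xu \cite{bryant-xu2011} and summarizes their method in one sentence: DeTurck's trick \emph{together with} the Nash--Moser inverse function theorem.  That summary already signals the gap in your plan.

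Your proposal assumes that after adding a DeTurck term the modified flow becomes strictly parabolic on all of $\Omega^3(M)$, so that ``standard short-time existence for quasilinear strongly parabolic equations'' applies.  This expectation is not borne out.  The Hodge Laplacian is a \emph{non-negative} operator, so $\partial_t\varphi=\Delta_\varphi\varphi$ superficially has the wrong sign for parabolicity; the flow is only parabolic once one accounts for the nonlinear dependence of $g_\varphi$ and $*_\varphi$ on $\varphi$, and even then only \emph{weakly} parabolic and only \emph{in the direction of closed forms}.  The degeneracy of the symbol is therefore not exhausted by the diffeomorphism gauge directions $\mathcal{L}_X\varphi$: the constraint $d\varphi=0$ is an additional source of degeneracy that no first-order DeTurck vector field removes.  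Consequently the modified equation is not strongly parabolic on $\Omega^3(M)$, standard quasilinear parabolic theory does not apply, and Bryant--Xu are forced to invoke Nash--Moser to close the argument.  Your second step, ``verifying this symbol cancellation,'' would fail as stated.

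A workable outline would keep your DeTurck idea but then either (i) restrict the modified flow to the affine subspace of closed forms and show it is parabolic there, invoking Nash--Moser to handle the resulting weak/tame setting, or (ii) recast the problem so that the closedness constraint is built in (e.g.\ work with an exact variation $\varphi=\varphi_0+d\eta$) and analyze parabolicity of the resulting equation for $\eta$.  Either way, the claim that one obtains a strongly parabolic system on unconstrained $3$-forms must be abandoned.
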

\noindent To prove Theorem \ref{thm-bryant-xu}, Bryant--Xu showed that the flow \eqref{Lap-flow-def} is (weakly) parabolic in the direction of closed forms.
This is not a typical form of parabolicity, and so standard theory does not obviously apply.  It is also surprising since the flow is defined by the Hodge Laplacian (which is nonnegative) and thus appears at first sight to have the wrong sign for parabolicity.  Nonetheless,  the theorem follows by applying  DeTurck's trick and the
 Nash--Moser inverse function theorem.

This short time existence result  naturally motivates the study of the long time behavior of the flow.  Here little is known, apart from a compact example
computed by Bryant \cite{bryant2005} where the flow exists for all time but does not converge, and recently, Fern\'{a}ndez--Fino--Manero \cite{Fern-Fino-M}
 constructed some noncompact examples where the flow converges to a flat $\GG_2$ structure.

\subsection{Shi-type estimates} After some preliminary material on closed $\GG_2$ structures in \S\ref{sec:prelim} and deriving the essential evolution equations
along the flow in \S\ref{sec:evlution}, we prove our first main result in \S\ref{sec:shi}: Shi-type derivative estimates for the Riemann curvature and torsion
tensors along the Laplacian flow.

For a solution $\varphi(t)$ of the Laplacian flow \eqref{Lap-flow-def}, we define the quantity
\begin{equation}\label{Lambda-t-def}
  \Lambda(x,t)=\left(|\nabla T(x,t)|_{g(t)}^2+|Rm(x,t)|_{g(t)}^2\right)^{\frac 12},
\end{equation}
where $T$ is the torsion tensor of $\varphi(t)$ (see \S \ref{sec:prelim} for a definition) and $Rm$ denotes the Riemann curvature tensor of the  metric $g(t)$ determined by $\varphi(t)$.
Notice that $T$ is determined by the derivative of $\varphi$ and $Rm$ is second order in the metric which is determined algebraically by $\varphi$, so both $Rm$ and $\nabla T$
are second order in $\varphi$. We show that a bound on $\Lambda(x,t)$ will induce a priori bounds on all derivatives of $Rm$ and  $\nabla T$
for positive time.  More precisely, we have the following.
\begin{thm}\label{mainthm-shi}
Suppose that $K>0$ and $\varphi(t)$ is a solution of the Laplacian flow \eqref{Lap-flow-def} for closed $\GG_2$ structures on a compact manifold $M^7$ for $t\in [0,\frac 1K]$. For all $k\in\mathbb{N}$, there exists a constant $C_k$ such that if $\Lambda(x,t)\leq K$ on  $M^7\times [0,\frac 1K]$, then
\begin{equation}\label{shi-0}
  |\nabla^kRm(x,t)|_{g(t)}+|\nabla^{k+1}T(x,t)|_{g(t)}\leq {C_k}{t^{-\frac k2}}K,\quad t\in (0,\frac 1K].
\end{equation}
\end{thm}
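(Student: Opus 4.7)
The plan is to adapt Shi's original derivative estimates for Ricci flow, using the parabolic maximum principle on carefully designed auxiliary functions and proceeding by induction on $k$. The essential input is the family of schematic evolution equations derived in Section \ref{sec:evlution}, which I expect to take the form
\begin{equation*}
\left(\partial_t - \Delta\right)|\nabla^k Rm|^2 \le -2|\nabla^{k+1}Rm|^2 + P_k\bigl(\nabla^j Rm, \nabla^{j+1}T : 0\le j \le k\bigr)
\end{equation*}
and similarly for $|\nabla^{k+1}T|^2$, where $P_k$ is a polynomial expression whose terms are quadratic or higher in the derivatives, each term being of total ``weight'' $k+2$ if $\nabla^j Rm$ and $\nabla^{j+1}T$ are each assigned weight $j+2$. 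These follow by differentiating the evolution equations for $\varphi$, $g$, $\nabla$, $T$, and $Rm$ established earlier, using commutator identities $[\nabla,\Delta]$ and $[\nabla,\partial_t]$ and repeatedly substituting the closed-$\GG_2$ identities relating $Rm$ and $T$.

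For the base case $k=1$, I would introduce the auxiliary function
\begin{equation*}
F = t\bigl(|\nabla Rm|^2 + |\nabla^2 T|^2\bigr) + A\bigl(|Rm|^2 + |\nabla T|^2\bigr),
\end{equation*}
with $A$ a constant to be chosen large depending only on universal geometric constants. Computing $(\partial_t-\Delta)F$, the differentiation of the lower-order term produces $-2A(|\nabla Rm|^2 + |\nabla^2 T|^2)$, which, after applying Young's inequality and using $\Lambda\le K$, dominates all the problematic cross terms generated by $P_1$ (such as those of the form $|Rm|\,|\nabla Rm|^2$ or $|\nabla T|\,|\nabla^2 T|^2$). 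The maximum principle then yields $F \le CK^2$ on $[0,1/K]$, which rearranges to the $k=1$ estimate after absorbing the $A K^2$ contribution.

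For the inductive step, assuming \eqref{shi-0} holds for $0,1,\dots,k-1$, I would work with
\begin{equation*}
F_k = t^k\bigl(|\nabla^k Rm|^2 + |\nabla^{k+1} T|^2\bigr) + A_k\, t^{k-1}\bigl(|\nabla^{k-1}Rm|^2+|\nabla^k T|^2\bigr),
\end{equation*}
apply the same maximum principle argument, and use the inductive hypothesis on scale $[t/2,t]$ to bound the lower-order factors appearing in $P_k$ by powers of $K$ and $t^{-1/2}$. The main obstacle I anticipate is combinatorial rather than analytic: unlike in Ricci flow where the reaction term lives in the single tensor $Rm$, here one must simultaneously track $Rm$ and the torsion hierarchy $\nabla^\ell T$, and the natural identities forced by $d\varphi=0$ (expressing $Rm$ in terms of $T, \nabla T$, and contractions with $\varphi$ and $*_\varphi\varphi$) must be used carefully to ensure that every monomial arising in $P_k$ genuinely has weight $k+2$, so that the scaling $t^k$ in $F_k$ matches and the induction closes. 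Once the monomial structure is controlled, Young's inequality with weights chosen against the factor $A_k$ finishes the argument exactly as in the base case.
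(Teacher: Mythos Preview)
Your approach matches the paper's: a Bernstein--Bando--Shi argument via the maximum principle on a time-weighted combination of $|\nabla^jRm|^2+|\nabla^{j+1}T|^2$, proceeding by induction, and your base-case function $F$ is exactly the paper's $f$. The one point of divergence is the inductive auxiliary function. Your two-term $F_k$, applied literally on $[0,1/K]$, leaves a residual $A_k(k-1)t^{k-2}\bigl(|\nabla^{k-1}Rm|^2+|\nabla^kT|^2\bigr)$ from differentiating the $t^{k-1}$ factor, which after inserting the inductive bound scales like $K^2t^{-1}$ and is not integrable at $t=0$; the restriction to $[t/2,t]$ you allude to is the correct fix, but you then need to replace the weight $t^k$ by $(s-t/2)$ and run the maximum principle on that subinterval before reading off the estimate at $s=t$. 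The paper instead takes the full weighted sum
\[
f_k=t^k\bigl(|\nabla^kRm|^2+|\nabla^{k+1}T|^2\bigr)+\beta_k\sum_{i=1}^k\frac{(k-1)!}{(k-i)!}\,t^{k-i}\bigl(|\nabla^{k-i}Rm|^2+|\nabla^{k+1-i}T|^2\bigr),
\]
with the factorial coefficients chosen precisely so that differentiating each $t^{k-i}$ is cancelled by the negative gradient term from the adjacent level; the whole thing telescopes to $\partial_tf_k\le\Delta f_k+CK^3$ directly on $[0,1/K]$ with no time-shift needed. Both routes are standard and close the induction; the paper's is cleaner on $[0,1/K]$, while yours is the common textbook variant.
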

\noindent We call the estimates \eqref{shi-0} Shi-type (perhaps, more accurately, Bernstein--Bando--Shi) estimates for the Laplacian flow, because they are analogues  of
 the well-known Shi derivative estimates in the Ricci flow.  In Ricci flow, a Riemann curvature bound will imply bounds on all the
derivatives of the Riemann curvature: this was proved by Bando \cite{Bando} and comprehensively by Shi \cite{shi} independently. The techniques used in
\cite{Bando,shi} were introduced by Bernstein (in the early twentieth century) for proving  gradient estimates via the maximum principle, and will also be used
 here in proving Theorem \ref{mainthm-shi}.

A key motivation for defining $\Lambda(x,t)$ as in \eqref{Lambda-t-def} is that the evolution equations of $|\nabla T(x,t)|^2$ and $|Rm(x,t)|^2$ both have some bad terms, but the chosen combination kills these terms and yields an effective evolution equation for $\Lambda(x,t)$.  We can then use the maximum principle
 to show that
\begin{equation}\label{Lambda-t-def-2}
  \Lambda(t)=\sup_M\Lambda(x,t)
\end{equation}
satisfies a doubling-time estimate (see Proposition \ref{prop-Rm-T^2}), i.e.~$\Lambda(t)\leq 2\Lambda(0)$ for all time $t\leq\frac 1{C\Lambda(0)}$ for which the flow exists, where $C$ is a uniform constant. This shows that $\Lambda$ has similar properties to Riemann curvature under Ricci flow. Moreover,
it implies that the assumption $\Lambda(x,t)\leq K$ in Theorem \ref{mainthm-shi} is reasonable as $\Lambda(x,t)$ cannot blow up quickly.
We conclude \S \ref{sec:shi} by giving a local version of Theorem \ref{mainthm-shi}.

In \S \ref{sec:longtime-I} we use our Shi-type estimates to study finite-time singularities of the Laplacian flow.  Given an initial closed $\GG_2$ structure $\varphi_0$ on a compact 7-manifold, Theorem \ref{thm-bryant-xu} tells us there exists a solution $\varphi(t)$ of the Laplacian flow
on a maximal time interval $[0,T_0)$.
If $T_0$ is finite, we call $T_0$ the singular time.  Using our global derivative estimates \eqref{shi-0} for $Rm$ and $\nabla T$, we can obtain the following long time existence result on the Laplacian flow.
\begin{thm}\label{mainthm-blowup}
If $\varphi(t)$ is a solution of the Laplacian flow \eqref{Lap-flow-def} on a compact manifold $M^7$ in a maximal time interval $[0,T_0)$ with $T_0<\infty$, then
\begin{equation*}
  \lim_{t\nearrow T_0}\Lambda(t)= \infty,
\end{equation*}
where $\Lambda(t)$ is given in \eqref{Lambda-t-def-2}.  Moreover, we have a lower bound on the blow-up rate:
\begin{equation*}
  \Lambda(t)\geq \frac{C}{T_0-t}
\end{equation*}
for some constant $C>0$.
\end{thm}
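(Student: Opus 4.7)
The proof naturally splits into two parts: the blow-up of $\Lambda(t)$ at a finite singular time, and the lower bound on the blow-up rate.

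For the first statement, I will argue by contradiction. Suppose $T_0<\infty$ and $\Lambda(t)\leq K$ on $[0,T_0)$ for some $K>0$. The plan is to show that under this assumption $\varphi(t)$ extends smoothly to a closed $\GG_2$ structure $\varphi(T_0)$ on $M$; then Theorem \ref{thm-bryant-xu} applied at $\varphi(T_0)$ yields an extension of the flow past $T_0$, contradicting maximality. To carry this out, I first need pointwise control of the full torsion $T$ itself, not just $\nabla T$: for closed $\GG_2$ structures there is Bryant's identity expressing the scalar curvature as a constant multiple of $-|T|^2$, so $|T|^2\leq C|Rm|\leq C\Lambda$. Combined with the evolution equation for $g(t)$ along \eqref{Lap-flow-def}, whose right-hand side is polynomial in $T$, $\nabla T$ and the contraction with $\varphi$, this gives a uniform bound $|\partial_t g|_{g(t)}\leq C(K)$, hence the metrics $g(t)$ are uniformly equivalent on $[0,T_0)$.

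Next I apply Theorem \ref{mainthm-shi} on shifted time intervals $[t_0,t_0+1/K]\cap[0,T_0)$, with $t_0\in[0,T_0)$, to obtain bounds on $|\nabla^k Rm|_{g(t)}$ and $|\nabla^{k+1}T|_{g(t)}$ uniform on $[\tau,T_0)$ for any fixed $\tau>0$, and for all $k$. Since $\partial_t\varphi=\Delta_\varphi\varphi$ can be written as a universal expression in $T,\nabla T$ and $\varphi$ (using the standard decomposition of $\Delta\varphi$ on a closed $\GG_2$ manifold), these estimates also control all $|\nabla^k\partial_t\varphi|_{g(t)}$. Integrating $\partial_t\varphi$ in $t$ shows that $\varphi(t)$ is Cauchy in every $C^k$ norm with respect to a fixed background metric, so it converges to a smooth limit $\varphi(T_0)$. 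Uniform metric equivalence ensures that $\varphi(T_0)$ is still nondegenerate, and passing closedness to the limit gives a closed $\GG_2$ structure; the flow starting from $\varphi(T_0)$ then extends $\varphi(t)$ beyond $T_0$, the desired contradiction.

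For the blow-up rate, the idea is to use the doubling-time estimate from Proposition \ref{prop-Rm-T^2}: there is a uniform constant $C$ such that, as long as the flow exists, $\Lambda(t)\leq 2\Lambda(t^*)$ on $[t^*,t^*+1/(C\Lambda(t^*))]$. If one had $\Lambda(t^*)<1/(C(T_0-t^*))$ for some $t^*<T_0$, then the interval $[t^*,t^*+1/(C\Lambda(t^*))]$ would contain $[t^*,T_0)$, so $\Lambda$ would stay bounded up to $T_0$, contradicting the first part of the theorem. Therefore $\Lambda(t^*)\geq 1/(C(T_0-t^*))$ for every $t^*\in[0,T_0)$, which is the asserted lower bound.

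The main obstacle, and where most of the work sits, is the smooth extension argument: one must carefully check that bounds on $\Lambda$ together with the Shi-type estimates actually give $C^\infty$ convergence of $\varphi(t)$ to a nondegenerate closed form, which hinges on the Bryant identity $R=-c|T|^2$ to upgrade $\Lambda$-control to a bound on $T$, and on the precise way $\Delta_\varphi\varphi$ decomposes as a polynomial expression in $T$, $\nabla T$ and $\varphi$.
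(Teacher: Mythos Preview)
Your proposal is correct and follows essentially the same approach as the paper: assume $\Lambda$ bounded, use $|T|^2=-R\leq C|Rm|$ and the Shi-type estimates to get $C^\infty$ convergence of $\varphi(t)$ to a closed $\GG_2$ structure, then restart the flow to contradict maximality. One small remark: your contradiction hypothesis ``$\Lambda(t)\leq K$ on $[0,T_0)$'' is the negation of $\limsup_{t\to T_0}\Lambda(t)=\infty$, not of $\lim=\infty$, so your first part literally establishes only the $\limsup$ statement; the paper closes this gap by a separate doubling-time argument and then derives the rate by integrating the ODE $\frac{d}{dt}\Lambda^{-1}\geq -C/2$, whereas in your route the rate bound $\Lambda(t^*)\geq 1/(C(T_0-t^*))$ (obtained directly from Proposition~\ref{prop-Rm-T^2}) simultaneously gives the rate and the full $\lim=\infty$ statement, which is a mild streamlining.
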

\noindent Theorem \ref{mainthm-blowup} shows that the solution $\varphi(t)$ of the Laplacian flow for closed $\GG_2$ structures will exist as long as the quantity $\Lambda(x,t)$ in \eqref{Lambda-t-def} remains bounded.
 We significantly strengthen this first long-time existence result in Theorem \ref{mainthm-longtime-II} below as a consequence of our compactness theory for the flow.

\subsection{Uniqueness} In \S \ref{sec:unique} we study uniqueness of the Laplacian flow, including both forward and backward uniqueness.

In Ricci flow, there are two standard arguments to prove forward uniqueness. One relies on the Nash--Moser inverse function theorem \cite{ha82} and another  relies on DeTurck's trick and the harmonic map flow (see \cite{ha95}).  Recently, Kotschwar \cite{kot2} provided a new approach to prove forward uniqueness. The idea in \cite{kot2} is to define an energy quantity $\mathcal{E}(t)$ in terms of the differences of the metrics, connections and Riemann curvatures of two Ricci flows, which vanishes if and only if the flows coincide. By deriving a differential inequality for $\mathcal{E}(t)$, it can be shown that $\mathcal{E}(t)=0$ if $\mathcal{E}(0)=0$, which gives the forward uniqueness.

In \cite{kot1}, Kotschwar proved backward uniqueness for complete solutions to the Ricci flow by deriving a general backward uniqueness theorem for time-dependent sections of vector bundles satisfying certain differential inequalities.
The method in \cite{kot1} is using Carleman-type estimates
inspired by \cite{Alexakis,Pin}. Recently, Kotschwar \cite{kot3} gave a simpler proof of the general backward uniqueness theorem in \cite{kot1}.

Here we will use the ideas in \cite{kot1,kot2} to give a new proof of forward uniqueness (given in \cite{bryant-xu2011}) and prove backward uniqueness of the Laplacian flow for closed $\GG_2$ structures, as stated below.

\begin{thm}\label{mainthm-uniq}
Suppose $\varphi(t)$, $\tilde{\varphi}(t)$ are two solutions to the Laplacian flow \eqref{Lap-flow-def}
on a compact manifold $M^7$ for $t\in [0,\epsilon]$, $\epsilon>0$. If $\varphi(s)=\tilde{\varphi}(s)$ for some $s\in [0,\epsilon]$, then $\varphi(t)=\tilde{\varphi}(t)$ for all $t\in[0,\epsilon]$.
\end{thm}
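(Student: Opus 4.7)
The plan is to adapt Kotschwar's energy and Carleman methods, originally developed for the Ricci flow in \cite{kot1,kot2,kot3}, to the Laplacian flow. Forward uniqueness on $[s,\epsilon]$ and backward uniqueness on $[0,s]$ are treated separately but with a common setup: compare the two flows by looking at the differences of $\varphi$, the induced metric $g$, the Levi-Civita connection (a tensor once we subtract), the torsion $T$, and the Riemann tensor $Rm$. The Shi-type estimates from Theorem \ref{mainthm-shi}, together with compactness of $M$ and the bounds on $\varphi(t),\tilde\varphi(t)$ on the closed interval $[0,\epsilon]$, provide the uniform geometric bounds needed to close every resulting differential inequality.

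For the forward direction, following \cite{kot2}, I would introduce an energy functional of the schematic form
\begin{equation*}
\mathcal{E}(t) = \int_M \Bigl( |\varphi-\tilde\varphi|^2 + |g-\tilde g|^2 + |\nabla-\tilde\nabla|^2 + |T-\tilde T|^2 + |Rm - \widetilde{Rm}|^2 \Bigr)\,d\vol_{g(t)},
\end{equation*}
where $\nabla - \tilde\nabla$ is the tensorial difference of the two connections and all pointwise norms are taken with respect to $g(t)$ (or a fixed background metric). Using the evolution equation \eqref{Lap-flow-def}, together with the algebraic dependence of $g$, $\nabla$, $T$ and $Rm$ on $\varphi$, each difference satisfies an equation whose leading term is parabolic and whose remaining terms can be written as polynomial expressions in the differences, their first derivatives, and uniformly bounded quantities. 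Integration by parts, combined with Cauchy--Schwarz and the bounds of Theorem \ref{mainthm-shi}, should then yield a Gronwall inequality $\frac{d}{dt}\mathcal{E}(t)\le C\,\mathcal{E}(t)$, so that $\mathcal{E}(s)=0$ forces $\mathcal{E}\equiv 0$ on $[s,\epsilon]$.

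For backward uniqueness, I would encode the same collection of differences as a single time-dependent section $X$ of an appropriate vector bundle over $M\times[0,s]$ and show that $X$ satisfies a closed system of differential inequalities of the schematic form
\begin{equation*}
\left| \frac{\pt X}{\pt t} + \Delta X \right| \le C\bigl(|X| + |\nabla X|\bigr),
\end{equation*}
with coefficients controlled by the curvature and torsion bounds of both flows. Kotschwar's abstract backward uniqueness theorem \cite{kot1,kot3}, which is proved via Carleman-type estimates, then applies: the vanishing $X(s)=0$ coming from $\varphi(s)=\tilde\varphi(s)$ propagates backwards, giving $X\equiv 0$ on $[0,s]$ and hence $\varphi\equiv\tilde\varphi$ on the whole interval $[0,\epsilon]$.

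The main obstacle lies in extracting the correct ``parabolic'' leading part of the linearisation of $\Delta_\varphi\varphi$: unlike the Ricci flow, the fundamental variable here is $\varphi$, with $g$ only determined algebraically by $\varphi$ through a nonlinear map, and the flow is merely weakly parabolic. Using the identification of $\Delta_\varphi\varphi$ with an expression in $Rm$, $T$ and $\nabla T$ derived in \S\ref{sec:evlution}, and keeping track of how $d\varphi=d\tilde\varphi=0$ kills the non-parabolic direction, one must verify that every error term produced when differencing the two flows is genuinely of the ``absorbable'' form required either by the Gronwall argument or by the hypotheses of the Kotschwar backward uniqueness theorem.
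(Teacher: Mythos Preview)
Your overall strategy matches the paper's: forward uniqueness via an energy in the spirit of \cite{kot2} and backward uniqueness via Kotschwar's abstract theorem \cite{kot1}. However, there are two concrete gaps in the implementation you sketch.

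For forward uniqueness, your energy omits the term $|\nabla T - \widetilde\nabla\widetilde T|^2$, which the paper includes (denoted $|V|^2$). This is not a cosmetic choice. Differencing the evolution \eqref{Rm} of $Rm$ produces a term of the schematic form $(\nabla^2 T - \widetilde\nabla^2\widetilde T)*T$, i.e.\ a $\nabla V$ contribution, which cannot be absorbed using only $-\int|\nabla S|^2$. One needs $V$ as a separate \emph{parabolic} variable in the energy so that integrating $\langle V,\Delta V\rangle$ supplies the additional good term $-\int|\nabla V|^2$. This mirrors the reason $\nabla T$ sits alongside $Rm$ in the quantity $\Lambda$. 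Relatedly, your assertion that ``each difference satisfies an equation whose leading term is parabolic'' is false: $\phi=\varphi-\tilde\varphi$, $h=g-\tilde g$ and $A=\nabla-\widetilde\nabla$ obey only ODE-type evolution equations (no Laplacian appears); the genuinely parabolic variables are $V$ and $S$. The Gronwall argument closes precisely because the negative gradient terms from this parabolic pair absorb the stray $\nabla V$, $\nabla S$ terms produced by the ODE variables.

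For backward uniqueness, packaging everything into a single section satisfying one parabolic inequality will not meet the hypotheses of \cite{kot1}. Kotschwar's theorem demands a \emph{PDE--ODE split}: a section $\mathbf{X}$ with $|(\partial_t-\Delta)\mathbf{X}|^2\le C(|\mathbf{X}|^2+|\nabla\mathbf{X}|^2+|\mathbf{Y}|^2)$ and a section $\mathbf{Y}$ with $|\partial_t\mathbf{Y}|^2$ bounded by the same quantity. Since $\phi,h,A$ are non-parabolic they must go into $\mathbf{Y}$. Moreover, because these are \emph{pointwise} inequalities (no integration by parts available), the system as you wrote it does not close: for instance $\partial_t A$ involves $\nabla S$, and estimating $(\partial_t-\Delta)V$ pointwise forces a $\nabla A$ term from $\Div\mathcal V$. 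The paper therefore enlarges the system by adding $B=\nabla A$ to $\mathbf{Y}$ and $W=\nabla^2T-\widetilde\nabla^2\widetilde T$, $Q=\nabla Rm-\widetilde\nabla\widetilde{Rm}$ to $\mathbf{X}$, after which every term that appears is genuinely dominated by $|\mathbf{X}|+|\nabla\mathbf{X}|+|\mathbf{Y}|$.
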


As an application of Theorem \ref{mainthm-uniq}, we show that on a compact manifold $M^7$,
the subgroup $I_{\varphi(t)}$ of diffeomorphisms of $M$ isotopic to the identity and
fixing $\varphi(t)$ is unchanged along the Laplacian flow.  Since $I_{\varphi}$ is strongly constrained for a torsion-free $\GG_2$ structure $\varphi$ on $M$,
this gives a test for when the Laplacian flow with a given initial condition could converge.

\subsection{Compactness} In the study of Ricci flow, Hamilton's compactness theorem \cite{ha95-compact} is an essential tool to study the behavior of the flow
 near a singularity. In \S \ref{sec:compact}, we prove an analogous compactness theorem for the Laplacian flow for closed $\GG_2$ structures.

Suppose we have a sequence $(M_i,\varphi_i(t))$ of compact solutions to the Laplacian flow and let $p_i\in M_i$.  For each $(M_i,\varphi_i(t))$, let
\begin{equation*}
  \Lambda_{\varphi_i}(x,t):=\left(|\nabla_{g_i(t)} T_i(x,t)|_{g_i(t)}^2+|Rm_{g_i(t)}(x,t)|_{g_i(t)}^2\right)^{\frac 12},
\end{equation*}
where $g_i(t)$ is the associated metric to $\varphi_i(t)$,
and let $inj(M_i,g_i(0),p_i)$ denote the injectivity radius of $(M_i,g_i(0))$ at the point $p_i$. Our compactness theorem
then states that under uniform bounds on $\Lambda_{\varphi_i}$ and $inj(M_i,g_i(0),p_i)$ we can extract a subsequence of $(M_i,\varphi_i(t))$ converging
to a limit flow $(M,\varphi(t))$.
\begin{thm}\label{mainthm-compact}
Let $M_i$ be a sequence of compact $7$-manifolds and let $p_i\in M_i$ for each $i$. Suppose that, for each $i$, $\varphi_i(t)$ is a solution
to the Laplacian flow \eqref{Lap-flow-def} on $M_i$ for $t\in (a,b)$, where $-\infty\leq a<0<b\leq \infty$. Suppose that
\begin{equation}\label{mainthm-compc-cond1}
  \sup_i\sup_{x\in M_i,t\in (a,b)}\Lambda_{\varphi_i}(x,t)<\infty
\end{equation}
and
\begin{equation}\label{mainthm-compc-cond2}
  \inf_i \textrm{inj}(M_i,g_i(0),p_i)>0.
\end{equation}
There exists a $7$-manifold $M$, a point $p\in M$ and a solution $\varphi(t)$ of the Laplacian flow
on $M$ for $t\in (a,b)$ such that, after passing to a subsequence,
\begin{equation*}
  (M_i,\varphi_i(t),p_i)\ra (M,\varphi(t),p)\quad\textrm{ as }i\ra\infty.
\end{equation*}
\end{thm}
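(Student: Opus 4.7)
The plan is to adapt Hamilton's Cheeger--Gromov-type compactness theorem for Ricci flow \cite{ha95-compact} to the setting of the Laplacian flow, treating the 3-form $\varphi_i(t)$ as the primary object (with the metric $g_i(t)$ recovered algebraically from it). The proof splits into two stages: a static compactness theorem for $\GG_2$ structures on a single time slice, followed by promotion to compactness of the time-dependent flows.

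For the first stage, I would apply the Shi-type estimates of Theorem \ref{mainthm-shi} to the uniform bound \eqref{mainthm-compc-cond1} on $\Lambda_{\varphi_i}$. This yields $i$-independent bounds on $|\nabla_{g_i(t)}^k Rm_{g_i(t)}|$ and $|\nabla_{g_i(t)}^{k+1} T_i|$ for every $k$, uniformly on any compact subinterval $[a',b']\Subset(a,b)$ (the initial-time bound can be handled by shifting $t$ slightly). Since $g_i(t)$ is an algebraic function of $\varphi_i(t)$ and $T_i$ is, up to algebraic contraction, $\nabla\varphi_i$, these bounds translate into uniform $C^k_{g_i(t)}$-bounds on $\varphi_i(t)$ for all $k$. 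Combined with the curvature bound at $t=0$ and the injectivity radius hypothesis \eqref{mainthm-compc-cond2}, the classical Cheeger--Gromov compactness result (as packaged by Hamilton in \cite{ha95-compact}) yields a pointed limit manifold $(M,p)$, an exhaustion $\Omega_i\subset M$ of $p$, and diffeomorphisms $\Phi_i:\Omega_i\to U_i\subset M_i$ with $\Phi_i(p)=p_i$, such that $\Phi_i^*g_i(0)\to g(0)$ in $C^\infty_{\mathrm{loc}}$. By a standard diagonal Arzel\`a--Ascoli argument using the uniform derivative bounds on $\varphi_i(0)$, we may pass to a further subsequence along which $\Phi_i^*\varphi_i(0)\to\varphi(0)$ in $C^\infty_{\mathrm{loc}}$. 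Since $\GG_2$-nondegeneracy is open and $g(0)$ is a bona fide Riemannian metric, the limit $\varphi(0)$ is a $\GG_2$ structure inducing $g(0)$.

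To promote this to convergence of the flows on $M\times(a,b)$, I would use the Laplacian flow equation itself to convert spatial bounds into mixed space--time bounds. The flow $\partial_t\varphi_i=\Delta_{\varphi_i}\varphi_i$ is a second-order differential expression in $\varphi_i$; differentiating repeatedly in $t$ and iterating, all $\partial_t^j\nabla^k\varphi_i$ are controlled in terms of bounds on $\nabla^{k+2j}\varphi_i$ together with derivatives of curvature and torsion, all of which are uniform on $[a',b']$ by the Shi-type estimates. Another diagonal Arzel\`a--Ascoli argument over an exhaustion of $M$ by compact sets and an exhaustion of $(a,b)$ by compact subintervals produces a subsequence with $\Phi_i^*\varphi_i(t)\to\varphi(t)$ in $C^\infty_{\mathrm{loc}}(M\times(a,b))$. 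Passing to the limit in $\partial_t\varphi_i=\Delta_{\varphi_i}\varphi_i$ and in $d\varphi_i=0$ shows that $\varphi(t)$ solves \eqref{Lap-flow-def}.

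The main technical obstacle is securing a uniform injectivity radius lower bound for $g_i(t)$ at $p_i$ on a whole time interval around $0$ from the hypothesis \eqref{mainthm-compc-cond2}, which is given only at $t=0$. Here I would use the evolution equation for $g_i(t)$ derived in \S\ref{sec:evlution}, whose right-hand side is polynomial in $T_i$ and $\nabla T_i$, to show that under \eqref{mainthm-compc-cond1} the metrics $g_i(t)$ and $g_i(0)$ are uniformly equivalent on any $[a',b']\Subset(a,b)$. Combined with the $Rm$ bound and Cheeger--Gromov--Taylor-type injectivity radius estimates, this transfers the time-zero injectivity radius bound to all nearby times, which is exactly what the static compactness step above consumes. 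Once this point is in hand, the remainder of the argument is a careful bookkeeping of the exhaustion and diagonal extractions.
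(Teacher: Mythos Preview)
Your proposal is correct and follows essentially the same route as the paper: first a static Cheeger--Gromov compactness for $\GG_2$ structures at $t=0$ (combining Hamilton's metric compactness with an Arzel\`a--Ascoli extraction of the $3$-form, then checking positivity via the bilinear form $B_\varphi$), then promoting to space--time convergence by using the Shi-type estimates and the flow equation to bound mixed derivatives, followed by a diagonal extraction. One small point: your final paragraph on transferring the injectivity radius bound to other times is unnecessary---the static compactness is applied only at $t=0$, and thereafter all analysis takes place on the fixed limit manifold $M$ via the diffeomorphisms $\Phi_i$, so no further injectivity radius control is needed (though the uniform equivalence of $g_i(t)$ and $g_i(0)$ that you mention \emph{is} used, as in the paper's Claims~\ref{claim-5-1}--\ref{claim-5-2}, to convert bounds from the moving connection to a fixed background).
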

\noindent We refer to \S\ref{sec:compact} for a definition of the notion of
convergence in Theorem \ref{mainthm-compact}.

To prove Theorem \ref{mainthm-compact}, we first prove a Cheeger--Gromov-type compactness theorem for the space of $\GG_2$ structures
(see Theorem \ref{compat-thm-G2}). Given this, Theorem \ref{mainthm-compact} follows from a similar argument for the analogous compactness theorem in
 Ricci flow as in \cite{ha95-compact}.

As we indicated, Theorem \ref{mainthm-compact} could be used to study the singularities of the Laplacian flow, especially if we can show some non-collapsing
 estimate as in Ricci flow (c.f.~\cite{perel}) to obtain the injectivity radius estimate \eqref{mainthm-compc-cond2}.
 Even without such an estimate, we can use Theorem \ref{mainthm-compact} to greatly strengthen Theorem \ref{mainthm-blowup} to the following desirable result,  which
states that the Laplacian flow will exist as long as the velocity of the flow remains bounded.
\begin{thm}\label{mainthm-longtime-II}
Let $M$ be a compact 7-manifold and $\varphi(t)$, $t\in [0,T_0)$, where $T_0<\infty$, be a solution to the Laplacian flow \eqref{Lap-flow-def} for closed $\GG_2$
 structures with associated metric $g(t)$ for each $t$. If the velocity of the flow satisfies
\begin{equation}\label{mainthm-9-1-cond}
  \sup_{M\times [0,T_0)}|\Delta_{\varphi}\varphi(x,t)|_{g(t)}<\infty,
\end{equation}
then the solution $\varphi(t)$ can be extended past time $T_0$.
\end{thm}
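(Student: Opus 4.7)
The plan is to argue by contradiction via parabolic blow-up together with Theorem \ref{mainthm-compact}. Suppose $T_0<\infty$ while $|\Delta_\varphi\varphi|_{g(t)}\le K$ on $M\times[0,T_0)$; by Theorem \ref{mainthm-blowup}, $\Lambda(t):=\sup_M\Lambda(\cdot,t)\to\infty$ as $t\nearrow T_0$. The idea is that the bounded velocity forces the parabolic blow-up limit at $T_0$ to be a static, torsion-free $\GG_2$ structure on a complete manifold, and the geometry of such a limit will contradict the non-vanishing of $\Lambda$ at the basepoint.

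\textbf{A priori estimates.} For a closed $\GG_2$ structure, $\Delta_\varphi\varphi=dd^*\varphi$ is algebraically a first-order differential operator in $T$ modulo lower-order algebraic expressions in $T$, so the velocity bound controls $|\nabla T|$ up to terms in $|T|$. Combined with the evolution equation for $|T|^2$ derived in \S\ref{sec:evlution} and the maximum principle, this should propagate to uniform bounds $|T|_{g(t)}+|\nabla T|_{g(t)}\le C$ on $M\times[0,T_0)$. Substituting these into the metric evolution $\pt_t g = h(T,\nabla T)$ gives $|\pt_t g|_{g(t)}\le C$, so the metrics $g(t)$ are mutually equivalent on $[0,T_0)$; in particular, the injectivity radius of $g(t)$ at every point is uniformly bounded below.

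\textbf{Rescaling and compactness.} Choose $t_i\nearrow T_0$ and $x_i\in M$ with $\Lambda(x_i,t_i)=\sup_{M\times[0,t_i]}\Lambda\to\infty$, and set $\lambda_i:=\Lambda(x_i,t_i)^{1/2}$. Define
\[\tilde\varphi_i(x,s):=\lambda_i^3\,\varphi\bigl(x,\,t_i+\lambda_i^{-2}s\bigr),\qquad \tilde g_i:=\lambda_i^2\,g,\qquad s\in\bigl[-\lambda_i^2 t_i,\,\lambda_i^2(T_0-t_i)\bigr),\]
a solution of the Laplacian flow. The parabolic scaling gives $\tilde\Lambda_{\tilde\varphi_i}\le 1$ on $M\times[-\lambda_i^2 t_i,0]$ with equality at $(x_i,0)$; together with the scalings $|\tilde T_i|_{\tilde g_i}=\lambda_i^{-1}|T|_g$, $|\tilde\nabla_i\tilde T_i|_{\tilde g_i}=\lambda_i^{-2}|\nabla T|_g$, $|\Delta_{\tilde\varphi_i}\tilde\varphi_i|_{\tilde g_i}=\lambda_i^{-2}|\Delta_\varphi\varphi|_g$ and the estimates above, we obtain
\[|\tilde T_i|_{\tilde g_i}+|\tilde\nabla_i\tilde T_i|_{\tilde g_i}+|\Delta_{\tilde\varphi_i}\tilde\varphi_i|_{\tilde g_i}\le C\lambda_i^{-1}\longrightarrow 0\quad\text{uniformly on }M\times[-\lambda_i^2 t_i,0].\]
Moreover $\textrm{inj}(\tilde g_i(0),x_i)=\lambda_i\,\textrm{inj}(g(t_i),x_i)\to\infty$, so Theorem \ref{mainthm-compact} applies and extracts a pointed limit Laplacian flow $(M_\infty,\tilde\varphi_\infty(s),x_\infty)$ on $s\in(-\infty,0]$ on which $\tilde T_\infty\equiv 0$ and $\Delta_{\tilde\varphi_\infty}\tilde\varphi_\infty\equiv 0$: that is, $\tilde\varphi_\infty$ is a static, complete, torsion-free $\GG_2$ structure.

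\textbf{Contradiction.} Since $\tilde T_\infty\equiv 0$, the normalisation $\tilde\Lambda_{\tilde\varphi_\infty}(x_\infty,0)=1$ reduces to $|\widetilde{Rm}_\infty|_{\tilde g_\infty}(x_\infty,0)=1$, so $(M_\infty,\tilde g_\infty)$ is non-flat. Deriving the final contradiction is the main obstacle. Since $\textrm{inj}(\tilde g_\infty,x_\infty)=\infty$ and $|\widetilde{Rm}_\infty|\le 1$, the exponential map at $x_\infty$ is a diffeomorphism from $T_{x_\infty}M_\infty\cong\R^7$ onto $M_\infty$, so $M_\infty\cong\R^7$ and $\tilde g_\infty$ has Euclidean volume growth. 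A rigidity/$\varepsilon$-regularity argument for complete Ricci-flat metrics of Euclidean volume growth, specialised to the torsion-free $\GG_2$ setting, then forces $\tilde g_\infty$ to be flat — contradicting $|\widetilde{Rm}_\infty|_{\tilde g_\infty}(x_\infty,0)=1$ and completing the proof.
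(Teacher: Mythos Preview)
Your blow-up strategy is exactly the paper's: rescale at points $(x_i,t_i)$ with $\Lambda(x_i,t_i)\to\infty$, extract a limit via Theorem~\ref{mainthm-compact}, observe that the limit is torsion-free hence Ricci-flat, and contradict $|Rm_\infty(x_\infty)|=1$ by showing the limit is flat. Two of your steps, however, are not justified and in fact fail as stated.

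First, uniform equivalence of $g(t)$ with $g(0)$ on compact $M$ does \emph{not} yield a uniform lower bound on $\mathrm{inj}(g(t))$: $C^0$-equivalence of metrics controls distances but not curvature, and metrics uniformly equivalent to a fixed one can have sectional curvature blowing up, producing conjugate points arbitrarily close together. (Your preliminary claim that the velocity bounds $|\nabla T|$ is also off: by \eqref{h-tensor-1} one has $\Delta_\varphi\varphi=i_\varphi(h)$ with $h=-Ric-\tfrac13|T|^2g-2T\!*\!T$, so a bound on $|\Delta_\varphi\varphi|$ controls only $|T|$ and $|Ric|$; Proposition~\ref{prop-nabla-T} shows that recovering $|\nabla T|$ requires the full $|Rm|$, which is precisely the quantity blowing up.)

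Second, even granting $\mathrm{inj}(\tilde g_\infty,x_\infty)=\infty$, this only says $\exp_{x_\infty}$ is a diffeomorphism, so $M_\infty\cong\R^7$ as a smooth manifold; it does \emph{not} give Euclidean volume growth. Bishop's inequality supplies the upper bound $\Vol(B_r)\le\omega_7 r^7$ from $Ric=0$, but infinite injectivity radius alone gives no matching lower bound, so Bishop--Gromov rigidity cannot be invoked.

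The paper repairs both gaps with a single volume argument. From $|\partial_t g|\le C$ one gets, for any $\epsilon>0$, a time $t_0<T_0$ with $(1-\epsilon)g(t_0)\le g(t)\le(1+\epsilon)g(t_0)$ on $[t_0,T_0)$. Combined with the pointwise monotonicity of $\vol_{g(t)}$ along the flow \eqref{evl-volumeform}, this yields $\Vol_{\tilde g_i(0)}(B_{\tilde g_i(0)}(x,r))\ge c_\epsilon\, r^7$ on the rescaled manifolds for all $r$ up to scale $\lambda_i$, with $c_\epsilon\to\omega_7$ as $\epsilon\to 0$. Since $|Rm_{\tilde g_i(0)}|\le 1$, Cheeger's lemma then furnishes the injectivity-radius bound required by Theorem~\ref{mainthm-compact}. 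The volume lower bound passes to the limit, and since it holds for every $\epsilon$ the limit has $\Vol(B_r)=\omega_7 r^7$ exactly; the equality case of Bishop--Gromov then forces $\tilde g_\infty$ to be flat, giving the contradiction.
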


In Ricci flow, the analogue of Theorem \ref{mainthm-longtime-II} was proved in \cite{sesum2005}, namely that the flow exists as long as the Ricci tensor
remains bounded.
It is an open question whether just the scalar curvature (the trace of the Ricci tensor) can control the Ricci flow, although it is known for Type-I Ricci flow   \cite{Ender-M-T2010} and K\"{a}hler--Ricci flow \cite{zhang}.
In \S \ref{sec:hodge-lap}, we see that for a closed $\GG_2$ structure $\varphi$, we have $\Delta_{\varphi}\varphi=i_{\varphi}(h)$, where $i_{\varphi}: S^2T^*M\ra \Lambda^3T^*M$ is an injective map defined in \eqref{ivarphi-def} and $h$ is a symmetric $2$-tensor with trace equal to
$\frac 23|T|^2$.  Moreover, the scalar curvature of the metric induced by $\varphi$ is $-|T|^2$.  Thus, comparing with  Ricci flow,
one may ask whether the Laplacian flow for closed $\GG_2$ structures will exist as long as the torsion tensor remains bounded.  This is also the
natural question to ask from the point of view of $\GG_2$ geometry.  However, even though $-|T|^2$ is the scalar curvature,
it  is only \emph{first order} in $\varphi$, rather than second order like $\Delta_{\varphi}\varphi$, so it would be a major step forward to control the
Laplacian flow using just a bound on the torsion tensor.

\subsection{Solitons} In \S\ref{sec:solit}, we study soliton solutions of the Laplacian flow for closed $\GG_2$ structures, which are expected to play a role in understanding
 the behavior of the flow near singularities, particularly given our compactness theory for the flow.

Given a  7-manifold $M$, a Laplacian soliton of the Laplacian flow \eqref{Lap-flow-def} for closed $\GG_2$ structures on $M$
is a triple $(\varphi,X,\lambda)$ satisfying
\begin{equation}\label{solition-def}
  \Delta_{\varphi}\varphi=\lambda\varphi+\mathcal{L}_X\varphi,
\end{equation}
where $d\varphi=0$, $\lambda\in\R$, $X$ is a vector field on $M$ and $\mathcal{L}_X\varphi$ is the Lie derivative of $\varphi$ in the direction of $X$.  Laplacian solitons give self-similar solutions to the Laplacian flow.
 Specifically, suppose $(\varphi_0,X,\lambda)$ satisfies \eqref{solition-def}. Define
\begin{equation*}
  \rho(t)=(1+\frac {2}3\lambda t)^{\frac 32}, \quad X(t)=\rho(t)^{-\frac 23}X,
\end{equation*}
and let $\phi_t$ be the family of diffeomorphisms generated by the vector fields $X(t)$ such that $\phi_0$ is the identity. Then $\varphi(t)$ defined by
\begin{equation*}
  \varphi(t)=\rho(t)\phi_t^*\varphi_0
\end{equation*}
is a solution of the Laplacian flow \eqref{Lap-flow-def}, which only differs by a scaling factor $\rho(t)$ and pull-back by a diffeomorphism $\phi_t$ for different times $t$.
We say a Laplacian soliton $(\varphi,X, \lambda)$ is expanding if $\lambda>0$; steady if $\lambda=0$; and shrinking if $\lambda<0$.

Recently, there are several papers considering soliton solutions to flows of $\GG_2$ structures, e.g.~\cite{Kar-solit,Lin,weiss-witt2}.  In particular, Lin
\cite{Lin} studied Laplacian solitons as in \eqref{solition-def} and proved there are no compact shrinking  solitons, and that the only compact
 steady solitons are given by torsion-free $\GG_2$ structures.

A closed $\GG_2$ structure on a compact manifold which is stationary under the Laplacian flow must be torsion-free since here, unlike in the general non-compact
setting, harmonic forms are  always
closed and coclosed. We show that stationary points for the flow are torsion-free on \emph{any} 7-manifold
 and also give non-existence results for Laplacian solitons as follows.
\begin{prop}\label{prop-soliton}
\emph{(a)} Any Laplacian soliton of the form $(\varphi,0,\lambda)$ must be an expander or torsion-free.  Hence,
stationary points of the Laplacian flow are given by torsion-free $\GG_2$ structures.

\noindent \emph{(b)} There are no compact Laplacian solitons of the form $(\varphi,0,\lambda)$ unless $\varphi$ is torsion-free.
\end{prop}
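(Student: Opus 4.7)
The plan is to combine the algebraic structure of the soliton equation with the explicit form of $\Delta_\varphi\varphi$ on closed $\GG_2$-structures derived in \S\ref{sec:hodge-lap}.

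For part (a), I would use the identity $\Delta_\varphi\varphi = i_\varphi(h)$ with $\tr h = \tfrac{2}{3}|T|^2$, together with the standard relation $i_\varphi(g) = 3\varphi$ and the injectivity of $i_\varphi \colon S^2 T^*M \to \Lambda^3 T^*M$. The soliton equation $\Delta_\varphi\varphi = \lambda\varphi$ then rewrites as $i_\varphi(h) = i_\varphi(\tfrac{\lambda}{3}g)$, so injectivity forces $h = \tfrac{\lambda}{3}g$, and tracing yields the pointwise identity $|T|^2 = \tfrac{7\lambda}{2}$. Since $|T|^2 \geq 0$, we conclude $\lambda \geq 0$, with equality if and only if $T\equiv 0$, i.e.,~$\varphi$ is torsion-free. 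For the second assertion in (a), a stationary point of the Laplacian flow satisfies $\Delta_\varphi\varphi = 0$, which is precisely the soliton equation with $(X,\lambda)=(0,0)$, so the structure is torsion-free by the preceding dichotomy.

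For part (b), assume $M$ is compact and set $\tau := d^*\varphi$, which for a closed $\GG_2$-structure lies in $\Omega^2_{14}$. With $X=0$ the soliton equation reduces to
\begin{equation*}
  d\tau \;=\; dd^*\varphi \;=\; \Delta_\varphi\varphi \;=\; \lambda\varphi.
\end{equation*}
The key step is to integrate $\tau\wedge\tau\wedge d\tau$ in two different ways. On the one hand, the product rule gives $d(\tau\wedge\tau\wedge\tau) = 3\,\tau\wedge\tau\wedge d\tau$, so Stokes' theorem yields $\int_M \tau\wedge\tau\wedge d\tau = 0$. On the other hand, substituting $d\tau = \lambda\varphi$ and using the standard $\GG_2$ identity $\tau\wedge\varphi = -*_\varphi\tau$ valid for $\tau \in \Omega^2_{14}$,
\begin{equation*}
  \int_M \tau\wedge\tau\wedge d\tau \;=\; \lambda\int_M \tau\wedge\tau\wedge\varphi \;=\; -\lambda\int_M |\tau|^2\,d\vol_g.
\end{equation*}
Combining these gives $\lambda\int_M |\tau|^2\,d\vol_g = 0$. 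If $\lambda\neq 0$, then $\tau\equiv 0$ on $M$, whence $\lambda\varphi = d\tau = 0$ forces $\lambda = 0$, a contradiction. Hence $\lambda=0$, and part (a) delivers that $\varphi$ is torsion-free.

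I do not anticipate any significant technical obstacle: part (a) is a purely algebraic consequence of the Hodge-theoretic formulas recalled in \S\ref{sec:hodge-lap}, and part (b) reduces to a one-line integration by parts once one notices that $d(\tau^3) = 3\tau^2\wedge d\tau$ and that the soliton equation collapses the right-hand side through the pointwise $\GG_2$ identity $\tau\wedge\varphi = -*_\varphi\tau$.
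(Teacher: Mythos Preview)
Your proposal is correct and takes essentially the same approach as the paper. The only cosmetic difference is in part (b): the paper first substitutes $\lambda=\tfrac{1}{7}|\tau|^2$ (from the part (a) analysis) into $d\tau=\lambda\varphi$, so the same Stokes argument yields $\int_M|\tau|^4\,*_\varphi 1=0$ and hence $\tau\equiv 0$ directly, avoiding your case split on $\lambda$.
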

Combining Lin's \cite{Lin} result and the above proposition,
 any Laplacian soliton on a compact manifold $M$ which is not torsion-free (if it exists) must satisfy \eqref{solition-def} for $\lambda>0$ and $X\neq 0$. This phenomenon is somewhat surprising, since it is very different from Ricci solitons
$Ric+\mathcal{L}_Xg=\lambda g$: when $X=0$, the Ricci soliton equation is just the Einstein equation $Ric=\lambda g$ and there are many examples of
compact Einstein metrics.

Since a $\GG_2$ structure $\varphi$ determines a unique metric $g$, it is natural to ask what condition the Laplacian soliton equation on $\varphi$ will impose on $g$. We show that for a closed $\GG_2$ structure $\varphi$ and any vector field $X$ on $M$, we have
\begin{equation}\label{symmetry}
 \mathcal{L}_X\varphi=\frac 12 i_{\varphi}(\mathcal{L}_Xg)+\frac 12 \big(d^*(X\lrcorner\varphi)\big)^{\sharp}\lrcorner\psi.
\end{equation}
Thus the symmetries of $\varphi$, namely the vector fields $X$ such $\mathcal{L}_X\varphi=0$,
are precisely given by the Killing vector fields $X$ of $g$ with $d^*(X\lrcorner\varphi)= 0$ on $M$.
 Moreover, using \eqref{symmetry} we can derive an equation for the metric $g$ from the Laplacian soliton equation \eqref{solition-def}, which we expect to be of further use (see Proposition \ref{prop-soliton-metric}). In particular, we deduce that any Laplacian soliton $(\varphi,X,\lambda)$ must satisfy $7\lambda+3\Div(X)=2|T|^2\geq 0$, which leads to a new short proof of the main result in \cite{Lin}.

To conclude the paper in \S\ref{sec:conclusion}, we provide a list of open problems that are inspired by our work and which we intend to study
in the future.

\section{Closed \texorpdfstring{$\GG_2$}{G2} structures} \label{sec:prelim}

We collect some facts on closed $\GG_2$ structures, mainly based on \cite{bryant2005, Kar}.

\subsection{Definitions} Let $\{e_1,e_2,\cdots,e_7\}$ denote the standard basis of $\R^7$ and let $\{e^1,e^2,\cdots,e^7\}$ be its dual basis. Write $e^{ijk}=e^i\wedge e^j\wedge e^k$ for simplicity and define the $3$-form
\begin{equation*}
  \phi=e^{123}+e^{145}+e^{167}+e^{246}-e^{257}-e^{347}-e^{356}.
\end{equation*}
The subgroup of $\GL(7,\R)$ fixing $\phi$ is the exceptional Lie group $\GG_2$, which is a compact, connected, simple Lie subgroup of $\SO(7)$ of dimension $14$.
Note that $\GG_2$ acts irreducibly on $\R^7$ and preserves the metric and orientation for which $\{e_1,e_2,\cdots,e_7\}$ is an oriented orthonormal basis. If $*_{\phi}$ denotes the Hodge star determined by the metric and orientation, then $\GG_2$ also preserves the $4$-form
\begin{equation*}
  *_{\phi}\phi=e^{4567}+e^{2367}+e^{2345}+e^{1357}-e^{1346}-e^{1256}-e^{1247}.
\end{equation*}

Let $M$ be a $7$-manifold. For $x\in M$ we let
\begin{equation*}
  \Lambda^3_+(M)_x=\{\varphi_x\in\Lambda^3T_x^*M\,|\,\exists\text{ invertible }u\in\textrm{Hom}_{\R}(T_xM,\R^7), u^*\phi=\varphi_x\},
\end{equation*}
which is isomorphic to $\GL(7,\R)/{\GG_2}$ since $\phi$ has stabilizer $\GG_2$. The bundle $\Lambda^3_+(M)=\bigsqcup_x \Lambda^3_+(M)_x$ is thus an open subbundle of $\Lambda^3T^*M$.  We call a section $\varphi$ of $\Lambda^3_+(M)$ a positive $3$-form on $M$ and denote the space of positive 3-forms by $\Omega_+^3(M)$. There is a 1-1 correspondence between $\GG_2$ structures (in the sense of subbundles of the frame bundle) and positive $3$-forms, because given $\varphi\in\Omega^3_+(M)$, the subbundle of the frame bundle whose fibre at $x$ consists of invertible $u\in\textrm{Hom}(T_xM,\R^7)$
such that $u^*\phi=\varphi_x$ defines a principal subbundle with fibre $\GG_2$. Thus we usually call a positive $3$-form $\varphi$ on $M$ a $\GG_2$ structure on $M$.  The existence of $\GG_2$ structures is equivalent to the property that $M$ is oriented and spin.

We now see that a positive $3$-form induces a unique metric and orientation. For a 3-form $\varphi$, we define a $\Omega^7(M)$-valued bilinear form $B_{\varphi}$ by
\begin{equation*}
  B_{\varphi}(u,v)=\frac 16(u\lrcorner\varphi)\wedge (v\lrcorner\varphi)\wedge \varphi,
\end{equation*}
where $u,v$ are tangent vectors on $M$. Then $\varphi$ is positive if and only if $B_{\varphi}$ is positive definite, i.e.~if $B_{\varphi}$ is the tensor product of a positive definite bilinear form and a nowhere vanishing $7$-form which defines a unique metric $g$ with volume form $vol_g$ as follows:
\begin{equation}\label{B-varphi}
  g(u,v)vol_g=B_{\varphi}(u,v).
\end{equation}
The metric and orientation determines the Hodge star operator $*_{\varphi}$, and we define $\psi=*_{\varphi}\varphi$,
which is sometimes called a positive $4$-form.  Notice that the relationship between $g$ and $\varphi$, and hence between $\psi$ and $\varphi$, is nonlinear.

The group $\GG_2$ acts irreducibly on $\R^7$ (and hence on $\Lambda^1(\R^7)^*$ and $\Lambda^6(\R^7)^*$), but it acts reducibly on $\Lambda^k(\R^7)^*$ for $2\leq k\leq 5$. Hence a $\GG_2$ structure $\varphi$ induces splittings of the bundles $\Lambda^kT^*M$ ($2\leq k\leq 5$) into direct summands, which we denote by $\Lambda^k_l(T^*M,\varphi)$ so that $l$ indicates the rank of the bundle. We let the space of sections of $\Lambda^k_l(T^*M,\varphi)$ be $\Omega^k_l(M)$.  We have that
\begin{align*}
  \Omega^2(M)= & \Omega^2_7(M)\oplus\Omega^2_{14}(M), \\
   \Omega^3(M)=& \Omega^3_1(M)\oplus\Omega^3_7(M)\oplus \Omega^3_{27}(M),
\end{align*}
where\footnote{Here we use the orientation in \cite{bryant2005} rather than \cite{Kar}.}
\begin{align*}
  \Omega^2_7(M) &= \{\beta\in\Omega^2(M)|\beta\wedge\varphi=2*_{\varphi}\beta\} =\{X\lrcorner\varphi|X\in C^{\infty}(TM)\},\\
  \Omega^2_{14}(M) &= \{\beta\in\Omega^2(M)|\beta\wedge\varphi=-*_{\varphi}\beta\} =\{\beta\in\Omega^2(M)|\beta\wedge\psi=0\},
\end{align*}
and
\begin{align*}
  \Omega^3_1(M)&=\{f\varphi|f\in C^{\infty}(M)\},\\
  \Omega^3_7(M)&=\{X\lrcorner\psi|X\in C^{\infty}(TM)\},\\
  \Omega^3_{27}(M)&=\{\gamma\in\Omega^3(M)|\gamma\wedge\varphi=0=\gamma\wedge\psi\}.
\end{align*}
Hodge duality gives corresponding decompositions of $\Omega^4(M)$ and $\Omega^5(M)$.

To study the Laplacian flow, it is convenient to write key quantities in local coordinates using summation convention. We write a $k$-form $\alpha$ as
\begin{equation*}
  \alpha=\frac 1{k!}\alpha_{i_1i_2\cdots i_k}dx^{i_1}\wedge\cdots\wedge dx^{i_k}
\end{equation*}
in local coordinates $\{x^1,\cdots, x^7\}$ on $M$, where  $\alpha_{i_1i_2\cdots i_k}$ is totally skew-symmetric in its indices.
In particular, we write $\varphi,\psi$ locally as
\begin{equation*}
  \varphi=\frac 16\varphi_{ijk}dx^i\wedge dx^j\wedge dx^k,\quad \psi=\frac 1{24}\psi_{ijkl}dx^i\wedge dx^j\wedge dx^k\wedge dx^l.
\end{equation*}
Note that the metric $g$ on $M$ induces an inner product of two $k$-forms $\alpha,\beta$, given locally by
\begin{equation*}
  \langle\alpha,\beta\rangle=\frac 1{k!}\alpha_{i_1i_2\cdots i_k}\beta_{j_1\cdots j_k}g^{i_1j_1}\cdots g^{i_kj_k}.
\end{equation*}

As in \cite{bryant2005} (up to a constant factor),  we define an operator $i_{\varphi}: S^2T^*M\ra \Lambda^3T^*M$ locally by
\begin{align}\label{ivarphi-def}
  i_{\varphi}(h)&=\frac 12h^l_i\varphi_{ljk}dx^i\wedge dx^j\wedge dx^k \nonumber\\
  &=\frac 16(h^l_i\varphi_{ljk}-h_j^l\varphi_{lik}-h_k^l\varphi_{lji})dx^i\wedge dx^j\wedge dx^k
\end{align}
where $h=h_{ij}dx^idx^j$. Then $\Lambda^3_{27}(T^*M,\varphi)=i_{\varphi}(S^2_0T^*M)$, where $S^2_0T^*M$ denotes the bundle of trace-free symmetric $2$-tensors on $M$. Clearly, $i_{\varphi}(g)=3\varphi$. We also have the inverse map $j_{\varphi}$ of $i_{\varphi}$,
\begin{equation*}
  j_{\varphi}(\gamma)(u,v)=*_{\varphi}((u\lrcorner\varphi)\wedge (v\lrcorner\varphi)\wedge \gamma),\quad u,v\in TM,
\end{equation*}
which is an isomorphism between $\Lambda^3_1(T^*M,\varphi)\oplus \Lambda^3_{27}(T^*M,\varphi)$ and $S^2T^*M$. Then  we have
$j_{\varphi}(i_{\varphi}(h))=4h+2tr_g(h)g $ for any $h\in S^2T^*M$ and $j_{\varphi}(\varphi)=6g$.

We have the following contraction identities of $\varphi$ and $\psi$ in index notation (see \cite{bryant2005,Kar}):
\begin{align}
  \varphi_{ijk}\varphi_{abl}g^{ia}g^{jb} &= 6g_{kl}, \label{contr-iden-1}\\
  \varphi_{ijq}\psi_{abkl}g^{ia}g^{jb}&= 4\varphi_{qkl},\label{contr-iden-2}\\
  \varphi_{ipq}\varphi_{ajk}g^{ia}&=g_{pj}g_{qk}-g_{pk}g_{qj}+\psi_{pqjk},\label{contr-iden-3}\\
  \varphi_{ipq}\psi_{ajkl}g^{ia}&=g_{pj}\varphi_{qkl}-g_{jq}\varphi_{pkl}+g_{pk}\varphi_{jql}-g_{kq}\varphi_{jpl}\nonumber\\
  &\qquad +g_{pl}\varphi_{jkq}-g_{lq}\varphi_{jkp},\label{contr-iden-4}\\
  \psi_{ijkl}\psi_{abcd}g^{jb}g^{kc}g^{ld}&=24g_{ia}. \label{contr-iden-5}
\end{align}

Given any $\GG_2$ structure $\varphi\in\Omega^3_+(M)$, there exist unique differential forms $\tau_0\in\Omega^0(M), \tau_1\in\Omega^1(M), \tau_{2}\in\Omega^2_{14}(M)$ and $\tau_{3}\in\Omega^3_{27}(M)$ such that $d\varphi$ and $d\psi$ can be expressed as follows (see \cite{bryant2005}):
\begin{align}
  d\varphi &= \tau_0\psi+3\tau_1\wedge\varphi+*_{\varphi}\tau_{3},\label{dvarphi}
\\
  d\psi &=  4\tau_1\wedge\psi+\tau_{2}\wedge\varphi.\label{dpsi}
\end{align}
We call $\{\tau_0,\tau_1,\tau_2,\tau_3\}$ the intrinsic torsion forms of the $\GG_2$ structure $\varphi$.  The full torsion tensor is a $2$-tensor $T$ satisfying (see \cite{Kar})
\begin{align}\label{nabla-var}
  \nabla_i\varphi_{jkl} &=T_i^{\,\,m}\psi_{mjkl},\\
\label{T-def}
  T_i^{\,\,j}&=\frac 1{24}\nabla_i\varphi_{lmn}\psi^{jlmn},
\end{align}
and
\begin{equation}
\label{nabla-psi}
  \nabla_m\psi_{ijkl} =-\Big( T_{mi}\varphi_{jkl}-T_{mj}\varphi_{ikl} -T_{mk}\varphi_{jil}-T_{ml}\varphi_{jki}\Big),
\end{equation}
where $T_{ij}=T(\pt_i,\pt_j)$ and $T_i^{\,\,j}=T_{ik}g^{jk}$. 
The full torsion tensor $T_{ij}$ is related to the intrinsic torsion forms by the following:
\begin{equation}\label{torsion-full}
 T_{ij}=\frac{\tau_0}4g_{ij}-(\tau_1^{\#}\lrcorner\varphi)_{ij}-(\bar{\tau}_3)_{ij}-\frac 12(\tau_2)_{ij},
\end{equation}
where $(\tau_1^{\#}\lrcorner\varphi)_{ij}=(\tau_1^{\#})^l\varphi_{lij}$ and $\bar{\tau}_3$ is the trace-free symmetric $2$-tensor such that $\tau_3=i_{\varphi}(\bar{\tau}_3)$.

If $\varphi$ is closed, i.e.~$d\varphi=0$, then \eqref{dvarphi} implies that $\tau_0,\tau_{1}$ and $\tau_3$ are all zero, so the only non-zero torsion form is $\tau_2=\frac 12(\tau_2)_{ij}dx^i\wedge dx^j$. Then from \eqref{torsion-full} we have that the full torsion tensor satisfies $T_{ij}=-T_{ji}=-\frac 12(\tau_2)_{ij}$, so $T$ is a skew-symmetric $2$-tensor. For the rest of the article, we write $\tau=\tau_2$ for simplicity and reiterate that for closed $\GG_2$
structures
\begin{equation}\label{T-tau-eq}
T=-\frac{1}{2}\tau.
\end{equation} Since $d\psi=\tau\wedge\varphi=-\!*_{\varphi}\!\tau$, we have that
\begin{equation}\label{tau.divfree.eq}
  d^*\tau=*_{\varphi}d*_{\varphi}\tau=-*_{\varphi}d^2\psi=0,
\end{equation}
which is given in local coordinates by $g^{mi}\nabla_m\tau_{ij}=0$.

We can write the condition that $\beta=\frac 12\beta_{ij}dx^i\wedge dx^j\in\Omega^2_{14}(M)$ as (see \cite{Kar})
\begin{align}\label{T-ident-1}
  &\beta_{ij}\varphi_{abk}g^{ia}g^{jb}=0\quad\textrm{ and }\quad \beta_{ij}\psi_{abkl}g^{ia}g^{jb}=-2\beta_{kl}
\end{align}
in local coordinates.

\subsection{Hodge Laplacian of \texorpdfstring{$\varphi$}{phi}}\label{sec:hodge-lap}
Since $d\varphi=0$, from \eqref{dvarphi} and \eqref{dpsi} we have that the Hodge Laplacian of $\varphi$ is equal to
\begin{align}\label{lap-varphi}
  \Delta_{\varphi}\varphi= & dd^*\varphi+d^*d\varphi=-d*_{\varphi}d\psi=d\tau,
\end{align}
where in the third equality we used $\tau\wedge\varphi=-*_{\varphi}\tau$ since $\tau\in\Omega^2_{14}(M)$. In local coordinates, we write \eqref{lap-varphi} as
\begin{align*}
  \Delta_{\varphi}\varphi=&\frac 16(\Delta_{\varphi}\varphi)_{ijk}dx^i\wedge dx^j\wedge dx^k,
\end{align*}
with
\begin{equation}\label{lap-varphi-0}
  (\Delta_{\varphi}\varphi)_{ijk}=\nabla_{i}\tau_{jk}-\nabla_j\tau_{ik}-\nabla_k\tau_{ji}.
\end{equation}
We can decompose $\Delta_{\varphi}\varphi$ into three parts:
\begin{align}\label{lap-varphi-decomp}
  \Delta_{\varphi}\varphi=&\pi_1^3(\Delta_{\varphi}\varphi)+\pi^3_7(\Delta_{\varphi}\varphi)+\pi^3_{27}(\Delta_{\varphi}\varphi)
  =a\varphi+X\lrcorner\psi+i_{\varphi}(\bar{h}),
\end{align}
where $\pi^k_l:\Omega^k(M)\ra \Omega^k_l(M)$ denotes the projection onto $\Omega^k_l(M)$, $a$ is a function, $X$ is a vector field and $\bar{h}$ is a trace-free symmetric $2$-tensor. We now calculate the values of $a,X,\bar{h}$.

For $a$, we take the inner product of $\varphi$ and $\Delta_{\varphi}\varphi$, and using the identity \eqref{T-ident-1} (since $\tau\in\Omega^2_{14}(M)$),
\begin{align*}
  a= & \frac 17\langle \Delta_{\varphi}\varphi,\varphi\rangle =\frac 1{42}\left(\nabla_{i}\tau_{jk}-\nabla_j\tau_{ik}-\nabla_k\tau_{ji}\right)\varphi_{lmn}g^{il}g^{jm}g^{kn}\\
  =&\frac 1{14}\nabla_{i}\tau_{jk}\varphi_{lmn}g^{il}g^{jm}g^{kn}\displaybreak[0]\\
  =&\frac 1{14}\nabla_{i}(\tau_{jk}\varphi_{lmn}g^{il}g^{jm}g^{kn})-\frac 1{14}\tau_{jk}\nabla_i\varphi_{lmn}g^{il}g^{jm}g^{kn}\displaybreak[0]\\
  =&\frac 1{28}\tau_{jk}\tau_{i}^s\psi_{slmn}g^{il}g^{jm}g^{kn}=\frac 1{14}\tau_{jk}\tau_{mn}g^{jm}g^{kn}=\frac 17|\tau|^2,
\end{align*}
where in the last equality we used $|\tau|^2=\frac 12\tau_{ij}\tau_{kl}g^{ik}g^{jl}$. For $X$, we use the contraction identities \eqref{contr-iden-2}, \eqref{contr-iden-4}, \eqref{contr-iden-5} and the definition of $i_{\varphi}$:
\begin{align*}
  (\Delta_{\varphi}\varphi\lrcorner\psi)_l= &(\Delta_{\varphi}\varphi)^{ijk}\psi_{ijkl}\\
  =&a\varphi^{ijk}\psi_{ijkl}+X^m\psi_m^{\,\,\,\,ijk}\psi_{ijkl}+(i_{\varphi}(\bar{h}))^{ijk}\psi_{ijkl}\\
  =&-24X_l+(\bar{h}^{im}\varphi_m^{\,\,\,\,jk}-\bar{h}^{jm}\varphi_m^{\,\,\,\,ik}-\bar{h}^{km}\varphi_m^{\,\,\,\,ji})\psi_{ijkl}\\
  =&-24X_l-12\bar{h}^{im}\varphi_{mil}=-24X_l,
\end{align*}
where the index of tensors are raised using the metric $g$. The last equality follows from the fact that $\bar{h}_{im}$ is symmetric in $i,m$, but $\varphi_{mil}$ is skew-symmetric in $i,m$. Using \eqref{lap-varphi-0}, we have
\begin{align*}
  X_l =& -\frac 1{24} (\Delta_{\varphi}\varphi)^{ijk}\psi_{ijkl}=-\frac 18g^{mi}\nabla_m\tau^{jk}\psi_{ijkl}\\
  =& -\frac 18g^{mi}\nabla_m(\tau^{jk}\psi_{ijkl})+\frac 18\tau^{jk}g^{mi}\nabla_m\psi_{ijkl}\\
  =&\frac 14 g^{mi}\nabla_m\tau_{il}+\frac 1{16}\tau^{jk}g^{mi}(\tau_{mi}\varphi_{jkl}-\tau_{mj}\varphi_{ikl}-\tau_{mk}\varphi_{jil}-\tau_{ml}\varphi_{jki})  =0,
\end{align*}
where in the above calculation we used \eqref{nabla-psi}, \eqref{tau.divfree.eq}, \eqref{T-ident-1}  and the totally skew-symmetry in $\varphi_{ijk}$ and $\psi_{ijkl}$.  So $X=0$ and thus the $\Omega^3_7(M)$ part of $\Delta_{\varphi}\varphi$ is zero. To find $h$, using the decomposition \eqref{lap-varphi-decomp}, $X=0$ and the contraction identities \eqref{contr-iden-2} and \eqref{contr-iden-3}, we have (as in \cite{Grig-Yau})
\begin{align*}
  (\Delta_{\varphi}\varphi)_i^{\,\,\,\,mn}\varphi_{jmn}&+(\Delta_{\varphi}\varphi)_j^{\,\,\,\,mn}\varphi_{imn}\\
  &=a\varphi_i^{\,\,\,mn}\varphi_{jmn}+X^l\psi_{li}^{\,\,\,\, mn}\varphi_{jmn}+(i_{\varphi}(\bar{h}))_i^{\,\,\,mn}\varphi_{jmn}\\
  &\qquad+a\varphi_j^{\,\,\,mn}\varphi_{imn}+X^l\psi_{lj}^{\,\,\,\, mn}\varphi_{imn}+(i_{\varphi}(\bar{h}))_j^{\,\,\,mn}\varphi_{imn}\\
  &=\frac{12}7|\tau|^2g_{ij}+8\bar{h}_{ij}.
  \end{align*}
The left-hand side of the above equation can be calculated using \eqref{lap-varphi-0}:
\begin{align*}
 (\nabla_m\tau_{ni}&-\nabla_n\tau_{mi}-\nabla_i\tau_{nm})\varphi_j^{\,\,\,mn}
+(\nabla_m\tau_{nj}-\nabla_n\tau_{mj}-\nabla_j\tau_{nm})\varphi_i^{\,\,\,mn}\\
  &=2(\nabla_m\tau_{ni}\varphi_j^{\,\,\,mn}+\nabla_m\tau_{nj}\varphi_i^{\,\,\,mn})-\nabla_i\tau_{nm}\varphi_j^{\,\,\,mn}-\nabla_j\tau_{nm}\varphi_i^{\,\,\,mn}\\
  &=4\nabla_m\tau_{ni}\varphi_j^{\,\,\,mn}+\tau_{nm}\nabla_i\varphi_j^{\,\,\,mn}+\tau_{nm}\nabla_j\varphi_i^{\,\,\,mn}\\
  &=4\nabla_m\tau_{ni}\varphi_j^{\,\,\,mn}-2\tau_i^{\,\,l}\tau_{lj},
\end{align*}
where we used \eqref{T-ident-1} and that for closed $\GG_2$ structures, $\nabla_m\tau_{ni}\varphi_j^{\,\,mn}$ is symmetric in $i,j$ (see Remark \ref{rem-Ricc}). Then
\begin{align*}
  \bar{h}_{ij}=&-\frac 3{14}|\tau|^2g_{ij}+\frac 12\nabla_m\tau_{ni}\varphi_j^{\,\,\,mn}-\frac 14\tau_i^{\,\,l}\tau_{lj}.
\end{align*}
 We conclude that
\begin{equation}\label{hodge-Lap-varp}
  \Delta_{\varphi}\varphi=d\tau=\frac 17|\tau|^2\varphi+i_{\varphi}(\bar{h})=i_{\varphi}(h)\in\Omega^3_1(M)\oplus\Omega^3_{27}(M),
\end{equation}
for
\begin{equation}\label{hodge-Lap-varp-2}
h_{ij}=\frac 12\nabla_m\tau_{ni}\varphi_j^{\,\,mn}-\frac 16|\tau|^2g_{ij}-\frac 14\tau_i^{\,\,l}\tau_{lj}.
\end{equation}

\subsection{Ricci curvature and torsion}

Since  $\varphi$ determines a unique metric $g$ on $M$, we then have the Riemann curvature tensor $Rm$ of $g$ on $M$. Our convention is the following:
\begin{equation*}
  R(X,Y)Z:=\nabla_X\nabla_YZ-\nabla_Y\nabla_XZ-\nabla_{[X,Y]}Z,
\end{equation*}
and $R(X,Y,Z,W)=g(R(X,Y)W,Z)$ for vector fields $X,Y,Z,W$ on $M$. In local coordinates
denote $R_{ijkl}=R(\pt_i,\pt_j,\pt_k,\pt_l)$. Recall that $Rm$ satisfies the first Bianchi identity:
\begin{equation}\label{Bianchi-Rm}
  R_{ijkl}+R_{iklj}+R_{iljk}=0.
\end{equation}
We also have the following Ricci identities when we commute covariant derivatives of a $(0,k)$-tensor $\alpha$:
\begin{equation}\label{Ricci-identity}
  (\nabla_i\nabla_j-\nabla_j\nabla_i)\alpha_{i_1i_2\cdots i_k}=\sum_{l=1}^kR_{iji_l}^{\quad m}\alpha_{i_1\cdots i_{l-1}mi_{l+1}\cdots i_k}.
\end{equation}

Karigiannis \cite{Kar} derived the following second Bianchi-type identity for the full torsion tensor.

\begin{lem}
\begin{equation}\label{bianchi-Torsion}
  \nabla_iT_{jk}-\nabla_jT_{ik}=\big(\frac 12R_{ijmn}-T_{im}T_{jn}\big)\varphi_k^{\,\,\,mn}.
\end{equation}
\end{lem}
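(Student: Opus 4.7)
The plan is to compute $[\nabla_i,\nabla_j]\varphi_{abc}$ in two different ways and equate the results. First, the Ricci identity \eqref{Ricci-identity} applied to the three-form $\varphi$ gives
\[
[\nabla_i,\nabla_j]\varphi_{abc}=R_{ija}{}^{m}\varphi_{mbc}+R_{ijb}{}^{m}\varphi_{amc}+R_{ijc}{}^{m}\varphi_{abm}.
\]
Second, I would differentiate the torsion identity \eqref{nabla-var} once more and expand $\nabla\psi$ via \eqref{nabla-psi}, then antisymmetrise in $i,j$. This produces the leading term $(\nabla_iT_j{}^{m}-\nabla_jT_i{}^{m})\psi_{mabc}$ together with a collection of $T\cdot T\cdot\varphi$ remainders; the piece proportional to $\varphi_{abc}$ vanishes because the skew-symmetry $T_{ij}=-T_{ji}$ \eqref{T-tau-eq} available for closed $\GG_2$ structures forces $T_j{}^{m}T_{im}-T_i{}^{m}T_{jm}=0$.

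Next, equating the two expressions and contracting with $\psi^{nabc}$ strips the $\psi$ off the $\nabla T$ term via \eqref{contr-iden-5}, namely $\psi_{mabc}\psi^{nabc}=24\delta^n_m$, so the left becomes $24(\nabla_iT_j{}^{n}-\nabla_jT_i{}^{n})$. On the right, each of the three curvature terms and each of the three surviving torsion-squared terms contains a contraction of $\psi$ with $\varphi$ on two indices, which \eqref{contr-iden-2} reduces back to a $\varphi$; the total antisymmetries of $\psi$ and $\varphi$ then imply that the three terms in each group are equal after relabeling dummy indices. Dividing by $24$ and lowering $n$ to $k$, one obtains
\[
\nabla_iT_{jk}-\nabla_jT_{ik}=\tfrac12 R_{ija}{}^{m}\varphi_{mk}{}^{a}-\tfrac12(T_j{}^{m}T_{ia}-T_i{}^{m}T_{ja})\varphi_{mk}{}^{a}.
\]
A final relabeling, using antisymmetry of $R$ in its last two slots and of $T$, together with the total antisymmetry of $\varphi$, puts the right side in the advertised form $(\tfrac12 R_{ijmn}-T_{im}T_{jn})\varphi_k{}^{mn}$.

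The main obstacle is the index bookkeeping in the middle step: the expansion \eqref{nabla-psi} of $\nabla\psi$ carries alternating signs, and these must be tracked consistently through the three parallel $\psi\cdot\varphi$ contractions so that the three $R\varphi$ pieces and the three $T\cdot T\cdot\varphi$ pieces combine with the correct signs and coefficients. No new geometric input is needed beyond the identities \eqref{nabla-var}, \eqref{nabla-psi}, \eqref{Ricci-identity}, \eqref{contr-iden-2}, \eqref{contr-iden-5} and the skew-symmetry of $T$ for closed $\GG_2$ structures.
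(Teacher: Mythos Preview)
Your approach is essentially the same as the paper's: both compute $[\nabla_i,\nabla_j]\varphi_{abc}$ via the Ricci identity on one hand and via \eqref{nabla-var}--\eqref{nabla-psi} on the other, then contract with $\psi$ using \eqref{contr-iden-2} and \eqref{contr-iden-5}. The only organisational difference is that the paper differentiates the formula \eqref{T-def} for $T_{jk}$ directly (so the $\psi$-contraction is built in from the start), whereas you equate the two expressions for $[\nabla_i,\nabla_j]\varphi$ first and contract with $\psi$ at the end.

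One small correction: the vanishing of the $\varphi_{abc}$ coefficient $T_j{}^{m}T_{im}-T_i{}^{m}T_{jm}$ does not require the skew-symmetry of $T$ (i.e.\ closedness). It holds for any $2$-tensor $T$, since $T_j{}^{m}T_{im}=g^{ml}T_{jl}T_{im}=g^{lm}T_{il}T_{jm}=T_i{}^{m}T_{jm}$ by symmetry of $g^{-1}$ and relabelling dummies. This matters because the lemma, as stated and used in the paper (e.g.\ in the derivation of \eqref{Ricc-prop}), is valid for arbitrary $\GG_2$ structures, not only closed ones; your argument actually proves the general case once you drop the appeal to \eqref{T-tau-eq}.
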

\proof
The proof of \eqref{bianchi-Torsion} in \cite{Kar} is indirect, but
as remarked there, \eqref{bianchi-Torsion} can also be established directly using \eqref{nabla-var}--\eqref{nabla-psi} and the Ricci identity. We provide the detail here for completeness.
\begin{align*}
\nabla_iT_{jk}-\nabla_jT_{ik}&= \frac 1{24}(\nabla_i(\nabla_j\varphi_{abc}\psi_k^{\,\,\,abc})-\nabla_j(\nabla_i\varphi_{abc}\psi_k^{\,\,\,abc})) \\
  &=  \frac 1{24}(\nabla_i\nabla_j-\nabla_j\nabla_i)\varphi_{abc}\psi_k^{\,\,\,abc}\\
  &\quad
+\frac 1{24}(\nabla_j\varphi_{abc}\nabla_i\psi_k^{\,\,\,abc}-\nabla_i\varphi_{abc}\nabla_j\psi_k^{\,\,\,abc})\\
  &=\frac 1{24}(R_{ija}^{\quad m}\varphi_{mbc}+R_{ijb}^{\quad m}\varphi_{amc}+R_{ijc}^{\quad m}\varphi_{abm})\psi_k^{\,\,\,abc}\\
  &\quad-\frac 1{24}T_{j}^{\,\,m}\psi_{mabc}(T_{ik}\varphi^{abc}-T_{i}^{\,\,a}\varphi_k^{\,\,\,bc}+T_{i}^{\,\,b}\varphi_k^{\,\,\,ac}-T_{i}^{\,\,c}\varphi_k^{\,\,\,ab})\\
  &\quad+\frac 1{24}T_{i}^{\,\,m}\psi_{mabc}(T_{jk}\varphi^{abc}-T_{j}^{\,\,a}\varphi_k^{\,\,\,bc}+T_{j}^{\,\,b}\varphi_k^{\,\,\,ac}-T_j^{\,\,c}\varphi_k^{\,\,\,ab})\displaybreak[0]\\
  &=\frac 12R_{ijma}\varphi_k^{\,\,\,ma}+\frac 12T_{jm}T_{ia}\varphi_k^{\,\,\,ma}-\frac 12T_{im}T_{ja}\varphi_k^{\,\,\,ma}\\
  &=\frac 12R_{ijma}\varphi_k^{\,\,\,ma}-T_{ia}T_{jm}\varphi_k^{\,\,\,am},
\end{align*}
where in the third equality we used \eqref{nabla-var}, \eqref{nabla-psi} and \eqref{Ricci-identity}, and in the fourth equality we used the contraction identity \eqref{contr-iden-2}.
\endproof

We now consider the Ricci tensor, given locally as $R_{ik}=R_{ijkl}g^{jl}$, which has been calculated for closed $\GG_2$ structures (and more generally) in \cite{bryant2005, Cleyton-I, Kar}.  We give the general result from \cite{Kar} here.

\begin{prop}\label{Ric-prop}
The Ricci tensor of the associated metric $g$ of the $\GG_2$ structure $\varphi$ is given locally as
\begin{equation}\label{Ricc-prop}
R_{ik}=(\nabla_iT_{jl}-\nabla_jT_{il})\varphi_k^{\,\,\,jl}+Tr(T)T_{ik}-T_{i}^{\,\,j}T_{jk}+T_{im}T_{jn}\psi_k^{\,\,\,jmn}.
\end{equation}
In particular, for a closed $\GG_2$ structure $\varphi$, we have
 \begin{equation}\label{Ricc-prop-closed}
R_{ik}=\nabla_jT_{li}\varphi_k^{\,\,\,jl}-T_{i}^{\,\,j}T_{jk}.
\end{equation}
\end{prop}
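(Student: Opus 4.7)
The plan is to derive the general formula \eqref{Ricc-prop} by contracting the second Bianchi-type identity \eqref{bianchi-Torsion} with $\varphi$, and then specialize to the closed case using the additional algebraic constraints that $T$ satisfies there. First I would relabel $k\to l$ in \eqref{bianchi-Torsion} and multiply both sides by $\varphi_k{}^{jl}$, producing a new free index $k$. The key preliminary step is evaluating $\varphi_l{}^{mn}\varphi_k{}^{jl}$ using the single-contraction identity \eqref{contr-iden-3}; since the permutation $(m,n,k,j)\mapsto(k,j,m,n)$ of $\psi$'s indices is even, one finds
\begin{equation*}
\varphi_l{}^{mn}\varphi_k{}^{jl}=\delta_k^m g^{jn}-\delta_k^n g^{jm}+\psi_k{}^{jmn}.
\end{equation*}

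Substituting into the contracted Bianchi identity, the $R\cdot g\cdot g$ contributions collapse, via $R_{ijkl}g^{jl}=R_{ik}$ together with the antisymmetry of $R$ in its last two indices, into a single copy of $R_{ik}$; the $T\cdot T\cdot g\cdot g$ contributions similarly supply $\mathrm{Tr}(T)T_{ik}-T_i{}^jT_{jk}$. The potentially troublesome term $\tfrac12 R_{ijmn}\psi_k{}^{jmn}$ vanishes by the first Bianchi identity \eqref{Bianchi-Rm}: since $\psi_k{}^{jmn}$ is totally antisymmetric in $(j,m,n)$ and $R_{ijmn}$ is already antisymmetric in $(m,n)$, the contraction extracts $R_{i[jmn]}$, which reduces to $\tfrac13(R_{ijmn}+R_{imnj}+R_{injm})=0$. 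The remaining $-T_{im}T_{jn}\psi_k{}^{jmn}$ piece is then moved to the other side, producing $+T_{im}T_{jn}\psi_k{}^{jmn}$ and delivering \eqref{Ricc-prop} after rearrangement.

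For the closed case, two structural facts come into play: $T=-\tfrac12\tau$ is skew, so $\mathrm{Tr}(T)=0$, and $\tau\in\Omega^2_{14}(M)$ gives, via \eqref{T-ident-1}, $T^{ab}\varphi_{abk}=0$; equivalently, $T_{jl}\varphi_k{}^{jl}\equiv 0$ on $M$. Covariantly differentiating this identity and using $\nabla_i\varphi_{abk}=T_i{}^m\psi_{mabk}$ from \eqref{nabla-var} gives
\begin{equation*}
\nabla_iT_{jl}\,\varphi_k{}^{jl}=-T_{jl}T_i{}^m\psi_{mk}{}^{jl}=-T_{im}T_{jn}\psi_k{}^{jmn},
\end{equation*}
the last equality obtained after raising/lowering indices and using the even permutation $\psi_{mkjn}=\psi_{kjmn}$. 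Substituting this relation together with $-\nabla_jT_{il}=\nabla_jT_{li}$ into the general formula cancels the $T\cdot T\cdot\psi$ term against the $\nabla_iT_{jl}\varphi_k{}^{jl}$ piece, leaving exactly \eqref{Ricc-prop-closed}.

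The main obstacle is careful index bookkeeping, specifically determining which permutations of the totally antisymmetric $\psi$ are even versus odd and applying the contraction identities \eqref{contr-iden-1}--\eqref{contr-iden-5} with the right sign conventions. The critical checkpoints are: the sign of $\psi_k{}^{jmn}$ in the expansion of $\varphi_l{}^{mn}\varphi_k{}^{jl}$, the first-Bianchi vanishing of $R_{ijmn}\psi_k{}^{jmn}$, and the closed-case identity $\nabla_iT_{jl}\varphi_k{}^{jl}=-T_{im}T_{jn}\psi_k{}^{jmn}$ which is what makes the $T\cdot T\cdot\psi$ term disappear from \eqref{Ricc-prop-closed}.
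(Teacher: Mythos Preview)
Your proposal is correct and follows essentially the same route as the paper: contract the Bianchi-type identity \eqref{bianchi-Torsion} against $\varphi_k{}^{jl}$, expand via \eqref{contr-iden-3}, and kill the $R\psi$ term with the first Bianchi identity. The only cosmetic difference is in the closed-case simplification: the paper computes $\nabla_iT_{jp}\varphi_k{}^{jp}$ and $T_{im}T_{jn}\psi_k{}^{jmn}$ separately via \eqref{T-ident-1} (each equal to $\pm 2T_i{}^mT_{mk}$) and then observes they cancel, whereas you obtain the same cancellation in one stroke by differentiating $T_{jl}\varphi_k{}^{jl}=0$ and recognizing the result as $-T_{im}T_{jn}\psi_k{}^{jmn}$.
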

\proof
We multiply \eqref{bianchi-Torsion} by $-\varphi_k^{\,\,\,jp}$:
\begin{align*}
-(\nabla_iT_{jp}&-\nabla_jT_{ip})\varphi_k^{\,\,\,jp}\\
&=-(T_{jm}T_{in}+\frac 12R_{ijmn})\varphi_p^{\,\,\,mn}\varphi_k^{\,\,\,jp} \\
  &=  (T_{jm}T_{in}+\frac 12R_{ijmn})(g^{mj}\delta_{nk}-\delta_{mk}g^{nj}-\psi_k^{\,\,\,jmn}) \\
  &= -T_{i}^{\,\,j}T_{jk}+ Tr(T)T_{ik}-T_{jm}T_{in}\psi_k^{\,\,\,jmn}-R_{ik}-\frac 12R_{ijmn}\psi_k^{\,\,\,jmn}\\
  &=  -T_{i}^{\,\,j}T_{jk}+ Tr(T)T_{ik}-T_{jm}T_{in}\psi_k^{\,\,\,jmn}-R_{ik} \\&\qquad
-\frac 16(R_{ijmn}+R_{imnj}+R_{injm})\psi_k^{\,\,\,jmn}\\
&= -T_{i}^{\,\,j}T_{jk}+ Tr(T)T_{ik}-T_{jm}T_{in}\psi_k^{\,\,\,jmn}-R_{ik},
\end{align*}
where the last equality is  due to \eqref{Bianchi-Rm}. The formula \eqref{Ricc-prop} follows.

For a closed $\GG_2$ structure, we have $T_{ij}=-\frac 12\tau_{ij}$, so $T$ is skew-symmetric. Moreover, using \eqref{T-ident-1}, we have
\begin{equation*}
  -T_{jm}T_{in}\psi_k^{\,\,\,jmn}=-\frac 14 \tau_{jm}\tau_{in}\psi_k^{\,\,\,jmn}=-\frac 12\tau_{i}^{\,\,n}\tau_{nk}=-2T_i^{\,\,n}T_{nk},
\end{equation*}
and
\begin{align*}
  \nabla_iT_{jp}\varphi_k^{\,\,\,jp} &= \nabla_i(T_{jp}\varphi_k^{\,\,\,jp})-T_{jp}\nabla_i\varphi_k^{\,\,\,jp}\nonumber\\
  &=  -\frac 12\nabla_i(\tau_{jp}\varphi_k^{\,\,\,jp}) -T_{jp}T_i^{\,\,m}\psi_{mk}^{\,\,\,\,\,\,\,jp}\\
  &=-\frac 14\tau_{jp}\tau_i^{\,\,m}\psi_{mk}^{\,\,\,\,\,\,\,jp}=\frac 12\tau_i^{\,\,m}\tau_{mk}=2T_i^{\,\,m}T_{mk}.
\end{align*}
Then we obtain
\begin{align*}
R_{ik}&=(\nabla_iT_{jp}-\nabla_jT_{ip})\varphi_k^{\,\,\,jp}+Tr(T)T_{ik}-T_{i}^{\,\,j}T_{jk}-T_{jm}T_{in}\psi_k^{\,\,\,jmn}\\
&=2T_i^{\,\,m}T_{mk}-\nabla_jT_{ip}\varphi_k^{\,\,\,jp}-T_{i}^{\,\,j}T_{jk}-2T_i^{\,\,n}T_{nk}\\
&=-\nabla_jT_{ip}\varphi_k^{\,\,\,jp}-T_{i}^{\,\,j}T_{jk},
\end{align*}
which is \eqref{Ricc-prop-closed}.
\endproof

\begin{rem}\label{rem-Ricc}
By \eqref{Ricc-prop-closed}, for a closed $\GG_2$ structure,
$\nabla_jT_{ip}\varphi_k^{\,\,\,jp}$ is symmetric in $i,k$, since $R_{ik}$ and $T_{i}^{\,\,j}T_{jk}$ are symmetric in $i,k$.
\end{rem}

We noted earlier that    $Rm$ and $\nabla T$ are second order in $\varphi$, and $T$ is essentially $\nabla\varphi$, so
we would expect $Rm$ and $\nabla T$ to be related.  We show explicitly using Proposition \ref{Ric-prop} that, for closed $\GG_2$ structures, this is  the case.

\begin{prop}\label{prop-nabla-T}
For a closed $\GG_2$ structure $\varphi$, we have
\begin{align}\label{naT-Rm}
 2\nabla_iT_{jk} =&\frac 12R_{ijmn}\varphi_k^{\,\,\,mn}+\frac 12R_{kjmn} \varphi_i^{\,\,\,mn}-\frac 12R_{ikmn}\varphi_j^{\,\,\,mn}\nonumber\\
 &~~-T_{im}T_{jn}\varphi_k^{\,\,\,mn} -T_{km}T_{jn}\varphi_i^{\,\,\,mn}+T_{im}T_{kn}\varphi_j^{\,\,\,mn}.
 \end{align}
\end{prop}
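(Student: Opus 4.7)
The plan is to solve for $\nabla_i T_{jk}$ algebraically from the second Bianchi-type identity \eqref{bianchi-Torsion}, exploiting the fact that for a closed $\GG_2$ structure the full torsion $T$ is skew-symmetric (see \eqref{T-tau-eq} and the surrounding discussion). Since $T_{jk}=-T_{kj}$, the tensor $A_{ijk}:=\nabla_i T_{jk}$ is skew-symmetric in its last two indices, and this is precisely the extra symmetry we need in order to recover $A_{ijk}$ from the pairwise differences $A_{ijk}-A_{jik}$ supplied by \eqref{bianchi-Torsion}.

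More concretely, set
\begin{equation*}
S_{ijk}:=\Bigl(\tfrac12 R_{ijmn}-T_{im}T_{jn}\Bigr)\varphi_k^{\,\,mn},
\end{equation*}
so that \eqref{bianchi-Torsion} reads $A_{ijk}-A_{jik}=S_{ijk}$. A quick dummy-index swap $m\leftrightarrow n$ shows $S_{ijk}$ is skew in $i,j$, which is consistent. Now cycle $(i,j,k)$ to write the three identities
\begin{equation*}
A_{ijk}-A_{jik}=S_{ijk},\qquad A_{jki}-A_{kji}=S_{jki},\qquad A_{kij}-A_{ikj}=S_{kij},
\end{equation*}
and use the skew-symmetry $A_{jik}=-A_{jki}$, $A_{kji}=-A_{kij}$, $A_{ikj}=-A_{ijk}$ to convert each difference into a sum:
\begin{equation*}
A_{ijk}+A_{jki}=S_{ijk},\qquad A_{jki}+A_{kij}=S_{jki},\qquad A_{kij}+A_{ijk}=S_{kij}.
\end{equation*}

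The key step is then the alternating combination (I) $-$ (II) $+$ (III), which telescopes the $A$-terms to give
\begin{equation*}
2A_{ijk}=S_{ijk}-S_{jki}+S_{kij}.
\end{equation*}
Substituting the definition of $S$ yields
\begin{align*}
2\nabla_iT_{jk}&=\tfrac12 R_{ijmn}\varphi_k^{\,\,mn}-\tfrac12 R_{jkmn}\varphi_i^{\,\,mn}+\tfrac12 R_{kimn}\varphi_j^{\,\,mn}\\
&\quad -T_{im}T_{jn}\varphi_k^{\,\,mn}+T_{jm}T_{kn}\varphi_i^{\,\,mn}-T_{km}T_{in}\varphi_j^{\,\,mn}.
\end{align*}
Finally, the stated form of \eqref{naT-Rm} follows by the antisymmetry of $R$ in its first two indices ($R_{jkmn}=-R_{kjmn}$, $R_{kimn}=-R_{ikmn}$) and by the dummy-index swap $m\leftrightarrow n$ together with $\varphi_k^{\,\,nm}=-\varphi_k^{\,\,mn}$, which converts $T_{jm}T_{kn}\varphi_i^{\,\,mn}$ to $-T_{km}T_{jn}\varphi_i^{\,\,mn}$ and $-T_{km}T_{in}\varphi_j^{\,\,mn}$ to $T_{im}T_{kn}\varphi_j^{\,\,mn}$.

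I do not anticipate any real obstacle: once one notices that closedness upgrades \eqref{bianchi-Torsion} (which only controls the skew part in $i,j$) into a usable identity by the additional skew-symmetry in $j,k$, the derivation is the standard three-term cyclic trick familiar from recovering a torsion tensor from its antisymmetrization. The only care needed is bookkeeping: matching signs and indices after applying the Riemann and $\varphi$ symmetries in the last step.
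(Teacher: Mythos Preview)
Your proof is correct and is essentially the same as the paper's: both generate three instances of \eqref{bianchi-Torsion} by permuting the indices and then take the appropriate alternating sum, using the skew-symmetry $T_{jk}=-T_{kj}$ for closed $\GG_2$ structures to solve for $\nabla_iT_{jk}$. The only cosmetic difference is that the paper produces the two extra identities via the transpositions $i\leftrightarrow k$ and $j\leftrightarrow k$ rather than your cyclic permutations, which amounts to the same equations up to sign.
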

\proof
By interchanging $i\leftrightarrow k$ and $j\leftrightarrow k$ in \eqref{bianchi-Torsion} respectively, we have
\begin{align}
   \nabla_kT_{ji}-\nabla_jT_{ki}=&\big(\frac 12R_{kjmn}-T_{km}T_{jn}\big)\varphi_i^{\,\,\,mn}\label{bian-2}\\
   \nabla_iT_{kj}-\nabla_kT_{ij}=&\big(\frac 12R_{ikmn}-T_{im}T_{kn}\big)\varphi_j^{\,\,\,mn}.\label{bian-3}
\end{align}
Then \eqref{naT-Rm} follows by combining the equations \eqref{bianchi-Torsion} and \eqref{bian-2}--\eqref{bian-3}.
\endproof

We can also deduce a useful, already known, formula for the scalar curvature of the metric given by a closed $\GG_2$ structure.

\begin{corr}\label{scalar-cor}
The scalar curvature of a metric associated to a closed $\GG_2$ structure satisfies
\begin{equation}\label{scalar-corr}
  R=-|T|^2=-T_{ik}T_{jl}g^{ij}g^{kl}.
\end{equation}
\end{corr}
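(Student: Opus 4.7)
The plan is to trace the Ricci formula \eqref{Ricc-prop-closed} with the inverse metric $g^{ik}$, giving
\begin{equation*}
R = g^{ik}\nabla_jT_{li}\varphi_k^{\,\,\,jl} - g^{ik}T_i^{\,\,j}T_{jk},
\end{equation*}
and then to evaluate each of the two resulting terms. The quadratic term is easy: $g^{ik}T_i^{\,\,j}T_{jk}=T_i^{\,\,j}T_j^{\,\,i}=\mathrm{tr}(T^2)$, and since $T$ is skew-symmetric (as $T=-\tfrac12\tau$ with $\tau\in\Omega^2_{14}(M)$), a one-line index swap using the definition $|T|^2=T_{ik}T_{jl}g^{ij}g^{kl}$ shows that $\mathrm{tr}(T^2)=-|T|^2$.

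The derivative term is the main point, and I would handle it by exploiting the $\Omega^3_1$-component of $\Delta_\varphi\varphi$ already extracted in \eqref{hodge-Lap-varp}. Since $\Delta_\varphi\varphi=d\tau=\tfrac17|\tau|^2\varphi+i_\varphi(\bar h)$ with $i_\varphi(\bar h)\in\Omega^3_{27}(M)$ orthogonal to $\varphi$, taking the pointwise inner product with $\varphi$ and using $|\varphi|^2=7$ (which follows by tracing \eqref{contr-iden-3}) yields $\langle d\tau,\varphi\rangle=|\tau|^2=2|T|^2$. Expanding the left-hand side in coordinates,
\begin{equation*}
\langle d\tau,\varphi\rangle=\tfrac{1}{6}(d\tau)_{ijk}\varphi^{ijk}=\tfrac12\nabla_i\tau_{jk}\varphi^{ijk},
\end{equation*}
where the three cyclic summands in $(d\tau)_{ijk}$ contribute equally after contraction against the totally antisymmetric $\varphi^{ijk}$. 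Substituting $\tau=-2T$ gives $\nabla_iT_{jk}\varphi^{ijk}=-2|T|^2$, and an even cyclic relabelling of the indices of $\varphi$ together with the skew-symmetry $T_{li}=-T_{il}$ identifies this scalar with the target contraction $g^{ik}\nabla_jT_{li}\varphi_k^{\,\,\,jl}$. Combining the two contributions then yields $R=-2|T|^2-(-|T|^2)=-|T|^2$, as claimed.

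There is no real analytic difficulty here; the main obstacle is careful index bookkeeping, in particular verifying that $\mathrm{tr}(T^2)=-|T|^2$ under the specific tensor-norm convention used in the statement and identifying the two differently-arranged scalar contractions of $\nabla T$ with $\varphi$. As a purely algebraic alternative to the Hodge-Laplacian route, one can instead differentiate the identity $T_{lk}\varphi_i^{\,\,\,lk}=0$ (which holds by the first part of \eqref{T-ident-1} since $\tau\in\Omega^2_{14}(M)$), substitute $\nabla_j\varphi_i^{\,\,\,lk}=T_j^{\,\,p}\psi_{pi}^{\,\,\,\,\,lk}$ from \eqref{nabla-var}, and use $T_{lk}\psi_{pi}^{\,\,\,\,\,lk}=-2T_{pi}$ from the second part of \eqref{T-ident-1} to obtain $(\nabla_jT_{lk})\varphi_i^{\,\,\,lk}=2T_j^{\,\,p}T_{pi}$; tracing with $g^{ij}$ reproduces the same $-2|T|^2$.
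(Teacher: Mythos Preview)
Your proposal is correct. Both you and the paper begin by tracing the Ricci formula \eqref{Ricc-prop-closed} and handle the quadratic term $g^{ik}T_i^{\,\,j}T_{jk}=-|T|^2$ identically. For the derivative term $g^{ik}\nabla_jT_{li}\varphi_k^{\,\,jl}$, the paper applies the Leibniz rule directly, uses $\tau_{sp}\varphi^{spj}=0$ from \eqref{T-ident-1} to kill the total-derivative part, and then substitutes $\nabla\varphi=T\!*\!\psi$ together with the $\psi$-identity in \eqref{T-ident-1}. Your main route instead quotes the $\Omega^3_1$-component of $\Delta_\varphi\varphi=d\tau$ already computed in \eqref{hodge-Lap-varp}, which is slicker since that computation was done once in \S\ref{sec:hodge-lap} and need not be repeated; your ``purely algebraic alternative'' (differentiating $T_{lk}\varphi_i^{\,\,lk}=0$ and using \eqref{nabla-var} and \eqref{T-ident-1}) is essentially the paper's direct computation, organized around the free index $i$ rather than the divergence index. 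There is no circularity in invoking \eqref{hodge-Lap-varp}, since that decomposition is established without any reference to curvature.
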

\proof
By taking trace in \eqref{Ricc-prop-closed}, using $T_{ij}=-\frac 12\tau_{ij}$ and \eqref{T-ident-1}, we obtain the scalar curvature
\begin{align*}
  R=&R_{sk}g^{sk}=-(\nabla_jT_{sp}\varphi_k^{\,\,\,jp}+T_{s}^{\,\,j}T_{jk})g^{sk}\nonumber\\
  =&-\nabla_j(T_{sp}\varphi_k^{\,\,\,jp})g^{sk}+T_{sp}\nabla_j\varphi_k^{\,\,\,jp}g^{sk}+|T|^2\nonumber\\
  =&\frac 12\nabla_j(\tau_{sp}\varphi_k^{\,\,\,jp})g^{sk}+T_{sp}T_{j}^{\,\,m}\psi_{mk}^{\,\,\,\,\,\,\,jp}g^{sk}+|T|^2\nonumber\\
  =&\frac 14\tau_{sp}\tau_j^{\,\,m}\psi_{mk}^{\,\,\,\,\,\,\,jp}g^{sk}+|T|^2=-\frac 12\tau_{sp}\tau^{sp}+|T|^2\nonumber\\
  =&-2T_{sp}T^{sp}+|T|^2=-|T|^2
\end{align*}
as claimed.
\endproof

This result is rather striking since it shows that the scalar curvature, which is a priori second order in the metric and hence in $\varphi$, is given by a first order quantity in $\varphi$
when $d\varphi=0$.

\section{Evolution equations}\label{sec:evlution}

In this section we derive evolution equations for several geometric quantities under the Laplacian flow, including the torsion tensor $T$, Riemann curvature tensor $Rm$, Ricci tensor $Ric$ and scalar curvature $R$.  These are fundamental equations for understanding the flow.

Recall that the Laplacian flow for a closed $\GG_2$ structure is
\begin{equation}\label{flow-1}
  \frac{\pt}{\pt t}\varphi=\Delta_{\varphi}\varphi.
\end{equation}
From \eqref{hodge-Lap-varp} and \eqref{hodge-Lap-varp-2}, the flow \eqref{flow-1} is equivalent to
\begin{equation}\label{flow-2}
  \frac{\pt}{\pt t}\varphi=i_{\varphi}(h),
\end{equation}
where $h$ is the symmetric $2$-tensor given in \eqref{hodge-Lap-varp-2}. We may write $h$ in terms the full torsion tensor $T_{ij}$ as follows:
\begin{equation}\label{hodge-Lap-varp-3}
h_{ij}=-\nabla_mT_{ni}\varphi_{j}^{\,\,mn}-\frac 13|T|^2g_{ij}-T_i^{\,\,l}T_{lj}.
\end{equation}
For closed $\varphi$, the Ricci curvature is equal to
\begin{equation*}
  R_{ij}=\nabla_mT_{ni}\varphi_{j}^{\,\,mn}-T_i^{\,\,k}T_{kj},
\end{equation*}
so we can also write $h$ as
\begin{equation}\label{h-tensor-1}
  h_{ij}=-R_{ij}-\frac 13|T|^2g_{ij}-2T_i^{\,\,k}T_{kj}.
\end{equation}
Notice that $T_i^{\,\,k}=T_{il}g^{kl}$ and $T_{il}=-T_{li}$.

Throughout this section and the remainder of the article we will use the symbol $\Delta$ to denote the ``analyst's Laplacian'' which is a non-positive operator given in local
coordinates as $\nabla^i\nabla_i$.  This is in contrast to $\Delta_{\varphi}$, which is the Hodge Laplacian and is instead a non-negative operator.

\subsection{Evolution of the metric}
Under a general flow for $\GG_2$ structures
\begin{equation}\label{flow-general}
  \frac{\pt}{\pt t}\varphi(t)=i_{\varphi(t)}(h(t))+X(t)\lrcorner \psi(t),
\end{equation}
where $h(t), X(t)$ are a time-dependent symmetric $2$-tensor and vector field on $M$ respectively, it is well known that (see \cite{bryant2005,joyce2000} and explicitly \cite{Kar})
the associated metric tensor $g(t)$ evolves by
\begin{equation*}
  \frac{\pt}{\pt t}g(t)=2h(t).
\end{equation*}
Substituting \eqref{h-tensor-1} into this equation, we have that under the Laplacian flow \eqref{flow-1} (also given by \eqref{flow-2}), the associated metric $g(t)$ of the $\GG_2$ structure $\varphi(t)$ evolves by
\begin{equation}\label{flow-g2}
  \frac{\pt}{\pt t}g_{ij}=-2R_{ij}-\frac 23|T|^2g_{ij}-4T_i^{\,\,k}T_{kj}.
\end{equation}
Thus the leading term of the metric flow \eqref{flow-g2} corresponds to the Ricci flow, as already observed in \cite{bryant2005}.

From \eqref{flow-g2} we have that the inverse of the metric evolves by
\begin{align}\label{flow-g^-1}
  \frac{\pt}{\pt t}g^{ij}=&-g^{ik}g^{jl}\frac{\pt}{\pt t}g_{kl}\nonumber\\
  =&g^{ik}g^{jl}(2R_{kl}+\frac 23|T|^2g_{kl}+4T_k^{\,\,m}T_{ml}),
\end{align}
and the volume form $vol_{g(t)}$ evolves by
\begin{align}\label{evl-volumeform}
  \frac{\pt}{\pt t}vol_{g(t)}=&\frac 12tr_g(\frac{\pt}{\pt t}g(t))vol_{g(t)}=tr_g(h(t))vol_{g(t)}\nonumber\\
  =&(-R-\frac 73|T|^2+2|T|^2)vol_{g(t)}=\frac 23|T|^2vol_{g(t)},
\end{align}
where we used the fact that the scalar curvature $R=-|T|^2$. Hence, along the Laplacian flow, the volume of $M$ with respect to the associated metric $g(t)$ will non-decrease; in fact, the volume form is pointwise non-decreasing (again as already noted in \cite{bryant2005}).

\subsection{Evolution of torsion}

By \cite[Lemma 3.7]{Kar},  the evolution of the full torsion tensor $T$ under the flow \eqref{flow-2} is given by \footnote{Note that compared with \cite[Lemma 3.7]{Kar}, the sign of the second term on the right-hand side of \eqref{evl-torsion} is different due to a different choice of orientation of $\psi$, which also leads to a different sign for the torsion tensor $T$.}
\begin{equation}\label{evl-torsion}
  \frac{\pt}{\pt t}T_{ij}=T_i^{\,\,k}h_{kj}-\nabla_mh_{in}\varphi_j^{\,\,mn}.
\end{equation}
Substituting \eqref{hodge-Lap-varp-3} into \eqref{evl-torsion}, we obtain
\begin{align}\label{evl-T-1}
  \frac{\pt}{\pt t}&T_{ij}=-\nabla_mh_{in}\varphi_j^{\,\,mn}+T_i^{\,\,k}h_{kj}\nonumber\displaybreak[0]\\
  &=-\nabla_m\left(-\nabla_pT_{qi}\varphi_n^{\,\,pq}-\frac 13|T|^2g_{in}-T_i^{\,\,k}T_{kn}\right)\varphi_j^{\,\,mn}\nonumber\\
  & +T_i^{\,\,k}\left(-\nabla_pT_{qk}\varphi_j^{\,\,pq}-\frac 13|T|^2g_{kj}-T_k^{\,\,m}T_{mj}\right)\nonumber\\
  &=\nabla_m\nabla_pT_{qi}\varphi_n^{\,\,pq}\varphi_j^{\,\,mn}+\nabla_pT_{qi}\nabla_m\varphi_n^{\,\,pq}\varphi_j^{\,\,mn}-\frac 13\nabla_m|T|^2\varphi_{ji}^{\,\,\,\,\, m}\nonumber\\
  &+\nabla_m(T_i^{\,\,k}T_{kn})\varphi_j^{\,\,mn}-T_i^{\,\,k}\nabla_pT_{qk}\varphi_j^{\,\,pq}-\frac 13|T|^2T_{ij}-T_i^{\,\,k}T_k^{\,\,m}T_{mj}.
\end{align}
Using the contraction identity \eqref{contr-iden-3} and Ricci identity \eqref{Ricci-identity}, the first term on the right hand side of \eqref{evl-T-1} is equal to
\begin{align}\label{evl-T-2}
  \nabla_m\nabla_pT_{qi}&\varphi_n^{\,\,\,pq}\varphi_j^{\,\,mn}\nonumber\\
&= \nabla_m\nabla_pT_{qi}(\delta^p_jg^{qm}-\delta^q_{j}g^{pm}+\psi_{j}^{\,\,pqm}) \nonumber\\
  &=  \nabla^m\nabla_jT_{mi}-\nabla^m\nabla_m T_{ji}+\nabla_m\nabla_pT_{qi}\psi_j^{\,\,pqm}\nonumber\displaybreak[0]\\
  &=\Delta T_{ij}+\nabla_j\nabla^mT_{mi}-R_j^{\,\,k}T_{ki}+R_{mjik}T^{mk}\nonumber\\
&\quad+\frac 12(\nabla_m\nabla_pT_{qi}-\nabla_p\nabla_mT_{qi})\psi_j^{\,\,pqm}\nonumber\displaybreak[0]\\
  &=\Delta T_{ij}-R_j^{\,\,k}T_{ki}+R_{mjik}T^{mk}+\frac 12(R_{mpi}^{\quad\,\,\, k}T_{qk}+R_{mpq}^{\quad\,\,\,k}T_{ki})\psi_j^{\,\,pqm}\nonumber\\
  &=\Delta T_{ij}-R_j^{\,\,k}T_{ki}+\frac 12(R_{mjik}-R_{kjim})T^{mk}+\frac 12R_{mpi}^{\quad\,\,\, k}T_{qk}\psi_j^{\,\,pqm}\nonumber\\
&\quad +\frac 16(R_{mpq}^{\quad\,\,\,k}+R_{pqm}^{\quad\,\,\,k}+R_{qmp}^{\quad\,\,\,k})T_{ki}\psi_j^{\,\,pqm}\nonumber\\
  &=\Delta T_{ij}-R_j^{\,\,k}T_{ki}+\frac 12R_{ijmk}T^{mk}+\frac 12R_{mpi}^{\quad\,\,\, k}T_{qk}\psi_j^{\,\,pqm}
\end{align}
where we used $\nabla^mT_{mi}=0$ in the fourth equality and the Bianchi identity \eqref{Bianchi-Rm} in the last equality. Using the contraction identity \eqref{contr-iden-4} and \eqref{nabla-var}, we can calculate the second term on the right hand side of \eqref{evl-T-1} as follows:
\begin{align}\label{evl-T-3}
  \nabla_pT_{qi}\nabla_m\varphi_n^{\,\,\,pq}\varphi_j^{\,\,mn}&=\nabla_pT_{qi}T_m^{\,\,\,k}\psi_{kn}^{\quad\!\! pq}\varphi_j^{\,\,mn}  \nonumber\\
   &= \nabla_pT_{qi}T_m^{\,\,k}(\delta^m_{k}\varphi_j^{\,\,\,pq}-g_{jk}\varphi^{mpq}+g^{mp}\varphi_{kj}^{\,\,\,\,\,\,q}\nonumber\\
   &\qquad\qquad\qquad -\delta_{j}^{p}\varphi_k^{\,\,\,mq}-g^{mq}\varphi_{kj}^{\,\,\,\,\,\,p}-\delta_{j}^{q}\varphi_k^{\,\,\,pm})\nonumber\\
   &=-\nabla_pT_{qi}(T_{mj}\varphi^{mpq}-T^{pk}\varphi_{kj}^{\,\,\,\,\,\,q}+T^{qk}\varphi_{kj}^{\,\,\,\,\,\,p}),
\end{align}
where in the last equality we used $T_m^{\,\,k}\delta_{k}^m=0$ and $T_m^{\,\,k}\varphi_k^{\,\,\,mq}=-\frac 12\tau_m^{\,\,k}\varphi_k^{\,\,\,mq}=0$ since $\tau\in \Omega^2_{14}(M)$. Then substituting \eqref{evl-T-2}--\eqref{evl-T-3} into \eqref{evl-T-1}, we obtain
\begin{align*}
 \frac{\pt}{\pt t}T_{ij} =&\Delta T_{ij}-R_j^{\,\,k}T_{ki}+\frac 12R_{ijmk}T^{mk}+\frac 12R_{mpi}^{\quad\,\,\, k}T_{qk}\psi_j^{\,\,pqm}\\
  &-\nabla_pT_{qi}(T_{mj}\varphi^{mpq}-T^{pk}\varphi_{kj}^{\,\,\,\,\,\,q}+T^{qk}\varphi_{kj}^{\,\,\,\,\,\,p})-\frac 13\nabla_m|T|^2\varphi_{ji}^{\,\,\,\,\, m}\\
  &+\nabla_m(T_i^{\,\,k}T_{kn})\varphi_j^{\,\,mn}-T_i^{\,\,k}\nabla_pT_{qk}\varphi_j^{\,\,pq}-\frac 13|T|^2T_{ij}-T_i^{\,\,k}T_k^{\,\,m}T_{mj}.
 \end{align*}
We can further simplify the above equations by noting that
\begin{align*}
   -\nabla_pT_{qi}(&T_{mj}\varphi^{mpq}\!-\!T^{pk}\varphi_{kj}^{\,\,\,\,\,\,q}\!+\!T^{qk}\varphi_{kj}^{\,\,\,\,\,\,p})+\nabla_m(T_i^{\,\,k}T_{kn})\varphi_j^{\,\,mn}-T_i^{\,\,k}\nabla_pT_{qk}\varphi_j^{\,\,pq}\\
  &=-\nabla_pT_{qi}(T_{mj}\varphi^{mpq}-T^{pk}\varphi_{kj}^{\,\,\,\,\,\,q}+2T^{qk}\varphi_{kj}^{\,\,\,\,\,\,p})-2T_i^{\,\,k}\nabla_pT_{qk}\varphi_j^{\,\,pq}\\
  &=\nabla_pT_{qi}(T^{pk}\varphi_{kj}^{\,\,\,\,\,\,q}-2T^{qk}\varphi_{kj}^{\,\,\,\,\,\,p})-R_i^{\,\,k}T_{kj}+2R_j^{\,\,k}T_{ki}-3T_i^{\,\,k}T_k^{\,\,l}T_{lj},
\end{align*}
where we used the expression of Ricci tensor in \eqref{Ricc-prop-closed}. Therefore, we have
\begin{align}
 \frac{\pt}{\pt t}T_{ij} &=\Delta T_{ij}+R_j^{\,\,k}T_{ki}-R_i^{\,\,k}T_{kj}+\frac 12R_{ijmk}T^{mk}+\frac 12R_{mpi}^{\quad\,\,\, k}T_{qk}\psi_j^{\,\,pqm}\nonumber\\
  &\quad+\nabla_pT_{qi}(T^{pk}\varphi_{kj}^{\,\,\,\,\,\,q}-2T^{qk}\varphi_{kj}^{\,\,\,\,\,\,p})-\frac 13\nabla_m|T|^2\varphi_{ji}^{\,\,\,\,\, m}\nonumber\\
  &\quad-\frac 13|T|^2T_{ij}-4T_i^{\,\,k}T_k^{\,\,m}T_{mj}.\nonumber
 \end{align}
The above evolution equation of the torsion tensor can be expressed schematically as
\begin{equation}\label{evl-torsion-1}
  \frac{\pt}{\pt t}T=\Delta T+Rm*T+Rm*T*\psi+\nabla T*T*\varphi+T*T*T,
\end{equation}
where $*$ indicates a contraction using the metric $g(t)$  determined by $\varphi(t)$.  

\subsection{Evolution of curvature}

To calculate the evolution of the Riemann curvature tensor we will use well-known general evolution equations. Recall that for any smooth one-parameter family of metrics $g(t)$ on a manifold evolving by
\begin{equation}\label{flow-g-any}
  \frac{\pt}{\pt t}g(t)=\eta(t),
\end{equation}
for some time-dependent symmetric $2$-tensor $\eta(t)$, the Riemann curvature tensor, Ricci tensor and  scalar curvature evolve by (see e.g.~\cite[Lemma 6.5]{Chow-Knopf})
\begin{align}
  \frac{\pt}{\pt t} R_{ijk}^{\quad l} =& \frac 12g^{lp}\left(\nabla_i\nabla_k\eta_{jp}+\nabla_j\nabla_p\eta_{ik}-\nabla_i\nabla_p\eta_{jk}-\nabla_j\nabla_k\eta_{ip} \right.\nonumber\\&\quad\qquad
-R_{ijk}^{\quad q}\eta_{qp}-R_{ijp}^{\quad q}\eta_{kq}),\label{evl-Rm-any}\\
\label{evl-Ric-any}
  \frac{\pt}{\pt t} R_{ik} =& -\frac 12\left(\Delta_L\eta_{ik}+\nabla_i\nabla_k(\textrm{tr}_g\eta)+\nabla_i(\delta \eta)_k+\nabla_k(\delta\eta)_i\right),\\
\label{evl-scalar-any}
  \frac{\pt}{\pt t} R =& -\Delta \textrm{tr}_g(\eta)+\Div (\Div \eta)-\langle \eta,\textrm{Ric}\rangle,
\end{align}
where $\Delta_L$ denotes the Lichnerowicz Laplacian
\begin{equation*}
  \Delta_L\eta_{ik}:=\Delta \eta_{ik}-R_i^{\,p}\eta_{pk}-R_{k}^{\,p}\eta_{ip}+2R_{pikl}\eta^{lp}
\end{equation*}
and $(\delta\eta)_k=-(\Div \eta)_k=-\nabla^i\eta_{ik}$. Substituting \eqref{flow-g2} into \eqref{evl-Rm-any}, we have
\begin{align*}
  \frac{\pt}{\pt t} R_{ijk}^{\quad l}&= -\nabla_i\nabla_kR_j^l-\nabla_j\nabla^lR_{ik}+\nabla_i\nabla^lR_{jk}+\nabla_j\nabla_kR_{i}^l \\
   & +(R_{ijk}^{\quad q}R_{qp}+R_{ijp}^{\quad q}R_{kq})g^{lp}+2g^{lp}(R_{ijk}^{\quad q}T_{q}^mT_{mp}+R_{ijp}^{\quad q}T_{k}^mT_{mq})\\
   &-\frac 13 g^{lp}(\nabla_i\nabla_k|T|^2g_{jp}+\nabla_j\nabla_p|T|^2g_{ik}-\nabla_i\nabla_p|T|^2g_{jk}-\nabla_j\nabla_k|T|^2g_{ip})\\
   &-2g^{lp}(\nabla_i\nabla_k(T_j^mT_{mp})+\nabla_j\nabla_p(T_i^mT_{mk})\\
&\qquad\qquad\qquad\qquad-\nabla_i\nabla_p(T_j^mT_{mk})-\nabla_j\nabla_k(T_i^mT_{mp})).
\end{align*}
The first six terms in the evolution equation come from the $-2Ric$ term in \eqref{flow-g2}. Then, as in Ricci flow, by applying Bianchi identities and commuting covariant derivatives,  we can obtain
\begin{align*}
  \frac{\pt}{\pt t} R_{ijk}^{\quad l}&=\Delta R_{ijk}^{\quad l}+g^{pq}(R_{ijp}^{\quad r}R_{rqk}^{\quad l}-2R_{pik}^{\quad r}R_{jqr}^{\quad l}+2R_{pir}^{\quad l}R_{jqk}^{\quad r})\\
&-g^{pq}(R_{ip}R_{qjk}^{\quad l}+R_{jp}R_{iqk}^{\quad l})-g^{pq}(R_{kq}R_{ijp}^{\quad l}-R_p^lR_{ijkq})\\
&+2g^{lp}(R_{ijk}^{\quad q}T_{q}^mT_{mp}+R_{ijp}^{\quad q}T_{k}^mT_{mq})\\
   &-\frac 13 g^{lp}(\nabla_i\nabla_k|T|^2g_{jp}+\nabla_j\nabla_p|T|^2g_{ik}-\nabla_i\nabla_p|T|^2g_{jk}-\nabla_j\nabla_k|T|^2g_{ip})\\
   &-2g^{lp}\big(\nabla_i\nabla_k(T_j^mT_{mp})+\nabla_j\nabla_p(T_i^mT_{mk})\\
&\qquad\qquad\qquad\qquad-\nabla_i\nabla_p(T_j^mT_{mk})-\nabla_j\nabla_k(T_i^mT_{mp})\big).
 \end{align*}
We write the above equation schematically as in \eqref{evl-torsion-1}:
\begin{equation}\label{Rm}
  \frac{\pt}{\pt t} Rm=\Delta Rm+Rm*Rm+Rm*T*T+\nabla^2T*T+\nabla T*\nabla T.
\end{equation}
Then from \eqref{flow-g^-1} and \eqref{Rm}, noting that $|T|^2=-R\leq C|Rm|$ for some universal constant $C$, we have
\begin{align}\label{evl-Rm^2}
  \frac{\pt}{\pt t}|Rm|^2&= \frac{\pt}{\pt t}(R_{ijkl}R_{abcd}g^{ia}g^{jb}g^{kc}g^{ld}) \nonumber\\
  &= Rm*Rm*(Ric+T*T)+2\langle Rm, \frac{\pt}{\pt t} Rm\rangle\nonumber\\
  &\leq \Delta |Rm|^2-2|\nabla Rm|^2+C|Rm|^3+C|Rm|^{\frac 32}|\nabla^2T|\nonumber\\
  &\quad +C|Rm||\nabla T|^2
\end{align}
Similarly, substituting \eqref{flow-g2} into \eqref{evl-Ric-any} and \eqref{evl-scalar-any}, we obtain the evolution equation of the Ricci tensor
\begin{align}\label{evl-Ric-1}
  \frac{\pt}{\pt t} R_{ik} &= \Delta_L(R_{ik}+\frac 13|T|^2g_{ik}+2T_i^{\,\,l}T_{lk})-\frac 23\nabla_i\nabla_k|T|^2\nonumber\\
  &\quad -2(\nabla_i\nabla^j(T_j^{\,\,l}T_{lk})+\nabla_k\nabla^j(T_j^{\,\,l}T_{li})),
\end{align}
and the evolution equation of the scalar curvature
\begin{align}\label{evl-scalar-1}
  \frac{\pt}{\pt t} R =& \Delta R-4\nabla^k\nabla^j(T_j^{\,\,l}T_{lk})+2|Ric|^2 -\frac 23R^2+4R^{ik}T_i^{\,\,l}T_{lk}.
\end{align}

\begin{rem} We shall only require the schematic evolution equations \eqref{evl-torsion-1} and \eqref{Rm} for $T$ and $Rm$ to derive our Shi-type estimates.  To obtain these equations we used the fact that $\varphi$ remains closed under the evolution, which is a particular property of the Laplacian flow.  If one is able to obtain the same schematic evolution equations for $T$ and $Rm$ for another flow of $\GG_2$ structures, then the methods of this article will apply more generally to give Shi-type estimates for that flow.
\end{rem}

\section{Derivative estimates of curvature and torsion}\label{sec:shi}

In this section, we use the evolution equations derived in \S \ref{sec:evlution} to obtain global derivative estimates for the curvature tensor $Rm$ and torsion tensor $T$.  Throughout, we use $*$  to denote some contraction between tensors and often use the same symbol $C$ for a finite number of constants for convenience.

First, we show a doubling-time estimate for $\Lambda(t)$ defined in \eqref{Lambda-t-def-2}, which roughly says that $\Lambda(t)$ behaves well and cannot blow up quickly.

\begin{prop}[Doubling-time estimate]\label{prop-Rm-T^2}
Let $\varphi(t)$ be a solution to the Laplacian flow \eqref{Lap-flow-def} on a compact 7-manifold
for $t\in [0,\epsilon]$. There exists a constant $C$ such that $\Lambda(t)\leq 2\Lambda(0)$ for all
$t$ satisfying $0\leq t\leq \min\{\epsilon,\frac 1{C\Lambda(0)}\}$.
\end{prop}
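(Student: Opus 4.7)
The plan is to derive the pointwise differential inequality
\begin{equation*}
\frac{\partial}{\partial t}\bigl(|Rm|^2 + |\nabla T|^2\bigr) \leq \Delta\bigl(|Rm|^2 + |\nabla T|^2\bigr) + C\bigl(|Rm|^2 + |\nabla T|^2\bigr)^{3/2},
\end{equation*}
then apply the maximum principle and ODE comparison. Inequality \eqref{evl-Rm^2} already supplies the $|Rm|^2$ half, whose only ``bad'' term is $C|Rm|^{3/2}|\nabla^2 T|$, since $|\nabla^2 T|$ is not controlled by $\Lambda$ alone.

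The first step is the analogous computation for $|\nabla T|^2$. Starting from the schematic evolution \eqref{evl-torsion-1} for $T$, together with $\partial_t g = 2h$ (so that $\partial_t \Gamma$ is schematically $\nabla Rm + T*\nabla T + T*T*T$), and commuting $\partial_t$ with $\nabla$, one obtains
\begin{equation*}
\frac{\partial}{\partial t}\nabla T = \Delta \nabla T + \nabla Rm * T + Rm * \nabla T + (\text{terms of order} \leq 2 \text{ in } \varphi),
\end{equation*}
and hence, pairing with $\nabla T$,
\begin{equation*}
\frac{\partial}{\partial t}|\nabla T|^2 \leq \Delta|\nabla T|^2 - 2|\nabla^2 T|^2 + C|\nabla Rm||T||\nabla T| + C|Rm||\nabla T|^2 + (\text{lower order}).
\end{equation*}
The bad term is now $C|\nabla Rm||T||\nabla T|$.

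The key observation is that the two bad terms absorb into the diffusive terms $-2|\nabla Rm|^2$ and $-2|\nabla^2 T|^2$ after summing, via Cauchy--Schwarz:
\begin{align*}
C|Rm|^{3/2}|\nabla^2 T| &\leq |\nabla^2 T|^2 + C'|Rm|^3,\\
C|\nabla Rm||T||\nabla T| &\leq |\nabla Rm|^2 + C'|T|^2|\nabla T|^2.
\end{align*}
The stray factor $|T|^2$ is dispatched using Corollary \ref{scalar-cor}: $|T|^2 = -R \leq C|Rm| \leq C\Lambda$. Every remaining cubic-in-derivatives term is then bounded by $C\Lambda^3$, which gives the desired inequality.

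Finally, applying the maximum principle to $\Lambda^2 = |Rm|^2 + |\nabla T|^2$ (and taking square roots) yields $\frac{d}{dt}\Lambda(t) \leq C\Lambda(t)^2$, so ODE comparison produces $\Lambda(t) \leq \Lambda(0)/(1 - Ct\Lambda(0))$, whence $\Lambda(t) \leq 2\Lambda(0)$ as long as $t \leq 1/(2C\Lambda(0))$. The main obstacle I anticipate is the careful bookkeeping in deriving the $|\nabla T|^2$ evolution: because $\partial_t g$ itself involves $Rm$ and $T*T$, commuting $\partial_t$ with $\nabla$ produces extra $\nabla Rm * T$ and $\nabla T * T * T$ contributions that must be tracked to verify that no bad term worse than the single $|\nabla Rm||T||\nabla T|$ above arises from the higher-order pieces of \eqref{evl-torsion-1}, and in particular that the $Rm*T*\psi$ and $\nabla T*T*\varphi$ terms contribute only quantities absorbable by a combination of $|\nabla^2 T|^2$, $|\nabla Rm|^2$, and $C\Lambda^3$.
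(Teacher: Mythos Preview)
Your proposal is correct and follows essentially the same route as the paper: derive the evolution of $|\nabla T|^2$ (the paper's \eqref{evl-nabla-T1}--\eqref{evl-nabla_T^2}), add it to \eqref{evl-Rm^2}, use Young's inequality to absorb the bad gradient terms into $-2|\nabla Rm|^2 - 2|\nabla^2 T|^2$, and then apply the maximum principle plus ODE comparison. The detailed bookkeeping you flag as the main obstacle is exactly what the paper carries out in \eqref{evl-nabla_T^2}; besides your two bad terms there is one more, $C|Rm|^{1/2}|\nabla^2 T||\nabla T|$ (coming from the $\nabla^2 T*T*\varphi$ piece of \eqref{evl-nabla-T1}), but it absorbs into $|\nabla^2 T|^2 + C\Lambda^3$ just as you anticipated.
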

\proof
We will calculate a  differential inequality for $\Lambda(x,t)$ given in \eqref{Lambda-t-def},
$$\Lambda(x,t)=\left(|\nabla T(x,t)|_{g(t)}^2+|Rm(x,t)|_{g(t)}^2\right)^{\frac 12}$$
and thus for $\Lambda(t)=\sup_{x\in M}\Lambda(x,t)$.  Since we already have an evolution equation for $|Rm|^2$ in \eqref{evl-Rm^2}, it suffices to compute
the evolution of $|\nabla T|^2$.

Recall that for any smooth family of metrics $g(t)$ evolving by \eqref{flow-g-any}, the Christoffel symbols of the Levi-Civita connection of $g(t)$ evolve by
\begin{equation*}
  \frac{\pt}{\pt t}\Gamma_{ij}^k=\frac 12g^{kl}(\nabla_i\eta_{jl}+\nabla_j\eta_{il}-\nabla_l\eta_{ij}).
\end{equation*}
Thus, for any time-dependent tensor $A(t)$, we have the commutation formula (see \cite[\S 2.3]{topping2006})
\begin{equation}\label{commut-tesor}
  \frac{\pt}{\pt t}\nabla A-\nabla \frac{\pt}{\pt t}A=A*\nabla\frac{\pt}{\pt t}g.
\end{equation}
The fact that the metric $g$ is parallel gives that for any two tensors $A,B$,
\begin{equation*}
  \nabla(A*B)=\nabla A*B+A*\nabla B.
\end{equation*}
Then using \eqref{flow-g2}, \eqref{evl-torsion-1}  and \eqref{commut-tesor}, we see that
\begin{align}\label{evl-nabla-T1}
  \frac{\pt}{\pt t}\nabla T &= \nabla  \frac{\pt}{\pt t} T+T*\nabla \frac{\pt}{\pt t}g \nonumber\\
  &=  \nabla \Delta T+\nabla Rm*(T+T*\psi)+\nabla T*(Rm+Rm*\psi)\nonumber\\
  &\quad+Rm*T*\nabla\psi+\nabla^2T*T*\varphi+\nabla T*\nabla T*\varphi\nonumber\\
  &\quad+\nabla T*T*\nabla\varphi+\nabla T*T*T\nonumber\\
  &=  \Delta\nabla T+\nabla Rm*(T+T*\psi)+\nabla T*(Rm+Rm*\psi) \nonumber\\
  &\quad+Rm*T*T*\varphi+\nabla^2T*T*\varphi+\nabla T*\nabla T*\varphi \nonumber\\
  &\quad+\nabla T*T*T*\psi+\nabla T*T*T,
\end{align}
where in the last equality we used \eqref{nabla-var} and \eqref{nabla-psi} in the form
\begin{equation*}
  \nabla\varphi=T*\psi,\quad \nabla\psi=T*\varphi,
\end{equation*}
and we commuted covariant derivatives using the Ricci identity, i.e.
\begin{equation*}
\nabla \Delta T=\Delta\nabla T+Rm*\nabla T+\nabla Rm*T.
\end{equation*}
 Then we can calculate the evolution  of the squared norm of $\nabla T$:
\begin{align}\label{evl-nabla_T^2}
  \frac{\pt}{\pt t}|\nabla T|^2 &= 2\langle\nabla T,\frac{\pt}{\pt t}\nabla T\rangle+\nabla T*\nabla T*\frac{\pt}{\pt t} g \nonumber\\
   &\leq \Delta|\nabla T|^2-2|\nabla^2T|^2+C|Rm||\nabla T|^2+C|\nabla Rm||T||\nabla T|\nonumber\\
   &\quad+C|Rm||T|^2|\nabla T|+C|\nabla^2T||\nabla T||T|\nonumber\\
   &\quad+C|\nabla T|^3+C|\nabla T|^2|T|^2 \nonumber\displaybreak[0]\\
   &\leq \Delta|\nabla T|^2-2|\nabla^2T|^2+C|Rm||\nabla T|^2+C|\nabla Rm||Rm|^{\frac 12}|\nabla T|\nonumber\\
   &\quad+C|Rm|^2|\nabla T|+C|Rm|^{\frac 12}|\nabla^2T||\nabla T|+C|\nabla T|^3,
\end{align}
where we used $|T|^2=-R\leq C|Rm|$ for a constant $C$ in the last inequality.

Now, using \eqref{evl-Rm^2} and \eqref{evl-nabla_T^2}, we obtain
\begin{align}
  \frac{\pt}{\pt t}\Lambda(x,t)^2&\leq \Delta (|Rm|^2+|\nabla T|^2)-2|\nabla Rm|^2-2|\nabla^2T|^2+C|Rm|^3\nonumber\\
  &\quad+C|Rm|^{\frac 32}|\nabla^2T|+C|Rm||\nabla T|^2+C|\nabla Rm||Rm|^{\frac 12}|\nabla T|\nonumber\\
  &\quad+C|Rm|^2|\nabla T|+C|Rm|^{\frac 12}|\nabla^2T||\nabla T|+C|\nabla T|^3\label{evl-Rm-T-1}.
\end{align}  
By Young's inequality, namely $ab\leq \frac{1}{2\epsilon}a^2+\frac{\epsilon}{2}b^2$ for any $\epsilon>0$ and $a,b\geq 0$, for all $\epsilon>0$ we have 
\begin{align}
|Rm|^{\frac{3}{2}}|\nabla^2T|&\leq \frac{1}{2\epsilon}|Rm|^3+\frac{\epsilon}{2}|\nabla^2T|^2,\label{evl-Rm-T-2}\\
|\nabla Rm||Rm|^{\frac{1}{2}}|\nabla T|&\leq \frac{1}{2\epsilon}|Rm||\nabla T|^2 + \frac{\epsilon}{2}|\nabla Rm|^2,\label{evl-Rm-T-3}\\
|Rm|^{\frac{1}{2}}||\nabla^2T||\nabla T|&\leq \frac{1}{2\epsilon}|Rm||\nabla T|^2+\frac{\epsilon}{2}|\nabla^2T|^2.\label{evl-Rm-T-4}
\end{align}
The terms $|Rm|^3$, $|Rm||\nabla T|^2$ and $|\nabla T|^3$ can all be bounded above by $\Lambda^3=(|Rm|^2+|\nabla T|^2)^\frac{3}{2}$ up to a multiplicative constant.  Using this bound and substituting \eqref{evl-Rm-T-2}--\eqref{evl-Rm-T-4} into \eqref{evl-Rm-T-1} we obtain 
\begin{align*}
  \frac{\pt}{\pt t}\Lambda(x,t)^2&\leq \Delta \Lambda(x,t)^2+(C\epsilon-2)(|\nabla Rm|^2+|\nabla^2T|^2)+\frac{C}{\epsilon}\Lambda(x,t)^3
\end{align*}
for any $\epsilon>0$.  Choosing $\epsilon$ so  $C\epsilon\leq 1$ then yields
\begin{align} 
\frac{\pt}{\pt t}\Lambda(x,t)^2  &\leq \Delta \Lambda(x,t)^2- (|\nabla Rm|^2+|\nabla^2T|^2)+C\Lambda(x,t)^{3}.\label{evl-Rm-T}
\end{align}
The idea behind the calculations leading to \eqref{evl-Rm-T} is that the  negative gradient terms appearing in the evolution equations of $|\nabla T|^2$ and $|Rm|^2$ allow us to kill the remaining bad terms to leave us with an effective differential inequality. This is precisely the motivation for the definition $\Lambda(x,t)$ in \eqref{Lambda-t-def} as a combination of $|\nabla T|$ and $|Rm|$.

Recall that $\Lambda(t)=\sup_M\Lambda(x,t)$, which is a Lipschitz function of time $t$. Applying the maximum principle to \eqref{evl-Rm-T}, we deduce that
\begin{align*}
  \frac{d}{d t}\Lambda(t)&\leq \frac C2\Lambda(t)^{2},
\end{align*}
in the sense of $\limsup$ of forward difference quotients. We conclude that
\begin{equation}\label{doble-time-pf1}
  \Lambda(t)\leq \frac {\Lambda(0)}{1-\frac 12C\Lambda(0)t}
\end{equation}
as long as $t\leq \min\{\epsilon,\frac 2{C\Lambda(0)}\}$, so $\Lambda(t)\leq 2\Lambda(0)$ if $t\leq \min\{\epsilon,\frac 1{C\Lambda(0)}\}$.
\endproof

We now derive Shi-type derivative estimates for the curvature tensor $Rm$ and torsion tensor $T$ along the Laplacian flow,
 using $\Lambda(x,t)$ given in \eqref{Lambda-t-def}.
\begin{thm}\label{thm-shi}
Suppose that $K>0$ and $\varphi(t)$ is a solution to the Laplacian flow \eqref{Lap-flow-def} for closed $\GG_2$ structures
on a compact  manifold $M^7$ with $t\in [0,\frac 1K]$. For all $k\in\mathbb{N}$, there exists a constant $C_k$ such that if $\Lambda(x,t)\leq K$ on
 $M^7\times [0,\frac 1K]$, then for all $t\in [0,\frac 1K]$ we have
\begin{equation}\label{shi-1}
  |\nabla^kRm|+|\nabla^{k+1}T|\leq {C_k}{t^{-\frac k2}}K.
\end{equation}
\end{thm}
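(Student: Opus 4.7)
The plan is to prove \eqref{shi-1} by induction on $k$, adapting the Bernstein--Shi method used for Ricci flow. The base case $k=0$ follows immediately from $\Lambda \leq K$, which gives $|Rm|+|\nabla T|\leq \sqrt{2}\,K$. For the inductive step, I would assume
\[
 |\nabla^{j}Rm|(x,t)+|\nabla^{j+1}T|(x,t)\leq C_j\, t^{-j/2}K \quad \text{for all } 0\leq j\leq k-1,\ t\in (0,1/K],
\]
and deduce the estimate at index $k$ using the schematic evolution equations \eqref{evl-torsion-1} and \eqref{Rm}, the metric evolution \eqref{flow-g2}, and the identities $\nabla\varphi=T*\psi$, $\nabla\psi=T*\varphi$.

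The first step is to derive schematic evolution equations for $\nabla^k Rm$ and $\nabla^{k+1}T$. Iteratively commuting $\nabla$ with $\pt_t$ via \eqref{commut-tesor}, commuting $\nabla$ with $\Delta$ via the Ricci identity \eqref{Ricci-identity}, and distributing derivatives across the product terms in \eqref{evl-torsion-1} and \eqref{Rm} via the Leibniz rule, one obtains, schematically,
\begin{align*}
\pt_t \nabla^k Rm &= \Delta \nabla^k Rm + \sum_{i+j=k}\nabla^i Rm * \nabla^j Rm + \sum_{i+j+l\leq k+2} \nabla^i Rm * \nabla^j T * \nabla^l T,\\
\pt_t \nabla^{k+1}T &= \Delta \nabla^{k+1}T + \sum_{i+j=k+1}\nabla^i Rm * \nabla^j T + \sum_{i+j+l=k+1}\nabla^i T * \nabla^j T * \nabla^l T,
\end{align*}
where $*$ denotes a metric contraction possibly involving a factor of $\varphi$ or $\psi$ (which are pointwise bounded). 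Pairing with $\nabla^k Rm$ and $\nabla^{k+1}T$ respectively then gives
\begin{align*}
\pt_t |\nabla^k Rm|^2 &\leq \Delta |\nabla^k Rm|^2 - 2|\nabla^{k+1}Rm|^2 + (\text{bad terms}),\\
\pt_t |\nabla^{k+1}T|^2 &\leq \Delta |\nabla^{k+1}T|^2 - 2|\nabla^{k+2}T|^2 + (\text{bad terms}).
\end{align*}

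Writing $\Phi_k=|\nabla^k Rm|^2+|\nabla^{k+1}T|^2$, applying Young's inequality and the inductive hypothesis, the bad terms split into three types: (i) top-order cross-terms of the form $|T|\,|\nabla^k Rm|\,|\nabla^{k+2}T|$ and $|T|\,|\nabla^{k+1}T|\,|\nabla^{k+1}Rm|$, which are absorbed into the good gradient terms $-2(|\nabla^{k+1}Rm|^2+|\nabla^{k+2}T|^2)$ using $|T|^2\leq CK$ from Corollary \ref{scalar-cor}; (ii) interior terms bounded by $C K\,\Phi_k$; and (iii) lower-order error terms bounded by $C_k K^2 t^{-k}$ via the inductive hypothesis. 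Following Bernstein's trick, I would introduce
\[
 F(x,t) = t^{k}\,\Phi_k(x,t) + B_k\, t^{k-1}\,\Phi_{k-1}(x,t),
\]
with $B_k$ chosen large in terms of $C_{k-1}$ and $K$. The negative gradient term $-t^{k-1}\Phi_k$ produced by $\pt_t(t^{k-1}\Phi_{k-1})$ (from the good gradient term of the $(k-1)$-level inequality), when multiplied by $B_k$, absorbs every remaining contribution of the form $t^k K\,\Phi_k$ on $t\leq 1/K$. The resulting inequality reads $\pt_t F \leq \Delta F + C_k' K^2$, and since $M$ is compact the maximum principle gives $F\leq C_k'' K^2$ on $M\times[0,1/K]$, whence $t^k\Phi_k\leq C_k'' K^2$ and \eqref{shi-1} follows after taking square roots.

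The main obstacle is the careful bookkeeping of the bad terms: for each schematic product $\nabla^i Rm * \nabla^j T * \nabla^l T$ appearing in the evolution equations, one must verify that, after pairing with the appropriate weight $t^k$, every factor of order exceeding $k$ in $Rm$ or $k+1$ in $T$ is absorbed by a good gradient term, every term of order exactly $k$ and $k+1$ respectively is absorbed by $-B_k t^{k-1}\Phi_k$ (for $t\leq 1/K$ and $B_k$ sufficiently large), and every strictly lower-order term is controlled by the inductive hypothesis. The pairing of $\nabla^k Rm$ with $\nabla^{k+1}T$ in $\Phi_k$ is precisely what makes the parities match at every step, exactly as in the doubling-time calculation for $\Lambda$ in Proposition \ref{prop-Rm-T^2}.
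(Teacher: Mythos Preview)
Your overall strategy is correct and closely parallels the paper's: induction on $k$, schematic evolution equations for $\Phi_j:=|\nabla^j Rm|^2+|\nabla^{j+1}T|^2$, and a time-weighted Bernstein function combined with the maximum principle. For $k=1$ your function $F=t\Phi_1+B_1\Phi_0$ coincides with the paper's and works. The gap is in your inductive step for $k\geq 2$: your two-level function
\[
F = t^{k}\Phi_k + B_k\,t^{k-1}\Phi_{k-1}
\]
does \emph{not} satisfy a bounded differential inequality. Differentiating the weight $t^{k-1}$ produces the term $B_k(k-1)t^{k-2}\Phi_{k-1}$, and the inductive hypothesis only gives $\Phi_{k-1}\leq C_{k-1}^{2}K^{2}t^{-(k-1)}$, so this contribution is merely bounded by a constant times $K^{2}t^{-1}$, which is not integrable down to $t=0$. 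There is nothing in your $F$ to absorb it (the good gradient term from $\partial_t\Phi_{k-1}$ is $-\Phi_k$, not $-\Phi_{k-1}$), so the claimed inequality $\partial_t F\leq \Delta F + C_k'K^{2}$ fails.

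The paper resolves this by taking the full cascading sum
\[
f_k = t^{k}\Phi_k + \beta_k\sum_{i=1}^{k}\alpha_i^{k}\,t^{k-i}\Phi_{k-i},\qquad \alpha_i^{k}=\frac{(k-1)!}{(k-i)!},
\]
with coefficients chosen so that $\alpha_i^{k}(k-i)=\alpha_{i+1}^{k}$. Then the offending term $\alpha_i^{k}(k-i)t^{k-i-1}\Phi_{k-i}$ (from differentiating $t^{k-i}$) is exactly cancelled by the good gradient term $-\alpha_{i+1}^{k}t^{k-i-1}\Phi_{k-i}$ coming from $\partial_t\Phi_{k-i-1}$ at the next level down; this telescopes to $i=k$, where the weight $t^{0}$ contributes nothing. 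One then gets $\partial_t f_k\leq \Delta f_k + CK^{3}$ (note $K^{3}$, not $K^{2}$, in the differential inequality; it becomes $K^{2}$ only after integrating over $[0,1/K]$), and the maximum principle finishes the proof. If you want to keep a two-level function, you must instead restart the clock on each dyadic interval $[t/2,t]$ and iterate, but as written your argument needs either that modification or the full telescoping sum.
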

\proof
The proof is by induction on $k$.  The idea is to define a suitable function $f_k(x,t)$ for each $k$, in a similar way to the Ricci flow, which satisfies a parabolic differential inequality amenable to the maximum principle.

For the case
$k=1$,
we define
\begin{equation}\label{shi-f-eq}
  f=t(|\nabla Rm|^2+|\nabla^2T|^2)+\alpha (|\nabla T|^2+|Rm|^2)
\end{equation}
for $\alpha$ to be determined later. To calculate the evolution of $f$, we first need to calculate the evolution of $\nabla Rm$ and $\nabla^2T$. Using \eqref{flow-g2}, \eqref{Rm} and \eqref{commut-tesor},
\begin{align}\label{evl-na-Rm1}
  \frac{\pt}{\pt t}\nabla Rm &= \nabla \frac{\pt}{\pt t}Rm+Rm*\nabla\frac{\pt}{\pt t}g(t) \nonumber\\
  &= \nabla\Delta Rm+Rm*\nabla Rm+\nabla Rm*T*T+Rm*T*\nabla T\nonumber\\
  &\quad+\nabla^3T*T+\nabla^2T*\nabla T +Rm*\nabla(Ric+T*T)\nonumber\\
  &= \Delta\nabla Rm+Rm*\nabla Rm+\nabla Rm*T*T+Rm*T*\nabla T\nonumber\\
  &\quad+\nabla^3T*T+\nabla^2T*\nabla T,
\end{align}
where in the last equality we used the commuting formula
\begin{equation*}
  \nabla\Delta Rm=\Delta\nabla Rm+Rm*\nabla Rm.
\end{equation*}
Then using \eqref{flow-g^-1}, \eqref{evl-na-Rm1} and $|T| \leq C |Rm|^{\frac{1}{2}}$, 
\begin{align}\label{evl-d-Rm^2}
  \frac{\pt}{\pt t}|\nabla Rm|^2 &\leq \Delta|\nabla Rm|^2-2|\nabla^2Rm|^2+C|\nabla Rm|^2|Rm|  \nonumber\\
  &+C|\nabla Rm|\left(|Rm|^{\frac 32}|\nabla T|+|Rm|^{\frac 12}|\nabla^3T|+|\nabla^2T||\nabla T|\right).
\end{align}
Similarly, we can use \eqref{commut-tesor} and \eqref{evl-nabla-T1} to obtain
\begin{align}\label{evl-na^2-T-0}
  \frac{\pt}{\pt t}\nabla^2T &= \Delta\nabla^2T+\nabla^2Rm*(T+T*\psi) \nonumber\\
   &\quad +\nabla Rm*(\nabla T+\nabla T*\psi+T^2*\varphi)\nonumber\\
   &\quad+Rm*(\nabla^2T+\nabla^2 T*\psi+\nabla T*T*\varphi+T^3*\psi)\nonumber\\
   &\quad+\nabla^3T*T*\varphi+\nabla^2T*\nabla T*\varphi+\nabla T*T^3*\varphi \nonumber\\
   &\quad+\nabla^2T*(T^2+T^2*\psi)+\nabla T*\nabla T*(T+T*\psi),
\end{align}
where we use the symbols $T^2$ and $T^3$ here to mean contractions of two or three copies of $T$ respectively, and again use $|T| \leq C |Rm|^{\frac{1}{2}}$ to find
\begin{align}\label{evl-na^2-T}
  &\frac{\pt}{\pt t}|\nabla^2T|^2\leq \Delta|\nabla^2T|^2-2|\nabla^3T|^2+C|\nabla^2Rm||\nabla^2T||Rm|^{\frac 12} \nonumber \\
   & +C|\nabla Rm||\nabla^2T|(|\nabla T|+|Rm|)+C|\nabla^3T||\nabla^2T||Rm|^{\frac 12}
\\
   &+C|\nabla^2T|^2(|Rm|+|\nabla T|)+C|\nabla^2T||Rm|^{\frac 12}(|Rm|^2
   +|Rm||\nabla T|+|\nabla T|^2)\nonumber.
\end{align}
Using Young's inequality, we know that for all $\epsilon>0$ we have
\begin{align*}
2|\nabla Rm||Rm|^{\frac{3}{2}}|\nabla T|&\leq |\nabla Rm||Rm|^{\frac{1}{2}}(|Rm|^2+|\nabla T|^2), \\
2|\nabla Rm||Rm|^{\frac{1}{2}}|\nabla^3 T|&\leq \frac{1}{\epsilon}|\nabla Rm|^2|Rm|+\epsilon |\nabla^3T|^2,  \\
2|\nabla Rm||\nabla^2T|(|\nabla T|+|Rm|)&\leq (|\nabla Rm|^2+|\nabla^2T|^2)(|\nabla T|+|Rm|), \\
2|\nabla^2 Rm||\nabla^2 T||Rm|^{\frac{1}{2}}&\leq \frac{1}{\epsilon}|\nabla^2T|^2|Rm|+\epsilon|\nabla^2Rm|^2, \\
2|\nabla^3T||\nabla^2 T||Rm|^{\frac{1}{2}}&\leq \frac{1}{\epsilon}|\nabla^2T|^2|Rm|+\epsilon|\nabla^3T|^2, \\
2|\nabla^2T||Rm|^{\frac{1}{2}}|Rm||\nabla T|&\leq |\nabla^2 T||Rm|^{\frac{1}{2}}(|Rm|^2+|\nabla T|^2).
\end{align*}
Substituting these bounds into \eqref{evl-d-Rm^2} and \eqref{evl-na^2-T}, for suitably chosen small $\epsilon>0$ as before, then yields
\begin{align}\label{evl-d-Rm-T}
  \frac{\pt}{\pt t}(|\nabla Rm|^2 +|\nabla^2T|^2)&\leq \Delta (|\nabla Rm|^2 +|\nabla^2T|^2)-(|\nabla^2Rm|^2+|\nabla^3T|^2)\nonumber\\
   &+C(|\nabla Rm|^2+|\nabla^2T|^2)(|\nabla T|+|Rm|)
\\
  &+C(|\nabla Rm|+|\nabla^2T|)|Rm|^{\frac 12}(|Rm|^2+|\nabla T|^2).\nonumber
\end{align}

Then, from \eqref{evl-Rm-T} and \eqref{evl-d-Rm-T}, we obtain
\begin{align*}
  \frac{\pt}{\pt t}f &\leq \Delta f
+Ct(|\nabla Rm|^2+|\nabla^2T|^2)(|\nabla T|+|Rm|)\nonumber\\
&\quad+Ct(|\nabla Rm|+|\nabla^2T|)|Rm|^{\frac 12}(|Rm|^2+|\nabla T|^2)\nonumber\\
  &\quad +(1-\alpha)(|\nabla Rm|^2+|\nabla^2T|^2)+C\alpha (|\nabla T|^2+|Rm|^2)^{\frac 32}.\nonumber
\end{align*}
By hypothesis $\Lambda(t)=\sup_{x\in M}\Lambda(x,t)\leq K$ and $tK\leq 1$, so using the above inequality and Young's inequality to combine the middle three terms implies 
\begin{align}
  \frac{\pt}{\pt t}f &\leq \Delta f+(C-\alpha)(|\nabla Rm|^2+|\nabla^2T|^2)+C\alpha K^3.
\end{align}
We can choose $\alpha$ sufficiently large that $  C-\alpha\leq 0$ and thus
\begin{align*}
  \frac{\pt}{\pt t}f &\leq \Delta f+C\alpha K^3.
\end{align*}
Note that $f(x,0)=\alpha (|\nabla T|^2+|Rm|^2)\leq \alpha K^2$, so applying the maximum principle to the above inequality implies that
\begin{equation*}
  \sup_{x\in M}f(x,t)\leq \alpha K^2+Ct\alpha K^3\leq CK^2.
\end{equation*}
 From the definition \eqref{shi-f-eq} of $f$, we obtain \eqref{shi-1} for $k=1$:
 \begin{equation*}
   |\nabla Rm|+|\nabla^2T|\leq CKt^{-\frac 12}.
 \end{equation*}

Given this, we next prove $k\geq 2$ by induction.  It is clear that we need to obtain differential inequalities for $|\nabla^kRm|^2$ and $|\nabla^{k+1}T|^2$, so this is how we proceed. Suppose 
\eqref{shi-1} holds for all $1\leq j<k$. From \eqref{commut-tesor}, for any time-dependent tensor $A(t)$ we have
\begin{equation}\label{commut-tensor-k}
   \frac{\pt}{\pt t}\nabla^k A-\nabla^k \frac{\pt}{\pt t}A=\sum_{i=1}^k\nabla^{k-i}A*\nabla^i\frac{\pt}{\pt t}g.
\end{equation}
By \eqref{flow-g2}, \eqref{Rm} and \eqref{commut-tensor-k}, we have
\begin{align}\label{dt-nabla^kRm}
  &\frac{\pt}{\pt t}\nabla^kRm = \nabla^k\frac{\pt}{\pt t}Rm+\sum_{i=1}^k\nabla^{k-i}Rm*\nabla^i\frac{\pt}{\pt t}g. \nonumber\\
  &= \nabla^k\Delta Rm+\nabla^k(Rm*Rm)+\nabla^k(Rm*T^2)+\nabla^{k+1}(\nabla T*T)\nonumber\\
  &\quad+\sum_{i=1}^k\nabla^{k-i}Rm*\nabla^i(Ric+T*T)\nonumber\\
  &=\Delta \nabla^kRm+\sum_{i=0}^k\nabla^{k-i}Rm*\nabla^i(Rm+T*T)+\sum_{i=0}^{k+1}\nabla^iT*\nabla^{k+2-i}T,
\end{align}
where in the last equality we used the Ricci identity
\begin{equation}\label{commu-tensor-lapl}
  \nabla^k\Delta Rm-\Delta\nabla^k Rm=\sum_{i=0}^k\nabla^{k-i}Rm*\nabla^iRm.
\end{equation}
Using \eqref{dt-nabla^kRm}, the evolution of the squared norm of $\nabla^kRm$ is: 
\begin{align}\label{dt-|na^kRm|}
  \frac{\pt}{\pt t}|\nabla^kRm|^2 &=  \Delta |\nabla^kRm|^2-2|\nabla^{k+1}Rm|^2\nonumber\\
  &+\sum_{i=0}^k\nabla^kRm*\nabla^{k-i}Rm*\nabla^i(Rm+T*T)\nonumber\\
  &+\sum_{i=0}^{k+1}\nabla^kRm*\nabla^iT*\nabla^{k+2-i}T. 
\end{align}
Applying \eqref{shi-1} for $1\leq j<k$ to \eqref{dt-|na^kRm|}, we get
\begin{align}\label{dt-|na^kRm|-2}
  \frac{\pt}{\pt t}|\nabla^kRm|^2 &\leq  \Delta |\nabla^kRm|^2-2|\nabla^{k+1}Rm|^2+CK^{\frac 12}|\nabla^kRm||\nabla^{k+2}T|\nonumber\\
  &\quad+CK(|\nabla^kRm|^2+|\nabla^{k+1}T|^2)+CK^2t^{-\frac k2}|\nabla^{k}Rm|\nonumber\\
  &\leq  \Delta |\nabla^kRm|^2-2|\nabla^{k+1}Rm|^2+CK^{\frac 12}|\nabla^kRm||\nabla^{k+2}T|\nonumber\\
  &\quad+CK^{3}t^{-k}+CK(|\nabla^kRm|^2+|\nabla^{k+1}T|^2),
\end{align}
where the constant $C$ depends on the constants $C_j, 1\leq j<k$ in \eqref{shi-1} and we used Young's inequality to estimate
\[
2K^2t^{-\frac k2}|\nabla^{k}Rm|=2K^{\frac{3}{2}}t^{-\frac k2}K^{\frac{1}{2}}|\nabla^kRm|\leq 
K^3t^{-k}+K|\nabla^kRm|^2.
\]
Similarly, we have
\begin{align*}
  &\frac{\pt}{\pt t}\nabla^{k+1}T= \nabla^{k+1}\frac{\pt}{\pt t}T+\sum_{i=1}^{k+1}\nabla^{k+1-i}T*\nabla^i\frac{\pt}{\pt t}g. \displaybreak[0]\\
  &=  \nabla^{k+1}\Delta T+\nabla^{k+1}(Rm*T)+\nabla^{k+1}(Rm*T*\psi)+\nabla^{k+1}(\nabla T*T*\varphi)\\
  &\quad+\nabla^{k+1}(T*T*T)+\sum_{i=1}^{k+1}\nabla^{k+1-i}T*\nabla^i(Ric+T*T)\displaybreak[0]\\
  &=\Delta\nabla^{k+1} T+\sum_{i=0}^{k+1}\nabla^{k+1-i}T*\nabla^iRm+\sum_{i=0}^{k+1}\nabla^{k+1-i}T*\nabla^i(T*T)\displaybreak[0]\\
  &\quad+\sum_{i=0}^{k+1}\nabla^{k+1-i}(Rm*T)*\nabla^i\psi+\sum_{i=0}^{k+1}\nabla^{k+1-i}(\nabla T*T)*\nabla^i\varphi
\end{align*}
and
\begin{align}\label{dt-na^k-T}
  \frac{\pt}{\pt t}|\nabla^{k+1}T|^2&=  \Delta|\nabla^{k+1} T|^2-2|\nabla^{k+2}T|^2\nonumber\\
  &\quad+\sum_{i=0}^{k+1}\nabla^{k+1} T*\nabla^{k+1-i}T*\nabla^i(Rm+T*T)\nonumber\displaybreak[0]\\
  &\quad+\sum_{i=0}^{k+1}\nabla^{k+1} T*\nabla^{k+1-i}(Rm*T)*\nabla^i\psi\nonumber\\
  &\quad+\sum_{i=0}^{k+1}\nabla^{k+1} T*\nabla^{k+1-i}(\nabla T*T)*\nabla^i\varphi.
\end{align}
The second line of \eqref{dt-na^k-T} can be estimated using the second line of \eqref{dt-|na^kRm|}. To estimate the third line of \eqref{dt-na^k-T}, for $2\leq i\leq k+1$ we have
\begin{align}\label{line3-1}
  |\nabla^{k+1-i}(Rm*T)|&\leq  \sum_{j=0}^{k+1-i}|\nabla^{k+1-i-j}Rm*\nabla^jT|\leq Ct^{-\frac {k-i}2}(K^{\frac 32}t^{-\frac 12}+K^2).
\end{align}
For $i=1$,
\begin{align}\label{thm4.2.eq.i1a}
  \nabla^k(Rm*T) &= \nabla^kRm*T+\sum_{l=1}^k\nabla^{k-l}Rm*\nabla^lT,
\end{align}
where
\begin{align}\label{thm4.2.eq.i1b}
|\sum_{l=1}^k\nabla^{k-l}Rm*\nabla^lT|&\leq CK^2t^{-\frac {k-1}2}.
\end{align}
Similarly for $i=0$, we have
\begin{align}\label{thm4.2.eq.i0a}
  \nabla^{k+1}(Rm*T) =& \nabla^{k+1}Rm*T+\nabla^kRm*\nabla T+\sum_{l=2}^k\nabla^{k+1-l}Rm*\nabla^lT,
\end{align}
where
\begin{align}\label{thm4.2.eq.i0b}
|\sum_{l=2}^k\nabla^{k+1-l}Rm*\nabla^lT|\leq &CK^2t^{-\frac {k}2}.
\end{align}
Using \eqref{nabla-var} and \eqref{nabla-psi}, we can estimate $\nabla^i\psi$.  We see from \eqref{nabla-psi} that \[|\nabla\psi|\leq C|T|\leq CK^{\frac{1}{2}}.\]
Then from \eqref{nabla-var} and \eqref{nabla-psi} we schematically have $$\nabla^2\psi=\nabla T*\varphi+T*T*\psi$$ and hence
\[
|\nabla^2\psi|\leq C(|\nabla T|+|T|^2)\leq CK.
\]
Using the same equations we see that 
\[
\nabla^3\psi=\nabla^2T*\varphi+\nabla T*T*\psi+T*T*T*\varphi
\]
schematically, and thus by hypothesis
\[
|\nabla^3\psi|\leq C(|\nabla^2T|+|\nabla T||T|+|T|^3)\leq C(Kt^{-\frac{1}{2}}+K^{\frac{3}{2}}).
\]
A straightforward induction then shows that for $i\geq 2$ we have
\begin{equation}\label{na^i-psi}
  |\nabla^i\psi|\leq CK\sum_{j=0}^{i-2}K^{\frac j2}t^{\frac {j-i+2}2}.
\end{equation}
Combining \eqref{line3-1}--\eqref{na^i-psi}, using 
\eqref{shi-1} for $0\leq j<k$ and the assumption $tK\leq 1$, the third line of \eqref{dt-na^k-T} can be estimated by
\begin{align*}
  |\sum_{i=0}^{k+1}\nabla^{k+1} T&*\nabla^{k+1-i}(Rm*T)*\nabla^i\psi|\\
  &\leq   |\nabla^{k+1}T*(\nabla^{k+1}Rm*T+\nabla^kRm*\nabla T)*\psi|\\
   &\quad+ |\nabla^{k+1}T*\nabla^kRm*T*\nabla\psi|+CK^2t^{-\frac k2}|\nabla^{k+1}T|,
\end{align*}
where the last term arises from the estimated terms in \eqref{thm4.2.eq.i1b}, \eqref{thm4.2.eq.i0b} and \eqref{na^i-psi}.
We can estimate the last line of \eqref{dt-na^k-T} similarly. We conclude that
\begin{align}\label{dt-na^k-T-2}
  \frac{\pt}{\pt t}|\nabla^{k+1}T|^2&\leq  \Delta|\nabla^{k+1} T|^2-2|\nabla^{k+2}T|^2+CK^2t^{-\frac k2}|\nabla^{k+1}T|\nonumber\\
  &\quad+CK^{\frac 12}|\nabla^{k+1}T|(|\nabla^{k+1}Rm|+|\nabla^{k+2}T|)\nonumber\\
  &\quad+CK(|\nabla^{k+1}T|^2+|\nabla^{k+1}T||\nabla^kRm|)\nonumber\\
  &\leq \Delta|\nabla^{k+1} T|^2-2|\nabla^{k+2}T|^2+CK^3t^{-k}\nonumber\\
  &\quad +CK^{\frac 12}|\nabla^{k+1}T|(|\nabla^{k+1}Rm|+|\nabla^{k+2}T|)\nonumber\\
&\quad+CK(|\nabla^{k+1}T|^2+|\nabla^kRm|^2),
\end{align}
where we again used Young's inequality to estimate
\begin{align*}
2K^2t^{-\frac k2}|\nabla^{k+1}T|&\leq K^3t^{-k}+K|\nabla^{k+1}T|^2,\\
2|\nabla^{k+1}T||\nabla^kRm|&\leq |\nabla^{k+1}T|^2+|\nabla^kRm|^2.
\end{align*}
Combining \eqref{dt-|na^kRm|-2} and \eqref{dt-na^k-T-2}, we have
\begin{align}\label{dt-|na^kLambda|.1}
  \frac{\pt}{\pt t}(|\nabla^kRm|^2+|\nabla^{k+1}T|^2) &
 \leq \Delta (|\nabla^kRm|^2+|\nabla^{k+1}T|^2)+CK^{3}t^{-k}\nonumber\\
 &\quad -2(|\nabla^{k+1}Rm|^2+|\nabla^{k+2}T|^2)\nonumber\\
 &\quad+CK^{\frac{1}{2}}|\nabla^kRm||\nabla^{k+2}T|\nonumber\\
 &\quad +CK^{\frac{1}{2}}|\nabla^{k+1}T|(|\nabla^{k+1}Rm|+|\nabla^{k+2}T|)\nonumber\\
&\quad  +CK(|\nabla^kRm|^2+|\nabla^{k+1}T|^2).
  \end{align}
Using Young's inequality once again, we know that for any $\epsilon>0$ we have
\begin{align*}
2K^{\frac{1}{2}}|\nabla^kRm||\nabla^{k+2}T|&\leq \frac{1}{\epsilon}K|\nabla^kRm|^2+\epsilon|\nabla^{k+2}T|^2,\\
2K^{\frac{1}{2}}|\nabla^{k+1}T|&(|\nabla^{k+1}Rm|+|\nabla^{k+2}T|)\\
&\leq \frac{2}{\epsilon}K|\nabla^{k+1}T|^2+\epsilon(|\nabla^{k+1}Rm|^2+|\nabla^{k+2}T|^2).
\end{align*}
 We deduce from these estimates and \eqref{dt-|na^kLambda|.1} that, by choosing $\epsilon>0$ sufficiently small (depending on $C$), we  have
  \begin{align}\label{dt-|na^kLambda|}
  \frac{\pt}{\pt t}(|\nabla^kRm|^2+|\nabla^{k+1}T|^2)  &\leq \Delta (|\nabla^kRm|^2+|\nabla^{k+1}T|^2)+CK^{3}t^{-k}\nonumber\\
  &\quad -|\nabla^{k+1}Rm|^2-|\nabla^{k+2}T|^2\nonumber\\
  &\quad +CK(|\nabla^kRm|^2+|\nabla^{k+1}T|^2).
\end{align}

Given these calculations, we now define
 \begin{align}\label{fk-def}
   f_k=&t^k(|\nabla^kRm|^2+|\nabla^{k+1}T|^2)\nonumber\\
   &\qquad +\beta_k\sum_{i=1}^k\alpha_i^kt^{k-i}(|\nabla^{k-i}Rm|^2+|\nabla^{k+1-i}T|^2),
 \end{align}
 for some constants $\beta_k$ to be determined later and $\alpha_i^k=\frac{(k-1)!}{(k-i)!}$. Assuming \eqref{shi-1} holds for all $1\leq i<k$, then by a similar calculation to those leading to \eqref{dt-|na^kLambda|}, we have
 \begin{align}\label{dt-na^iLambda|}
  \frac{\pt}{\pt t}(|\nabla^{k-i}Rm|^2+|\nabla^{k+1-i}T|^2) &\leq  \Delta (|\nabla^{k-i}Rm|^2+|\nabla^{k+1-i}T|^2)+CK^{3}t^{i-k}\nonumber\\
  &\quad -|\nabla^{k+1-i}Rm|^2-|\nabla^{k+2-i}T|^2,
\end{align}
where here we do not require the corresponding last term in \eqref{dt-|na^kLambda|}, since by assumption \eqref{shi-1} holds, so we have 
\[
CK(|\nabla^{k-i}Rm|^2+|\nabla^{k-i+1}T|^2)\leq CK^3t^{-(k-i)}.
\]
 From \eqref{dt-|na^kLambda|} and \eqref{dt-na^iLambda|}, we may calculate
 \begin{align*}
   \frac{\pt}{\pt t}f_k&\leq t^k \frac{\pt}{\pt t}(|\nabla^kRm|^2+|\nabla^{k+1}T|^2)+kt^{k-1} (|\nabla^kRm|^2+|\nabla^{k+1}T|^2)\\
   &\quad +\beta_k\sum_{i=1}^k\alpha_i^kt^{k-i}\frac{\pt}{\pt t}(|\nabla^{k-i}Rm|^2+|\nabla^{k-i+1}T|^2)\\
   &\quad+\beta_k\sum_{i=1}^k(k-i)\alpha_i^kt^{k-i-1}(|\nabla^{k-i}Rm|^2+|\nabla^{k-i+1}T|^2)\displaybreak[0]\\
&\leq t^k\Delta (|\nabla^kRm|^2+|\nabla^{k+1}T|^2)+CK^{3}\\
 &\quad -t^k(|\nabla^{k+1}Rm|^2+|\nabla^{k+2}T|^2)\\
 &\quad +(CKt^k+kt^{k-1})(|\nabla^kRm|^2+|\nabla^{k+1}T|^2)\displaybreak[0]\\
  &\quad +\beta_k\sum_{i=1}^k\alpha_i^kt^{k-i}\Delta (|\nabla^{k-i}Rm|^2+|\nabla^{k+1-i}T|^2)+CK^{3}\alpha_i^k\\
  &\quad -\beta_k\sum_{i=1}^k\alpha_i^kt^{k-i}(|\nabla^{k+1-i}Rm|^2+|\nabla^{k+2-i}T|^2)\\
  &\quad +\beta_k\sum_{i=1}^k(k-i)\alpha_i^kt^{k-i-1}(|\nabla^{k-i}Rm|^2+|\nabla^{k-i+1}T|^2).\displaybreak[0]
 \end{align*}
 Collecting terms we see that
 \begin{align}\label{fk-evl}
   \frac{\pt}{\pt t}f_k   &\leq \Delta f_k+(kt^{k-1}+CKt^k-\beta_kt^{k-1})(|\nabla^kRm|^2+|\nabla^{k+1}T|^2)\nonumber\\
   &\quad +\beta_k\sum_{i=1}^{k-1}(\alpha_i^k(k-i)-\alpha_{i+1}^k)t^{k-i-1}(|\nabla^{k-i}Rm|^2+|\nabla^{k+1-i}T|^2)\nonumber\\
   &\quad +(C+C\beta_k\sum_{i=1}^k\alpha_i^k)K^3\nonumber\displaybreak[0]\\
   &\leq \Delta f_k+CK^3,
 \end{align}
where we used the facts $\alpha_i^k(k-i)-\alpha_{i+1}^k=0$, $Kt\leq 1$ and chose $\beta_k$ sufficiently large.  Since $f_k(0)=\beta_k\alpha_k^k(|Rm|^2+|\nabla T|^2)\leq \beta_k\alpha_k^k K^2$, applying the maximum principle to \eqref{fk-evl} gives
 \begin{align*}
   \sup_{x\in M}f_k(x,t) &\leq  \beta_k\alpha_k^k K^2+CtK^3  \leq  CK^2
 \end{align*}
 Then from the definition of $f_k$, we obtain that
 \begin{equation*}
   |\nabla^k Rm|+|\nabla^{k+1}T|\leq CKt^{-\frac k2}.
 \end{equation*}
 This completes the inductive step and finishes the proof of Theorem \ref{thm-shi}.
\endproof

From Proposition \ref{prop-Rm-T^2}, we know the assumption $\Lambda(x,t)\leq K$ in Theorem \ref{thm-shi} is reasonable, since $\Lambda(x,t)$ can not blow up quickly along the flow
. Note that the estimate \eqref{shi-1} blows up as $t$ approaches zero, but the short-time existence result (Theorem \ref{thm-bryant-xu}) already bounds all derivatives of $Rm$ and $T$ for a short time. In fact, when
 $\Lambda(x,t)\leq K$, from \eqref{evl-d-Rm-T} we have
\begin{align*}
  \frac d{dt}\max_{M_t}(|\nabla Rm|^2+|\nabla^2T|^2)\leq   CK\max_{M_t}(|\nabla Rm|^2+|\nabla^2T|^2)+CK^4 ,
\end{align*}
which gives us
\begin{align*}
  \max_{M_t}(|\nabla Rm|^2+|\nabla^2T|^2)\leq  e^{CKt}(\max_{M_0}(|\nabla Rm|^2+|\nabla^2T|^2)+K^3)-K^3
\end{align*}
for  $t\in [0,\epsilon]$ if $\epsilon$ sufficiently small.Using \eqref{dt-|na^kRm|-2}--\eqref{dt-na^k-T} and the maximum principle, we may deduce that such estimates also hold for higher order derivatives, so $\max_{M_t}(|\nabla^k Rm|^2+|\nabla^{k+1}T|^2)$ is also bounded in terms of its initial value and $K$ for a short time.

\begin{rem}
One can ask whether the growth of the constants $C_k$ in Theorem \ref{thm-shi} can be controlled in terms of $k$.  The authors show this is indeed the 
case in \cite{Lotay-Wei-ra} and as a consequence deduce that the Laplacian flow is 
real analytic in space for each fixed positive time.
\end{rem}

We can also prove a local version of Theorem \ref{thm-shi}, stated below.  Since we already established evolution inequalities for the relevant geometric quantities in the proof of Theorem \ref{thm-shi}, the proof just follows by applying a similar argument to Shi \cite{shi} (see also \cite{ha95}) in the Ricci flow case, so we omit it.
\begin{thm}[Local derivative estimates]\label{thm-shi-completecase}
Let $K>0$ and $r>0$.  Let $M$ be a $7$-manifold,
$p\in M$, and $\varphi(t), t\in [0,\frac 1K]$ be a solution to the Laplacian flow \eqref{Lap-flow-def} for closed $\GG_2$ structures on an open neighborhood $U$ of $p$ containing $B_{g(0)}(p,r)$ as a compact subset.

For any $k\in \mathbb{N}$, there exists a constant $C=C(K,r,k)$ such that if $\Lambda(x,t)\leq K$ for all $x\in U$  and $t\in [0,\frac 1K]$, then for all $y\in B_{g(0)}(p,r/2)$ and $t\in [0,\frac 1K]$, we have
\begin{equation}\label{shi-1-complete}
  |\nabla^kRm|+|\nabla^{k+1}T|\leq {C(K,r,k)}
{t^{-\frac k2}}.
\end{equation}
\end{thm}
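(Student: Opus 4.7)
The plan is to localize the argument of Theorem \ref{thm-shi} via a spatial cutoff function supported in $B_{g(0)}(p,r)$, following Shi's template \cite{shi}. The evolution inequalities \eqref{evl-Rm-T}, \eqref{evl-d-Rm-T}, and their higher-order analogues derived in the proof of Theorem \ref{thm-shi} are pointwise identities that do not require compactness of $M$, so they transfer without change to the local setting. What must be supplied is a replacement for the global maximum principle, together with care regarding the fact that $\Delta$ is taken with respect to the evolving metric $g(t)$ while the cutoff will be defined using $g(0)$.

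First I would fix a smooth cutoff $\eta: M \to [0,1]$ depending only on $g(0)$, equal to $1$ on $B_{g(0)}(p,r/2)$ and vanishing outside $B_{g(0)}(p,3r/4)$, chosen (e.g.\ as $\chi^2$ for a standard bump $\chi$) so that $|\nabla_{g(0)}\eta|_{g(0)}^2 \leq C(r)\eta$ and $|\nabla_{g(0)}^2 \eta|_{g(0)} \leq C(r)$. Under the hypothesis $\Lambda \leq K$, the metric evolution \eqref{flow-g2} together with $|T|^2 \leq C|Rm|$ gives the uniform equivalence $C^{-1}g(0) \leq g(t) \leq C g(0)$ on $U \times [0, 1/K]$, so the corresponding bounds on $\eta$ and its first two covariant derivatives hold for $g(t)$ up to constants depending only on $K$ and $r$, and the $g(0)$-geodesic ball $B_{g(0)}(p, 3r/4)$ remains a $g(t)$-precompact subset of $U$ for all $t \in [0, 1/K]$.

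For the base case $k=1$, I would apply the maximum principle to $F = \eta^2 f_1$, where $f_1$ is as in \eqref{shi-f-eq}. Writing
\begin{equation*}
  (\pt_t - \Delta) F = \eta^2 (\pt_t - \Delta) f_1 - f_1 \Delta \eta^2 - 2 \langle \nabla \eta^2, \nabla f_1 \rangle,
\end{equation*}
I would exploit a refined form of the evolution inequality for $f_1$ that retains the negative gradient contribution $-t(|\nabla^2 Rm|^2 + |\nabla^3 T|^2)$ from \eqref{evl-d-Rm-T} rather than discarding it. The unfavourable cross-term $\langle \nabla \eta^2, \nabla f_1 \rangle$ is then absorbed into this retained negative term via Cauchy--Schwarz and Young's inequality, using $|\nabla \eta^2|^2 / \eta^2 \leq C(r)$. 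This yields $(\pt_t - \Delta) F \leq C(K,r) K^3$ on the support of $\eta$, and since $F$ vanishes on $\pt B_{g(0)}(p, 3r/4)$, the maximum principle on $\overline{B_{g(0)}(p, 3r/4)} \times [0, 1/K]$ gives $\sup F \leq C(K,r) K^2$. Because $\eta \equiv 1$ on $B_{g(0)}(p, r/2)$, this proves \eqref{shi-1-complete} for $k=1$.

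The inductive step is then carried out on a nested sequence of radii $r/2 = \rho_\infty < \cdots < \rho_1 < \rho_0 = r$, applying the same construction to $\eta_k^{2(k+1)} f_k$, with $f_k$ as in \eqref{fk-def} and $\eta_k$ supported in $B_{g(0)}(p, \rho_{k-1})$ and equal to $1$ on $B_{g(0)}(p, \rho_k)$. The power $2(k+1)$ is chosen so that the cutoff-induced cross-terms are controlled by the retained higher-order negative gradient contributions in the evolution of $f_k$ and by the lower-order derivative estimates already established on $B_{g(0)}(p, \rho_{k-1})$ at the previous inductive stage. The main obstacle is purely bookkeeping: one must verify that every cutoff-induced term, every commutator of $\nabla^i$ with $\Delta$, and every extra factor arising from the $g(t)$-dependence of the Laplacian can be absorbed into constants of the form $C(K,r,k)$ via Young's inequality. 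No new geometric input is required beyond what is already used in Theorem \ref{thm-shi} and in the original Ricci flow argument of Shi \cite{shi}.
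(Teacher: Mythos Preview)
Your outline is precisely what the paper intends: it omits the proof, saying only that the evolution inequalities already derived in the proof of Theorem~\ref{thm-shi}, combined with Shi's cutoff argument \cite{shi,ha95}, give the result. The structure you describe (cutoff, retained negative gradient terms, nested radii for the induction) is the standard template.

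One point in your sketch is not quite right, and it is where the actual work in the localization lies. You assert that the uniform equivalence $C^{-1}g(0)\leq g(t)\leq Cg(0)$ transfers the bound on $\nabla^2_{g(0)}\eta$ to a bound on $\nabla^2_{g(t)}\eta$. It does not: for a scalar function the two Hessians differ by $(\Gamma(g(0))-\Gamma(g(t)))\ast d\eta$, and since $\partial_t\Gamma\sim\nabla h\sim\nabla^2T\ast\varphi+\nabla T\ast T$ (using \eqref{Ricc-prop-closed} to rewrite $h$), controlling the connection drift requires a bound on $\int_0^t|\nabla^2T|\,ds$ --- exactly the quantity the $k=1$ step is supposed to produce. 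This is the same circularity that arises in the Ricci flow version (where $\partial_t\Gamma\sim\nabla\mathrm{Ric}$), and it is not broken by Young's inequality alone. Shi's argument resolves it by a continuity device: work on the maximal time interval on which the target estimate holds with, say, a doubled constant; on that interval the provisional bound gives $|\Gamma(g(t))-\Gamma(g(0))|\leq CKt^{1/2}\leq CK^{1/2}$, hence $|\Delta_{g(t)}\eta|\leq C(K,r)$; then the maximum-principle computation closes with the original constant, contradicting maximality. So the step you call ``purely bookkeeping'' does need one further idea, albeit a standard one.
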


\begin{rem}
By Proposition \ref{prop-nabla-T} and Corollary \ref{scalar-cor}, we can bound $|\nabla T|$ using bounds on $|Rm|$, and hence we
can, if we wish, replace the bound on $\Lambda$ in \eqref{Lambda-t-def} in Theorems \ref{thm-shi} and \ref{thm-shi-completecase}
by a bound on $|Rm|$.
\end{rem}

\section{Long time existence I}\label{sec:longtime-I}
Given an initial closed $\GG_2$ structure $\varphi_0$, there exists a solution $\varphi(t)$ of Laplacian flow on a maximal time interval $[0,T_0)$, where maximal means that either $T_0=\infty$, or that $T_0<\infty$ but there do not exist $\epsilon>0$ and a smooth Laplacian flow $\tilde{\varphi}(t)$ for $t\in [0,T_0+\epsilon)$ such that $\tilde{\varphi}(t)=\varphi(t)$ for $t\in [0,T_0)$. We call $T_0$ the singular time.

In this section, we use the global derivative estimates \eqref{shi-0} for $Rm$ and $\nabla T$ to prove Theorem \ref{mainthm-blowup}, i.e.~$\Lambda(x,t)$ given in \eqref{Lambda-t-def} will blow up at a finite time singularity along the flow. We restate Theorem \ref{mainthm-blowup} below.
\begin{thm}\label{thm-blowup}
If $\varphi(t)$ is a solution to the Laplacian flow \eqref{Lap-flow-def} for closed $\GG_2$ structures on a compact manifold $M^7$ in a maximal time interval $[0,T_0)$ and the maximal time $T_0<\infty$, then $\Lambda(t)$ given in \eqref{Lambda-t-def-2} satisfies
\begin{equation}\label{thm-bu-eqn1}
  \lim_{t\nearrow T_0}\Lambda(t)= \infty.
\end{equation}
Moreover, we have a lower bound on the blow-up rate,
\begin{equation}\label{thm-bu-eqn2}
  \Lambda(t)\geq \frac{C}{T_0-t}
\end{equation}
for some constant $C>0$.
\end{thm}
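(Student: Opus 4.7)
The approach is a contradiction argument familiar from Ricci flow: assume $\Lambda$ remains bounded on $[0,T_0)$, derive that $\varphi(t)$ converges smoothly to a closed $\GG_2$ structure as $t\nearrow T_0$, and then apply the short-time existence theorem to extend the flow past $T_0$, contradicting maximality. The lower bound on the blow-up rate then falls out of the doubling-time estimate.

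Concretely, suppose $\sup_{[0,T_0)}\Lambda(t)\leq K<\infty$. Applying Theorem \ref{thm-shi} on translated intervals of length $1/K$ yields uniform bounds on $|\nabla^k Rm|_{g(t)}+|\nabla^{k+1}T|_{g(t)}$ on $[t_1,T_0)$ for every fixed $t_1>0$ and all $k\geq 0$. From the metric evolution \eqref{flow-g2}, together with $|T|^2=-R\leq C|Rm|$ (Corollary \ref{scalar-cor}), we get $|\partial_t g|_g\leq CK$; integrating in time shows the $g(t)$ are uniformly equivalent on $[0,T_0)$ and form a $C^\infty$-Cauchy family as $t\nearrow T_0$, using the Shi bounds on higher spatial derivatives. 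For the $3$-form $\varphi$ itself, the flow equation together with \eqref{h-tensor-1} gives $|\partial_t\varphi|_{g(t)}=|i_\varphi(h)|_{g(t)}\leq CK$; spatial derivatives are handled inductively via $\nabla\varphi=T*\psi$ and $\nabla\psi=T*\varphi$ (equations \eqref{nabla-var}, \eqref{nabla-psi}) using the Shi bounds on $\nabla^k T$, and mixed derivatives follow from the commutation formulas of \S\ref{sec:evlution}. Hence $\varphi(t)\to\varphi(T_0)$ in $C^\infty$; positivity of the limit follows from uniform positive definiteness of the $g(t)$, and closedness is preserved because $d$ is a continuous first-order operator. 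Applying Theorem \ref{thm-bryant-xu} with initial data $\varphi(T_0)$ extends the flow past $T_0$, contradicting the maximality of $T_0$.

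For the blow-up rate \eqref{thm-bu-eqn2}, fix $s\in[0,T_0)$ and apply the doubling-time estimate of Proposition \ref{prop-Rm-T^2} restarted at time $s$: on $[s,s+(C\Lambda(s))^{-1}]\cap[s,T_0)$ one has $\Lambda(t)\leq 2\Lambda(s)$. If $(C\Lambda(s))^{-1}>T_0-s$ for some $s$, then $\Lambda$ would be bounded on all of $[s,T_0)$, which by the preceding paragraph would extend the flow past $T_0$, a contradiction. Therefore $\Lambda(s)\geq C^{-1}(T_0-s)^{-1}$ for every $s\in[0,T_0)$, which simultaneously gives \eqref{thm-bu-eqn1} and \eqref{thm-bu-eqn2}. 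The main obstacle is the second step: promoting Shi-type bounds on $Rm$ and $\nabla T$ to genuine $C^\infty$-convergence of the $\GG_2$ structures $\varphi(t)$ to a limit that is valid initial data for Theorem \ref{thm-bryant-xu}. In particular, one must carefully convert estimates in the evolving metric $g(t)$ into estimates in a fixed background metric (using uniform equivalence), control the time derivatives of $\varphi$ up to arbitrary order, and verify that both positivity and closedness pass to the limit at $t=T_0$.
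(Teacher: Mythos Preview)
Your proposal is correct and follows essentially the same approach as the paper: assume $\Lambda$ is bounded, use the Shi-type estimates and the metric evolution to obtain uniform $C^\infty$ bounds on $\varphi(t)$ with respect to a fixed background metric (this is exactly the content of the paper's Claims~5.1 and~5.2, which you correctly identify as the main technical step), deduce smooth convergence to a closed positive limit $\varphi(T_0)$, and restart the flow via Theorem~\ref{thm-bryant-xu}. The only cosmetic difference is that the paper derives the blow-up rate by integrating the ODE $\frac{d}{dt}\Lambda^{-1}\geq -C/2$ directly (using $\Lambda(t)^{-1}\to 0$), whereas you package the same ODE into Proposition~\ref{prop-Rm-T^2} and argue by contradiction; your route has the minor advantage of yielding \eqref{thm-bu-eqn1} and \eqref{thm-bu-eqn2} simultaneously without a separate $\limsup$-to-$\lim$ step.
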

\proof
Suppose the solution $\varphi(t)$ exists on a  maximal finite time interval $[0,T_0)$. We first prove, by contradiction, that
\begin{equation}\label{pf-thm5.1-0}
  \limsup_{t\nearrow T_0}\Lambda(t)=\infty.
\end{equation}

Suppose \eqref{pf-thm5.1-0} does not hold, so there exists a constant $K>0$ such that
\begin{equation}\label{pf-thm5.1-1}
  \sup_{M\times [0, T_0)}\Lambda(x,t)=\sup_{M\times [0, T_0)}\left(|\nabla T(x,t)|_{g(t)}^2+|Rm(x,t)|_{g(t)}^2\right)^{\frac 12}\leq K,
\end{equation}
where $g(t)$ is the metric determined by $\varphi(t)$. Then, in particular, we have the uniform curvature bound
\begin{equation*}
 \sup_{M\times [0, T_0)}|Rm(x,t)|_{g(t)}\leq K,
\end{equation*}
which implies that
\begin{equation*}
  \sup_{M\times [0, T_0)}\biggl|\frac{\pt}{\pt t}g_{ij}\biggr|_{g(t)}=\sup_{M\times [0, T_0)}\biggl|-2R_{ij}-\frac 23|T|^2g_{ij}-4T_i^{\,\,k}T_{kj}\biggr|_{g(t)}\leq CK.
\end{equation*}
(Keep in mind that $|T|^2=-R$). Then all the metrics $g(t)$ $(0\leq t<T_0)$ are uniformly equivalent (see e.g.~\cite[Theorem 14.1]{ha82}), as $T_0<\infty$.  We also have from
\eqref{T-tau-eq}, \eqref{lap-varphi} and \eqref{pf-thm5.1-1}:
\begin{align}\label{pf-thm5.1-2}
  \biggl|\frac{\pt}{\pt t}\varphi\biggr|_{g(t)}=&\bigl|\Delta_{\varphi}\varphi\bigr|_{g(t)}\leq CK,
\end{align}
for some uniform positive constant $C$.

We fix a background metric $\bar{g}=g(0)$, the metric determined by $\varphi(0)$. From \eqref{pf-thm5.1-2} and the uniform equivalence of the metrics $\bar{g}$ and $g(t)$, we have
\begin{align}
  \biggl|\frac{\pt}{\pt t}\varphi\biggr|_{\bar{g}}\leq C\biggl|\frac{\pt}{\pt t}\varphi\biggr|_{g(t)}\leq CK.
\end{align}
For any $0<t_1<t_2<T_0$,
\begin{align}\label{pf-thm5.1-3}
  \bigl|\varphi(t_2)-\varphi(t_1)\bigr|_{\bar{g}}\leq \int_{t_1}^{t_2}\biggl|\frac{\pt}{\pt t}\varphi\biggr|_{\bar{g}}dt\leq CK(t_2-t_1),
\end{align}
which implies that $\varphi(t)$ converges to a 3-form $\varphi(T_0)$ continuously as $t\ra T_0$. We may similarly argue using \eqref{flow-g2} and \eqref{pf-thm5.1-1} that the uniformly equivalent Riemannian metrics $g(t)$ converge continuously to a Riemannian metric $g(T_0)$ as $t\ra T_0$, since all the $g(t)$ are uniformly equivalent to $\bar{g}$.

By \eqref{B-varphi}, for each $t\in [0,T_0)$ we have
\begin{equation}\label{pf-5-8}
  g_t(u,v)vol_{g(t)}=\frac 16(u\lrcorner\varphi(t))\wedge(v\lrcorner\varphi(t))\wedge\varphi(t).
\end{equation}
Let $t\ra T_0$ in \eqref{pf-5-8}.  Recall that we have argued above that $g(t)\to g(T_0)$ which is  a Riemannian metric and thus $\vol_{g(t)}\to\vol_{g(T_0)}$ which is a volume hence. Therefore the left hand side  of \eqref{pf-5-8} 
tends to a positive definite $7$-form valued bilinear form.  Thus, the right-hand side of \eqref{pf-5-8} has a positive definite limit, and thus the limit $3$-form $\varphi(T_0)$ is positive, i.e.~$\varphi(T_0)$ is a $\GG_2$ structure on $M$. Moreover, note that $\d\varphi(t)=0$ for all $t$ means that  the limit $\GG_2$ structure $\varphi(T_0)$ is also closed. In summary, the solution $\varphi(t)$ of the Laplacian flow for closed $\GG_2$ structures can be extended continuously to the time interval $[0,T_0]$.

We now show that the extension is actually smooth, thus obtaining our required contradiction.  We beginning by showing that we can uniformly bound the
derivatives of the metric and 3-form with respect to the background Levi-Civita connection along the flow.

\begin{claim}\label{claim-5-1}
There exist constants $C_m$ for $m\in \mathbb{N}$ such that
\begin{equation*}
  \sup_{M\times [0,T_0)}\biggl|\overline{\nabla}^{(m)}g(t)\biggr|_{\bar{g}}\leq C_m,
\end{equation*}
where $\overline{\nabla}$ is the Levi-Civita connection with respect to $\bar{g}$.
\end{claim}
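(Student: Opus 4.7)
The overall strategy is two-stage: first upgrade the uniform bound $\Lambda \le K$ on $M \times [0,T_0)$ to uniform bounds on all intrinsic derivatives $|\nabla^k Rm|_{g(t)}$ and $|\nabla^{k+1} T|_{g(t)}$, and then convert these into the desired bounds on $|\overline{\nabla}^m g(t)|_{\bar g}$ by induction on $m$.

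For the first stage, I would patch together the short-time derivative estimates recorded in the discussion following Theorem~\ref{thm-shi} (which bound $|\nabla^k Rm|^2 + |\nabla^{k+1} T|^2$ on a short interval $[0,\epsilon_0]$ in terms of their initial values and $K$) with the global Shi-type estimate, Theorem~\ref{thm-shi}, applied on sliding time windows. Concretely, for any $t \in [\epsilon_1, T_0)$ with $\epsilon_1 := \min(\epsilon_0, 1/K)$, restart the Shi-type estimate at time $t - \epsilon_1$; since $\Lambda \le K$ on the whole interval $[t-\epsilon_1, t]$, Theorem~\ref{thm-shi} yields $|\nabla^k Rm(\cdot,t)|_{g(t)} + |\nabla^{k+1}T(\cdot,t)|_{g(t)} \le C_k \epsilon_1^{-k/2} K$. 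Together with the short-time bound on $[0,\epsilon_1]$, this gives a uniform bound $|\nabla^k Rm|_{g(t)} + |\nabla^{k+1} T|_{g(t)} \le \widetilde{C}_k$ on $M\times[0,T_0)$. The uniform equivalence of the metrics $g(t)$ and $\bar g$ (already proved in the excerpt) transfers all such bounds to the $\bar g$-norm.

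For the second stage, I would argue by induction on $m$. The base case $m=0$ is the uniform equivalence of $g(t)$ and $\bar g$. For the inductive step, introduce the difference tensor $A := \Gamma - \bar\Gamma$, which satisfies $A^k_{ij} = \tfrac12 g^{kl}(\overline{\nabla}_i g_{jl} + \overline{\nabla}_j g_{il} - \overline{\nabla}_l g_{ij})$, so that bounds on $|\overline{\nabla}^{j+1} g|_{\bar g}$ are equivalent (modulo the already controlled factors $g$ and $g^{-1}$) to bounds on $|\overline{\nabla}^j A|_{\bar g}$. Since $\overline{\nabla}$ commutes with $\partial_t$, the evolution equation \eqref{flow-g2} and the formula \eqref{h-tensor-1} for $h$ give
\begin{equation*}
\partial_t\,\overline{\nabla}^m g = \overline{\nabla}^m h, \qquad h = -2\,Ric - \tfrac{2}{3}|T|^2 g - 4\, T \cdot T.
\end{equation*}
By iteratively commuting $\overline{\nabla}$ past $\nabla$ at the cost of factors of $A$, one expands
\begin{equation*}
\overline{\nabla}^m h = \nabla^m h + \sum \overline{\nabla}^{k_1} A * \cdots * \overline{\nabla}^{k_r} A * \nabla^j h,
\end{equation*}
with $r \ge 1$, $k_i \le m-1$ and $j + r + \sum k_i = m$. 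Every $\nabla^j h$ is uniformly bounded by the first stage. Every term in the sum with all $k_i \le m-2$ is controlled by the inductive hypothesis. The only remaining terms must have a single factor $\overline{\nabla}^{m-1} A$ (forcing $r=1$, $j=0$), and these depend linearly on $\overline{\nabla}^m g$ with bounded coefficient $|h|_{\bar g} \le C$. This yields the pointwise estimate
\begin{equation*}
|\overline{\nabla}^m h|_{\bar g} \le C + C'\,|\overline{\nabla}^m g|_{\bar g}.
\end{equation*}
Since $\bar g$ is time-independent, $\tfrac{d}{dt}|\overline{\nabla}^m g|_{\bar g}^2 \le 2|\overline{\nabla}^m g|_{\bar g}\cdot |\overline{\nabla}^m h|_{\bar g}$, and Gr\"onwall's inequality on the finite interval $[0,T_0)$, with initial value $\overline{\nabla}^m g|_{t=0} = \overline{\nabla}^m \bar g = 0$ (for $m\ge 1$), delivers the uniform bound $|\overline{\nabla}^m g|_{\bar g} \le C_m$.

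The main obstacle is the combinatorial bookkeeping in expanding $\overline{\nabla}^m h$ as a universal polynomial in $\nabla^j h$ and $\overline{\nabla}^k A$ and verifying that no term of order higher than $\overline{\nabla}^m g$ survives; this is routine but must be done carefully to justify the clean Gr\"onwall estimate. Once the claim is established, the same scheme (applied to $\varphi$ rather than $g$, using \eqref{lap-varphi}) will extend the flow smoothly across $T_0$, producing the desired contradiction with maximality.
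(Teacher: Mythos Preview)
Your proposal is correct and follows precisely the standard argument the paper invokes: the paper's own proof of Claim~\ref{claim-5-1} simply reads ``the proof of the claim is similar to the Ricci flow case, see e.g.~\cite[\S 6.7]{Chow-Knopf}, so we omit the detail here,'' and your two-stage outline (sliding-window Shi estimates to get uniform $|\nabla^k Rm|_{g(t)}$, $|\nabla^{k+1}T|_{g(t)}$ bounds, then the $\overline{\nabla}^m g$ induction via the difference tensor $A$ and Gr\"onwall) is exactly that argument, spelled out in more detail than the paper provides. The combinatorial point you flag---that the only uncontrolled term in the expansion of $\overline{\nabla}^m h$ is the single linear factor $\overline{\nabla}^{m-1}A * h$---is the crux of the induction, and your counting $j+r+\sum k_i = m$ with $k_i \le m-1$ handles it correctly.
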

\begin{proof}[Proof of Claim \ref{claim-5-1}]
Since 
$g(t)$ evolves by \eqref{flow-g2}, the proof of the claim 
is similar to the Ricci flow case, see e.g.~\cite[\S 6.7]{Chow-Knopf}, so we omit the detail here.
\end{proof}

\begin{claim}\label{claim-5-2}
There exist constants $C_m$ for $m\in \mathbb{N}$ such that
\begin{equation*}
  \sup_{M\times [0,T_0)}\biggl|\overline{\nabla}^{(m)}\varphi(t)\biggr|_{\bar{g}}\leq C_m.
\end{equation*}
\end{claim}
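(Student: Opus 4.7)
The plan is to relate $\bar{\nabla}$ and the $t$-dependent Levi-Civita connection $\nabla = \nabla^{g(t)}$, bound the needed $\nabla$-derivatives of $\varphi$ via the torsion identity \eqref{nabla-var}, and then invoke Claim \ref{claim-5-1} together with the Shi-type estimates from Theorem \ref{thm-shi} to convert everything into $\bar{g}$-norms. Write $A(t) := \bar{\nabla} - \nabla$, a time-dependent $(1,2)$-tensor given in local coordinates by
\begin{equation*}
  A^{k}_{ij}(t) = -\tfrac{1}{2} g^{kl}(t)\bigl(\bar{\nabla}_i g_{jl}(t) + \bar{\nabla}_j g_{il}(t) - \bar{\nabla}_l g_{ij}(t)\bigr).
\end{equation*}
A standard induction on $m$ yields a schematic identity of the form
\begin{equation*}
  \bar{\nabla}^{m}\varphi \;=\; \nabla^{m}\varphi \;+\; \sum_{\ell=0}^{m-1}\sum \bar{\nabla}^{i_1}A*\cdots*\bar{\nabla}^{i_p}A*\nabla^{\ell}\varphi,
\end{equation*}
where the inner sum is over tuples with $i_1+\cdots+i_p+\ell+p = m$ and $p\ge 1$, and $*$ denotes contractions using $g(t)$.

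The first task is to control $A$ and its $\bar{\nabla}$-derivatives. Since $A$ is built algebraically from $g^{-1}(t)$ and $\bar{\nabla}g(t)$, the uniform equivalence of the metrics $g(t)\sim \bar{g}$ (established just before Claim \ref{claim-5-1}), combined with the bounds $|\bar{\nabla}^{(k)}g(t)|_{\bar{g}}\le C_k$ from Claim \ref{claim-5-1}, immediately gives uniform $\bar{g}$-bounds on $|\bar{\nabla}^{(k)}A(t)|_{\bar{g}}$ for every $k\in\mathbb{N}$.

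The second task is to control $\nabla^{\ell}\varphi$ in $g(t)$-norm. Starting from \eqref{nabla-var} and \eqref{nabla-psi}, which read $\nabla\varphi = T*\psi$ and $\nabla\psi = T*\varphi$, a straightforward induction shows that $\nabla^{\ell}\varphi$ is a universal polynomial expression in $\varphi$, $\psi$, and $T,\nabla T,\dots,\nabla^{\ell-1}T$. Since $|\varphi|_{g(t)}$ and $|\psi|_{g(t)}$ are pointwise constants depending only on the dimension, it suffices to obtain a uniform bound on $|\nabla^{j}T|_{g(t)}$ on $M\times[0,T_0)$ for every $j\ge 0$. On $[\varepsilon,T_0)$ with $\varepsilon := \min\{\tfrac{1}{2K}, \tfrac{T_0}{2}\}$, Theorem \ref{thm-shi} applied on the interval $[t-\varepsilon,t]$ for each $t\in[\varepsilon,T_0)$ yields $|\nabla^{j}T(\cdot,t)|_{g(t)} \le C_j(K,\varepsilon)$. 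On the initial interval $[0,\varepsilon]$, the smoothness of $\varphi_0$ gives initial bounds on all derivatives of $T$, and the inductive evolution inequalities \eqref{dt-|na^kRm|-2}--\eqref{dt-na^k-T-2}, combined with the maximum principle and Gr\"onwall's inequality (as indicated in the paragraph following Theorem \ref{thm-shi}), propagate these bounds for short time. Together these cover $[0,T_0)$.

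Finally, combining the two controls via the schematic identity and then converting from $g(t)$-norms to $\bar{g}$-norms via the uniform equivalence of metrics yields the desired estimate $|\bar{\nabla}^{(m)}\varphi(t)|_{\bar{g}}\le C_m$. The main technical obstacle is not any single step but bookkeeping the matching of norms and connections cleanly through the induction; in particular, one must be careful that the constants $C_m$ in Claim \ref{claim-5-1} and in the Shi-type bounds are independent of $t\in[0,T_0)$, which is why the uniform bound $\Lambda(x,t)\le K$ on all of $[0,T_0)$ (rather than merely on time-slices) is essential.
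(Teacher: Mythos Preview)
Your argument is correct, but it proceeds along a different axis from the paper's proof. The paper bounds $\bar{\nabla}^{m}\varphi$ by integrating in \emph{time}: one writes
\[
\frac{\partial}{\partial t}\bar{\nabla}^{m}\varphi=\bar{\nabla}^{m}\Delta_{\varphi}\varphi,
\]
estimates the right-hand side by converting $\bar{\nabla}^{m}$ into $\nabla^{m}$ via the difference tensor $A=\bar{\nabla}-\nabla$ (bounding $A$ itself by integrating $\partial_t A$ from $A(0)=0$, and its higher $\bar{\nabla}$-derivatives via Claim~\ref{claim-5-1}), and then integrates from $t=0$ to $t$ using the Shi-type bounds on $\nabla^{j}\Delta_{\varphi}\varphi$. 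By contrast, you work at each fixed time: you expand $\bar{\nabla}^{m}\varphi$ schematically in terms of $\nabla^{\ell}\varphi$ and $\bar{\nabla}^{i}A$, bound $A$ and all $\bar{\nabla}^{i}A$ directly from the algebraic formula $A=g^{-1}*\bar{\nabla}g$ together with Claim~\ref{claim-5-1}, and bound $\nabla^{\ell}\varphi$ via the torsion identities $\nabla\varphi=T*\psi$, $\nabla\psi=T*\varphi$ and the Shi-type control on $\nabla^{j}T$. Your route is slightly more economical in that it avoids any time integration of $\varphi$ itself and makes transparent that the estimate is really a static conversion between two connections once the Shi bounds are in hand; the paper's route is perhaps more in keeping with the parabolic spirit of the surrounding argument and reuses the same integration-in-time mechanism already employed for $A$. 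Both rely on exactly the same ingredients: the uniform equivalence of $g(t)$ and $\bar g$, Claim~\ref{claim-5-1}, and the Shi-type estimates of Theorem~\ref{thm-shi} (together with the short-time propagation of derivative bounds near $t=0$).
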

\begin{proof}[Proof of Claim \ref{claim-5-2}]
We begin with $m=1$. At any $(x,t)\in M\times [0,T_0)$,
\begin{align}\label{pf-claim-1}
  \frac{\pt}{\pt t}\overline{\nabla}\varphi=& \overline{\nabla}\frac{\pt}{\pt t} \varphi=\overline{\nabla}\Delta_{\varphi}\varphi\nonumber\\
  =&\nabla\Delta_{\varphi}\varphi+A*\Delta_{\varphi}\varphi,
\end{align}
where we denote $A=\overline{\nabla}-\nabla$ as the difference of two connections, which is a tensor. Then in a fixed chart around $x$ we have
\begin{align*}
  \frac{\pt}{\pt t} A_{ij}^k=&-\frac{\pt}{\pt t}\Gamma_{ij}^k\\
  =&-\frac 12g^{kl}(\nabla_i(\frac{\pt}{\pt t}g_{jl})+\nabla_j(\frac{\pt}{\pt t}g_{il})-\nabla_l(\frac{\pt}{\pt t}g_{ij})),
\end{align*}
so
\begin{align*}
  \frac{\pt}{\pt t}A=-g^{-1}\nabla(Ric+T*T).
\end{align*}
Integrating in time $t$, we get
\begin{align}\label{A-est}
  |A(t)|_{\bar{g}}&\leq  |A(0)|_{\bar{g}}+\int_0^t\bigl|\frac{\pt}{\pt s} A\bigr|_{\bar{g}}ds\nonumber\\
  &\leq |A(0)|_{\bar{g}}+C\int_0^t\bigl|\frac{\pt}{\pt s} A\bigr|_{g(s)}ds\nonumber\\
  &\leq |A(0)|_{\bar{g}}+C(|\nabla Ric|+|\nabla T||T|)t\leq C,
\end{align}
since $t<T_0$ is finite and $|\nabla Ric|+|\nabla T||T|$ is bounded by \eqref{shi-1} and \eqref{pf-thm5.1-1}. Furthermore,
we can derive from Claim \ref{claim-5-1} that
\begin{equation}\label{A-est2}
  |\overline{\nabla}^kA(t)|_{\bar{g}}\leq C\quad\textrm{ for } 0\leq k\leq m-1.
\end{equation}
From \eqref{shi-1}, \eqref{pf-claim-1} and \eqref{A-est}, we get
\begin{align*}
  \biggl|\frac{\pt}{\pt t}\overline{\nabla}\varphi \biggr|_{\bar{g}}\leq  C,
\end{align*}
and then
\begin{align}\label{pf-claim-2}
  |\overline{\nabla}\varphi(t)|_{\bar{g}}\leq  |\overline{\nabla}\varphi(0)|_{\bar{g}}+\int_0^t\biggl|\frac{\pt}{\pt s}\overline{\nabla}\varphi(s) \biggr|_{\bar{g}}ds
  \leq |\overline{\nabla}\varphi(0)|_{\bar{g}}+CT_0,
\end{align}
which gives the $m=1$ case of Claim \ref{claim-5-2}.

For $m\geq 2$, we can prove by induction that
\begin{align}\label{pf-claim-3}
  \biggl|\frac{\pt}{\pt t}\overline{\nabla}^m\varphi\biggr|_{\bar{g}}&=\bigl|\overline{\nabla}^m\Delta_{\varphi}\varphi\bigr|_{\bar{g}}\nonumber\\
  &\leq  C \sum_{i=0}^m|A|^i|\nabla^{m-i}\Delta_{\varphi}\varphi|+C\sum_{i=1}^{m-1}|\overline{\nabla}^iA||\nabla^{m-1-i}\Delta_{\varphi}\varphi|.
\end{align}
It then follows from \eqref{shi-1}, \eqref{A-est2} and \eqref{pf-claim-3} that
\begin{align}\label{pf-claim-4}
  \biggl|\frac{\pt}{\pt t}\overline{\nabla}^m\varphi\biggr|_{\bar{g}}=&\bigl|\overline{\nabla}^m\Delta_{\varphi}\varphi\bigr|_{\bar{g}}\leq C.
\end{align}
Then Claim \ref{claim-5-2} follows from \eqref{pf-claim-4} by integration.
\end{proof}

Now we continue the proof of Theorem \ref{thm-blowup}. We have that a continuous limit of closed $\GG_2$ structures $\varphi(T_0)$ exists, and in a fixed local coordinate chart $\mathcal{U}$
it satisfies
\begin{equation}\label{pf-thm5.1-4}
  \varphi_{ijk}(T_0)=\varphi_{ijk}(t)+\int_{t}^{T_0}(\Delta_{\varphi(s)}\varphi(s))_{ijk}ds.
\end{equation}
Let $\alpha=(a_1,\cdots,a_r)$ be any multi-index with $|\alpha|=m\in \mathbb{N}$. By Claim \ref{claim-5-2} and \eqref{pf-claim-4}, we have that
\begin{equation}
  \frac{\pt^m}{\pt x^{\alpha}}\varphi_{ijk}\quad\textrm{ and }\quad \frac{\pt^m}{\pt x^{\alpha}}(\Delta_{\varphi}\varphi)_{ijk}
\end{equation}
are uniformly bounded on $\mathcal{U}\times [0,T_0)$. Then from \eqref{pf-thm5.1-4} we have  that $\frac{\pt^m}{\pt x^{\alpha}}\varphi_{ijk}(T_0)$ is bounded
on $\mathcal{U}$ and hence $\varphi(T_0)$ is a smooth closed $\GG_2$ structure. Moreover,
\begin{equation}
  \biggl|\frac{\pt^m}{\pt x^{\alpha}}\varphi_{ijk}(T_0)-\frac{\pt^m}{\pt x^{\alpha}}\varphi_{ijk}(t)\biggr|\leq C(T_0-t),
\end{equation}
and thus $\varphi(t)\ra \varphi(T_0)$ uniformly in any $C^m$ norm as $t\ra T_0$, $m\geq 2$.

Now, Theorem \ref{thm-bryant-xu} gives a solution $\bar{\varphi}(t)$ of the Laplacian flow \eqref{Lap-flow-def} 
with $\bar{\varphi}(0)=\varphi(T_0)$ for a short time $0\leq t<\epsilon$.  Since $\varphi(t)\ra \varphi(T_0)$ smoothly as $t\ra T_0$, this gives that
\begin{equation*}
  \tilde{\varphi}(t)=\left\{\begin{array}{cl}
                              \varphi(t), & 0\leq t<T_0, \\
                               \bar{\varphi}(t-T_0),& T_0\leq t<T_0+\epsilon.
                            \end{array}\right.
\end{equation*}
is a solution of 
\eqref{Lap-flow-def} with initial value $\tilde{\varphi}(0)=\varphi(0)$ for $t\in [0,T_0+\epsilon)$, which is a contradiction to the maximality of $T_0$. So we have
\begin{equation}\label{pf-thm5.1-5}
  \limsup_{t\nearrow T_0}\Lambda(t)=\infty.
\end{equation}

We now prove \eqref{thm-bu-eqn1} by replacing the $\limsup$ in \eqref{pf-thm5.1-5} by $\lim$. Suppose, for a contradiction, that
\eqref{thm-bu-eqn1} does not hold.  Then there exists a sequence $t_i\nearrow T_0$ such that $\Lambda(t_i)\leq K_0$ for some constant $K_0$. By the doubling time estimate in Proposition \ref{prop-Rm-T^2},
\begin{equation}
 \Lambda(t)\leq 2\Lambda(t_i)\leq 2K_0,
\end{equation}
for all $t\in [t_i, \min\{T_0,t_i+\frac 1{CK_0}\})$. Since $t_i\ra T_0$, for sufficiently large $i$ we have $t_i+\frac 1{CK_0}\geq T_0$. Therefore, for all $i$
sufficiently large,
\begin{equation}
 \sup_{M\times [t_i,T_0)}\Lambda(x,t)\leq 2K_0,
\end{equation}
but we already showed above that this leads to a contradiction to the maximality of $T_0$.
This completes the proof of \eqref{thm-bu-eqn1}.

We conclude by proving the lower bound of the blow-up rate \eqref{thm-bu-eqn2}. Applying the maximum principle to \eqref{evl-Rm-T} we have
\begin{equation*}
  \frac d{dt}\Lambda(t)^2\leq C\Lambda(t)^3,
\end{equation*}
which implies that
\begin{equation}\label{pf-thm5.1-6}
  \frac d{dt}\Lambda(t)^{-1}\geq -\frac C2.
\end{equation}
We already proved that $\lim\limits_{t\ra T_0}\Lambda(t)=\infty$, so we have
\begin{equation}\label{pf-thm5.1-7}
  \lim_{t\ra T_0}\Lambda(t)^{-1}=0.
\end{equation}
Integrating \eqref{pf-thm5.1-6} from $t$ to $t'\in (t,T_0)$ and passing to the limit $t'\ra T_0$, we obtain
\begin{equation*}
  \Lambda(t)\geq \frac 2{C(T_0-t)}.
\end{equation*}
This completes the proof of Theorem \ref{thm-blowup}.
\endproof

Combining Theorem \ref{thm-blowup} and Proposition \ref{prop-Rm-T^2} gives us the following corollary on the estimate of the minimal existence time.
\begin{corr}\label{exist-time-cor}
Let $\varphi_0$ be a closed $\GG_2$ structure on a compact manifold $M^7$ with
\begin{equation*}
  \Lambda_{\varphi_0}(x)=\left(|\nabla T(x)|^2+|Rm(x)|^2\right)^{\frac 12}\leq K
\end{equation*}
on $M$, for some constant $K$.  Then the unique solution $\varphi(t)$ of the Laplacian flow \eqref{Lap-flow-def} starting from $\varphi_0$ exists at least for time $t\in [0,\frac 1{CK}]$, where $C$ is a uniform constant as in Proposition \ref{prop-Rm-T^2}. 
\end{corr}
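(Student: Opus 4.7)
The plan is to combine Theorem \ref{thm-blowup} (the blow-up characterization of the singular time) with the doubling-time estimate in Proposition \ref{prop-Rm-T^2} in a straightforward contradiction argument. First I would invoke the short-time existence result Theorem \ref{thm-bryant-xu} to obtain a unique solution $\varphi(t)$ of the Laplacian flow starting from $\varphi_0$ on a maximal time interval $[0,T_0)$ with $T_0>0$.

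Next, I would argue by contradiction: suppose $T_0<\frac{1}{CK}$, where $C$ is the universal constant from Proposition \ref{prop-Rm-T^2}. By hypothesis, $\Lambda(0)=\sup_M\Lambda_{\varphi_0}(x)\leq K$. Applying the doubling-time estimate with $\epsilon=T_0$ gives
\begin{equation*}
\Lambda(t)\leq 2\Lambda(0)\leq 2K\qquad\text{for all }t\in\Bigl[0,\min\bigl\{T_0,\tfrac{1}{C\Lambda(0)}\bigr\}\Bigr).
\end{equation*}
Since $\frac{1}{C\Lambda(0)}\geq\frac{1}{CK}>T_0$, this bound holds throughout $[0,T_0)$, so $\sup_{[0,T_0)}\Lambda(t)\leq 2K<\infty$. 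But Theorem \ref{thm-blowup} asserts that at any finite maximal singular time $T_0$ we must have $\lim_{t\nearrow T_0}\Lambda(t)=\infty$, which is the desired contradiction. Hence $T_0\geq\frac{1}{CK}$, and the flow exists at least on $[0,\frac{1}{CK}]$.

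There is no real obstacle here: the two substantive ingredients, namely the doubling-time estimate and the finite-time blow-up characterization, have already been established, and the corollary is just their direct combination. The only thing to verify is that the constant $C$ appearing in the final statement is literally the same as the one produced by Proposition \ref{prop-Rm-T^2}, which is clear from the argument above.
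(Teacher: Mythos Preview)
Your proposal is correct and matches the paper's approach exactly: the paper simply states that the corollary follows from combining Theorem~\ref{thm-blowup} and Proposition~\ref{prop-Rm-T^2}, and your contradiction argument is the natural way to spell this out. One very minor point: to conclude existence on the \emph{closed} interval $[0,\tfrac{1}{CK}]$ you should assume $T_0\leq \tfrac{1}{CK}$ (rather than strict inequality) for the contradiction, but your argument goes through unchanged in that case.
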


\section{Uniqueness}\label{sec:unique} 

In this section, we will use the ideas in \cite{kot1,kot2} to prove Theorem \ref{mainthm-uniq}: the forwards and backwards uniqueness property of the Laplacian flow.

If $\varphi(t)$, $\tilde{\varphi}(t)$ are two smooth solutions to the flow \eqref{Lap-flow-def}  on a compact manifold $M^7$ for $t\in [0,\epsilon], \epsilon>0$, there exists a constant $K_0$ such that
\begin{equation}\label{Lambda-tLambda-bounds}
  \sup_{M\times [0,\epsilon]}\left(\Lambda(x,t)+\widetilde{\Lambda}(x,t)\right)\leq K_0,
\end{equation}
adopting the obvious notation for quantities determined by $\varphi(t)$ and $\tilde{\varphi}(t)$.
By the Shi-type estimate \eqref{shi-0}, there is a constant $K_1$ depending on $K_0$ such that
\begin{equation}\label{curvature-bounds}
  \sum_{k=0}^2\left(|\nabla^kRm|_{g(t)}+|\tilde{\nabla}^k\widetilde{Rm}|_{\tilde{g}(t)}\right)+\sum_{k=0}^3\left(|\nabla^kT|_{g(t)}+|\tilde{\nabla}^k\widetilde{T}|_{\tilde{g}(t)}\right)\leq K_1
\end{equation}
on $M\times [0,\epsilon]$.  The uniform curvature bounds from \eqref{curvature-bounds} imply that $g(t)$ and $\tilde{g}(t)$ are uniformly equivalent on $M\times [0,\epsilon]$, so the norms $|\cdot|_{g(t)}$ and $|\cdot|_{\tilde{g}(t)}$ only differ by a uniform constant on $M\times [0,\epsilon]$. We deduce the following from \eqref{curvature-bounds}.

\begin{lem}\label{lem-uniq-1}
The inverse $\tilde{g}^{-1}$ of the metric $\tilde{g}$, $\widetilde{\nabla}^k\widetilde{Rm}$ for $0\leq k\leq 2$ and $\widetilde{\nabla}^k\widetilde{T}$ for
$0\leq k\leq 3$ are uniformly bounded with respect to $g(t)$ on $[0,\epsilon]$.
\end{lem}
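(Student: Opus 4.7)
The plan is to deduce the lemma as an essentially immediate corollary of the uniform equivalence of the two metric families $g(t)$ and $\tilde g(t)$ on $M\times[0,\epsilon]$ asserted in the sentence preceding the lemma statement. So the work splits into two parts: (i) make that equivalence precise, and (ii) use it to translate the bounds in \eqref{curvature-bounds}, which are stated with respect to $\tilde g(t)$, into bounds with respect to $g(t)$.

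For (i), I would begin with the metric evolution equation \eqref{flow-g2},
\begin{equation*}
  \frac{\partial}{\partial t} g_{ij} = -2R_{ij} - \tfrac{2}{3}|T|^2 g_{ij} - 4T_i^{\,k}T_{kj},
\end{equation*}
and its analogue for $\tilde g$. Since \eqref{curvature-bounds} gives uniform bounds on $|Rm|_{g}$, $|T|_g$ and their tilded counterparts, there is a constant $C = C(K_1)$ with
\begin{equation*}
  \biggl|\frac{\partial g}{\partial t}\biggr|_{g(t)} \leq C, \qquad \biggl|\frac{\partial \tilde g}{\partial t}\biggr|_{\tilde g(t)} \leq C.
\end{equation*}
A standard Gronwall argument (applied to $\phi(t) = g(t)(v,v)$ for a fixed tangent vector $v$, and similarly for $\tilde\phi$) then gives
\begin{equation*}
  e^{-C|t-s|}\, g(s) \leq g(t) \leq e^{C|t-s|}\, g(s)
\end{equation*}
on $M\times[0,\epsilon]$, and likewise for $\tilde g$. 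Under the hypothesis of Theorem \ref{mainthm-uniq} we have $g(s) = \tilde g(s)$ at some $s\in[0,\epsilon]$, so combining the two Gronwall estimates yields a constant $C_0 = e^{2C\epsilon}$ with
\begin{equation*}
  C_0^{-1}\, g(t) \,\leq\, \tilde g(t) \,\leq\, C_0\, g(t) \qquad \text{on } M\times[0,\epsilon].
\end{equation*}

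For (ii), this uniform equivalence of metrics yields the same uniform equivalence of the inverse metrics, which directly gives the bound $|\tilde g^{-1}|_{g(t)} \leq C$. More generally, for any tensor $A$ of total valence $N=r+s$ we have $|A|_{g(t)} \leq C_0^{N/2}\, |A|_{\tilde g(t)}$, since each index raised or lowered by the metric contributes at most a factor of $C_0^{1/2}$. Applying this to $A = \widetilde\nabla^k\widetilde{Rm}$ for $0\leq k\leq 2$ and $A = \widetilde\nabla^k\widetilde T$ for $0\leq k\leq 3$, and invoking \eqref{curvature-bounds}, produces the desired bounds in the $g(t)$-norm.

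The only step requiring any genuine care is the Gronwall comparison in (i), and specifically the need to have some reference time at which the two metrics agree; this is supplied by the hypothesis of Theorem \ref{mainthm-uniq}. Every other step is pure tensor bookkeeping, so no serious obstacle is expected.
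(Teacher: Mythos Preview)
Your proposal is correct and follows the same route as the paper: establish uniform equivalence of $g(t)$ and $\tilde g(t)$, then transfer the $\tilde g$-bounds in \eqref{curvature-bounds} to $g$-bounds by elementary norm comparison. The paper treats the lemma as immediate from the sentence preceding it, and your write-up simply fills in those details.

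One minor point worth noting: you invoke the hypothesis $\varphi(s)=\tilde\varphi(s)$ of Theorem~\ref{mainthm-uniq} to obtain a common reference time for the two metric families, whereas the paper attributes the uniform equivalence to the curvature bounds alone. This is not a gap in your argument, but it is slightly more than you need: since $M$ is compact, any two smooth metrics on $M$ are equivalent, so after your Gronwall step gives $g(t)\sim g(0)$ and $\tilde g(t)\sim\tilde g(0)$, compactness already yields $g(0)\sim\tilde g(0)$ without appealing to the uniqueness hypothesis. This keeps the lemma logically independent of the hypothesis it is ultimately used to verify.
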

We will use this fact frequently in the following calculation. We continue to let $A*B$ denote some contraction of two tensors $A,B$ using $g(t)$.
We also recall that if $\varphi(s)=\tilde{\varphi}(s)$ for some $s\in [0,\epsilon]$, then the induced metrics also satisfy $g(s)=\tilde{g}(s)$.

\subsection{Forward uniqueness}\label{sec:for-uniq}
We begin by showing forward uniqueness of the flow as claimed in Theorem \ref{mainthm-uniq}; namely, that if $\varphi(s)=\tilde{\varphi}(s)$ for some $s\in[0,\epsilon]$ then $\varphi(t)=\tilde{\varphi}(t)$
for all $t\in[s,\epsilon]$.
The strategy to show this, inspired by \cite{kot2}, is to define an energy quantity $\mathcal{E}(t)$ by
\begin{align}\label{energy-def}
  \mathcal{E}(t)=&\int_M\biggl(|\phi(t)|^2_{g(t)}+|h(t)|^2_{g(t)}+|A(t)|^2_{g(t)}+|U(t)|^2_{g(t)}\nonumber\\
  &\qquad+|V(t)|^2_{g(t)}+|S(t)|^2_{g(t)}\biggr) 
vol_{g(t)},
\end{align}
and show that $\mathcal{E}(t)$ satisfies a differential inequality which implies that $\mathcal{E}(t)$ vanishes identically if $\mathcal{E}(0)=0$ initially. Here in the definition \eqref{energy-def} of $\mathcal{E}(t)$,
\begin{gather*}
\phi=\varphi-\tilde{\varphi}, \quad h=g-\tilde{g},\quad A=\nabla-\widetilde{\nabla},\\
U=T-\widetilde{T},\quad V=\nabla T-\widetilde{\nabla}\widetilde{T}, \quad S=Rm-\widetilde{Rm}.
\end{gather*}  In local coordinates, we have $A_{ij}^k=\Gamma_{ij}^k-\widetilde{\Gamma}_{ij}^k$, $U_{ij}=T_{ij}-\widetilde{T}_{ij}$, $V_{ijk}=\nabla_iT_{jk}-\widetilde{\nabla}_i\widetilde{T}_{jk}$ and $S_{ijk}^{\quad l}=R_{ijk}^{\quad l}-\widetilde{R}_{ijk}^{\quad l}$.

We begin by deriving inequalities for the derivatives of the quantities in the integrand defining $\mathcal{E}(t)$.

\begin{lem}\label{lem-forward-uniq-1}
We have the following inequalities:
\begin{align}
   \biggl|\frac{\pt}{\pt t}\phi(t)\biggr|_{g(t)} &\leq  C(|V(t)|_{g(t)}+|A(t)|_{g(t)}); \label{evl-phit-0}\\
   \biggl|\frac{\pt}{\pt t}h(t)\biggr|_{g(t)} &\leq  C(|S(t)|_{g(t)}+|h(t)|_{g(t)}+|U(t)|_{g(t)});\label{evl-ht-0}\\
  \left|\frac{\pt}{\pt t}A(t)\right|_{g(t)} &\leq  C\big(|A(t)|_{g(t)}+|h(t)|_{g(t)}\nonumber\\
&\qquad\;+|U(t)|_{g(t)}+|V(t)|_{g(t)}+|\nabla S(t)|_{g(t)}\big);\label{evl-At-0}\displaybreak[0]\\
   \left|\frac{\pt}{\pt t}U(t)\right|_{g(t)}&\leq C\big(|\phi(t)|_{g(t)}+|A(t)|_{g(t)}+|U(t)|_{g(t)}+|S(t)|_{g(t)} \nonumber \\
   &\qquad\; +|\nabla V(t)|_{g(t)}+|V(t)|_{g(t)}\big);\label{evl-Ut-0}
\end{align}
\begin{align}\label{evl-Vt-0}
   \bigg|\frac{\pt}{\pt t}V(t)&-\Delta V(t)-\Div \mathcal{V}(t)\bigg|_{g(t)}\nonumber\\
&\leq C\big(|V(t)|_{g(t)}+|A(t)|_{g(t)}+|U(t)|_{g(t)}+|S(t)|_{g(t)} \nonumber \\   & \qquad \;
+|h(t)|_{g(t)}+|\phi(t)|_{g(t)}+|\nabla S(t)|_{g(t)}+|\nabla V(t)|_{g(t)}\big),
\end{align}
where $\mathcal{V}$ given by $\mathcal{V}^a_{\;\,ijk}=(g^{ab}\nabla_b-\tilde{g}^{ab}\widetilde{\nabla}_b)\widetilde{\nabla}_i\widetilde{T}_{jk}$ satisfies
\begin{equation*}
  |\mathcal{V}(t)|_{g(t)}\leq C(|h(t)|_{g(t)}+|A(t)|_{g(t)});
\end{equation*}
and
\begin{align}\label{evl-St-0}
   \bigg|\frac{\pt}{\pt t} &S(t)-\Delta S(t)-\Div \mathcal{S}(t)\bigg|_{g(t)}\nonumber\\
&\leq C\big(|V(t)|_{g(t)}+|A(t)|_{g(t)}+|U(t)|_{g(t)}+|S(t)|_{g(t)}+|\nabla V(t)|_{g(t)}\big),
\end{align}
where $\mathcal{S}_{\;\,ijk}^{a\quad\! l}=(g^{ab}\nabla_b-\tilde{g}^{ab}\widetilde{\nabla}_b)\widetilde{Rm}_{ijk}^{\quad l}$ satisfies
\begin{equation*}
  |\mathcal{S}(t)|_{g(t)}\leq C(|h(t)|_{g(t)}+|A(t)|_{g(t)}).
\end{equation*}
In the above inequalities, $\nabla$, $\Delta$ and  $\Div$ are the Levi-Civita connection, Laplacian  and divergence on $M$ with respect to $g(t)$ and
 $C$ denotes uniform constants depending on $K_1$ given in \eqref{curvature-bounds}. 
\end{lem}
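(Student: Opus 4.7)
The plan is to derive each of the six inequalities by computing $\partial_t$ of the relevant difference quantity using the evolution equations from Section 3, subtracting between the two flows, and expressing everything in terms of the listed difference quantities. The background quantities that appear in the computation are controlled uniformly by Lemma \ref{lem-uniq-1}, and the metrics $g(t),\tilde g(t)$ are equivalent, so all products involving only bounded factors can be absorbed into the constants $C$. The algebraic backbone will be the bilinear identity $A*B - \tilde A*\tilde B = (A-\tilde A)*B + \tilde A*(B-\tilde B)$, together with $|\psi - \tilde\psi| \leq C|\phi|$ (which holds because $\psi = *_\varphi\varphi$ is a smooth function of $\varphi$ and $\varphi,\tilde\varphi$ are uniformly bounded).

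The first four inequalities are direct applications of this scheme. For $\phi$, I use the local form \eqref{lap-varphi-0} and $\tau = -2T$ to write $\partial_t\phi$ as a linear expression in $\nabla T - \tilde\nabla\tilde T = V + A*\tilde T$. For $h$, subtracting the two versions of \eqref{flow-g2} reduces to $\mathrm{Ric} - \widetilde{\mathrm{Ric}}$ (contributing $O(|S| + |h|)$) and differences of $|T|^2 g$ and $T*T$ (contributing $O(|U| + |h|)$). For $A$, differentiating $\Gamma - \tilde\Gamma$ via $\partial_t\Gamma = \tfrac12 g^{-1}\nabla(\partial_t g)$ gives $\nabla\mathrm{Ric} - \tilde\nabla\widetilde{\mathrm{Ric}}$ at top order, which is $\nabla S + O(|A|)$, the remaining terms being handled as above. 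For $U$, the schematic form \eqref{evl-torsion-1} gives $\partial_t U = \Delta T - \tilde\Delta\tilde T + \text{product differences}$; adding and subtracting $g^{ij}\tilde\nabla_i\tilde\nabla_j\tilde T$ handles the Laplacian difference, yielding $g^{ij}\nabla_i V_j + O(|A|+|h|)$ and producing the $|\nabla V|$ term on the right.

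The heart of the argument is \eqref{evl-Vt-0} and \eqref{evl-St-0}, where the divergence structure must be isolated. From \eqref{evl-nabla-T1} the principal part of $\partial_t V$ is $\Delta\nabla T - \tilde\Delta\tilde\nabla\tilde T$, which I would decompose as $\Delta V + \bigl[g^{ab}\nabla_a\nabla_b - \tilde g^{ab}\tilde\nabla_a\tilde\nabla_b\bigr]Y$ with $Y = \tilde\nabla\tilde T$. Using $\nabla g = 0$, the bracketed operator becomes
\[
\nabla_a\!\left(g^{ab}\nabla_b Y - \tilde g^{ab}\tilde\nabla_b Y\right)
+ \left[\nabla_a(\tilde g^{ab}\tilde\nabla_b Y) - \tilde g^{ab}\tilde\nabla_a\tilde\nabla_b Y\right];
\]
the first piece is exactly $\Div\mathcal{V}$ by definition, while the second reduces to $O(|A|)$ via $\nabla\tilde g = -A*\tilde g$ and $\nabla Z = \tilde\nabla Z + A*Z$ together with the Lemma \ref{lem-uniq-1} bound on $\tilde\nabla^2\tilde T$. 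The bound $|\mathcal{V}| \leq C(|h| + |A|)$ follows from the splitting $\mathcal{V} = (g^{-1} - \tilde g^{-1})\tilde\nabla Y + g^{-1}(\nabla - \tilde\nabla)Y$. All remaining lower-order terms from \eqref{evl-nabla-T1} break up via the product identity into the required combination, with $|\nabla S|$ arising from $\nabla Rm - \tilde\nabla\widetilde{Rm} = \nabla S + A*\widetilde{Rm}$ appearing in $\nabla Rm * T$; the $S$ inequality is completely analogous using \eqref{Rm} in place of \eqref{evl-nabla-T1}. The main obstacle is this divergence extraction: it is essential that the non-divergence remainder contains no uncontrolled terms like $\nabla A$ or $\nabla h$, and verifying that such a priori bad terms actually collapse to $O(|A|)$ via $\nabla g = 0$ is the delicate point; everything else is mechanical bookkeeping modulo Lemma \ref{lem-uniq-1}.
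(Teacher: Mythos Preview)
Your proposal is correct and follows essentially the same approach as the paper. The paper uses precisely the same decomposition of $\Delta(\nabla T)-\widetilde{\Delta}(\widetilde{\nabla}\widetilde{T})$ into $\Delta V + \Div\mathcal{V} + A*\widetilde{\nabla}^2\widetilde{T}$ that you describe, relying on the identities $g^{-1}-\tilde g^{-1}=\tilde g^{-1}*h$, $\nabla h=A*\tilde g$, $\nabla\tilde g^{-1}=\tilde g^{-1}*A$, and handles the lower-order product differences term by term exactly as you outline; your treatment of the $U$ inequality is in fact slightly more careful than the paper's, since you correctly pick up the $O(|h|)$ contribution from $(g^{ab}-\tilde g^{ab})\widetilde{\nabla}_a\widetilde{\nabla}_b\widetilde{T}$ that the paper's schematic display omits (harmlessly, since $|h|^2$ already appears in $\mathcal{E}$).
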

\proof
We have the following basic facts:
\begin{gather*}
  g^{ij}-\tilde{g}^{ij}=-g^{ik}\tilde{g}^{jl}h_{kl},\;\, \nabla_ih_{jk}=A_{ij}^l\tilde{g}_{lk}+A_{ik}^l\tilde{g}_{jl},\;\,
 \nabla_k\tilde{g}^{ij}=A_{kl}^i\tilde{g}^{lj}+A_{kl}^j\tilde{g}^{il}.
\end{gather*}
The above equations can be expressed schematically as
\begin{equation}\label{g-invers-diff-3}
 g^{-1}-\tilde{g}^{-1}=\tilde{g}^{-1}*h, \quad \nabla h=A*\tilde{g}, \quad \nabla \tilde{g}^{-1}=\tilde{g}^{-1}*A.
\end{equation}
We now calculate the evolution equations of $\phi,h,A,U,S$ on $M\times [0,\epsilon]$.

From the Laplacian flow equation \eqref{Lap-flow-def}  and \eqref{lap-varphi}, we have
\begin{align*}
   \frac{\pt}{\pt t}\phi=  \Delta_{\varphi}\varphi-\Delta_{\tilde{\varphi}}\tilde{\varphi} =d\tau-d\widetilde{\tau}.
\end{align*}
This satisfies the estimate
\begin{align*}
  \biggl|\frac{\pt}{\pt t}\phi\biggr|_{g(t)} &\leq  C|\nabla U(t)|_{g(t)}=C|\nabla T-\widetilde{\nabla}\widetilde{T}+(\widetilde{\nabla}-\nabla)\widetilde{T}|_{g(t)}\nonumber \\
  &\leq C|V(t)|_{g(t)}+C|A(t)|_{g(t)}|\widetilde{T}|_{g(t)}\leq C(|V(t)|_{g(t)}+|A(t)|_{g(t)}),
\end{align*}
where we used the fact that $|\widetilde{T}|_{g(t)}$ is bounded due to Lemma \ref{lem-uniq-1}. We thus obtain the inequality \eqref{evl-phit-0}.

From the evolution equation \eqref{flow-g2} for the metric, we have in coordinates
\begin{align}\label{evl-ht-1}
   \frac{\pt}{\pt t}h_{ik} &=-2(R_{ik}-\widetilde{R}_{ik})-\frac 23(|T|^2_{g(t)}g_{ik}-|\widetilde{T}|^2_{\tilde{g}(t)}\tilde{g}_{ik})-4(T_i^{\,\,j}T_{jk}-\widetilde{T}_i^{\,\,j}\widetilde{T}_{jk})\nonumber\\
  &=  -2S_{ijk}^{\quad j}-\frac 23|\widetilde{T}|^2_{\tilde{g}(t)}h_{ij}-\frac 23(|T|^2_{g(t)}-|\widetilde{T}|^2_{\tilde{g}(t)}){g}_{ij}\nonumber\\  &\quad
-4(g^{jl}T_{il}T_{jk}-\tilde{g}^{jl}\widetilde{T}_{il}\widetilde{T}_{jk}).
\end{align}
Since
\begin{align*}
|T|^2_{g(t)}-|\widetilde{T}|^2_{\tilde{g}(t)}&= T_{ij}T_{kl}g^{ik}g^{jl}- \widetilde{T}_{ij}\widetilde{T}_{kl}\tilde{g}^{ik}\tilde{g}^{jl}\\
  &= (T_{ij}+\widetilde{T}_{ij})U_{kl}g^{ik}g^{jl}+\widetilde{T}_{ij}\widetilde{T}_{kl}(g^{ik}+\tilde{g}^{ik})(g^{jl}-\tilde{g}^{jl})\\
  &=(T+\widetilde{T})*U+\widetilde{T}*\widetilde{T}*(g^{-1}+\tilde{g}^{-1})*h
\end{align*}
and
\begin{align*}
  g^{jl}T_{il}T_{jk}-\tilde{g}^{jl}\widetilde{T}_{il}\widetilde{T}_{jk}&=  U_{il}T_{jk}g^{jl}+\widetilde{T}_{il}U_{jk}g^{jl} +(g^{jl}-\tilde{g}^{jl})\widetilde{T}_{il}\widetilde{T}_{jk}\\
   &= (T+\widetilde{T})*U+\widetilde{T}*\widetilde{T}*\tilde{g}^{-1}*h,
\end{align*}
we obtain from \eqref{evl-ht-1} that
\begin{equation}\label{evl-ht}
   \frac{\pt}{\pt t}h=-2\check{S}-\frac 23|\widetilde{T}|_{\tilde{g}}^2h+(T+\widetilde{T})*U*h+(T+\widetilde{T})*U*\tilde{g}+\widetilde{T}*\widetilde{T}*(g^{-1}+\tilde{g}^{-1})*h,
\end{equation}
where $\check{S}_{ik}=S_{ijk}^{\quad j}$.  
Then \eqref{evl-ht-0} follows from \eqref{evl-ht} and Lemma \ref{lem-uniq-1}.

Recall that under the evolution \eqref{flow-g2} of $g(t)$, the connection evolves by
\begin{equation*}
  \frac{\pt}{\pt t}\Gamma_{ij}^k=\frac 12g^{kl}(\nabla_i\eta_{jl}+\nabla_j\eta_{il}-\nabla_l\eta_{ij}),
\end{equation*}
where schematically
\[
\eta=-2Ric-\frac{2}{3}|T|^2_gg-2T*T.
\]
Thus, the tensor $A_{ij}^k=\Gamma_{ij}^k-\widetilde{\Gamma}_{ij}^k$ satisfies
\begin{align}\label{evl-At}
   \frac{\pt}{\pt t}A&=  \tilde{g}^{-1} *\widetilde{\nabla}(\widetilde{Ric}+\frac 13|\widetilde{T}|^2_{\tilde{g}}\tilde{g}+\widetilde{T}*\widetilde{T})-g^{-1}*\nabla(Ric+\frac 13|T|^2_gg+T*T)\nonumber\\
   &=(\tilde{g}^{-1}-g^{-1})*\widetilde{\nabla}\widetilde{Rm}+(\widetilde{\nabla}-\nabla)*\widetilde{Rm}+g^{-1}*\nabla(\widetilde{Rm}-Rm)\nonumber\\
   &\quad+(\tilde{g}^{-1}-{g}^{-1})*\widetilde{T}*\widetilde{\nabla}\widetilde{T}*\tilde{g}^{-1}+(\widetilde{\nabla}\widetilde{T}-\nabla T)*\widetilde{T}*\tilde{g}^{-1}\nonumber\\
   &\quad+\nabla T*(\widetilde{T}-T)*\tilde{g}^{-1}+\nabla T*T*(\tilde{g}^{-1}-{g}^{-1})\nonumber\\
   &=\tilde{g}^{-1}*h*\widetilde{\nabla}\widetilde{Rm}+A*\widetilde{Rm}+g^{-1}*\nabla S+\tilde{g}^{-1}*h*\widetilde{T}*\widetilde{\nabla}\widetilde{T}*\tilde{g}^{-1}\nonumber\\
   &\quad +V*\widetilde{T}*\tilde{g}^{-1}+\nabla T* U*\tilde{g}^{-1}+\nabla T*T*\tilde{g}^{-1}*h,
\end{align}
which gives \eqref{evl-At-0}.

From the evolution equation \eqref{evl-torsion-1} of $T$, we have
\begin{align*}
  \frac{\pt}{\pt t}U &=\frac{\pt}{\pt t} T-\frac{\pt}{\pt t}\widetilde{T}\\
  &=A*\widetilde{\nabla}\widetilde{T}+\nabla V+S*(\widetilde{T}+\widetilde{T}*\tilde{\psi})+U*(Rm+Rm*\tilde{\psi})\\
  &\quad+{Rm}*{T}*(\tilde{\psi}-\psi)+V*\widetilde{T}*\tilde{\varphi}+\nabla T*U*\tilde{\varphi}\\
  &\quad+\nabla T*T*\phi+U*(T*T+\widetilde{T}*T+\widetilde{T}*\widetilde{T}).
\end{align*}
Noting that
\begin{equation*}
 |\tilde{\psi}-\psi|\leq C|\tilde{\varphi}-\varphi|=C|\phi|,
\end{equation*}
we see that \eqref{evl-Ut-0} follows from the evolution equation for $U$.

We next compute the evolution of $V$ using \eqref{evl-nabla-T1}.  We start by seeing that
\begin{align*}
\Delta(\nabla T)-\widetilde{\Delta}(\widetilde{\nabla} \widetilde{T})&=\nabla_{a}g^{ab}\nabla_b(\nabla T)-\widetilde{\nabla}_a\tilde{g}^{ab}\widetilde{\nabla}_b(\widetilde{\nabla} \widetilde{T})\\
&=\nabla_ag^{ab}\nabla_b(\nabla T-\widetilde{\nabla}\widetilde{T})
+\nabla_a(g^{ab}\nabla_b-\tilde{g}^{ab}\widetilde{\nabla}_b)(\widetilde{\nabla}\widetilde{T})\\
&\quad+(\nabla_a-\widetilde{\nabla}_a)(\tilde{g}^{ab}\widetilde{\nabla}_b(\widetilde{\nabla} \widetilde{T}))\\
&=\Delta V+\nabla_a(g^{ab}\nabla_b\widetilde{\nabla}\widetilde{T}-\tilde{g}^{ab}\widetilde{\nabla}_b\widetilde{\nabla}\widetilde{T})+A*\widetilde{\nabla}^2\widetilde{T}.
\end{align*}
The second terms from \eqref{evl-nabla-T1} give schematically that
\begin{align*}
&\nabla Rm*(T+T*\psi)-\widetilde{\nabla}\widetilde{Rm}*(\tilde{T}+\tilde{T}*\tilde{\psi})\\
&=\nabla Rm*(U+U*\tilde{\psi}+T*(\psi-\tilde{\psi}))
+(\nabla Rm-\widetilde{\nabla}\widetilde{Rm})*(\widetilde{T}+\widetilde{T}*\tilde{\psi})\\
&=\nabla Rm*(U+U*\tilde{\psi}+T*(\psi-\tilde{\psi}))
+(A*\widetilde{Rm}+\nabla S)*(\widetilde{T}+\widetilde{T}*\tilde{\psi}).
\end{align*}
Similarly, the third and fourth terms from \eqref{evl-nabla-T1} yield
\begin{align*}
&\nabla T*(Rm+Rm*\psi)-\widetilde{\nabla}\widetilde{T}*(\widetilde{Rm}+\widetilde{Rm}*\tilde{\psi})\\
&=\nabla T*(S+S*\tilde{\psi}+Rm*(\psi-\tilde{\psi}))+
V*(\widetilde{Rm}+\widetilde{Rm}*\tilde{\psi}).
\end{align*}
and
\begin{align*}
& Rm*T*T*\varphi-\widetilde{Rm}*\widetilde{T}*\widetilde{T}*\tilde{\varphi}\\
&=Rm*T*T*\phi+Rm*U*(T+\tilde{T})*\tilde{\varphi}+S*\widetilde{T}*\widetilde{T}*\tilde{\varphi}.
\end{align*}
We now observe that
\begin{align*}
g^{ab}\nabla_b\nabla_i  T_{jk}-\tilde{g}^{ab}\widetilde{\nabla}_a\widetilde{\nabla}_i\widetilde{T}_{jk}=g^{ab}\nabla_bV_{ijk}+g^{ab}\nabla_b\widetilde{\nabla}_i\widetilde{T}_{jk}-\tilde{g}^{ab}\widetilde{\nabla}_b\widetilde{\nabla}_i\widetilde{T}_{jk}
\end{align*}
and, by virtue of \eqref{g-invers-diff-3}, the last term is given schematically as
\begin{equation}\label{est-V-uniq}
  \mathcal{V}^a_{\,\,\,ijk}=g^{ab}\nabla_b\widetilde{\nabla}_i\widetilde{T}_{jk}-\tilde{g}^{ab}\widetilde{\nabla}_b\widetilde{\nabla}_i\widetilde{T}_{jk}=\big(\tilde{g}^{-1}*h*\widetilde{\nabla}^2\widetilde{T}+A*\widetilde{\nabla}\widetilde{T}\big)^a_{\;\,ijk}.
\end{equation}
Hence, the fifth terms in \eqref{evl-nabla-T1} give
\begin{align*}
&\nabla^2T*T*\varphi-\widetilde{\nabla}^2\widetilde{T}*\widetilde{T}*\tilde{\varphi}\\
&=\nabla^2T*T*\phi+\nabla^2T*U*\tilde{\varphi}
+(\nabla^2T-\widetilde{\nabla}^2\widetilde{T})*\widetilde{T}*\tilde{\varphi}\\
&=\nabla^2T*T*\phi+\nabla^2T*U*\tilde{\varphi}
+(\nabla V+h*\widetilde{\nabla}^2\widetilde{T}+A*\widetilde{\nabla}\widetilde{T})*\widetilde{T}*\tilde{\varphi}
\end{align*}
The sixth terms in \eqref{evl-nabla-T1} yield
\begin{align*}
\nabla T*\nabla T*\varphi-\widetilde{\nabla}\widetilde{T}*\widetilde{\nabla}\widetilde{T}*\tilde{\varphi}=
V*(\nabla T+\widetilde{\nabla}\widetilde{T})
*\tilde{\varphi}+\nabla T*\nabla T*\phi.
\end{align*}
For the remaining terms in \eqref{evl-nabla-T1} we observe that
\begin{align*}
\nabla T*T*T-\widetilde{\nabla}\widetilde{T}*\widetilde{T}*\widetilde{T}
&=V*\widetilde{T}*\widetilde{T}+\nabla T*(T+\widetilde{T})*U.
\end{align*}
Altogether, we find the evolution equation for $V$:
\begin{align*}
  \frac{\pt}{\pt t}V &=\frac{\pt}{\pt t}\nabla T-\frac{\pt}{\pt t}\widetilde{\nabla}\widetilde{T} \displaybreak[0]\\
  &=\Delta V+\nabla_a(g^{ab}\nabla_b\widetilde{\nabla}\widetilde{T}-\tilde{g}^{ab}\widetilde{\nabla}_b\widetilde{\nabla}\widetilde{T})+A*\widetilde{\nabla}^2\widetilde{T}\nonumber\\ &+(A*\widetilde{Rm}+\nabla S)*(\widetilde{T}+\widetilde{T}*\tilde{\psi})+\nabla Rm*(U+U*\tilde{\psi}+T*(\psi-\tilde{\psi}))\nonumber\\
  &+V*(\widetilde{Rm}+\widetilde{Rm}*\tilde{\psi})+\nabla T*(S+S*\tilde{\psi}+Rm*(\psi-\tilde{\psi}))\nonumber\\
  &+S*\widetilde{T}^2*\tilde{\varphi}+Rm*U*(T+\widetilde{T})*\tilde{\varphi}+Rm*T^2*\phi\nonumber\\
  &+(A*\widetilde{\nabla}\widetilde{T}+h*\widetilde{\nabla}^2\widetilde{T}+\nabla V)*\widetilde{T}*\tilde{\varphi}+\nabla^2T*U*\tilde{\varphi}+\nabla^2T*T*\phi\nonumber\\
  &+V*(\widetilde{\nabla}\widetilde{T}+\nabla T)*\tilde{\varphi}+\nabla T*\nabla T*\phi+V*(\widetilde{T}^2+\widetilde{T}^2*\tilde{\psi})\\
  &+\nabla T*T^2*(\tilde{\psi}-\psi)+\nabla T*(T+\widetilde{T})*(U+U*\tilde{\psi}).\nonumber
\end{align*}
We thus obtain \eqref{evl-Vt-0} as claimed.

Finally, we compute the evolution of $S$ using the evolution \eqref{Rm} for $Rm$:
\begin{align*}
    \frac{\pt}{\pt t}S &=  \frac{\pt}{\pt t} Rm-\frac{\pt}{\pt t}\widetilde{Rm}\nonumber\\
     &=\Delta S+\nabla_a(g^{ab}\nabla_b\widetilde{Rm}-\tilde{g}^{ab}\widetilde{\nabla}_b\widetilde{Rm})+A*\widetilde{\nabla}\widetilde{Rm}+S*(Rm+\widetilde{Rm})\nonumber\\
   &\quad+S*T^2+\widetilde{Rm}*U*(T+\widetilde{T})+(A*\widetilde{\nabla}\widetilde{T}+\nabla V)*\widetilde{T} \\&\quad
+\nabla^2T*U+V*(\nabla T+\widetilde{\nabla}\widetilde{T}),\nonumber
\end{align*}
where we used
\begin{equation}\label{est-S-uniq}
  \mathcal{S}_{\;\,ijk}^{a\quad\! l}=g^{ab}\nabla_b\widetilde{Rm}_{ijk}^{\quad l}-\tilde{g}^{ab}\widetilde{\nabla}_b\widetilde{Rm}_{ijk}^{\quad l}=\big(\tilde{g}^{-1}*h*\widetilde{\nabla}\widetilde{Rm}+A*\widetilde{Rm}\big)_{\;\,ijk}^{a\quad l}.
\end{equation}
We thus obtain \eqref{evl-St-0} as required. 
\endproof

We now use Lemma \ref{lem-forward-uniq-1} to obtain a differential inequality for $\mathcal{E}(t)$.

\begin{lem}\label{lem-forward-uniq}
The quantity $\mathcal{E}(t)$ defined by \eqref{energy-def} satisfies
\begin{align*}
  \frac d{dt}\mathcal{E}(t) \leq C\mathcal{E}(t),
\end{align*}
where $C$ is a uniform constant depending only on $K_0$ given in \eqref{Lambda-tLambda-bounds}.
\end{lem}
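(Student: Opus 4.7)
\proof[Proof proposal]
The plan is to differentiate $\mathcal{E}(t)$ under the integral sign, handle each of the six summands separately, and combine the resulting pointwise and integral bounds to obtain the claimed Gronwall-type inequality. Throughout, all norms are taken with respect to $g(t)$, and we will repeatedly use that $g(t)$ and $\tilde g(t)$ are uniformly equivalent on $M\times[0,\epsilon]$ together with the curvature/torsion bounds \eqref{curvature-bounds} and Lemma \ref{lem-uniq-1}, so that all ``background'' quantities in the inequalities of Lemma \ref{lem-forward-uniq-1} can be absorbed into constants depending only on $K_0$.

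First I would observe that for any time-dependent tensor $W$,
\begin{equation*}
\frac{\partial}{\partial t}\bigl(|W|^2_{g(t)}\,\mathrm{vol}_{g(t)}\bigr)
= \Bigl(2\langle W,\tfrac{\partial}{\partial t}W\rangle + W*W*\tfrac{\partial}{\partial t}g\Bigr)\,\mathrm{vol}_{g(t)}
+ |W|^2\cdot\tfrac{1}{2}\mathrm{tr}_g\bigl(\tfrac{\partial}{\partial t}g\bigr)\,\mathrm{vol}_{g(t)},
\end{equation*}
and since $\partial_t g$ is uniformly bounded by \eqref{flow-g2} and \eqref{curvature-bounds}, the ``metric variation'' contributions are pointwise dominated by $C|W|^2$. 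Applied to $W\in\{\phi,h,A,U\}$ together with the pointwise bounds \eqref{evl-phit-0}--\eqref{evl-Ut-0} and $2ab\le a^2+b^2$, each of the corresponding four terms in $\tfrac{d}{dt}\mathcal{E}(t)$ is bounded by $C\mathcal{E}(t)$ plus contributions of the form $\int_M(|\nabla S|^2+|\nabla V|^2)$ that must be absorbed using good negative terms coming from the $V$ and $S$ summands.

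Next I would handle the $V$ summand, which is where the divergence form of \eqref{evl-Vt-0} is crucial. Writing $\partial_t V = \Delta V + \Div\mathcal{V} + E_V$ with $|E_V|$ bounded by the right-hand side of \eqref{evl-Vt-0} minus the derivative terms, I integrate by parts:
\begin{align*}
\int_M \langle V,\Delta V\rangle\,\mathrm{vol}_{g(t)}
&= -\int_M |\nabla V|^2\,\mathrm{vol}_{g(t)},\\
\int_M \langle V,\Div\mathcal{V}\rangle\,\mathrm{vol}_{g(t)}
&= -\int_M \langle \nabla V,\mathcal{V}\rangle\,\mathrm{vol}_{g(t)}.
\end{align*}
Using $|\mathcal{V}|\le C(|h|+|A|)$ and Young's inequality, the second integral is bounded by $\tfrac{1}{8}\int|\nabla V|^2 + C\int(|h|^2+|A|^2)$. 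Handling $\langle V,E_V\rangle$ by Cauchy--Schwarz and Young, and absorbing the $|\nabla S|$ and $|\nabla V|$ terms by the same trick, yields
\begin{equation*}
\tfrac{d}{dt}\int_M |V|^2\,\mathrm{vol}_{g(t)}
\le -\tfrac{3}{2}\int_M |\nabla V|^2\,\mathrm{vol}_{g(t)} + \tfrac{1}{8}\int_M |\nabla S|^2\,\mathrm{vol}_{g(t)} + C\mathcal{E}(t).
\end{equation*}
The $S$ summand is treated identically using \eqref{evl-St-0} and the bound $|\mathcal{S}|\le C(|h|+|A|)$, producing an analogous inequality with the roles of $V$ and $S$ interchanged.

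Finally I would add the six contributions: the bad $\int|\nabla S|^2$ and $\int|\nabla V|^2$ terms accumulated from the $\phi,h,A,U,V,S$ pieces are all controlled by choosing the Young constants small enough, so that they are dominated by the negative $-\tfrac{3}{2}\int|\nabla V|^2$ and $-\tfrac{3}{2}\int|\nabla S|^2$ coming from integration by parts. What remains is precisely the estimate $\tfrac{d}{dt}\mathcal{E}(t)\le C\mathcal{E}(t)$. The main technical obstacle I anticipate is bookkeeping: the evolution inequalities \eqref{evl-At-0} and \eqref{evl-Ut-0} each contain one derivative more than the quantity being controlled, and the only way to close the estimate is to ensure these extra derivatives are absorbed by the negative Dirichlet-type terms from the $V$ and $S$ equations. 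The careful tracking of Young-inequality constants, so that all coefficients in front of $\int|\nabla V|^2$ and $\int|\nabla S|^2$ finally sum to something nonpositive, is the only subtle point; otherwise everything reduces to the pointwise estimates of Lemma \ref{lem-forward-uniq-1}.
\endproof
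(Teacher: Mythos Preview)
Your proposal is correct and follows essentially the same approach as the paper: differentiate each summand of $\mathcal{E}(t)$, use the pointwise bounds of Lemma~\ref{lem-forward-uniq-1}, integrate by parts on the $\Delta V$, $\Delta S$, $\Div\mathcal{V}$, $\Div\mathcal{S}$ terms to produce the good negative Dirichlet terms $-2\int(|\nabla V|^2+|\nabla S|^2)$, and then absorb the stray $|\nabla V|$ and $|\nabla S|$ contributions (coming from \eqref{evl-At-0}, \eqref{evl-Ut-0}, \eqref{evl-Vt-0}, \eqref{evl-St-0}) via Young's inequality with a small parameter. The only cosmetic difference is that the paper collects all six terms together before applying Young, whereas you treat them one at a time; the substance is identical.
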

\proof
Under the curvature and torsion bounds \eqref{curvature-bounds}, the evolution equations  of the metric \eqref{flow-g2} and  volume form \eqref{evl-volumeform}
imply
\begin{equation}
  \biggl|\frac{\pt}{\pt t}g(t)\biggr|_{g(t)}\leq C,\quad \biggl|\frac{\pt}{\pt t}
vol_{g(t)}\biggr|_{g(t)}\leq C.
\end{equation}
For any tensor $P(t)$ we therefore have
\begin{align*}
\frac{d}{dt}\int_M|P(t)|_{g(t)}^2 vol_{g(t)}&=\int_M\frac{\partial }{\partial t}g(t)\big(P(t),P(t)\big)+2\langle P(t),\frac{\partial}{\partial t}P(t)\rangle vol_{g(t)}\\
&\quad+\int_M|P(t)|_{g(t)}^2\frac{\partial}{\partial t}vol_{g(t)}\\
&\leq C\int_M |P(t)|_{g(t)}^2 vol_{g(t)}+2\int_M\langle P(t),\frac{\partial}{\partial t}P(t)\rangle vol_{g(t)}.
\end{align*}
Hence,
\begin{align*}
 \frac d{dt}\mathcal{E}(t) \leq C \mathcal{E}(t)&+2\int_M\biggl(\langle \phi(t),\frac{\pt}{\pt t}\phi(t)\rangle +\langle h(t),\frac{\pt}{\pt t}h(t)\rangle+\langle A(t),\frac{\pt}{\pt t}A(t)\rangle \\
  &+\langle U(t),\frac{\pt}{\pt t}U(t)\rangle+\langle V(t),\frac{\pt}{\pt t}V(t)\rangle+\langle S(t),\frac{\pt}{\pt t}S(t)\rangle\biggr)
vol_{g(t)}.
\end{align*}
We also observe that, by integration by parts, we have
\begin{align*}
\int_M\langle P(t),\Delta P(t)\rangle vol_{g(t)}=-\int_M |\nabla P(t)|_{g(t)}^2 vol_{g(t)}
\end{align*}
and, if $\mathcal{P}(t)$ is another tensor,
\begin{align*}
\int_M\langle P(t),\Div \mathcal{P}(t)\rangle vol_{g(t)}=-\int_M\langle \nabla P(t),\mathcal{P}(t)\rangle vol_{g(t)}.
\end{align*}
Using Lemma \ref{lem-forward-uniq-1}, including the estimates for $\mathcal{V}$ and $\mathcal{S}$, we may calculate: 
\begin{align*}
  \frac d{dt}\mathcal{E}(t) &\leq C\mathcal{E}(t)+C \int_M\biggl(|\phi(t)|^2_{g(t)}+|h(t)|^2_{g(t)}+|A(t)|^2_{g(t)}\\
  &\qquad\qquad\qquad\qquad +|U(t)|^2_{g(t)}+|V(t)|^2_{g(t)}+|S(t)|^2_{g(t)}\biggr)vol_{g(t)}\\
   &\quad-2\int_M(|\nabla S(t)|^2+|\nabla V(t)|^2)
vol_{g(t)} \displaybreak[0]\\
   &\quad+C\int_M|\nabla V(t)|(|h(t)|+|A(t)|+|U(t)|+|V(t)|+|S(t)|)
vol_{g(t)}\\
   &\quad+C\int_M|\nabla S(t)|(|h(t)|+|A(t)|+|V(t)|)
vol_{g(t)}.
\end{align*}
The second term is clearly bounded by $C\mathcal{E}(t)$.  
Now we use the negative third integral in the inequality to crucially cancel
the terms involving $\nabla V$ and $\nabla S$ arising from the fourth and fifth integrals via Young's inequality.  Concretely, for any $\epsilon>0$, we have
\begin{align*}
2&|\nabla V(t)|(|h(t)|+|A(t)|+|U(t)|+|V(t)|+|S(t)|)\\
&\leq \frac{5}{\epsilon}(|h(t)|^2+|A(t)|^2+|U(t)|^2+|V(t)|^2+|S(t)|^2)+\epsilon|\nabla V(t)|^2\\
2&|\nabla S(t)|(|h(t)|+|A(t)|+|V(t)|)\\
&\leq \frac{3}{\epsilon}(|h(t)|^2+|A(t)|^2+|V(t)|^2)+\epsilon|\nabla S(t)|^2,
\end{align*} 
so by choosing $\epsilon$ sufficiently we obtain
\begin{align*}
 \frac{d}{dt}\mathcal{E}(t) &\leq C\mathcal{E}(t)-\int_M(|\nabla S(t)|^2+|\nabla V(t)|^2)vol_{g(t)}\leq C\mathcal{E}(t)
\end{align*}
as claimed.
\endproof

The forward uniqueness property in Theorem \ref{mainthm-uniq} now follows immediately from Lemma \ref{lem-forward-uniq}. If $\varphi(s)=\tilde{\varphi}(s)$ for some $s\in [0,\epsilon]$, then $\mathcal{E}(s)=0$. Thus for $t\in [s,\epsilon]$, we can integrate the differential inequality in Lemma \ref{lem-forward-uniq} to obtain
\begin{equation*}
  \mathcal{E}(t)\leq e^{C(t-s)}\mathcal{E}(s)=0,
\end{equation*}
which implies that $\varphi(t)=\tilde{\varphi}(t)$ for all $t\in [s,\epsilon]$ as required.

\subsection{Backward uniqueness}
To complete the proof of Theorem \ref{mainthm-uniq}, we need to show backward uniqueness of the flow; i.e.~if $\varphi(s)=\tilde{\varphi}(s)$ for some $s\in [0,\epsilon]$, then $\varphi(t)=\tilde{\varphi}(t)$ for all $t\in [0,s]$. To this end, we apply a general backward uniqueness theorem \cite[Theorem 3.1]{kot1} for time-dependent sections of vector bundles satisfying certain differential inequalities. Since we only consider compact manifolds, we state \cite[Theorem 3.1]{kot1} here for this setting.
\begin{thm}\label{thm-kot-backuniq}
Let $M$ be a compact manifold and $g(t), t\in [0,\epsilon]$ be a family of smooth Riemannian metrics on $M$ with Levi-Civita connection $\nabla=\nabla_{g(t)}$. Assume that there exists a positive constant $C$ such that
 \begin{equation*}
  \biggl|\frac{\pt}{\pt t}g(t)\biggr|^2_{g(t)}+\biggl|\nabla\frac{\pt}{\pt t}g(t)\biggr|^2_{g(t)}\leq C,\quad \biggl|\frac{\pt}{\pt t}g^{-1}(t)\biggr|^2_{g(t)}+\biggl|\nabla\frac{\pt}{\pt t}g^{-1}(t)\biggr|^2_{g(t)}\leq C,
\end{equation*}
and that the Ricci curvature of the metric $g(t)$ is bounded below by a uniform constant, i.e.~$Ric(g(t))\geq -Kg(t)$ for some $K\geq 0$. Let $\mathcal{X}$ and $\mathcal{Y}$ be finite direct sums of the bundles $T_l^k(M)$, and $\textbf{X}(t)\in C^{\infty}(\mathcal{X})$, $\textbf{Y}(t)\in C^{\infty}(\mathcal{Y})$, for $t\in[0,\epsilon]$, be smooth families of sections satisfying
\begin{align}
   \left|\left(\frac{\pt}{\pt t}-\Delta_{g(t)}\right) \textbf{X}(t)\right|^2_{g(t)}&\leq C\biggl(|\textbf{X}(t)|^2_{g(t)}+|\nabla \textbf{X}(t)|^2_{g(t)}+|\textbf{Y}(t)|^2_{g(t)}\biggr),\label{evl-Xt}\\
   \left|\frac{\pt}{\pt t}\textbf{Y}(t)\right|^2_{g(t)}&\leq C\biggl(|\textbf{X}(t)|^2_{g(t)}+|\nabla \textbf{X}(t)|^2_{g(t)}+|\textbf{Y}(t)|^2_{g(t)}\biggr)\label{evl-Yt}
\end{align}
for some constant $C\geq 0$, where $\Delta_{g(t)}\textbf{X}(t)=g^{ij}(t)\nabla_i\nabla_j\textbf{X}(t)$ is the Laplacian with respect to $g(t)$ acting on tensors. Then $\textbf{X}(\epsilon)\equiv 0$, $\textbf{Y}(\epsilon)\equiv 0$ implies $\textbf{X}(t)\equiv 0$, $\textbf{Y}(t)\equiv 0$ on $M$ for all $t\in[0,\epsilon]$.
\end{thm}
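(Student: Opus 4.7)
The plan is to follow the Carleman-estimate strategy of \cite{kot1} (with the simplifications of \cite{kot3}). First I would reverse time by setting $\tau = \epsilon - t$, so that the vanishing hypothesis $\mathbf{X}(\epsilon) = \mathbf{Y}(\epsilon) = 0$ becomes an initial condition at $\tau = 0$; the inequality \eqref{evl-Xt} transforms into a bound of the form $|(\pt_\tau + \Delta_{g(\tau)})\mathbf{X}|^2 \leq C(|\mathbf{X}|^2 + |\nabla \mathbf{X}|^2 + |\mathbf{Y}|^2)$, while \eqref{evl-Yt} retains its form. The goal is then to show that $\mathbf{X}$ and $\mathbf{Y}$ are identically zero on $M \times [0, \epsilon]$.

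The heart of the argument is a weighted Carleman-type inequality: for a suitable weight $\phi(\tau)$ that blows up as $\tau \to 0$ (typically something like $\phi(\tau) = -\log \tau + a\tau$ for a constant $a$ chosen in terms of $C$ and $K$), and all sufficiently large $\lambda > 0$, one establishes
\begin{equation*}
\lambda \int_0^{\tau_0}\!\!\int_M e^{\lambda \phi(\tau)} \bigl(|\mathbf{X}|^2 + |\nabla \mathbf{X}|^2\bigr) vol_{g(\tau)}\, d\tau \leq C \int_0^{\tau_0}\!\!\int_M e^{\lambda \phi(\tau)} \bigl|(\pt_\tau + \Delta_{g(\tau)})\mathbf{X}\bigr|^2 vol_{g(\tau)}\, d\tau.
\end{equation*}
This is obtained by decomposing the conjugated operator $e^{\lambda\phi/2}(\pt_\tau + \Delta)e^{-\lambda\phi/2}$ into symmetric and antisymmetric parts and integrating by parts; the resulting positive quadratic form is produced modulo curvature and metric-derivative corrections, which are absorbed using the Bochner identity together with the hypotheses $\operatorname{Ric} \geq -K g$ and the uniform bounds on $\pt_\tau g$, $\nabla \pt_\tau g$, $\pt_\tau g^{-1}$, $\nabla \pt_\tau g^{-1}$. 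Plugging \eqref{evl-Xt} into the right-hand side and taking $\lambda$ large enough absorbs the $|\mathbf{X}|^2$ and $|\nabla \mathbf{X}|^2$ contributions on the left, leaving a residual $\int\!\int e^{\lambda \phi}|\mathbf{Y}|^2$. For the $\mathbf{Y}$ variable, multiplying \eqref{evl-Yt} by $e^{\lambda \phi}$, integrating from $\tau = 0$ (where $\mathbf{Y}$ vanishes) and applying Gronwall yields $\int\!\int e^{\lambda\phi}|\mathbf{Y}|^2 \leq C\int\!\int e^{\lambda\phi}(|\mathbf{X}|^2 + |\nabla \mathbf{X}|^2)$, which combines with the Carleman estimate to force the weighted integrals to vanish.

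Since $\phi$ is singular at $\tau = 0$, vanishing of the weighted integral over $[0, \tau_0]$ forces $\mathbf{X} \equiv \mathbf{Y} \equiv 0$ on that slab, and a standard continuation argument applied successively on a partition of $[0, \epsilon]$ then extends this to the full interval. The principal obstacle is the Carleman inequality itself: one must choose $\phi$ so that the conjugated operator satisfies an appropriate pseudoconvexity condition, and simultaneously so that every error term arising from the time-dependence of $g(\tau)$ and from the commutator $[\nabla, \Delta]$ is dominated under the stated geometric hypotheses. The Ricci lower bound enters essentially at this step, via the Bochner formula for $|\nabla \mathbf{X}|^2$; once the Carleman inequality is secured, the remaining coupling of $\mathbf{X}$ and $\mathbf{Y}$ is handled by a straightforward absorption and Gronwall bootstrap.
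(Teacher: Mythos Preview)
The paper does not actually prove this theorem: it is quoted verbatim as \cite[Theorem 3.1]{kot1} (restricted to compact manifolds) and applied as a black box. Your sketch correctly outlines the Carleman-estimate proof from \cite{kot1} (and its streamlined version in \cite{kot3}), so in that sense your approach is exactly the one the paper defers to.
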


Suppose $\varphi(s)=\tilde{\varphi}(s)$ for some $s\in[0,\epsilon]$. For our purpose,  we let
\begin{align}
  \textbf{X}(t)&=  U(t)\oplus V(t)\oplus W(t)\oplus S(t)\oplus Q(t), \label{X-def}\\
  \textbf{Y}(t)&=  \phi(t)\oplus h(t)\oplus A(t) \oplus B(t),\label{Y-def}
\end{align}
where $\phi,h,A,U,V,S$ are defined as in \S \ref{sec:for-uniq} and
$$B=\nabla A,\quad W=\nabla^2T-\widetilde{\nabla}^2\widetilde{T},\quad Q=\nabla Rm-\widetilde{\nabla}\widetilde{Rm}.$$
Then
\begin{align*}
  \textbf{X}(t) & \in T_2(M)\oplus T_3(M)\oplus T_4(M)\oplus T_3^1(M)\oplus T_4^1(M) \\
   \textbf{Y}(t) & \in T_3(M)\oplus T_2(M)\oplus T_2^1(M)\oplus T_3^1(M).
\end{align*}
To be able to apply Theorem \ref{thm-kot-backuniq}, we need to show that $\textbf{X}(t)$, $\textbf{Y}(t)$ defined in \eqref{X-def}--\eqref{Y-def} satisfy the system of differential inequalities \eqref{evl-Xt}--\eqref{evl-Yt}.

We begin with the following.
\begin{lem}\label{lem-back-uniq}
The quantities $\phi,h,A,U,V,S,B,W,Q$ defined above are uniformly bounded with respect to $g(t)$ on $M\times [0,\epsilon]$.
\end{lem}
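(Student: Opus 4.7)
The plan is to verify the bounds tensor by tensor, splitting the nine quantities into those directly controlled by already-established bounds and those requiring a short Gronwall argument. For the differences $U$, $V$, $S$, $W$, $Q$, each is a difference of a tensor built from $\varphi(t)$ and the analogous one built from $\tilde\varphi(t)$; by \eqref{curvature-bounds} and Lemma \ref{lem-uniq-1}, each of those tensors is uniformly bounded with respect to $g(t)$ on $M\times[0,\epsilon]$, so the triangle inequality closes the case. For $h=g-\tilde g$, the uniform equivalence of $g(t)$ and $\tilde g(t)$ noted after \eqref{curvature-bounds} gives $|h|_{g(t)}\leq |g|_{g(t)}+|\tilde g|_{g(t)}\leq C$. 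For $\phi=\varphi-\tilde\varphi$, the contraction identity \eqref{contr-iden-1} implies the pointwise identity $|\varphi|_{g(t)}^2=7$ for every $\GG_2$ structure, so $|\varphi|_{g(t)}$ is constant and, by metric equivalence, $|\tilde\varphi|_{g(t)}$ is uniformly bounded, yielding the bound on $|\phi|_{g(t)}$.

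For $A=\nabla-\tilde\nabla$, the plan is to use the evolution equation \eqref{evl-At}. Every term on its right-hand side is either a product of tensors already shown to be bounded, a bounded tensor times $|A|_{g(t)}$, or the term $g^{-1}\ast\nabla S$; this last term is controlled by writing $\nabla S = Q + A\ast\widetilde{Rm}$ (via $\nabla=\tilde\nabla+A$), which gives $|\nabla S|_{g(t)}\leq C(1+|A|_{g(t)})$. Combining, one obtains $|\partial_t A|_{g(t)}\leq C(1+|A|_{g(t)})$ on $M\times[0,\epsilon]$. Since $A(0)$ is a smooth tensor on the compact manifold $M$, $\sup_M |A(0)|_{g(0)}$ is finite; applying Gronwall to the Lipschitz function $t\mapsto\sup_M |A|_{g(t)}$ then yields the desired uniform bound.

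For $B=\nabla A$, the plan is to differentiate \eqref{evl-At} in space and apply an analogous Gronwall argument. The commutator $[\partial_t,\nabla]A$ produces a factor of $\nabla(\partial_t g)$, controlled by the Shi estimates. Spatial derivatives of the right-hand side of \eqref{evl-At} introduce at worst $\nabla^2 S$, $\nabla V$ and $B$ itself; using $\nabla=\tilde\nabla+A$ repeatedly, each such quantity can be rewritten as a polynomial in $A$ and $B$ whose remaining factors lie within reach of the Shi estimates \eqref{curvature-bounds}. The upshot is $|\partial_t B|_{g(t)}\leq C(1+|B|_{g(t)})$, and Gronwall starting from the finite initial value $\sup_M |B(0)|_{g(0)}$ finishes the case. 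The hard part will be the careful bookkeeping in this last step: one must verify that no derivative order higher than the one guaranteed by \eqref{curvature-bounds} (namely $\nabla^2 Rm$ and $\nabla^3 T$) ever appears, which is where the conversion between $\nabla$ and $\tilde\nabla$ via $A$ is essential.
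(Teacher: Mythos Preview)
Your proof is correct, but it handles $\phi$, $A$, and $B$ differently from the paper.  The paper exploits the standing hypothesis in the backward-uniqueness section that $\varphi(s)=\tilde\varphi(s)$ for some $s\in[0,\epsilon]$: this forces $\phi(s)=A(s)=B(s)=0$, and one then bounds $\phi$, $A$, $B$ by integrating their time derivatives away from $s$ (so the resulting bounds depend only on $K_1$ and $\epsilon$).  You instead bound $\phi$ via the algebraic identity $|\varphi|_{g(t)}^2=7$ together with metric equivalence, and you bound $A$, $B$ by a Gronwall argument starting from the finite initial data $A(0)$, $B(0)$ on the compact manifold $M$.  Your route is slightly more general in that it never invokes the coincidence hypothesis, at the cost of producing bounds that depend on the (finite but otherwise uncontrolled) initial differences $A(0)$, $B(0)$; for the purposes of this lemma and its subsequent use in verifying the hypotheses of Theorem~\ref{thm-kot-backuniq}, either set of bounds suffices.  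Your bookkeeping in the $B$ step is also fine: converting $\nabla$ to $\tilde\nabla+A$ as you describe keeps all curvature and torsion derivatives within the range $\nabla^2 Rm$, $\tilde\nabla^2\widetilde{Rm}$, $\nabla^3 T$, $\tilde\nabla^3\widetilde{T}$ covered by~\eqref{curvature-bounds}.
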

\proof
At the beginning of this section, we argued that the metrics $g(t)$ and $\tilde{g}(t)$ are uniformly equivalent on  $M\times [0,\epsilon]$. We immediately deduce that $|h(t)|_{g(t)}=|g(t)-\tilde{g}(t)|_{g(t)}$ is bounded.
From \eqref{curvature-bounds} and the uniform equivalence of $g(t)$ and $\tilde{g}(t)$, we further have
\begin{gather*}
|V|_{g(t)}=|\nabla T-\widetilde{\nabla}\widetilde{T}|_{g(t)},\quad |S|_{g(t)}=|Rm-\widetilde{Rm}|_{g(t)},\\
 |W|_{g(t)}=|\nabla^2T-\widetilde{\nabla}^2\widetilde{T}|_{g(t)},\quad|Q|_{g(t)}=|\nabla Rm-\widetilde{\nabla}\widetilde{Rm}|_{g(t)}
\end{gather*} are bounded on $M\times [0,\epsilon]$. Recall  $|T|^2_g=-R$, where $R$ is the scalar curvature of $g$. Thus we also have that $|U|_{g(t)}=|T-\tilde{T}|_{g(t)}$ is bounded on $M\times [0,\epsilon]$.

Since $\varphi(s)=\tilde{\varphi}(s)$ for some $s\in [0,\epsilon]$,  we have
\begin{align*}
  |\phi(t)|_{g(t)} &= |\varphi(t)-\tilde{\varphi}(t)|_{g(t)} \\
  &\leq   |\varphi(t)-\varphi(s)|_{g(t)}+|\tilde{\varphi}(s)-\tilde{\varphi}(t)|_{g(t)}\\
  &\leq \biggl|\int_t^s\frac{\pt}{\pt u}\varphi(u)d u\biggr|_{g(t)}+\biggl|\int_t^s\frac{\pt}{\pt u}\tilde{\varphi}(u)d u\biggr|_{g(t)}\\
  &\leq C\biggl|\int_t^{s} |\Delta_{\varphi(u)}\varphi(u)|_{g(u)}+|\Delta_{\tilde{\varphi}(u)}\tilde{\varphi}(u)|_{\tilde{g}(u)} d u\biggr|.
\end{align*}
Since $g(t)$ and $\tilde{g}(t)$ are uniformly equivalent on $M\times [0,\epsilon]$, we know that 
\[
|\Delta_{\tilde{\varphi}(u)}\tilde{\varphi}(u)|_{\tilde{g}(u)}\leq C|\Delta_{\tilde{\varphi}(u)}\tilde{\varphi}(u)|_{g(u)}.
\]
Hence, by virtue of \eqref{T-tau-eq} and \eqref{lap-varphi-0}, we have
\begin{align*}
|\Delta_{\varphi(u)}\varphi(u)|_{g(u)}&+|\Delta_{\tilde{\varphi}(u)}\tilde{\varphi}(u)|_{\tilde{g}(u)}\\
&\leq C(|T(u)|_{g(u)}+|\nabla T(u)|_{g(u)}+|\widetilde{T}(u)|_{g(u)}+|\widetilde{\nabla}\widetilde{T}(u)|_{g(u)}).
\end{align*}
Therefore, by \eqref{curvature-bounds} and the fact that $s,t\in[0,\epsilon]$, there is a uniform constant $C$ depending on $K_1$ such that
\[
 |\phi(t)|_{g(t)} \leq C\epsilon.
\]

Finally, we show $A,B$ are bounded on $M\times [0,\epsilon]$. Since $A(s)=0$, we have
\begin{align}
  |A(t)|_{g(t)} &=|A(t)-A(s)|_{g(t)}\nonumber\\
  &  \leq C\biggl|\int_t^s\bigg|\frac{\pt}{\pt u}A(u)\bigg|_{g(u)}du\biggr|\nonumber\\
  &\leq C\biggl|\int_t^s\biggl|\tilde{g}^{-1} \widetilde{\nabla}(\widetilde{Ric}+\frac 13|\widetilde{T}|^2_{\tilde{g}}\tilde{g}+\widetilde{T}*\widetilde{T})\nonumber\\
  &\qquad\qquad\qquad-g^{-1}\nabla(Ric+\frac 13|T|^2_gg+T*T)\biggr|_{g(u)}\!\!\!\!d u\biggr|.\nonumber
\end{align}
In \eqref{evl-At} we showed that 
\begin{align*}
\tilde{g}^{-1} *\widetilde{\nabla}(\widetilde{Ric}&+\frac 13|\widetilde{T}|^2_{\tilde{g}}\tilde{g}+\widetilde{T}*\widetilde{T})-g^{-1}*\nabla(Ric+\frac 13|T|^2_gg+T*T)
\\
   &=(\tilde{g}^{-1}-g^{-1})*\widetilde{\nabla}\widetilde{Rm}+(\widetilde{\nabla}-\nabla)*\widetilde{Rm}+g^{-1}*\nabla(\widetilde{Rm}-Rm)\\
   &\quad+(\tilde{g}^{-1}-{g}^{-1})*\widetilde{T}*\widetilde{\nabla}\widetilde{T}*\tilde{g}^{-1}+(\widetilde{\nabla}\widetilde{T}-\nabla T)*\widetilde{T}*\tilde{g}^{-1}\\
   &\quad+\nabla T*(\widetilde{T}-T)*\tilde{g}^{-1}+\nabla T*T*(\tilde{g}^{-1}-{g}^{-1}).
\end{align*}
Thus, by the uniform equivalence of $g(t)$ and $\tilde{g}(t)$ and \eqref{curvature-bounds}, we have a uniform constant $C$ depending on $K_1$ such that
\[
|A(t)|_{g(t)}\leq C\epsilon.
\]
 Similarly, we can bound $B=\nabla A$ 
on $M\times [0,\epsilon]$.
\endproof

We derived the evolution equations of $\phi,h,A,U,V,S$ in \S \ref{sec:for-uniq}, so now we compute the evolutions of $B,W,Q$.
\begin{lem}\label{lem-back-uniq-2}
We have the following estimates on the evolution of $B,W,Q$:
\begin{align}\label{evl-Bt}
  \left|\frac{\pt}{\pt t}B(t)\right|^2_{g(t)}  &\leq  C\left(|h(t)|^2_{g(t)}+| A(t)|^2_{g(t)}+|B(t)|^2_{g(t)}+|\nabla Q(t)|^2_{g(t)} \right.\nonumber \\
  & \quad\left.+|U(t)|^2_{g(t)}+|\nabla U(t)|^2_{g(t)}+|\nabla V(t)|^2_{g(t)}+|V(t)|^2_{g(t)}\right);
\end{align}

\vspace{-3ex}

\begin{align}
   \left|\frac{\pt}{\pt t}W(t)-\Delta W(t)\right|^2_{g(t)}&\leq C\Bigl(|A(t)|^2_{g(t)}+|B(t)|^2_{g(t)}+|Q(t)|^2_{g(t)}+|\nabla Q(t)|^2_{g(t)}\nonumber \\
   & \qquad+|\phi(t)|^2_{g(t)}+|U(t)|^2_{g(t)}+|V(t)|^2_{g(t)}\nonumber\\
   &\qquad+|S(t)|^2_{g(t)}+|W(t)|^2_{g(t)}+|\nabla W(t)|^2_{g(t)}\Bigr);\label{evl-Wt}\\
   \left|\frac{\pt}{\pt t}Q(t)-\Delta Q(t)\right|^2_{g(t)}&\leq C\Bigl(|A(t)|^2_{g(t)}+|B(t)|^2_{g(t)}+|Q(t)|^2_{g(t)}+|S(t)|^2_{g(t)} \nonumber \\
   & +|U(t)|^2_{g(t)}+|V(t)|^2_{g(t)}+|W(t)|^2_{g(t)}+|\nabla W(t)|^2_{g(t)}\Bigr).\label{evl-Qt}
\end{align}
\end{lem}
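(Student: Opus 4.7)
The plan is to differentiate in space the evolution equations for $A$, $V$, $S$ already derived in Lemma \ref{lem-forward-uniq-1}, using the commutation formula $\frac{\pt}{\pt t}\nabla P - \nabla \frac{\pt}{\pt t}P = P * \nabla \frac{\pt}{\pt t}g$ for a time-dependent tensor $P$, together with the Ricci identity $\nabla \Delta P = \Delta \nabla P + Rm * \nabla P + \nabla Rm * P$ whenever $\nabla$ is exchanged with $\Delta$. The bounds \eqref{curvature-bounds}, combined with the Shi-type estimates of Theorem \ref{thm-shi} applied to each of the two flows, provide uniform control on $\nabla\frac{\pt}{\pt t}g$, $\nabla Rm$, $\nabla^2 T$, $\widetilde{\nabla}\widetilde{Rm}$ and $\widetilde{\nabla}^2\widetilde{T}$, so that the exchange errors and all ``background'' coefficients will be absorbable into the schematic terms on the right of \eqref{evl-Bt}--\eqref{evl-Qt}.

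Before differentiating I would record the two conversion identities
\begin{equation*}
\nabla V = W + A * \widetilde{\nabla}\widetilde{T}, \qquad \nabla S = Q + A * \widetilde{Rm},
\end{equation*}
which follow from $\nabla = \widetilde{\nabla} + A$, together with the structural formulas $\nabla h = A * \tilde g$ and $\nabla\tilde g^{-1} = A * \tilde g^{-1}$ from \eqref{g-invers-diff-3}. For $B = \nabla A$ I simply apply $\nabla$ to the formula \eqref{evl-At} for $\frac{\pt}{\pt t}A$: the only second-derivative-of-$S$ term $\nabla^2 S$ unfolds via the conversion identity to $\nabla Q + B * \widetilde{Rm} + A * \widetilde{\nabla}\widetilde{Rm}$, producing the $|\nabla Q|$ and $|B|$ terms in \eqref{evl-Bt}, while $\nabla$ landing on $\tilde g^{-1}, h, \widetilde T, \widetilde{\nabla}\widetilde{T}, T, \nabla T$ produces the remaining $A, h, U, \nabla U, V, \nabla V$ contributions.

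For $W$ I would work with $W = \nabla V - A * \widetilde{\nabla}\widetilde{T}$ and apply the heat operator $\frac{\pt}{\pt t} - \Delta$ to it. Applying $\nabla$ to the evolution of $V$ encoded in \eqref{evl-Vt-0} and using the Ricci identity to commute $\nabla$ past $\Delta$ gives $(\frac{\pt}{\pt t} - \Delta)\nabla V$ as a bounded combination of $\nabla V$, $\nabla\Div\mathcal{V}$, and the spatial derivative of the other terms on the right of \eqref{evl-Vt-0}; the second derivatives of $h$ and $A$ appearing in $\nabla\Div\mathcal{V}$ convert via the structural formulas and the two conversion identities into $\nabla W$, $\nabla Q$, $B$ plus bounded coefficients. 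Subtracting $(\frac{\pt}{\pt t} - \Delta)(A * \widetilde{\nabla}\widetilde{T})$, whose time-derivative piece is controlled by \eqref{evl-At-0} and whose spatial Laplacian piece is controlled by the Shi-type bounds, then yields \eqref{evl-Wt}. The derivation of \eqref{evl-Qt} from $Q = \nabla S - A * \widetilde{Rm}$ is entirely parallel, starting from \eqref{evl-St-0}.

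The main obstacle is purely combinatorial bookkeeping: every schematic product in \eqref{evl-At}, in the formula for $\frac{\pt}{\pt t}V$ used to derive \eqref{evl-Vt-0}, and in the corresponding formula for $\frac{\pt}{\pt t}S$ produces several terms on differentiation, and one must consistently trade $\widetilde{\nabla}-\nabla$ for $A$ and $\tilde g^{-1} - g^{-1}$ for $h$ so that each term matches one of the nine difference quantities $\phi, h, A, B, U, V, S, W, Q$ or a first covariant derivative of these as permitted in \eqref{evl-Bt}--\eqref{evl-Qt}. Once the products are written out, Lemma \ref{lem-back-uniq} together with the Shi-type bounds on both flows guarantees uniform background coefficients, and Young's inequality allows any residual cross terms such as $|\nabla Q||h|$ to be absorbed into the stated right-hand sides.
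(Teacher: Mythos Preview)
Your treatment of $B$ is essentially the paper's: differentiate \eqref{evl-At}, then rewrite $\nabla^2 S$ via $\nabla S = Q + A*\widetilde{Rm}$.

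For $W$ and $Q$, however, your route diverges from the paper's and contains a gap. The paper does \emph{not} differentiate the $V$ and $S$ evolutions from Lemma~\ref{lem-forward-uniq-1}; instead it subtracts the two copies of the already-derived heat equations \eqref{evl-na^2-T-0} and \eqref{evl-na-Rm1} for $\nabla^2 T$ and $\nabla Rm$. The Laplacian difference $\Delta\nabla^2 T - \widetilde{\Delta}\widetilde{\nabla}^2\widetilde{T}$ then unfolds as $\Delta W$ plus a divergence term built from $h$ and $A$ (hence controlled by $|h|+|A|+|B|$), and the highest-order reaction terms $\nabla^2 Rm - \widetilde{\nabla}^2\widetilde{Rm}$ and $\nabla^3 T - \widetilde{\nabla}^3\widetilde{T}$ convert via your identities to $\nabla Q + A*\widetilde{\nabla}\widetilde{Rm}$ and $\nabla W + A*\widetilde{\nabla}^2\widetilde{T}$. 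No second derivative of $A$ ever appears.

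By contrast, in your scheme $\nabla\Div\mathcal{V}$ genuinely produces a term $\nabla B * \widetilde{\nabla}\widetilde{T}$ (from differentiating the $B*\widetilde{\nabla}\widetilde{T}$ piece of $\Div\mathcal{V}$), and your correction $-\Delta(A*\widetilde{\nabla}\widetilde{T})$ contains $-\Delta A * \widetilde{\nabla}\widetilde{T}$, again a contraction of $\nabla B$. Neither of these is ``controlled by the Shi-type bounds'' (Shi controls curvature and torsion, not $\nabla^2 A$), and $\nabla B$ does \emph{not} convert to $\nabla W$, $\nabla Q$, or $B$ via any structural identity---your claim here is incorrect. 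What actually happens is that these two $\nabla B$ contributions \emph{cancel} against each other, since both decompositions compute the same $(\partial_t-\Delta)W$; but you must track this cancellation explicitly rather than asserting a conversion. The paper's direct approach sidesteps this entirely and is both shorter and safer.
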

\proof
Since $A$, as a difference of connections,
is a tensor, \eqref{commut-tesor} gives
\begin{align*}
   \frac{\pt}{\pt t}B=\frac{\pt}{\pt t}\nabla A=&\nabla \frac{\pt}{\pt t}A+A*\nabla\frac{\pt}{\pt t}g.
\end{align*}
Since $g$ is uniformly bounded and $\nabla Rm$, $T$ and $\nabla T$ are uniformly bounded in light of \eqref{curvature-bounds}, we immediately deduce 
from the evolution equation \eqref{flow-g2} for $g$ that 
\[
|A(t)*\nabla\frac{\pt}{\pt t}g(t)|_{g(t)}\leq C|A(t)|_{g(t)}.
\]  For the first term we observe from \eqref{g-invers-diff-3} that $\nabla h$ and $\nabla \tilde{g}^{-1}$ are bounded by $C|A|$ as well since $\tilde{g}$ and $\tilde{g}^{-1}$ are uniformly bounded by the uniform equivalence of $\tilde{g}$ and $g$ and Lemma \ref{lem-uniq-1}.  Using these observations together with the uniform boundedness of derivatives of $Rm$, $\widetilde{Rm}$, $T$, $\widetilde{T}$ by \eqref{curvature-bounds}, Lemma \ref{lem-uniq-1} and the boundedness of $A$ by Lemma \ref{lem-back-uniq}, we may apply $\nabla$ to the evolution equation \eqref{evl-At} for $A$ to deduce that 
\begin{align*}
|\nabla \frac{\pt}{\pt t}A(t)|_{g(t)}&\leq C(|h(t)|_{g(t)}+|A(t)|_{g(t)}+|B(t)|_{g(t)}+|\nabla^2 S(t)|_{g(t)}\\
&\qquad+|V(t)|_{g(t)}+|\nabla V(t)|_{g(t)}+|U(t)|_{g(t)}+|\nabla U(t)|_{g(t)}).
\end{align*}
(Note that there is no $\nabla S$ term since $\nabla g=0$.)  
We then observe that 
\begin{align*}
 |\nabla^2 S(t)|^2_{g(t)} &=  |\nabla^2 (Rm(t)-\widetilde{Rm}(t))|^2_{g(t)} \\
  &= |\nabla(\nabla Rm(t)-\widetilde{\nabla}\widetilde{Rm}(t))+(\nabla(\widetilde{\nabla}-\nabla) )\widetilde{Rm}(t)|^2_{g(t)}\\
  &\leq  C\left(|\nabla Q(t)|^2_{g(t)}+| A(t)|^2_{g(t)}+|B(t)|^2_{g(t)}\right).
\end{align*}
Hence,
\begin{align*}
  \left|\frac{\pt}{\pt t}B(t)\right|^2_{g(t)} &\leq  C\left(|h(t)|^2_{g(t)}+| A(t)|^2_{g(t)}+|B(t)|^2_{g(t)}+|\nabla^2 S(t)|^2_{g(t)}\right.\nonumber \\
  & \quad\qquad\left.+|U(t)|^2_{g(t)}+|\nabla U(t)|^2_{g(t)} +|V(t)|^2_{g(t)}+|\nabla V(t)|^2_{g(t)}\right)\nonumber \\
  &\leq  C\left(|h(t)|^2_{g(t)}+| A(t)|^2_{g(t)}+|B(t)|^2_{g(t)}+|\nabla Q(t)|^2_{g(t)}\right.\nonumber \\
  & \quad\qquad\left.+|U(t)|^2_{g(t)}+|\nabla U(t)|^2_{g(t)} +|V(t)|^2_{g(t)}+|\nabla V(t)|^2_{g(t)}\right).
\end{align*}
This gives the inequality \eqref{evl-Bt}.

The inequalities \eqref{evl-Wt} and \eqref{evl-Qt} follow from similar calculations using \eqref{evl-na-Rm1} and \eqref{evl-na^2-T-0}.
\endproof

Recall the elementary inequality
\[
\left|\frac{\pt}{\pt t}V(t)-\Delta V(t)\right|^2_{g(t)}\leq 2\left|\frac{\pt}{\pt t}V(t)-\Delta V(t)-\Div\mathcal{V}(t)\right|^2_{g(t)}+2\left|\Div\mathcal{V}(t)\right|^2_{g(t)}.
\]
By taking the divergence of \eqref{est-V-uniq} and using the uniform boundedness of $\tilde{g}^{-1}$, derivatives of $\widetilde{T}$ and $A$ by Lemmas \ref{lem-uniq-1} and \ref{lem-back-uniq}, we have 
\[
|\Div\mathcal{V}(t)|_{g(t)}\leq C(|h(t)|_{g(t)}+|A(t)|_{g(t)}+|B(t)|_{g(t)}).
\]
We deduce from these observations and the evolution equation \eqref{evl-Vt-0} for $V$ that
\begin{align}
   \Big|&\frac{\pt}{\pt t}V(t)-\Delta V(t)\Big|^2_{g(t)}\leq C\Bigl(|A(t)|^2_{g(t)}+|B(t)|^2_{g(t)}+|S(t)|^2_{g(t)}+|\nabla S(t)|^2_{g(t)} \nonumber \\
   &\qquad+|h(t)|_{g(t)}^2+|\phi(t)|^2_{g(t)}+|U(t)|^2_{g(t)}+|V(t)|^2_{g(t)}+|\nabla V(t)|^2_{g(t)}\Bigr);\label{evl-Vt-2}
   \end{align}
We now observe that by taking the divergence of \eqref{est-S-uniq} we have an  estimate for $\Div\mathcal{S}$:
\[
|\Div\mathcal{S}(t)|_{g(t)}\leq C(|h(t)|_{g(t)}+|A(t)|_{g(t)}+|B(t)|_{g(t)}).
\]
Hence, using the evolution equation \eqref{evl-St-0} for $S$ together with the above estimate, we have:
\begin{align}
   \Big|\frac{\pt}{\pt t}S(t)-\Delta S(t)\Big|^2_{g(t)}&\leq C\Bigl(|A(t)|^2_{g(t)}+|B(t)|^2_{g(t)}+|S(t)|^2_{g(t)} +|h(t)|_{g(t)}^2\nonumber \\
   &\qquad+|U(t)|^2_{g(t)}+|V(t)|^2_{g(t)}+|\nabla V(t)|^2_{g(t)}\Bigr).\label{evl-St-2}
\end{align}

Recall the definition of $\textbf{X}(t)$ and \textbf{Y}(t) in \eqref{X-def}.  We see from Lemma \ref{lem-back-uniq-2}, \eqref{evl-Vt-2} and \eqref{evl-St-2} we have estimates of the form
\[
\left|\frac{\pt}{\pt}P(t)-\Delta P(t)\right|_{g(t)}^2\leq C(|\mathbf{X}(t)|^2_{g(t)}+|\nabla\mathbf{X}(t)|^2_{g(t)}+|\mathbf{Y}(t)|_{g(t)}^2)
\]
for $P=V,W,S,Q$.  Moreover, we have from Lemma \ref{lem-forward-uniq-1} that
\[
\left|\frac{\pt}{\pt t} U(t)\right|_{g(t)}^2\leq C(|\mathbf{X}(t)|^2_{g(t)}+|\nabla\mathbf{X}(t)|^2_{g(t)}+|\mathbf{Y}(t)|_{g(t)}^2),
\]
and we also observe that
\begin{align*}
|\Delta U(t)|^2_{g(t)}&=|\Delta(T(t)-\widetilde{T}(t))|^2_{g(t)}\\
&\leq |\nabla^2(T(t)-\widetilde{T}(t))|^2_{g(t)}\\
&=|\nabla(\nabla T(t)-\widetilde{\nabla}\widetilde{T}(t))+(\nabla(\widetilde{\nabla}-\nabla))\widetilde{T}(t)|_{g(t)}^2\\
&\leq C(|\nabla V|^2_{g(t)}+|A(t)|^2_{g(t)}+|B(t)|^2_{g(t)}).
\end{align*}
Hence, $\textbf{X}(t)$ satisfies \eqref{evl-Xt} in Theorem \ref{thm-kot-backuniq}.
Similarly, from Lemma \ref{lem-forward-uniq-1} and Lemma \ref{lem-back-uniq-2}, we 
have estimates of the form
\[
\left|\frac{\pt}{\pt}P(t)\right|_{g(t)}^2\leq C(|\mathbf{X}(t)|^2_{g(t)}+|\nabla\mathbf{X}(t)|^2_{g(t)}+|\mathbf{Y}(t)|_{g(t)}^2)
\]
for $P=\phi,h,A,B$.  Thus, $\textbf{Y}(t)$ satisfies \eqref{evl-Yt} in Theorem \ref{thm-kot-backuniq}.

Overall, since $M$ is compact and we have the estimates \eqref{curvature-bounds}, we have demonstrated that all of the conditions in Theorem \ref{thm-kot-backuniq} are satisfied.

Hence, if $\varphi(s)=\tilde{\varphi}(s)$ at some time $s\in [0,\epsilon]$, then $\textbf{X}(s)=\textbf{Y}(s)=0$ and thus, by
Theorem \ref{thm-kot-backuniq},  $\textbf{X}(t)=\textbf{Y}(t)=0$ for all $t\in[0,s]$.  This in turn implies $\varphi(t)=\tilde{\varphi}(t)$ for all
$t\in[0,s]$, which is the claimed backward uniqueness property in Theorem \ref{mainthm-uniq}.

\subsection{Applications} We finish this section with two applications of Theorem \ref{mainthm-uniq}; specifically, to the isotropy subgroup of
the $\GG_2$ structure under the flow, and to solitons.

 Let $M$ be a 7-manifold and let $\mathcal{D}$ be the group of diffeomorphisms of $M$ isotopic to the identity.  
 For a $\GG_2$ structure $\varphi$ on $M$, we let $I_{\varphi}$ denote the subgroup of $\mathcal{D}$ fixing $\varphi$.
We now study the behaviour of $I_{\varphi}$ under the Laplacian flow.
\begin{corr}\label{Ivarphi-cor}
Let $\varphi(t)$ be a solution to the Laplacian flow \eqref{Lap-flow-def} on a compact manifold $M$ for $t\in [0,\epsilon]$. Then  $I_{\varphi(t)}= I_{\varphi(0)}$ for all $t\in [0,\epsilon]$.
\end{corr}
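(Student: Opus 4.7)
\proof[Proof proposal for Corollary \ref{Ivarphi-cor}]
The plan is to exploit the diffeomorphism invariance of the Laplacian flow together with the two-sided uniqueness property established in Theorem \ref{mainthm-uniq}. The key observation is that the Hodge Laplacian $\Delta_\varphi$ is constructed naturally from $\varphi$ (via the metric and orientation it determines), so for any diffeomorphism $f\in\mathcal{D}$ we have $f^*(\Delta_\varphi\varphi) = \Delta_{f^*\varphi}(f^*\varphi)$, and pullback commutes with $d$. Consequently, if $\varphi(t)$ solves \eqref{Lap-flow-def} on $[0,\epsilon]$, then so does $\tilde\varphi(t):=f^*\varphi(t)$, with initial datum $f^*\varphi(0)$.

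First I would show $I_{\varphi(0)}\subseteq I_{\varphi(t)}$ for all $t\in[0,\epsilon]$. Fix $f\in I_{\varphi(0)}$, so $f^*\varphi(0)=\varphi(0)$. Then $\tilde\varphi(t)=f^*\varphi(t)$ and $\varphi(t)$ are two solutions to \eqref{Lap-flow-def} on $[0,\epsilon]$ agreeing at $t=0$, so the forward uniqueness half of Theorem \ref{mainthm-uniq} gives $f^*\varphi(t)=\varphi(t)$ for all $t\in[0,\epsilon]$, i.e.~$f\in I_{\varphi(t)}$.

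Next I would show the reverse inclusion $I_{\varphi(t)}\subseteq I_{\varphi(0)}$ for each $t\in[0,\epsilon]$. Fix $t_0\in[0,\epsilon]$ and $f\in I_{\varphi(t_0)}$. Again $\tilde\varphi(t)=f^*\varphi(t)$ and $\varphi(t)$ are both solutions of \eqref{Lap-flow-def} on $[0,\epsilon]$, and now they agree at the intermediate time $t_0$. Applying the full statement of Theorem \ref{mainthm-uniq} (which uses both forward and backward uniqueness), we conclude $f^*\varphi(t)=\varphi(t)$ for all $t\in[0,\epsilon]$; evaluating at $t=0$ gives $f\in I_{\varphi(0)}$.

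The only substantive point to verify is the diffeomorphism invariance $f^*(\Delta_\varphi\varphi)=\Delta_{f^*\varphi}(f^*\varphi)$, which is immediate from naturality of $d$ and the fact that the Hodge star is determined by the metric and orientation built out of $\varphi$ via \eqref{B-varphi}; since $f^*$ respects this construction, $*_{f^*\varphi}f^*=f^*\!*_\varphi$. Given this, the argument above is purely formal, so there is no genuine obstacle once Theorem \ref{mainthm-uniq} is in hand; the content of the corollary is exactly that two-sided uniqueness upgrades the trivial diffeomorphism invariance of the PDE to invariance of the isotropy group along the flow.
\endproof
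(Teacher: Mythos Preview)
Your proposal is correct and follows essentially the same approach as the paper's proof: both use diffeomorphism invariance of the flow to show that $f^*\varphi(t)$ is again a solution, then invoke forward uniqueness for the inclusion $I_{\varphi(0)}\subseteq I_{\varphi(t)}$ and backward uniqueness for the reverse inclusion. The only minor difference is that the paper phrases the second step as showing $I_{\varphi(s)}\subseteq I_{\varphi(t)}$ for $t\le s$ and then chains the inclusions, whereas you invoke the full two-sided uniqueness at once; the content is identical.
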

\proof
Let $\Psi\in I_{\varphi(0)}$ and $\tilde{\varphi}(t)=\Psi^*\varphi(t)$. Then $\tilde{\varphi}(t)$ is closed for all $t$ and
\begin{align*}
  \frac {\pt}{\pt t} \tilde{\varphi}(t)&=\Psi^*\left(\frac {\pt}{\pt t}\varphi(t)\right)  =\Psi^*\big(\Delta_{\varphi(t)}\varphi(t)\big)
  =\Delta_{\Psi^*\varphi(t)}\Psi^*\varphi(t)=\Delta_{\tilde{\varphi}(t)}\tilde{\varphi}(t),
\end{align*}
so $\tilde{\varphi}(t)$ is also a solution to the flow \eqref{Lap-flow-def}. 
Since $\tilde{\varphi}(0)=\Psi^*\varphi(0)=\varphi(0)$ as $\Psi\in I_{\varphi(0)}$, the forward uniqueness in Theorem \ref{mainthm-uniq} implies that $\tilde{\varphi}(t)=\varphi(t)$ for all $t\in [0,\epsilon]$. Thus, $\Psi\in I_{\varphi(t)}$ for all $t\in [0,\epsilon]$.

Similarly, using the backward uniqueness in Theorem \ref{mainthm-uniq}, we can show if $s\in [0,\epsilon]$ and $\Psi\in I_{\varphi(s)}$, then $\Psi\in I_{\varphi(t)}$ for all $t\in [0,s]$. Therefore, for all $t\in[0,\epsilon]$,
  $I_{\varphi(0)}\subset I_{\varphi(t)}\subset I_{\varphi(0)}$,
 which means $I_{\varphi(t)}= I_{\varphi(0)}$. 
\endproof

\noindent Irreducible compact $\GG_2$ manifolds $(M,\varphi)$ cannot have continuous symmetries and so $I_{\varphi}$ is trivial.  Since the symmetry group $I_{\varphi}$ is not expected to become smaller at an infinite time limit, 
Corollary \ref{Ivarphi-cor}
suggests an immediate test on a closed $\GG_2$ structure $\varphi_0$ 
to determine when the Laplacian flow starting at $\varphi_0$ 
can converge to an irreducible torsion-free $\GG_2$ structure.

We can also use Theorem \ref{mainthm-uniq} in a straightforward way to deduce the following result, which says that any Laplacian flow satisfying the
Laplacian soliton equation at some time must in fact be a Laplacian soliton.
\begin{corr}
Suppose $\varphi(t)$ is a solution to the Laplacian flow \eqref{Lap-flow-def} on a compact manifold $M$ for $t\in [0,\epsilon]$. If for some time $s\in [0,\epsilon]$, $\varphi(s)$ satisfies the Laplacian soliton equation \eqref{solition-def} for some $\lambda\in\R$ and vector field $X$ on $M$, then there exists a family of diffeomorphisms $\phi_t$ and a scaling factor $\rho(t)$ with $\phi_s=id$ and $\rho(s)=1$ such that $\varphi(t)=\rho(t)\phi_t^*\varphi(s)$ on $M\times [0,\epsilon]$.
\end{corr}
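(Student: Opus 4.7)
\smallskip

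\noindent\textbf{Proof plan.} The approach is to construct directly the self-similar Laplacian flow determined by the soliton data $(\varphi(s),X,\lambda)$ at time $s$, and then invoke Theorem \ref{mainthm-uniq} to identify it with $\varphi(t)$. Concretely, set
\begin{equation*}
  \rho(t)=\left(1+\tfrac{2}{3}\lambda(t-s)\right)^{3/2},\qquad X(t)=\rho(t)^{-2/3}X,
\end{equation*}
and let $\phi_t$ be the $1$-parameter family of diffeomorphisms of $M$ generated by $X(t)$ with $\phi_s=\mathrm{id}$. Since $M$ is compact and $X$ is smooth, $\phi_t$ is well defined on the maximal open interval $I\ni s$ on which $\rho(t)>0$. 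Define
\begin{equation*}
  \tilde{\varphi}(t)=\rho(t)\,\phi_t^*\varphi(s)\qquad\text{for } t\in I.
\end{equation*}
By assumption $(\varphi(s),X,\lambda)$ satisfies \eqref{solition-def}, so the computation already sketched in the introduction shows that $\tilde{\varphi}(t)$ is a closed $\GG_2$ structure and a solution of the Laplacian flow \eqref{Lap-flow-def} on $I$, with $\tilde{\varphi}(s)=\varphi(s)$.

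Next, I apply Theorem \ref{mainthm-uniq} on the interval $J:=I\cap[0,\epsilon]$, which contains $s$ and on which both $\varphi$ and $\tilde{\varphi}$ are smooth solutions of \eqref{Lap-flow-def} agreeing at the time $s$. The forward and backward uniqueness statements together give $\tilde{\varphi}(t)=\varphi(t)$ on $J$. Hence the conclusion of the corollary will follow once I verify that $J=[0,\epsilon]$, i.e.\ that $I\supset[0,\epsilon]$.

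This last verification is the only genuinely non-automatic step. Suppose for contradiction that $I$ has an endpoint $t^*\in[0,\epsilon]$, so $\rho(t^*)=0$. Fix an auxiliary smooth reference metric $\bar{g}$ on $M$. Since $M$ is compact and $X$ is a smooth vector field, the diffeomorphisms $\phi_t$ stay in a compact family (the flow of the complete vector field $X$ reparametrised by $u(t)=\int_s^t\rho(\tau)^{-2/3}\,d\tau$), so there is a uniform constant $C$ with $|\phi_t^*\varphi(s)|_{\bar{g}}\le C$ for all $t$ in the one-sided neighbourhood of $t^*$ inside $I$. Then
\begin{equation*}
  |\tilde{\varphi}(t)|_{\bar{g}}=\rho(t)\,|\phi_t^*\varphi(s)|_{\bar{g}}\le C\rho(t)\longrightarrow 0\quad\text{as } t\to t^*,
\end{equation*}
contradicting $\tilde{\varphi}(t)=\varphi(t)\to\varphi(t^*)$ and the fact that $\varphi(t^*)$ is a nondegenerate $3$-form. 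Therefore $I\supset[0,\epsilon]$, and combining with the previous paragraph yields $\varphi(t)=\rho(t)\phi_t^*\varphi(s)$ on $M\times[0,\epsilon]$.

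\smallskip

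\noindent\textbf{Main obstacle.} The uniqueness step is essentially immediate from Theorem \ref{mainthm-uniq}; the only delicate point is the last one, namely that the self-similar Ansatz itself is defined on all of $[0,\epsilon]$. I expect this to be handled by the compactness/volume argument above, which uses only that $\varphi(t)$ is a genuine $\GG_2$ structure on the whole interval.
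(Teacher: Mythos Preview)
Your overall strategy---build the self-similar solution from the soliton data at time $s$ and invoke Theorem \ref{mainthm-uniq}---is exactly the paper's intended argument; the paper simply asserts that the corollary follows from uniqueness ``in a straightforward way'' and gives no further details. You deserve credit for noticing that one must also check $I\supset[0,\epsilon]$, a point the paper passes over in silence.

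However, your justification of that last step has a gap. You claim that the diffeomorphisms $\phi_t$ remain in a compact family because they are the time-$u(t)$ flow of the complete vector field $X$. But as $t\to t^*$ one has $\rho(t)^{-2/3}=(1+\tfrac{2}{3}\lambda(t-s))^{-1}$, whose integral diverges logarithmically, so $u(t)\to\pm\infty$. The time-$u$ flow of a smooth vector field on a compact manifold does \emph{not} in general stay in a compact subset of $\mathrm{Diff}(M)$ as $u\to\infty$ (think of a vector field with a hyperbolic zero), and there is no reason for $|\phi_t^*\varphi(s)|_{\bar g}$ to remain bounded above. So the inequality $|\tilde\varphi(t)|_{\bar g}\le C\rho(t)$ is not established.

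The fix is simple and uses only what you already have: on $J$ you know $\varphi(t)=\tilde\varphi(t)$, hence the induced metrics agree, $g(t)=\rho(t)^{2/3}\phi_t^*g(s)$. Taking total volume over $M$ (which is diffeomorphism-invariant) gives
\[
\Vol\big(M,g(t)\big)=\rho(t)^{7/3}\,\Vol\big(M,g(s)\big)\quad\text{for }t\in J.
\]
The left side extends continuously to $t^*\in[0,\epsilon]$ with a strictly positive value, while the right side tends to $0$ as $t\to t^*$. This contradiction shows $I\supset[0,\epsilon]$, and the rest of your argument goes through unchanged.
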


\section{Compactness}\label{sec:compact} 

In this section, we  prove a Cheeger--Gromov-type compactness theorem  for solutions to the Laplacian flow for closed $\GG_2$ structures.

\subsection[Compactness for {\boldmath $\GG_2$} structures]{Compactness for \texorpdfstring{$\GG_2$}{G2} structures}

We begin by proving a compactness theorem for the space of $\GG_2$ structures.

Let $M_i$ be a sequence of $7$-manifolds and let $p_i\in M_i$ for each $i$. Suppose that $\varphi_i$ is a $\GG_2$ structure on $M_i$ for each $i$
such that the associated metrics $g_i$ on $M_i$ are complete. Let $M$ be a $7$-manifold with $p\in M$ and let $\varphi$ be a $\GG_2$ structure on $M$. We say that
\begin{equation*}
  (M_i,\varphi_i,p_i)\ra (M,\varphi,p)\quad\textrm{ as }i\ra\infty
\end{equation*}
if there exists a sequence of compact subsets $\Omega_i\subset M$ exhausting $M$ with $p\in int(\Omega_i)$ for each $i$, a sequence of diffeomorphisms $F_i: \Omega_i\ra F_i(\Omega_i)\subset M_i$ with $F_i(p)=p_i$ such that
\begin{equation*}
  F_i^*\varphi_i\ra \varphi\quad\textrm{ as }i\ra \infty,
\end{equation*}
in the sense that $F_i^*\varphi_i-\varphi$ and its covariant derivatives of all orders (with respect to any fixed metric) converge uniformly to zero on every compact subset of $M$.

We may thus give our compactness theorem for $\GG_2$ structures.
\begin{thm}\label{compat-thm-G2}
Let $M_i$ be a sequence of smooth $7$-manifolds and for each $i$ we let $p_i\in M_i$ and $\varphi_i$ be a $\GG_2$ structure on $M_i$
such that the metric $g_i$ on $M_i$ induced by $\varphi_i$  is complete on $M_i$. Suppose that
\begin{equation}\label{compact-thm1-cond1}
  \sup_i\sup_{x\in M_i}\left(|\nabla_{g_i}^{k+1} T_i(x)|_{g_i}^2+|\nabla_{g_i}^kRm_{g_i}(x)|_{g_i}^2\right)^{\frac 12}<\infty
\end{equation}
for all $k\geq 0$ and
\begin{equation*}
  \inf_i \textrm{inj}(M_i,g_i,p_i)>0,
\end{equation*}
where $T_i$, $Rm_{g_i}$ are the torsion and curvature tensor of $\varphi_i$ and $g_i$ respectively, and
$\textrm{inj}(M_i,g_i,p_i)$ denotes the injectivity radius of $(M_i,g_i)$ at $p_i$.

Then there exists a $7$-manifold $M$, a $\GG_2$ structure $\varphi$ on $M$ and a point $p\in M$ such that, after passing to a subsequence, we have
\begin{equation*}
  (M_i,\varphi_i,p_i)\ra (M,\varphi,p)\quad\textrm{ as }i\ra\infty.
\end{equation*}
\end{thm}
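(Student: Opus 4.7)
The plan is to reduce to the classical Cheeger--Gromov compactness theorem for Riemannian manifolds and then extract a limiting $\GG_2$ structure from the pulled-back sequence by an Arzel\`a--Ascoli argument. First, observe that the hypothesis \eqref{compact-thm1-cond1} with $k\geq 0$ controls all covariant derivatives of $Rm_{g_i}$, and the injectivity radius at $p_i$ is uniformly bounded below. By Hamilton's compactness theorem for Riemannian metrics \cite{ha95-compact}, after passing to a subsequence there exists a complete pointed smooth $7$-manifold $(M,g,p)$, an exhaustion $\Omega_i\subset M$ with $p\in\operatorname{int}(\Omega_i)$, and diffeomorphisms $F_i:\Omega_i\to F_i(\Omega_i)\subset M_i$ with $F_i(p)=p_i$ such that $F_i^* g_i\to g$ in $C^\infty_{loc}$ on $M$.

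Next I would transfer the torsion bounds into bounds on the $\GG_2$ structures themselves. Using the fundamental identities $\nabla\varphi=T*\psi$ and $\nabla\psi=T*\varphi$ from \eqref{nabla-var} and \eqref{nabla-psi}, a straightforward induction (identical in spirit to the calculation leading to \eqref{na^i-psi}) shows that $|\nabla_{g_i}^{k}\varphi_i|_{g_i}$ and $|\nabla_{g_i}^{k}\psi_i|_{g_i}$ can be bounded in terms of $|\nabla_{g_i}^{j}T_i|_{g_i}$ for $j\leq k-1$, hence are uniformly bounded by \eqref{compact-thm1-cond1}. Consequently, the pullbacks $\tilde\varphi_i:=F_i^*\varphi_i$ have $|\nabla_{F_i^*g_i}^{k}\tilde\varphi_i|_{F_i^*g_i}$ uniformly bounded on $\Omega_i$ for every $k\geq 0$. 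Since $F_i^*g_i\to g$ in $C^\infty_{loc}$, these bounds are also uniform with respect to the fixed background metric $g$ on compact subsets of $M$.

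Fix a compact $K\subset M$. Then $K\subset\Omega_i$ for all $i$ large, and $\{\tilde\varphi_i\}$ is uniformly bounded in $C^{k}(K,g)$ for every $k$. By Arzel\`a--Ascoli and a diagonal argument across an exhaustion of $M$ by compacts, a subsequence $\tilde\varphi_i$ converges in $C^\infty_{loc}(M)$ to some smooth $3$-form $\varphi$ on $M$. To conclude, I need to verify that $\varphi$ is a $\GG_2$ structure on $M$ and that $g$ is the metric induced by $\varphi$. For each $i$, the identity \eqref{B-varphi} applied to $\tilde\varphi_i$ gives $F_i^*g_i(u,v)\operatorname{vol}_{F_i^*g_i}=\tfrac16(u\lrcorner\tilde\varphi_i)\wedge(v\lrcorner\tilde\varphi_i)\wedge\tilde\varphi_i$ pointwise on $\Omega_i$; passing to the limit using $F_i^*g_i\to g$ and $\tilde\varphi_i\to\varphi$ in $C^\infty_{loc}$ yields
\begin{equation*}
g(u,v)\operatorname{vol}_{g}=\tfrac16(u\lrcorner\varphi)\wedge(v\lrcorner\varphi)\wedge\varphi.
\end{equation*}
Since $g$ is a genuine Riemannian metric, the right-hand side is the tensor product of a positive-definite bilinear form and a nowhere-vanishing $7$-form, so $B_\varphi$ is positive definite, which is precisely the condition that $\varphi\in\Omega^3_+(M)$; moreover this shows $g=g_\varphi$.

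The step that requires the most care is the translation of the assumption \eqref{compact-thm1-cond1}, which is phrased in terms of $\nabla^{k+1}T_i$ and $\nabla^k Rm_{g_i}$, into genuine $C^{k}$ bounds on $\tilde\varphi_i$ with respect to the limit metric $g$. The induction using $\nabla\varphi=T*\psi$ is clean, but one must also use the $C^\infty_{loc}$ convergence $F_i^* g_i\to g$ (with uniformly equivalent metrics on compact sets for $i$ large, as in the proof of Theorem \ref{thm-blowup}) in order to pass between norms taken with respect to $F_i^* g_i$ and with respect to $g$; all other steps are standard applications of Arzel\`a--Ascoli and of continuity of the algebraic map $\varphi\mapsto g_\varphi$.
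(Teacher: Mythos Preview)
Your proposal is correct and follows essentially the same route as the paper's proof: apply Cheeger--Gromov/Hamilton compactness to the metrics, use the identities $\nabla\varphi=T*\psi$, $\nabla\psi=T*\varphi$ to convert the torsion bounds into uniform $C^k$ bounds on the pulled-back $3$-forms, apply Arzel\`a--Ascoli and a diagonal argument, and finally verify positivity of the limit $3$-form by passing to the limit in the identity \eqref{B-varphi}. The only point to make explicit is that your induction needs $|T_i|_{g_i}$ bounded (for the base case $|\nabla\varphi|\leq C|T|$), which is not literally contained in \eqref{compact-thm1-cond1} but follows immediately from Corollary~\ref{scalar-cor} since $|T_i|_{g_i}^2=-R_{g_i}\leq C|Rm_{g_i}|_{g_i}$.
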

\proof In the proof we always use the convention that, after taking a subsequence, we will continue to use the index $i$.

By the Cheeger-Gromov compactness theorem \cite[Theorem 2.3]{ha95-compact} for complete pointed Riemannian manifolds, there exists a complete Riemannian $7$-manifold $(M,g)$ and $p\in M$ such that,  after passing to a subsequence,
\begin{equation}\label{compact-g}
  (M_i,g_i,p_i)\ra (M,g,p)\quad\textrm{ as }i\ra\infty.
\end{equation}
The convergence in \eqref{compact-g} means that, as above, there exist nested compact sets $\Omega_i\subset M$ exhausting $M$ with $p\in int(\Omega_i)$ for all $i$ and diffeomorphisms $F_i:\Omega_i\ra F_i(\Omega_i)\subset M_i$ with $F_i(p)=p_i$ such that $F_i^*g_i\ra g$ smoothly as $i\ra \infty$ on any compact subset of $M$.   

Fix $i$ sufficiently large.
 For $j\geq 0$ we have $\Omega_i\subset \Omega_{i+j}$ and a diffeomorphism $F_{i+j}: \Omega_{i+j} \ra F_{i+j}(\Omega_{i+j})\subset M_{i+j}$. We can then define a
 restricted diffeomorphism
 \begin{equation*}
   F_{i,j}=F_{i+j}|_{\Omega_i}: \Omega_i\ra F_{i+j}(\Omega_i)\subset M_{i+j}\quad\textrm{ for all }j\geq 0.
 \end{equation*}
The convergence \eqref{compact-g} implies that the sequence $\{g_{i,j}=F_{i,j}^*g_{i+j}\}_{j=0}^{\infty}$ of Riemannian metrics on $\Omega_i$ converges to $g_{i,\infty}=g$ on $\Omega_i$ as $j\ra \infty$.

Let $\nabla$, $\nabla_{g_{i,j}}$ be the Levi-Civita connections of $g$, $g_{i,j}$ on $\Omega_i$ respectively. As before, let $h=g-g_{i,j}$ and $A=\nabla-\nabla_{g_{i,j}}$ be the difference of the metrics and their connections, respectively.  It is straightforward to see locally that 
 \begin{equation*}
   A_{ab}^c=\frac 12(g_{i,j})^{cd}\left(\nabla_ah_{bd}+\nabla_bh_{ad}-\nabla_dh_{ab}\right).
 \end{equation*}
 Since $g_{i,j}\ra g$ smoothly on $\Omega_i$ as $j\ra \infty$, $g_{i,j}$ and $g$ are equivalent for sufficiently large $j$, and $|\nabla^kh|_g$ tends to zero as $j\ra\infty$ for all $k\geq 0$. Hence, $A$ is uniformly bounded with respect to $g$ for all large $j$. Moreover,
 \begin{align*}
   &\nabla^{(k)}A_{ab}^c = \frac 12\sum_{l=1}^k\nabla^{(k+1-l)}(g_{i,j})^{cd}\left(\nabla^{(l)}\nabla_ah_{bd}+\nabla^{(l)}\nabla_bh_{ad}-\nabla^{(l)}\nabla_dh_{ab}\right)\\
    &=-\frac 12\sum_{l=1}^k\nabla^{(k+1-l)}(g^{cd}-(g_{i,j})^{cd})\left(\nabla^{(l)}\nabla_ah_{bd}+\nabla^{(l)}\nabla_bh_{ad}-\nabla^{(l)}\nabla_dh_{ab}\right).
 \end{align*}
 Thus there exist constants  $c_k$ for $k\geq 0$ such that $|\nabla^kA|_g\leq c_k$ for all $j\geq 0$.

Using each diffeomorphism $F_{i,j}$, we can define a $\GG_2$ structure $\varphi_{i,j}=F_{i,j}^*\varphi_{i+j}$ on $\Omega_i$
by pulling back the $\GG_2$ structure $\varphi_{i+j}$ on $M_{i+j}$. We next estimate $|\nabla^k\varphi_{i,j}|_g$.  
First, since $g$ and $g_{(i,j)}$ are all equivalent for large $j$, $|\varphi_{i,j}|_g\leq c_0|\varphi_{i,j}|_{g_{i,j}}\leq 7c_0=\tilde{c}_0$ for some
constants $c_0,\tilde{c}_0$.
We next observe trivially that
 \begin{equation*}
   \nabla \varphi_{i,j}=\nabla_{g_{i,j}}\varphi_{i,j}+(\nabla-\nabla_{g_{i,j}})\varphi_{i,j},
 \end{equation*}
 so, since $A$ is uniformly bounded, there is a constant $\tilde{c}_1$ such that
 \begin{align*}
   |\nabla \varphi_{i,j}|_g \leq  c_0|\nabla_{g_{i,j}}\varphi_{i,j}|_{g_{i,j}}+C|A|_g|\varphi_{i,j}|_g \leq \tilde{c}_1.
 \end{align*}
 Similarly, we have
 \begin{align*}
 \nabla^2 \varphi_{i,j}&=\nabla^2_{g_{i,j}}\varphi_{i,j}+(\nabla-\nabla_{g_{i,j}})\nabla_{g_{i,j}}\varphi_{i,j} \\
    & \quad +(\nabla(\nabla-\nabla_{g_{i,j}}))\varphi_{i,j}+(\nabla-\nabla_{g_{i,j}})\nabla \varphi_{i,j},
 \end{align*}
 and so, since $A$, $\nabla A$ are uniformly bounded, there is a constant $\tilde{c}_2$ such that
  \begin{align*}
   |\nabla^2 \varphi_{i,j}|_g &\leq  C|\nabla^2_{g_{i,j}}\varphi_{i,j}|_{g_{i,j}}+C|A|_g|\nabla_{g_{i,j}}\varphi_{i,j}|_g \\
   &\quad +C|\nabla A|_g|\varphi_{i,j}|_g+C|A|_g|\nabla \varphi_{i,j}|_g\leq \tilde{c}_2.
 \end{align*}
For $k\geq 2$, we have the estimate
 \begin{align*}
    |\nabla^k \varphi_{i,j}|_g &\leq  C\sum _{l=0}^k|A|^l_g|\nabla^{(k-l)}_{g_{i,j}}\varphi_{i,j}|_{g_{i,j}}+C\sum_{l=1}^{k-1}|\nabla^lA|_g|\nabla^{(k-1-l)}\varphi_{i,j}|_g.
 \end{align*}
By an inductive argument, using the estimate $|\nabla^kA|_g\leq c_k$ and the assumption \eqref{compact-thm1-cond1}, we can show the existence of constants $\tilde{c}_k$ for $k\geq 0$ such $|\nabla^k\varphi_{i,j}|_g\leq \tilde{c}_k$ on $\Omega_i$ for all $j,k\geq 0$.

The Arzel\`{a}--Ascoli theorem (see, e.g.~\cite[Corollary 9.14]{Ben-hopper}) now implies that there exists a $3$-form $\varphi_{i,\infty}$ and a subsequence of $\varphi_{i,j}$ in $j$, which we still denote by $\varphi_{i,j}$, that converges to $\varphi_{i,\infty}$ smoothly on $\Omega_i$, i.e.
 \begin{equation}\label{comp-pf-5}
   |\nabla^k(\varphi_{i,j}-\varphi_{i,\infty})|_g\ra 0\quad \textrm{ as }j\ra \infty
 \end{equation}
 uniformly on $\Omega_i$ for all $k\geq 0$.

 Since each $\varphi_{i,j}$ is a $\GG_2$ structure on $\Omega_i$ with associated metric $g_{i,j}$, the $7$-form valued bilinear form
 \begin{equation*}
   B_{\varphi_{i,j}}(u,v)=\frac 16 (u\lrcorner \varphi_{i,j})\wedge (v\lrcorner \varphi_{i,j})\wedge \varphi_{i,j}
 \end{equation*}
 is positive definite for each $j$ and satisfies
 \begin{equation}\label{comp-pf-2}
   g_{i,j}(u,v)vol_{g_{i,j}}=B_{\varphi_{i,j}}(u,v),
 \end{equation}
 where $u,v$ are any 
vector fields on $\Omega_i\subset M$. Letting $j\ra \infty$ in \eqref{comp-pf-2} gives
 \begin{equation}\label{comp-pf-3}
     g_{i,\infty}(u,v)vol_{g_{i,\infty}}=B_{\varphi_{i,\infty}}(u,v).
 \end{equation}
 Since  the Cheeger--Gromov compactness theorem guarantees the limit metric $g_{i,\infty}=g$ is a Riemannian metric on $\Omega_i$, \eqref{comp-pf-3} implies that $\varphi_{i,\infty}$ is a positive  $3$-form and hence defines a $\GG_2$ structure on $\Omega_i$ with associated metric $g_{i,\infty}=g$.

We now denote the inclusion map of $\Omega_i$ into $\Omega_k$ for $k\geq i$ by
 \begin{equation*}
   I_{ik}: \Omega_i\ra \Omega_k,\quad\textrm{ for  }k\geq i.
 \end{equation*} 
For each $\Omega_k$, we can argue as before to define $g_{k,j}$, $\varphi_{k,j}$ which converge to $g_{k,\infty}$, $\varphi_{k,\infty}$ respectively as $j\ra\infty$,  after taking a subsequence. By definition,
 \begin{equation*}
   I_{ik}^*g_{k,j}=g_{i,j}\quad\text{and}\quad I_{ik}^*\varphi_{k,j}=\varphi_{i,j}.
 \end{equation*}
 Since $I_{i,k}^*$ is independent of $j$, by taking $j\ra \infty$ here we find that
  \begin{equation}\label{comp-pf-4}
   I_{ik}^*g_{k,\infty}=g_{i,\infty}\quad\text{and}\quad I_{ik}^*\varphi_{k,\infty}=\varphi_{i,\infty}.
 \end{equation}
From \eqref{comp-pf-4}, we see that there exists a $3$-form $\varphi$ on $M$, which is a $\GG_2$ structure with associated metric $g$, such that
 \begin{equation}
   I_{i}^*g=g_{i,\infty},\quad I_i^*\varphi=\varphi_{i,\infty},
 \end{equation}
 where $I_i:\Omega_i\ra M$ is the inclusion map.

 Finally, we show that $(M_i,\varphi_i,p_i)$ converges to $(M,\varphi,p)$.  For any compact subset $\Omega\subset M$, there exists $i_0$ such that $\Omega$ is contained in $\Omega_i$ for all $i\geq i_0$. Fixing $i$ such that $\Omega\subset \Omega_i$,  on $\Omega$ we have by \eqref{comp-pf-5} that
 \begin{align*}
   |\nabla^k(F_l^*\varphi_l-\varphi)|_g &= |\nabla^k(F_{i+j}^*\varphi_{i+j}-\varphi)|_g,\quad \textrm{ where }l=i+j, \\
    &= |\nabla^k(\varphi_{i,j}-\varphi_{i,\infty})|_g \ra 0 \quad\textrm{ as }l\ra \infty
 \end{align*}
for all $k\geq 0$, as required. 
\endproof

\subsection{Compactness for the Laplacian flow}\label{sec:8-3}

Now we can prove Theorem \ref{mainthm-compact}, the compactness theorem for the Laplacian flow for closed $\GG_2$ structures, which we restate here for convenience.

\begin{thm}\label{mainthm-compact-a}
Let $M_i$ be a sequence of compact $7$-manifolds and let $p_i\in M_i$ for each $i$. Suppose that $\varphi_i(t)$ is a sequence of solutions to the Laplacian flow \eqref{Lap-flow-def} for closed $\GG_2$ structures on $M_i$ with the associated metric  $g_i(t)$ on $M_i$ for $t\in (a,b)$, where $-\infty\leq a<0<b\leq \infty$. Suppose further that
\begin{equation}\label{mainthm-compc-cond1-a}
  \sup_i\sup_{x\in M_i,t\in (a,b)}\left(|\nabla_{g_i(t)} T_i(x,t)|_{g_i(t)}^2+|Rm_{g_i(t)}(x,t)|_{g_i(t)}^2\right)^{\frac 12}<\infty,
\end{equation}
where $T_i$ and $Rm_{g_i(t)}$ denote the torsion and curvature tensors determined by $\varphi_i(t)$ respectively, and the injectivity radius of $(M_i,g_i(0))$ at $p_i$ satisfies
\begin{equation}\label{mainthm-compc-cond2-a}
  \inf_i \textrm{inj}(M_i,g_i(0),p_i)>0.
\end{equation}

There exists a $7$-manifold $M$, $p\in M$ and a solution $\varphi(t)$ of the flow \eqref{Lap-flow-def} on $M$ for $t\in (a,b)$ such that, after passing to a subsequence, we have
\begin{equation*}
  (M_i,\varphi_i(t),p_i)\ra (M,\varphi(t),p)\quad\textrm{ as }i\ra\infty.
\end{equation*}
\end{thm}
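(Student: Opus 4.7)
The plan is to follow the template of Hamilton's compactness theorem for Ricci flow \cite{ha95-compact}, using Theorem \ref{compat-thm-G2} as the $t=0$ input and promoting the convergence to all times via Shi-type estimates together with the flow equation. Throughout, $K$ denotes the uniform bound from \eqref{mainthm-compc-cond1-a}.

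First, I would invoke the Shi-type estimates. The bound \eqref{mainthm-compc-cond1-a} gives $\Lambda_{\varphi_i}(x,t)\leq K$ uniformly, so by Theorem \ref{thm-shi} (applied on overlapping time subintervals of length $\leq 1/K$, using that the bound extends beyond any fixed compact $[a',b']\Subset(a,b)$), we obtain for every $k\geq 0$ and every $[a',b']\Subset(a,b)$ a constant $C_{k,a',b'}$ with
\begin{equation*}
  \sup_{i}\sup_{M_i\times[a',b']}\bigl(|\nabla_{g_i(t)}^k Rm_{g_i(t)}|_{g_i(t)}+|\nabla_{g_i(t)}^{k+1}T_i|_{g_i(t)}\bigr)\leq C_{k,a',b'}.
\end{equation*}
Via Proposition \ref{prop-nabla-T} and Corollary \ref{scalar-cor}, this also controls all derivatives of $T_i$ and hence, through $\nabla\varphi_i=T_i*\psi_i$, of $\varphi_i$ itself.

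Next, I would apply Theorem \ref{compat-thm-G2} to the initial $\GG_2$ structures $\varphi_i(0)$. The hypothesis \eqref{compact-thm1-cond1} holds because of the time-$0$ slice of the Shi-type bounds above, and the injectivity radius hypothesis is exactly \eqref{mainthm-compc-cond2-a}. This yields a $7$-manifold $M$, a point $p\in M$, a $\GG_2$ structure $\varphi(0)$ with complete metric $g(0)$, an exhaustion $\Omega_i\Subset M$ with $p\in\mathrm{int}(\Omega_i)$, and diffeomorphisms $F_i:\Omega_i\to F_i(\Omega_i)\subset M_i$ with $F_i(p)=p_i$ and $F_i^*\varphi_i(0)\to\varphi(0)$ smoothly on compact subsets of $M$. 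In particular, the pulled-back metrics $F_i^*g_i(0)$ converge smoothly to $g(0)$, so for $i$ large they are uniformly equivalent to $g(0)$ on any fixed compact set.

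The main step is to upgrade this time-$0$ convergence to convergence on $M\times(a,b)$. Define, for each fixed $i$ large and each $t\in(a,b)$, the pulled-back $\GG_2$ structures
\begin{equation*}
  \td\varphi_i(t):=F_i^*\varphi_i(t)\quad\text{on }\Omega_i,
\end{equation*}
with associated metrics $\td g_i(t)=F_i^*g_i(t)$. Since diffeomorphisms commute with $d$ and the Hodge star is natural, each $\td\varphi_i(t)$ solves the Laplacian flow on $\Omega_i$. The Shi-type bounds above, together with the uniform equivalence of $\td g_i(0)$ with $g(0)$ (and hence with $\td g_i(t)$, using the evolution $\frac{\pt}{\pt t}g=-2Ric-\tfrac{2}{3}|T|^2g-4T*T$ which is uniformly bounded), give uniform $C^k$ bounds on $\td\varphi_i(t)$ measured in the fixed background metric $g(0)$, on any compact subset of $M$, uniformly in $t\in[a',b']$. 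To get equicontinuity in $t$, I use the flow equation $\frac{\pt}{\pt t}\td\varphi_i=\Delta_{\td\varphi_i(t)}\td\varphi_i$ and the identity \eqref{hodge-Lap-varp}: the right-hand side is given by $i_{\td\varphi_i}(h_i)$ with $h_i$ uniformly bounded in every $C^k$ norm with respect to $g(0)$, by Shi-type estimates and the algebraic formula for $h$. Iterating by differentiating the flow equation in $t$ and using \eqref{commut-tensor-k}, one obtains uniform bounds on all mixed space-time derivatives $\bar\nabla^\ell\pt_t^m\td\varphi_i$ on $\Omega\times[a',b']$ for every compact $\Omega\subset M$ and every $[a',b']\Subset(a,b)$.

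With these bounds in place, the Arzel\`a--Ascoli theorem produces, on any $\Omega\times[a',b']$, a subsequence converging smoothly to a smooth family of $3$-forms $\varphi(t)$ on $M$. A standard diagonal argument over an exhaustion $\Omega\Subset M$ and an exhaustion $[a',b']\Subset(a,b)$ yields a single subsequence with $F_i^*\varphi_i(t)\to\varphi(t)$ smoothly on compact subsets of $M\times(a,b)$. Positivity of the limit at each $t$ is preserved exactly as in the proof of Theorem \ref{compat-thm-G2} (from the pointwise identity $g_t(u,v)\vol_{g(t)}=B_{\varphi(t)}(u,v)$ and continuity), closedness is preserved since $d$ commutes with smooth convergence, and the Laplacian flow equation passes to the limit because both sides converge smoothly (the Hodge Laplacian $\Delta_{\varphi}\varphi$ depends smoothly on jets of $\varphi$ up to order two). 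This gives the required solution $\varphi(t)$ on $M\times(a,b)$ and the convergence $(M_i,\varphi_i(t),p_i)\to(M,\varphi(t),p)$.

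The main obstacle is bookkeeping the uniform bounds on mixed space-time derivatives of $\td\varphi_i$ with respect to the \emph{fixed} background metric $g(0)$ on $M$, rather than with respect to the evolving $\td g_i(t)$. This is handled, as in the proof of Theorem \ref{compat-thm-G2}, by controlling the difference tensor $A_i=\bar\nabla-\nabla_{\td g_i(t)}$ (and its covariant derivatives) using the uniform equivalence and smooth convergence of $\td g_i(0)$ to $g(0)$ together with the uniformly bounded evolution of $\td g_i(t)$; no new ideas beyond those already appearing in Section \ref{sec:compact} are needed.
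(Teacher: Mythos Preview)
Your proposal is correct and follows essentially the same route as the paper: apply the Shi-type estimates to upgrade the $\Lambda$-bound to uniform higher-order bounds, invoke Theorem~\ref{compat-thm-G2} at $t=0$ to produce the limit manifold and diffeomorphisms, pull back the flows, transfer all bounds to the fixed background metric $g(0)$, extract time-derivative bounds from the flow equation, and conclude via Arzel\`a--Ascoli and diagonalization. The paper's proof is slightly more terse---it cites Claims~\ref{claim-5-1} and~\ref{claim-5-2} for the transfer of derivative bounds to the fixed background---but the content and sequence of ideas are the same as yours.
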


\noindent The proof is an adaptation of Hamilton's argument  in 
the Ricci flow case \cite{ha95-compact}.

\begin{proof}
By a usual diagonalization argument, without loss of generality, we can assume $a,b$ are finite. From the Shi-type estimates  in $\S$\ref{sec:shi} and \eqref{mainthm-compc-cond1-a}, we have
\begin{equation}\label{comp-pf-6}
  |\nabla_{g_i(t)}^kRm_i(x,t)|_{g_i(t)}+|\nabla_{g_i(t)}^{k+1}T_i(x,t)|_{g_i(t)}\leq {C_k}.
\end{equation} 
Assumption \eqref{mainthm-compc-cond2-a} allows us to apply Theorem \ref{compat-thm-G2} to extract a subsequence of $(M_i,\varphi_i(0),p_i)$ which converges  to a complete limit $(M, \tilde{\varphi}_{\infty}(0),p)$ in the sense described above. Using the notation of Theorem \ref{compat-thm-G2}, we have 
\begin{equation*}
  F_i^*\varphi_i(0)\ra \tilde{\varphi}_{\infty}(0)
\end{equation*}
smoothly on any compact subset $\Omega\subset M$ as $i\ra \infty$. Since each $\varphi_i(0)$ is closed, we  see that $d\tilde{\varphi}_{\infty}(0)=0$.

Let $\tilde{\varphi}_i(t)=F_i^*\varphi_i(t)$.  Fix a compact subset $\Omega\times [c,d]\subset M\times (a,b)$, and let $i$ be sufficiently large that $\Omega\subset \Omega_i$, in the notation of Theorem \ref{compat-thm-G2}. Then $\tilde{\varphi}_i(t)$ is a sequence of solutions of the Laplacian flow on $\Omega\subset M$ defined for $t\in [c,d]$, with associated metrics  $\tilde{g}_i(t)=F_i^*g_i(t)$.  By Claims \ref{claim-5-1} and \ref{claim-5-2}, we may deduce from \eqref{comp-pf-6} that there exist constants $C_{k}$,  independent of $i$, such that
\begin{equation}\label{comp-pf-7}
  \sup_{\Omega\times [c,d]}\left(|\nabla_{\tilde{g}_i(0)}^k\tilde{g}_i(t)|_{\tilde{g}_i(0)}+|\nabla_{\tilde{g}_i(0)}^k\tilde{\varphi}_i(t)|_{\tilde{g}_i(0)}\right)\leq C_{k}.
\end{equation} 
Recall that  $\tilde{\varphi}_i(0)$ and $\tilde{g}_i(0)$ converge to $\tilde{\varphi}_{\infty}(0)$ and $\tilde{g}_{\infty}(0)$ uniformly, with all their covariant derivatives, on $\Omega$. By a similar argument to the proof of Theorem \ref{compat-thm-G2}, we can show from \eqref{comp-pf-7} that there
are constants $\tilde{C}_k$ such that
\begin{equation}\label{comp-pf-8}
  \sup_{\Omega\times [c,d]}\left(|\nabla_{\tilde{g}_{\infty}(0)}^k\tilde{g}_i(t)|_{\tilde{g}_{\infty}(0)}+|\nabla_{\tilde{g}_{\infty}(0)}^k\tilde{\varphi}_i(t)|_{\tilde{g}_{\infty}(0)}\right)\leq \widetilde{C}_{k},
\end{equation}
for sufficiently large $i$, which in turn gives us  constants $\tilde{C}_{k,l}$ such that
\begin{equation}\label{comp-pf-9}
  \sup_{\Omega\times [c,d]}\left(\biggl|\frac{\pt^l}{\pt t^l}\nabla_{\tilde{g}_{\infty}(0)}^k\tilde{g}_i(t)\biggr|_{\tilde{g}_{\infty}(0)}+\biggl|\frac{\pt^l}{\pt t^l}\nabla_{\tilde{g}_{\infty}(0)}^k\tilde{\varphi}_i(t)\biggr|_{\tilde{g}_{\infty}(0)}\right)\leq {C}_{k,l},
\end{equation}
since the time derivatives can be written in terms of spatial derivatives via the Laplacian flow evolution equations. It follows from the Arzel\'{a}--Ascoli theorem that there exists a subsequence of $\tilde{\varphi}_i(t)$ which converges smoothly on $\Omega\times [c,d]$. A diagonalization argument then produces a subsequence that converges smoothly on any compact subset of $M\times (a,b)$ to a solution $\tilde{\varphi}_{\infty}(t)$ of the Laplacian flow.
\end{proof} 

As in Ricci flow, we would want to use our compactness theorem for the Laplacian flow to analyse singularities of the flow as follows.

Let $M$ be a compact $7$-manifold and let $\varphi(t)$ be a solution of the Laplacian flow \eqref{Lap-flow-def} on a maximal time interval $[0,T_0)$ with  $T_0<\infty$. Theorem \ref{mainthm-blowup} implies that $\Lambda(t)$ given in \eqref{Lambda-t-def-2} satisfies $\lim \Lambda(t)=\infty$ as $t\nearrow T_0$. Choose a sequence of points $(x_i,t_i)$ such that $t_i\nearrow T_0$ and
\begin{equation*}
  \Lambda(x_i,t_i)=\sup_{x\in M,\, t\in [0,t_i]}\left(|\nabla T(x,t)|_{g(t)}^2+|Rm(x,t)|_{g(t)}^2\right)^{\frac 12},
\end{equation*}
where $T$ and $Rm$ are the torsion and curvature tensor as usual.

We consider a sequence of parabolic dilations of the Laplacian flow
\begin{equation}\label{phi-i-def}
  \varphi_i(t)=\Lambda(x_i,t_i)^{\frac 32}\varphi(t_i+\Lambda(x_i,t_i)^{-1}t)
\end{equation}
and define
\begin{equation}\label{Lambda-i-def}
  \Lambda_{\varphi_i}(x,t)=\left(|\nabla_{g_i(t)} T_i(x,t)|_{g_i(t)}^2+|Rm_i(x,t)|_{g_i(t)}^2\right)^{\frac 12}.
\end{equation}
From the basic conformal property for $3$-forms we have
\begin{equation*}
  \tilde{\varphi}=\lambda \varphi\quad \Rightarrow \quad \Delta_{\tilde{\varphi}}\tilde{\varphi}=\lambda^{\frac 13}\Delta_{\varphi}\varphi.
\end{equation*}
Thus, for each $i$, $(M,\varphi_i(t))$ is a solution of the Laplacian flow \eqref{Lap-flow-def} on the time interval
\begin{equation*}
  t\in [-t_i\Lambda(x_i,t_i), (T_0-t_i)\Lambda(x_i,t_i))
\end{equation*}
satisfying $\Lambda_{\varphi_i}(x_i,0)=1$ and
\begin{equation*}
  \sup_M|\Lambda_{\varphi_i}(x,t)|\leq 1\quad\textrm{ for }t\leq 0.
\end{equation*}

Since $\sup_M|\Lambda_{\varphi_i}(x,0)|=1$, by the doubling-time estimate (Proposition \ref{prop-Rm-T^2}) and Corollary \ref{exist-time-cor}, there exists a uniform $b>0$ such that
\begin{equation*}
  \sup_M|\Lambda_{\varphi_i}(x,t)|\leq 2\quad\textrm{ for }t\leq b.
\end{equation*}
Therefore, we obtain a sequence of solutions $(M,\varphi_i(t))$ to the Laplacian flow defined on $(a,b)$ for some $a<0$, with
\begin{equation*}
  \sup_i\sup_M|\Lambda_{\varphi_i}(x,t)|< \infty\quad\textrm{ for }t\in(a,b).
\end{equation*}

If we can establish the injectivity radius estimate
\begin{equation*}
  \inf_i inj(M, g_i(0), x_i)>0,
\end{equation*}
which is equivalent to
\begin{equation*}
  \inf_i inj(M, g(t_i), x_i)\geq c\Lambda(x_i,t_i)^{-1},
\end{equation*}
we can apply our compactness theorem (Theorem \ref{mainthm-compact}) and extract a subsequence of $(M,\varphi_i(t), x_i)$ which converges to a limit flow
$({M}_{\infty},{\varphi}_{\infty}(t), {x}_{\infty})$.
Such a blow-up of the flow at the singularity will provide an invaluable tool for
further study of the Laplacian flow.

\section{Long time existence II}\label{sec:longtime-II}

Theorem \ref{mainthm-blowup} states that  the Riemann curvature or the derivative of the torsion tensor must blow-up at a finite singular time of the Laplacian
flow.  However, based on Joyce's existence result for torsion-free $\GG_2$ structures \cite{joyce96-1}, we would
hope to be able to characterise the finite-time singularities of the flow via the blow-up of the torsion tensor itself.

In this section we will show that, under an additional continuity assumption on the metrics along the flow, that the Laplacian flow will exist as long as the torsion tensor remains bounded. From this result, stated below, our improvement Theorem \ref{mainthm-longtime-II} of Theorem \ref{mainthm-blowup} follows as a corollary.

\begin{thm}\label{thm-longtime-II}
Let $M^7$ be a compact manifold and $\varphi(t)$ for $t\in [0,T_0)$, where $T_0<\infty$, be a solution to the Laplacian flow \eqref{Lap-flow-def} for closed $\GG_2$ structures with associated metric $g(t)$ for each $t$. If $g(t)$ is uniformly continuous and the torsion tensor $T(x,t)$ of $\varphi(t)$ satisfies
\begin{equation}\label{thm-9-1-cond}
  \sup_{M\times [0,T_0)}|T(x,t)|_{g(t)}<\infty,
\end{equation}
then the solution $\varphi(t)$ can be extended past time $T_0$.
\end{thm}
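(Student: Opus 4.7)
Our plan is to derive the conclusion by a blow-up argument, combining Theorem \ref{thm-blowup} with the compactness theorem \ref{mainthm-compact-a}. We argue by contradiction: assume $\varphi(t)$ cannot be extended past $T_0$. Theorem \ref{thm-blowup} then forces $\Lambda(t)\to\infty$ as $t\nearrow T_0$, and since $\sup_{M\times[0,T_0)}|T|_{g(t)} \leq K$ by hypothesis, the identity \eqref{naT-Rm} of Proposition \ref{prop-nabla-T} yields $|\nabla T|_{g(t)} \leq C(|Rm|_{g(t)} + K^2)$, so that $\sup_{M\times[0,t]}|Rm|_{g(t)} \to \infty$ as well.

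Next, I would pick blow-up points $(x_i, t_i)$ with $t_i \nearrow T_0$ and
\[
K_i := |Rm|_{g(t_i)}(x_i, t_i) = \sup_{M\times[0,t_i]}|Rm|_{g(t)} \to \infty,
\]
and form the parabolically rescaled flows $\varphi_i(t) := K_i^{3/2}\varphi(t_i + K_i^{-1} t)$, with associated metrics $g_i(t) = K_i\, g(t_i + K_i^{-1} t)$. Each $\varphi_i$ solves \eqref{Lap-flow-def} and satisfies $|Rm_i|_{g_i}(x_i,0) = 1$, $|Rm_i|_{g_i}\leq 1$ for $t \leq 0$, and $|T_i|_{g_i} \leq K\,K_i^{-1/2} \to 0$ uniformly. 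Proposition \ref{prop-Rm-T^2} then keeps $\Lambda_{\varphi_i}$ uniformly bounded on intervals $(a_i, b)$ with $a_i \to -\infty$ and $b > 0$ independent of $i$. The injectivity radius hypothesis of Theorem \ref{mainthm-compact-a} translates to $\mathrm{inj}(M, g(t_i), x_i) \geq c K_i^{-1/2}$, which I would try to establish by combining the uniform continuity of $g(t)$ (giving uniform bi-Lipschitz equivalence of the $g(t)$ to a continuous limit metric $g(T_0)$ on compact $M$) with the volume monotonicity \eqref{evl-volumeform}. Granted this, Theorem \ref{mainthm-compact-a} furnishes a subsequential smooth limit $(M_\infty, \varphi_\infty(t), x_\infty)$ defined on $(-\infty, b)$ with $T_\infty \equiv 0$, so $\varphi_\infty$ is torsion-free and stationary and $g_\infty$ is Ricci-flat, while $|Rm_{g_\infty}|(x_\infty, 0) = 1$.

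The contradiction will come from showing that the limit $g_\infty(0)$ is flat. Passing to a subsequence so that $x_i \to x_\infty^{\mathrm{lim}} \in M$, one works in a fixed smooth chart near $x_\infty^{\mathrm{lim}}$ on $M$; the uniform continuity of $g$ implies that $(g(t_i))_{ab}(x_i + K_i^{-1/2} y)$ converges locally uniformly in $y$ to the constant symmetric matrix $(g(T_0))_{ab}(x_\infty^{\mathrm{lim}})$, and the rescaling $g_i(0)=K_i g(t_i)$ with compensating coordinate dilation $y = K_i^{1/2}(x - x_i)$ gives exactly these metric coefficients. By uniqueness of the smooth Cheeger--Gromov limit, $g_\infty(0)$ is then locally isometric to Euclidean $\R^7$, forcing $|Rm_{g_\infty}|\equiv 0$ and contradicting $|Rm_{g_\infty}|(x_\infty,0) = 1$. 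I expect the two main obstacles to be: obtaining the injectivity radius lower bound from only the mild uniform continuity assumption, which seems to demand exploiting volume monotonicity in tandem with the uniform bi-Lipschitz structure; and rigorously matching the smooth Cheeger--Gromov limit with the $C^0$ tangent-cone description of $g(T_0)$, which requires a compatible choice of diffeomorphisms and coordinates in the blow-up.
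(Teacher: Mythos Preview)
Your proposal is correct and follows the same blow-up strategy as the paper. The paper rescales by $\Lambda(x_i,t_i)$ rather than by $|Rm|(x_i,t_i)$, but under the torsion bound these are comparable (exactly as you note via \eqref{naT-Rm}), so this is cosmetic. The injectivity-radius step is carried out in the paper just as you sketch: uniform continuity gives $(1-\epsilon)g(t_0)\leq g(t)\leq (1+\epsilon)g(t_0)$ for $t$ near $T_0$, which combined with the monotonicity \eqref{evl-volumeform} yields the volume-ratio bound $\Vol_{g_i(0)}(B_{g_i(0)}(x,r))\geq cr^7$ for all $r\leq \Lambda(x_i,t_i)^{1/2}$, and then the Cheeger--Gromov--Taylor estimate converts this into the required injectivity-radius lower bound.

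The one substantive difference is the final flatness argument for the limit. You propose a $C^0$ tangent-cone argument: in rescaled coordinate charts on $M$ the metric coefficients of $g_i(0)$ converge locally uniformly to a constant matrix, and uniqueness of pointed Gromov--Hausdorff limits then forces the smooth Cheeger--Gromov limit $(M_\infty,g_\infty(0),x_\infty)$ to be isometric to flat $\R^7$. The paper instead passes the volume-ratio bound above to the limit (where it now holds for all $r>0$), concludes that the Ricci-flat manifold $(M_\infty,g_\infty(0))$ has exactly Euclidean volume growth, and invokes the equality case of Bishop--Gromov to deduce flatness. Both routes are valid; the paper's is the one inherited from Sesum's Ricci-flow argument and has the advantage of never needing to reconcile the smooth Cheeger--Gromov limit with a separate $C^0$ coordinate limit, while your approach is more directly geometric once the Gromov--Hausdorff uniqueness step is made precise.
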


Here we say  
$g(t)$ is uniformly continuous if for any $\epsilon>0$ there exists $\delta>0$ such that for any $0\leq t_0<t<T_0$ with $t-t_0\leq \delta$ we have
\begin{equation*}
  |g(t)-g(t_0)|_{g(t_0)}\leq \epsilon,
\end{equation*}
which implies that, as symmetric $2$-tensors, we have
\begin{equation}\label{metric-unif-cont}
  (1-\epsilon)g(t_0)\leq g(t)\leq (1+\epsilon)g(t_0).
\end{equation}

Before we prove Theorem \ref{thm-longtime-II}, we deduce Theorem \ref{mainthm-longtime-II} from Theorem \ref{thm-longtime-II}.
\begin{proof}[Proof of Theorem \ref{mainthm-longtime-II} (assuming Theorem \ref{thm-longtime-II})]
We recall that, for closed $\GG_2$ structures $\varphi$,
\begin{equation*}
  \Delta_{\varphi}\varphi=i_{\varphi}(h),
\end{equation*}
where $h$ is a symmetric $2$-tensor satisfying, in local coordinates,
\begin{equation*} 
h_{ij}=-\nabla_mT_{ni}\varphi_{jmn}-\frac 13|T|^2g_{ij}-T_{il}T_{lj}
\end{equation*}
by \eqref{hodge-Lap-varp-3}.  Moreover, \eqref{evl-volumeform} shows that  the trace of $h$ is equal to
\begin{equation*}
  tr_g(h)=g^{ij}h_{ij}=\frac 23|T|^2_{g}.
\end{equation*}
By  \cite[Proposition 2.9]{Kar},
\begin{equation*}
  |\Delta_{\varphi}\varphi|^2_g=|i_{\varphi}(h)|^2_g=(tr_g(h))^2+2h_i^kh_k^i.
\end{equation*}
Thus, under the assumed bound \eqref{mainthm-9-1-cond} on $\Delta_{\varphi(t)}\varphi(t)$ from Theorem \ref{mainthm-longtime-II},
\begin{equation} 
 \sup_{M\times [0,T_0)}|T(x,t)|_{g(t)}<\infty
\quad\text{and}\quad
\label{pf-mainthm9-1-1}
  \sup_{M\times [0,T_0)}|h(x,t)|_{g(t)}<\infty.
\end{equation}
Along the Laplacian flow \eqref{Lap-flow-def}, the metric $g(t)$ evolves by
\begin{equation*}
  \frac{\pt}{\pt t}g(x,t)=2h(x,t),
\end{equation*}
so it follows from \eqref{pf-mainthm9-1-1} that $g(t)$ is uniformly continuous.  Theorem \ref{thm-longtime-II} then implies that the flow  extends past time $T_0$
as required. 
\end{proof}

Now we give the proof of Theorem \ref{thm-longtime-II}.
\begin{proof}[Proof of Theorem \ref{thm-longtime-II}]
We adapt the argument for an analogous result for Ricci flow in \cite[\S 6.4]{Chow-Lu-Ni-2006}. (Note that Sesum's original proof \cite{sesum2005} of the Ricci flow result used Perelman's noncollapsing theorem, but Lei Ni pointed out that the result can be proved without the noncollapsing theorem.)

Assume, for a contradiction, that the conditions of Theorem \ref{thm-longtime-II} hold but the solution $\varphi(t)$ of the flow cannot be extended pass the time $T_0$.
By the long time existence theorem (Theorem \ref{mainthm-blowup}), there exists a sequence of points and times $(x_i,t_i)$ with $t_i\nearrow T_0$ such that
\begin{equation*}
  \Lambda(x_i,t_i)=\sup_{x\in M,\, t\in [0,t_i]}\left(|\nabla T(x,t)|_{g(t)}^2+|Rm(x,t)|_{g(t)}^2\right)^{\frac 12}\ra \infty.
\end{equation*}
Then arguing as in \S \ref{sec:8-3}, we can define $\varphi_i(t)$  by \eqref{phi-i-def}  and  obtain a sequence of flows $(M, \varphi_i(t), x_i)$ defined on $[-t_i\Lambda(x_i,t_i), 0]$.  Moreover, $\Lambda_{\varphi_i(t)}(x,t)$ given by \eqref{Lambda-i-def} satisfies
\begin{equation*}
  \sup_{M\times [-t_i\Lambda(x_i,t_i), 0]}|\Lambda_{\varphi_i}(x,t)|\leq 1\quad\text{and}\quad |\Lambda_{\varphi_i}(x_i,0)|= 1,
\end{equation*}
 and the associated metric $g_i(t)$ of $\varphi_i(t)$ is
\begin{equation*}
 g_i(t)=\Lambda(x_i,t_i)g(t_i+\Lambda(x_i,t_i)^{-1}t).
\end{equation*}

By assumption, $g(t)$ is uniformly continuous. Let $\epsilon\in(0,\frac{1}{2}]$ and let $\delta>0$ be given 
by the definition of uniform continuity of $g(t)$ so that if $t_0=T_0-\delta$ then \eqref{metric-unif-cont} holds for all $t_0<t<T_0$. Suppose $i$ is sufficiently large that $t_i\geq t_0$. From \eqref{metric-unif-cont}, for any $x,y\in M$ and $t\in [t_0,T_0)$, we have
\begin{equation*}
  (1-\epsilon)^{\frac 12}d_{g(t_0)}(x,y)\leq d_{g(t)}(x,y)\leq (1+\epsilon)^{\frac 12}d_{g(t_0)}(x,y).
\end{equation*}
Therefore, if $B_{g(t)}(x,r)$ denotes the geodesic ball of radius $r$ centred at $x$ with respect to the metric $g(t)$, we have
\begin{equation*}
  B_{g(t)}(x,r)\supset B_{g(t_0)}(x,(1+\epsilon)^{-\frac 12}r).
\end{equation*}
Along the Laplacian flow, the volume form increases, so
\begin{equation*}
  Vol_{g(t)}(B_{g(t)}(x,r))\geq Vol_{g(t_0)}(B_{g(t_0)}(x,(1+\epsilon)^{-\frac 12}r)
\end{equation*}
for any $x\in M$, $r>0$ and $t\in [t_0,T_0)$.    Then, for $x\in M$ and $r\leq \Lambda(x_i,t_i)^{\frac 12}$ we have 
\begin{align*}
  Vol_{g_i(0)}(B_{g_i(0)}(x,r))&=\Lambda(x_i,t_i)^{\frac 72}Vol_{g(t_i)}\big(B_{g(t_i)}(x,\Lambda(x_i,t_i)^{-\frac 12}r)\big)\\
  &\geq \Lambda(x_i,t_i)^{\frac 72}Vol_{g(t_0)}\big(B_{g(t_0)}(x,(1+\epsilon)^{-\frac 12}\Lambda(x_i,t_i)^{-\frac 12}r)\big)\\
  &\geq c(1+\epsilon)^{-\frac 72}r^7,
\end{align*}
for some uniform positive constant $c$. 
Hence we have
\begin{equation}\label{volume-ratio}
  Vol_{g_i(0)}(B_{g_i(0)}(x,r))\geq cr^7
\end{equation}
for all $x\in M$ and $r\in [0, \Lambda(x_i,t_i)^{\frac 12}]$. 

Note that by definition of $\Lambda_{\varphi_i}$ in \eqref{Lambda-i-def} that
\begin{equation*}
  |Rm_{g_i}(x,0)|\leq  \sup_{M\times [-t_i\Lambda(x_i,t_i), 0]}|\Lambda_{\varphi_i}(x,t)|\leq 1
\end{equation*}
on $M$. By the volume ratio bound \eqref{volume-ratio} and \cite[Theorem 5.42]{Chow-Lu-Ni-2006}, we have a uniform injectivity radius estimate $inj(M,g_i(0),x_i)\geq c$ for some constant $c>0$.
 We can thus apply our compactness theorem (Theorem \ref{mainthm-compact}) to obtain a subsequence of $(M, \varphi_i(t), x_i)$ converging to a limit $({M}_{\infty},{\varphi}_{\infty}(t), {x}_{\infty})$, $t\in (-\infty,0]$ with $|\Lambda_{\varphi_{\infty}}(x_{\infty},0)|= 1$.

By the assumption \eqref{thm-9-1-cond} that $T$ remains bounded and $\Lambda(x_i,t_i)\ra \infty$ as $i\ra \infty$, we have
\begin{equation}
  |T_i(x,t)|_{g_i(t)}^2=\Lambda(x_i,t_i)^{-1}|T(x,t_i+\Lambda(x_i,t_i)^{-1}t)|^2_{g(t_i+\Lambda(x_i,t_i)^{-1}t)}\ra 0
\end{equation}
 as $i\ra \infty$. Therefore, $(M_{\infty},\varphi_{\infty}(t))$ has zero torsion for all $t\in (-\infty,0]$. 
Thus $Ric_{g_{\infty}(t)}\equiv 0$ for all $t\in (-\infty,0]$, where $g_{\infty}(t)$ denotes the metric defined by $\varphi_{\infty}(t)$, since
torsion-free $\GG_2$ structures define Ricci-flat metrics.

We can then argue as in \cite{sesum2005} (see also \cite[\S 6.4]{Chow-Lu-Ni-2006}) that $g_{\infty}(0)$ has precisely Euclidean volume growth; i.e.~for all $r>0$, $$Vol_{g_{\infty}(0)}\big(B_{g_{\infty}(0)}(x_{\infty},r)\big)=Vol_{g_{\R^7}}\big(B_{g_{\R^7}}(0,1)\big)r^7.$$ Since a Ricci-flat complete manifold with this property must be isometric to Euclidean space by the Bishop--Gromov relative volume comparison theorem, 
$Rm(g_{\infty}(0))\equiv 0$ on $M_{\infty}$.  This contradicts the fact that
 \begin{equation*}
   |Rm_{g_{\infty}}(x_{\infty},0)|=|\Lambda_{\varphi_{\infty}}(x_{\infty},0)|=1,
 \end{equation*}
 where in the first equality we used the fact that the torsion of $(M_{\infty},\varphi_{\infty}(0))$ vanishes.  We have our required contradiction, so the result follows. 
\end{proof}

\section{Laplacian solitons}\label{sec:solit}

In this section we study what are called soliton solutions of the Laplacian flow.

Given a 7-manifold $M$, a Laplacian soliton of the Laplacian flow \eqref{Lap-flow-def} for closed $\GG_2$ structures on $M$
is a triple $(\varphi,X, \lambda)$ satisfying
\begin{equation}\label{solition-eq1}
  \Delta_{\varphi}\varphi=\lambda\varphi+\mathcal{L}_X\varphi,
\end{equation}
where $d\varphi=0, \lambda\in\R$ and $X$ is a vector field on $M$.
We are interested in $\GG_2$ structures $\varphi$ satisfying \eqref{solition-eq1} as they
naturally give self-similar solutions to the Laplacian flow \eqref{Lap-flow-def}.

Concretely, suppose the initial condition $\varphi_0$ satisfies \eqref{solition-eq1} for some $X$ and $\lambda$.  Define, for all $t$ such that $1+\frac{2}{3}\lambda t>0$,
\begin{equation}\label{solit-scale}
  \rho(t)=(1+\frac {2}3\lambda t)^{\frac 32} \quad\text{and}\quad X(t)=\rho(t)^{-\frac 23}X.
\end{equation}
Let $\phi_t$ be the family of diffeomorphism generated by the vector fields $X(t)$ such that $\phi_0$ is the identity. If we define
\begin{equation}\label{self-simil}
  \varphi(t)=\rho(t)\phi_t^*\varphi_0,
\end{equation}
which only changes by a scaling factor $\rho(t)$ and pullback by a diffeomorphism $\phi_t$ at each time $t$, then
\begin{align*}
  \frac{\pt}{\pt t}\varphi(t) &= \rho'(t)\phi_t^*\varphi_0+ \rho(t)\phi_t^*(\mathcal{L}_{X(t)}\varphi_0)\\
  &= \rho(t)^{\frac 13}\phi_t^*\left(\lambda\varphi_0+\mathcal{L}_X\varphi_0\right)\displaybreak[0]\\
  &= \rho(t)^{\frac 13}\phi_t^*(\Delta_{\varphi_0}\varphi_0)\\
   &= \rho(t)^{\frac 13}(\Delta_{\phi_t^*\varphi_0}\phi_t^*\varphi_0)=\Delta_{\varphi(t)}\varphi(t).
\end{align*}
Hence, $\varphi(t)$ defined in \eqref{self-simil} satisfies the Laplacian flow \eqref{Lap-flow-def} with $\varphi(0)=\varphi_0$.

Based on the formula \eqref{solit-scale} for the scaling factor $\rho(t)$, we say a Laplacian soliton $(\varphi,X, \lambda)$ is expanding if $\lambda>0$; steady if $\lambda=0$; and shrinking if $\lambda<0$. For a closed $\GG_2$ structure $\varphi$ on $M$, we already showed in \eqref{hodge-Lap-varp} that
 \begin{equation}
    \Delta_{\varphi}\varphi=\frac 17|\tau|^2\varphi+\gamma,
 \end{equation}
 where $\gamma\in\Omega^3_{27}(M)$. Therefore, \eqref{solition-eq1} is equivalent to
\begin{equation}\label{solition-eq2}
  (\frac 17|\tau|^2-\lambda)\varphi=-\gamma+\mathcal{L}_X\varphi.
\end{equation}
From this equation we observe that if $X=0$ then since $\gamma\in\Omega^3_{27}$ and $\varphi\in\Omega^3_1$ we must have $\gamma=0$ and
 $\lambda=\frac 1 7|\tau|^2$.  We deduce the following, which is Proposition \ref{prop-soliton}(a).

\begin{prop}\label{prop-soliton-a} A Laplacian soliton of the type $\Delta_{\varphi}\varphi=\lambda\varphi$ must have $\lambda\geq 0$, and $\lambda=0$ if and only if $\varphi$ is torsion-free.
\end{prop}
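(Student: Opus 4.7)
The key observation is already packaged in equation \eqref{hodge-Lap-varp} of the excerpt, which gives the decomposition
\begin{equation*}
\Delta_{\varphi}\varphi=\tfrac{1}{7}|\tau|^{2}\varphi+\gamma,\qquad \gamma\in\Omega^{3}_{27}(M),
\end{equation*}
for any closed $\GG_2$ structure $\varphi$. My plan is to feed the soliton assumption into this decomposition and exploit the fact that $\Omega^{3}_{1}(M)$ and $\Omega^{3}_{27}(M)$ are pointwise distinct (indeed, orthogonal) irreducible summands of $\Omega^{3}(M)$ under the $\GG_2$ action.

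Concretely, assuming $\Delta_{\varphi}\varphi=\lambda\varphi$ with $\lambda\in\R$, substituting the decomposition yields
\begin{equation*}
\bigl(\lambda-\tfrac{1}{7}|\tau|^{2}\bigr)\varphi=\gamma.
\end{equation*}
The left-hand side is a pointwise multiple of $\varphi$ and hence lies in $\Omega^{3}_{1}(M)$, while the right-hand side lies in $\Omega^{3}_{27}(M)$. Since these two subbundles intersect trivially, both sides must vanish identically, giving $\gamma\equiv 0$ and $|\tau|^{2}\equiv 7\lambda$ on $M$. Because $|\tau|^{2}\geq 0$ pointwise, this immediately forces $\lambda\geq 0$.

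For the equivalence in the second claim, I note that $\lambda=0$ holds iff $|\tau|^{2}\equiv 0$, iff $\tau\equiv 0$, iff (by \eqref{T-tau-eq}) the full torsion tensor $T$ vanishes, i.e.\ $\varphi$ is torsion-free. The converse is immediate: if $\varphi$ is torsion-free then $\Delta_{\varphi}\varphi=d\tau=0$, so $(\varphi,0,0)$ trivially satisfies the soliton equation. There is no real obstacle here — the entire argument reduces to invoking the $\GG_2$-irreducible decomposition of $\Lambda^{3}T^{*}M$ together with the formula \eqref{hodge-Lap-varp}, both of which are already available from \S\ref{sec:prelim}. I would keep the proof to just a few lines.
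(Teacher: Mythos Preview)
Your proof is correct and follows essentially the same approach as the paper: the paper also uses the decomposition $\Delta_{\varphi}\varphi=\tfrac{1}{7}|\tau|^{2}\varphi+\gamma$ with $\gamma\in\Omega^{3}_{27}$ and the fact that $\Omega^{3}_{1}\cap\Omega^{3}_{27}=0$ to conclude $\gamma=0$ and $\lambda=\tfrac{1}{7}|\tau|^{2}\geq 0$, with equality iff $\tau=0$.
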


We now give the proof of Proposition \ref{prop-soliton}(b), which we restate here.
\begin{prop}
The only compact Laplacian solitons of the type $\Delta_{\varphi}\varphi=\lambda\varphi$ are when $\varphi$ is torsion-free.
\end{prop}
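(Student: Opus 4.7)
The plan is to exploit the exactness of $\Delta_\varphi\varphi$ together with the special property $\tau\in\Omega^2_{14}(M)$ to produce a closed 7-form whose integral is forced to be $0$ by Stokes and simultaneously $-7\lambda^2\,\mathrm{vol}$ by direct computation.

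The argument begins by reducing to showing $\lambda=0$. By Proposition~\ref{prop-soliton-a}, any soliton of the form $(\varphi,0,\lambda)$ has $\lambda\geq 0$ with equality exactly when $\varphi$ is torsion-free; moreover, that proof establishes $|\tau|^2=7\lambda$ pointwise, so $|\tau|^2$ is a constant on $M$. Since $d\varphi=0$, we have $\Delta_\varphi\varphi=dd^*\varphi=d\tau$ by \eqref{lap-varphi}, so the soliton equation is equivalent to
\[
 d\tau=\lambda\varphi.
\]

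Next I would evaluate the $7$-form $d\tau\wedge\tau\wedge\tau$ on $M$ in two ways. On the one hand, using $d\tau=\lambda\varphi$ together with the identity $\tau\wedge\varphi=-*_\varphi\tau$ (which holds because $\tau\in\Omega^2_{14}(M)$), and the basic relation $\tau\wedge *_\varphi\tau=|\tau|^2\,\mathrm{vol}_g$, I get
\[
 d\tau\wedge\tau\wedge\tau=\lambda\,\varphi\wedge\tau\wedge\tau=\lambda(\tau\wedge\varphi)\wedge\tau=-\lambda\,(*_\varphi\tau)\wedge\tau=-\lambda|\tau|^2\,\mathrm{vol}_g=-7\lambda^2\,\mathrm{vol}_g.
\]
On the other hand, since $\tau$ is a $2$-form, $d\tau\wedge\tau=\tau\wedge d\tau$ (all relevant sign exponents are even), so a short graded-Leibniz calculation gives
\[
 d(\tau\wedge\tau\wedge\tau)=3\,d\tau\wedge\tau\wedge\tau,
\]
i.e.\ $d\tau\wedge\tau\wedge\tau=\tfrac{1}{3}d(\tau^{3})$ is exact. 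By Stokes' theorem on the compact manifold $M$, its integral vanishes.

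Combining the two computations yields $-7\lambda^2\operatorname{Vol}(M)=0$, forcing $\lambda=0$, and Proposition~\ref{prop-soliton-a} then gives that $\varphi$ is torsion-free. The only delicate point is to be careful with the graded commutativity and the identification $(*_\varphi\tau)\wedge\tau=|\tau|^2\,\mathrm{vol}_g$; once the correct sign convention from $\tau\wedge\varphi=-*_\varphi\tau$ is used, everything falls into place. There is no hard analytic input — the entire proof is algebraic plus Stokes — which is what makes compactness essential (it is precisely Stokes without boundary that supplies the contradiction; on a non-compact manifold expanding solitons with $X=0$ are a priori not ruled out by this argument).
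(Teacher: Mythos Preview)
Your proof is correct and follows essentially the same approach as the paper's: both arguments identify $d\tau=\lambda\varphi$ (equivalently $d\tau=\tfrac17|\tau|^2\varphi$), compute $\tfrac13 d(\tau\wedge\tau\wedge\tau)=d\tau\wedge\tau\wedge\tau$ using $\tau\wedge\varphi=-*_\varphi\tau$, and apply Stokes on the compact manifold to force $\lambda=0$ (equivalently $|\tau|=0$). The only cosmetic difference is that you phrase the integrand as $-7\lambda^2\,\mathrm{vol}_g$ while the paper writes it as $-\tfrac17|\tau|^4*_\varphi 1$, which are identical via $|\tau|^2=7\lambda$.
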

\proof
Let $X=0$ in \eqref{solition-eq2}, so
\begin{equation}\label{solition-eq3}
  (\frac 17|\tau|^2-\lambda)\varphi=-\gamma.
\end{equation}
Since the left-hand side of \eqref{solition-eq3} belongs to $\Omega^3_1(M)$ while the right hand side of \eqref{solition-eq3} belongs to $\Omega^3_{27}(M)$, we have
\begin{equation*}
  (\frac 17|\tau|^2-\lambda)\varphi=-\gamma=0.
\end{equation*}
Thus $\lambda=\frac 17|\tau|^2$, which means that
\begin{equation*}
  \qquad d\tau=\Delta_{\varphi}\varphi=\frac 17|\tau|^2\varphi.
\end{equation*}
We can deduce that
\begin{align*}
  \frac 13 d(\tau\wedge \tau\wedge \tau) &= \tau\wedge \tau\wedge d\tau =  \frac 17|\tau|^2\tau\wedge \tau\wedge\varphi\\
  &=-\frac 17|\tau|^2\tau\wedge*_{\varphi}\tau=-\frac 17|\tau|^4*_{\varphi}1,
\end{align*}
where in the third equality we used $\tau\wedge\varphi=-*_{\varphi}\tau$ as $\tau\in\Omega^2_{14}(M)$. Since $M$ is compact, integrating the above equality over $M$ gives that
\begin{equation*}
  0=\frac 13\int_M d(\tau\wedge \tau\wedge \tau) =-\frac 17\int_M|\tau|^4*_{\varphi}1.
\end{equation*}
Thus $\tau=0$ and $\lambda=0$, which means that $\varphi$ is torsion-free.
\endproof

We may call a vector field $X$ such that $\mathcal{L}_X\varphi=0$ 
a \emph{symmetry} of the $\GG_2$ structure $\varphi$. The following lemma shows that the symmetries of a closed $\GG_2$ structure 
correspond to certain Killing vector fields of the associated metric. 
\begin{lem}
Let $\varphi$ be a closed $\GG_2$ structure on a compact manifold $M$ with associated metric $g$ and let $X$ be a vector field on $M$. Then
\begin{equation}\label{L-varphi0}
\mathcal{L}_X\varphi=\frac 12i_{\varphi}\left(\mathcal{L}_Xg\right)+\frac 12 \big(d^*(X\lrcorner\varphi)\big)^{\sharp}\lrcorner\psi,
\end{equation}
where $i_{\varphi}:S^2T^*M\to \Lambda^3T^*M$ is the injective map given in \eqref{ivarphi-def}. In particular, any symmetry $X$ of the closed $\GG_2$ structure $\varphi$ must be a Killing vector field of the associated metric $g$ and satisfy $d^*(X\lrcorner\varphi)= 0$ on $M$.
\end{lem}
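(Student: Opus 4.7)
The plan is to use the $\GG_2$-type decomposition $\Omega^3(M)=\Omega^3_1(M)\oplus\Omega^3_7(M)\oplus\Omega^3_{27}(M)$ from \S\ref{sec:prelim}, together with the fact that $i_\varphi:S^2T^*M\to\Omega^3_1(M)\oplus\Omega^3_{27}(M)$ is an isomorphism, to write uniquely
\[
\mathcal{L}_X\varphi=i_\varphi(h)+Y\lrcorner\psi
\]
for some full symmetric $2$-tensor $h$ and vector field $Y$, and then identify $h$ and $Y$ separately as the two summands in \eqref{L-varphi0}.

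For $h$, I apply the standard variation formula recalled at the start of \S\ref{sec:evlution} (following \cite{Kar}): under any one-parameter family of $\GG_2$ structures with $\tfrac{\pt}{\pt t}\varphi=i_\varphi(h)+Z\lrcorner\psi$, the induced metric evolves by $\tfrac{\pt}{\pt t}g=2h$. Applied to the family $\varphi_t=(\phi^X_t)^*\varphi$ obtained by pulling back along the flow of $X$, this gives $\mathcal{L}_Xg=2h$, hence $h=\tfrac12\mathcal{L}_Xg$, supplying the first summand of \eqref{L-varphi0}.

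For $Y$, I will read it off from $\mathcal{L}_X\varphi\wedge\varphi$. The key pointwise algebraic identity is that for any $3$-form $\eta=f\varphi+Y\lrcorner\psi+\gamma$ with $\gamma\in\Omega^3_{27}(M)$ one has
\[
\eta\wedge\varphi=4\,Y\lrcorner\vol_g,
\]
since $\varphi\wedge\varphi=0$ (because $\Lambda^6(\R^7)^*$ is $\GG_2$-irreducible and hence admits no invariant element), $\gamma\wedge\varphi=0$ by the definition of $\Omega^3_{27}$, and $(Y\lrcorner\psi)\wedge\varphi=4\,Y\lrcorner\vol_g$ follows by applying $Y\lrcorner$ to $\varphi\wedge\psi=7\vol_g$ via the Leibniz rule, combined with the auxiliary identity $(Y\lrcorner\varphi)\wedge\psi=3\,Y\lrcorner\vol_g$ (a $\GG_2$-equivariance statement, readily checked via $|Y\lrcorner\varphi|^2=3|Y|^2$). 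Taking $\eta=\mathcal{L}_X\varphi=d(X\lrcorner\varphi)$, using $d\varphi=0$ and the defining identity $\beta\wedge\varphi=2*_\varphi\beta$ for $\beta\in\Omega^2_7$, we obtain
\[
4\,Y\lrcorner\vol_g=d(X\lrcorner\varphi)\wedge\varphi=d\bigl((X\lrcorner\varphi)\wedge\varphi\bigr)=2\,d*_\varphi(X\lrcorner\varphi),
\]
and applying $*_\varphi$ together with the Hodge identity $d^*=*_\varphi d\,*_\varphi$ on $2$-forms in dimension $7$ yields $Y^\flat=\tfrac12 d^*(X\lrcorner\varphi)$, as required.

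The in-particular assertion is then immediate from \eqref{L-varphi0}: if $\mathcal{L}_X\varphi=0$ then, since $i_\varphi(S^2T^*M)$ and $\Omega^3_7(M)$ are complementary, both summands must vanish independently, forcing $\mathcal{L}_Xg=0$ and $d^*(X\lrcorner\varphi)=0$. The main obstacle is the pointwise $\GG_2$-equivariant identity $(Y\lrcorner\psi)\wedge\varphi=4\,Y\lrcorner\vol_g$ and its partner $(Y\lrcorner\varphi)\wedge\psi=3\,Y\lrcorner\vol_g$; once these are in hand, the rest of the argument reduces to clean manipulations using only $d\varphi=0$ and the defining property $\beta\wedge\varphi=2*_\varphi\beta$ of $\Omega^2_7$.
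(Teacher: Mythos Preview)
Your proof is correct and takes a genuinely different route from the paper's. The paper computes the three components $a\in C^\infty(M)$, $W\in\Gamma(TM)$, $\eta\in S^2_0T^*M$ of $\mathcal{L}_X\varphi$ in the decomposition $\Omega^3=\Omega^3_1\oplus\Omega^3_7\oplus\Omega^3_{27}$ one at a time, by lengthy index computations using the contraction identities \eqref{contr-iden-1}--\eqref{contr-iden-4} and the closed hypothesis via \eqref{T-ident-1}. Your argument is shorter and more conceptual: for the $i_\varphi(h)$ part you invoke the general variation formula $\tfrac{\partial}{\partial t}g=2h$ (already quoted in the paper after \eqref{flow-general}) applied to the pullback family $\varphi_t=(\phi^X_t)^*\varphi$, bypassing all the index work for $a$ and $\eta$ at once; for the $\Omega^3_7$ part you wedge with $\varphi$ and use only $d\varphi=0$, the defining identity $\beta\wedge\varphi=2\!*_\varphi\!\beta$ for $\Omega^2_7$, and the paper's convention $d^*=*_\varphi d*_\varphi$ on $2$-forms. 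The paper's approach is self-contained and yields as a byproduct the explicit curl formula \eqref{W-eq} for $W$; yours is cleaner but relies on the cited metric-variation result. Two minor remarks: your reason for $\varphi\wedge\varphi=0$ (irreducibility of $\Lambda^6$) is beside the point---it vanishes simply because $\varphi$ has odd degree; and your suggested verification of $(Y\lrcorner\varphi)\wedge\psi=3\,Y\lrcorner\vol_g$ via $|Y\lrcorner\varphi|^2=3|Y|^2$ does not directly yield the constant (that identity involves $*_\varphi(Y\lrcorner\varphi)=\tfrac12(Y\lrcorner\varphi)\wedge\varphi$, not $\psi$), though the statement is correct and follows immediately from \eqref{contr-iden-2} or a one-line check in the standard frame.
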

\proof
Since $\varphi$ is closed, we have
\begin{equation*}
   \mathcal{L}_X\varphi=d(X\lrcorner\varphi)+X\lrcorner d\varphi=d(X\lrcorner\varphi).
\end{equation*}
Denote $\beta=X\lrcorner\varphi$. Then $\beta_{ij}=X^l\varphi_{lij}$ and
\begin{equation*}
    \mathcal{L}_X\varphi=d\beta=\frac 16(\nabla_i\beta_{jk}-\nabla_j\beta_{ik}-\nabla_k\beta_{ji})dx^i\wedge dx^j\wedge dx^k,
\end{equation*}
i.e., in index notation, we have
\begin{equation}\label{L-varphi-index}
    (\mathcal{L}_X\varphi)_{ijk}=\nabla_i\beta_{jk}-\nabla_j\beta_{ik}-\nabla_k\beta_{ji}.
\end{equation}
We decompose $\mathcal{L}_X\varphi$ into three parts
\begin{align*}
   \mathcal{L}_X\varphi=&\pi_1^3(\mathcal{L}_X\varphi)+\pi_{7}^3(\mathcal{L}_X\varphi)+\pi_{27}^3(\mathcal{L}_X\varphi) =a\varphi+W\lrcorner\psi+i_{\varphi}(\eta),
\end{align*}
where $\pi^k_l:\Omega^k(M)\ra \Omega^k_l(M)$ denotes the projection onto $\Omega^k_l(M)$, $a$ is a function, $W$ is a vector field and $\eta$ is a trace-free symmetric $2$-tensor on $M$.
We now calculate $a$, $W$ and $\eta$, using a similar method to \S \ref{sec:hodge-lap}.

To calculate $a$:
\begin{align*}
  a&=  \frac 17\langle \mathcal{L}_X\varphi,\varphi\rangle =\frac 1{42}(\nabla_i\beta_{jk}-\nabla_j\beta_{ik}-\nabla_k\beta_{ji})\varphi^{ijk} \\
  &=\frac 1{14}\nabla_i\beta_{jk}\varphi^{ijk}=\frac 1{14}\nabla_i(\beta_{jk}\varphi^{ijk})-\frac 1{14}\beta_{jk}\nabla_i\varphi^{ijk}\\
  &=\frac 1{14}\nabla_i(X^l\varphi_{ljk}\varphi^{ijk})-\frac 1{14}X^l\varphi_{ljk}T_{i}^m\psi_m^{\,\,\,\,ijk}\\
  &=\frac 37\nabla_iX_i+\frac 1{28}X^l\varphi_{ljk}\tau_{i}^m\psi_m^{\,\,\,\,ijk}\\
  &=\frac 37\nabla_iX_i+\frac 1{14}X^l\varphi_{ljk}\tau^{jk}=\frac 37 \Div(X),
\end{align*}
where we used \eqref{contr-iden-1}, $ \varphi_{ljk}\tau^{jk}=0$ and $\tau_{i}^m\psi_m^{\,\,\,\,ijk}=2\tau^{jk} $ in \eqref{T-ident-1} since $\tau\in \Omega^2_{14}(M)$ for closed $\GG_2$ structures $\varphi$.

To calculate $W$, using the contraction identities \eqref{contr-iden-1}--\eqref{contr-iden-2},
\begin{align*}
  \big((\mathcal{L}_X\varphi)\lrcorner\psi\big)_l&= (\mathcal{L}_X\varphi)^{ijk}\psi_{ijkl}\\
  &=a\varphi^{ijk}\psi_{ijkl}+W^m\psi_m^{\,\,\,\,ijk}\psi_{ijkl}+(i_{\varphi}(\eta))^{ijk}\psi_{ijkl}\\
  &=-24W_l+(\eta^{im}\varphi_m^{\,\,\,\,jk}-\eta^{jm}\varphi_m^{\,\,\,\,ik}-\eta^{km}\varphi_m^{\,\,\,\,ji})\psi_{ijkl}\\
  &=-24W_l+12\eta^{im}\varphi_{mil}=-24W_l,
\end{align*}
where the last equality follows since $\eta_{im}$ is symmetric in $i,m$ and $\varphi_{mil}$ is skew-symmetric in $i,m$. Using \eqref{L-varphi-index}, we have
\begin{align*}
  W_l =& -\frac 1{24} (\mathcal{L}_X\varphi)^{ijk}\psi_{ijkl}=-\frac 18g^{mi}\nabla_m\beta^{jk}\psi_{ijkl}\\
  = &-\frac 18g^{mi}\nabla_m(\beta^{jk}\psi_{ijkl})+\frac 18\beta^{jk}g^{mi}\nabla_m\psi_{ijkl}\\
  =&-\frac 18 g^{mi}\nabla_m(X^n\varphi_n^{\,\,\,jk}\psi_{ijkl})\\
  &\qquad +\frac 1{16}\beta^{jk}g^{mi}(\tau_{mi}\varphi_{jkl}-\tau_{mj}\varphi_{ikl}-\tau_{mk}\varphi_{jil}-\tau_{ml}\varphi_{jki})\\
  =&-\frac 12 g^{mi}\nabla_m(X^n\varphi_{nil})-\frac 18X^n\varphi_n^{\,\,\,jk}g^{mi}\tau_{mj}\varphi_{ikl}-\frac 1{16}X^n\varphi_n^{\,\,\,jk}g^{mi}\tau_{ml}\varphi_{jki}\\
  =&-\frac 12 g^{mi}\nabla_m(X^n\varphi_{nil}),
\end{align*}
where in the above calculation we used \eqref{contr-iden-2}, \eqref{contr-iden-3}, \eqref{nabla-psi}, \eqref{T-ident-1} and skew-symmetry in the index of $\psi_{ijkl}$. So $$W=\frac 12 \big(d^*(X\lrcorner\varphi)\big)^{\sharp}.$$
If we define the $\GG_2$ curl operator on vector fields by
\begin{equation}\label{curl-eq}
\curl(X)=\big(\!*(d X^{\flat}\wedge\psi)\big)^{\sharp}\quad\text{so}\quad \curl(X)_i=\varphi_{ijk}\nabla^jX^k,
\end{equation}
then in local coordinates
\begin{align*}
  W_l&=  -\frac 12 g^{mi}\nabla_m(X^n\varphi_{nil})=-\frac 12\nabla^iX^n\varphi_{nil}-\frac 12 X^n\nabla^i\varphi_{nil} \\
  &=  \frac 12 \curl(X)_l-\frac 12X^nT_i^{\,\,m}\psi_{mnil}=\frac 12 \curl(X)_l+X^nT_{nl},
\end{align*}
i.e.~the vector field $W$ is
 \begin{equation}\label{W-eq}
   W=\frac 12 \big(d^*(X\lrcorner\varphi)\big)^{\sharp}=\frac 12 \curl(X)+X\lrcorner T.
 \end{equation}

Finally, to calculate $\eta$:
\begin{align}\label{eta-1}
    (\mathcal{L}_X\varphi)_{mni}\varphi_j^{\,\,\,mn}&+(\mathcal{L}_X\varphi)_{mnj}\varphi_i^{\,\,\,mn} \nonumber\\
   &=a\varphi_{mni}\varphi_j^{\,\,\,mn}+W^l\psi_{li}^{\,\,\,\, mn}\varphi_{jmn}+ i_{\varphi}(\eta)_{mni}\varphi_j^{\,\,\,mn}\nonumber\\
   &\qquad +a\varphi_{mnj}\varphi_i^{\,\,\,mn}+W^l\psi_{lj}^{\,\,\,\, mn}\varphi_{imn}+i_{\varphi}(\eta)_{mnj}\varphi_i^{\,\,\,mn}\nonumber\\
   &=12ag_{ij}+8\eta_{ij},
\end{align}
where in the last equation we used the contraction identity \eqref{contr-iden-2} to obtain
\begin{align*}
  W^l\psi_{li}^{\,\,\,\, mn}\varphi_{jmn}+W^l\psi_{lj}^{\,\,\,\, mn}\varphi_{imn}=& 4W^l(\varphi_{jli}+\varphi_{ilj})=0
\end{align*}
and \eqref{contr-iden-3} on the terms involving $\eta$. 
We can calculate the left hand side of \eqref{eta-1} as follows
\begin{align*}
    (\mathcal{L}_X\varphi)_{mni}&\varphi_j^{\,\,\,mn}+(\mathcal{L}_X\varphi)_{mnj}\varphi_i^{\,\,\,mn} \\
   &=(\nabla_m\beta_{ni}-\nabla_n\beta_{mi}-\nabla_i\beta_{nm})\varphi_j^{\,\,\,mn}\\
   &\qquad +(\nabla_m\beta_{nj}-\nabla_n\beta_{mj}-\nabla_j\beta_{nm})\varphi_i^{\,\,\,mn}\\
   &=2(\nabla_m\beta_{ni}\varphi_j^{\,\,\,mn}+\nabla_m\beta_{nj}\varphi_i^{\,\,\,mn})-\nabla_i\beta_{nm}\varphi_j^{\,\,\,mn}-\nabla_j\beta_{nm}\varphi_i^{\,\,\,mn}\\
   &=2\nabla_m(X^l\varphi_{lni}\varphi_j^{\,\,\,mn})-2X^l\varphi_{lni}T_{m}^{\,\,\,\,k}\psi_{kj}^{\,\,\,\,\,\,mn}+2\nabla_m(X^l\varphi_{lnj}\varphi_i^{\,\,\,mn})\\
   &\quad -2X^l\varphi_{lnj}T_{m}^{\,\,\,\,k}\psi_{ki}^{\,\,\,\,\,\,mn}-\nabla_i(X^l\varphi_{lnm}\varphi_j^{\,\,\,mn})+X^l\varphi_{lnm}T_{i}^{\,\,k}\psi_{kj}^{\,\,\,\,\,\,mn}\\
   &\quad -\nabla_j(X^l\varphi_{lnm}\varphi_i^{\,\,\,mn})+X^l\varphi_{lnm}T_{j}^{\,\,k}\psi_{ki}^{\,\,\,\,\,\,mn}\\
   &=2\Div(X)g_{ij}-2\nabla_iX_j+2\nabla_m(X^l\psi_{ilj}^{\quad m})+4X^l\varphi_{lni}T_{j}^{\,\,n}\\
   &\quad +2\Div(X)g_{ij}-2\nabla_jX_i+2\nabla_m(X^l\psi_{jli}^{\quad m})+4X^l\varphi_{lnj}T_{i}^{\,\,n}\\
   &\quad +6\nabla_iX_j-4X^l\varphi_{lkj}T_i^{\,\,k}+6\nabla_jX_i-4X^l\varphi_{lki}T_j^{\,\,k}\\
   &=4\Div(X)g_{ij}+4(\nabla_iX_j+\nabla_jX_i),
\end{align*}
where in the above calculation we again used the equations \eqref{contr-iden-1}--\eqref{contr-iden-3} and \eqref{T-ident-1}. We deduce that
\begin{align*}
  \eta_{ij}=&-\frac 32ag_{ij}+\frac 12 \Div(X)g_{ij}+\frac 12(\nabla_iX_j+\nabla_jX_i)\nonumber\\
  =&-\frac 17 \Div(X)g_{ij}+\frac 12(\mathcal{L}_Xg)_{ij}.
\end{align*}
Then
\begin{align*}
 \mathcal{L}_X\varphi&=a\varphi+W\lrcorner\psi+i_{\varphi}(\eta)=i_{\varphi}(\frac 13ag+\eta)+W\lrcorner\psi\nonumber\\
 &=\frac 12i_{\varphi}(\mathcal{L}_Xg)+\frac 12 \big(d^*(X\lrcorner\varphi)\big)^{\sharp}\lrcorner\psi.
\end{align*}
This proves the formula \eqref{L-varphi0}.

If $X$ is a symmetry of the closed $\GG_2$ structure $\varphi$, i.e.~$\mathcal{L}_X\varphi=0$, then
\begin{align*}
  i_{\varphi}(\frac 12\mathcal{L}_Xg)=&\pi_1^3(\mathcal{L}_X\varphi)+\pi^3_{27}(\mathcal{L}_X\varphi) = 0
\end{align*}
and $\frac 12 (d^*(X\lrcorner\varphi))^{\sharp}\lrcorner\psi=\pi_7^3(\mathcal{L}_X\varphi)=0$. This implies that $\mathcal{L}_Xg=0$ and $d^*(X\lrcorner\varphi)=0$, since $i_{\varphi}$ is an injective operator and $\Omega^3_7(M)\cong \Omega^1(M)$.  
\endproof

We can now derive the condition satisfied by the metric $g$ induced by $\varphi$ when $(\varphi,X,\lambda)$ is a Laplacian soliton, which we expect to have further use.
\begin{prop}\label{prop-soliton-metric}
Let $(\varphi,X,\lambda)$ be a Laplacian soliton
as defined by \eqref{solition-eq1}. Then the associated metric $g$ of $\varphi$ satisfies, in local coordinates,
\begin{equation}\label{soliton-metric}
-R_{ij}-\frac 13|T|^2g_{ij}-2T_i^kT_{kj}=\frac 13\lambda g_{ij}+\frac 12(\mathcal{L}_Xg)_{ij}
\end{equation}
and the vector field $X$ satisfies $d^*(X\lrcorner\varphi)=0$.
\end{prop}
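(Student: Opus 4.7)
The plan is to recognize that every term in the soliton equation \eqref{solition-eq1} can be expressed through the canonical decomposition $\Omega^3 = \Omega^3_1 \oplus \Omega^3_7 \oplus \Omega^3_{27}$, and then to extract the claimed identities by matching components.

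First I would rewrite each side of the soliton equation in a form that makes its type decomposition transparent. On the left, \eqref{hodge-Lap-varp} together with \eqref{h-tensor-1} gives
\[
\Delta_\varphi\varphi = i_\varphi(h), \qquad h_{ij} = -R_{ij} - \tfrac13|T|^2 g_{ij} - 2T_i^{\,k}T_{kj},
\]
which lies in $\Omega^3_1(M)\oplus\Omega^3_{27}(M)$. On the right, $\lambda\varphi = \tfrac13\lambda\, i_\varphi(g)$ since $i_\varphi(g) = 3\varphi$, and by the lemma just proved (equation \eqref{L-varphi0}),
\[
\mathcal{L}_X\varphi = \tfrac12 i_\varphi(\mathcal{L}_X g) + \tfrac12\bigl(d^*(X\lrcorner\varphi)\bigr)^{\sharp}\lrcorner\psi.
\]
Substituting these into \eqref{solition-eq1} yields
\[
i_\varphi(h) = i_\varphi\!\left(\tfrac13\lambda g + \tfrac12\mathcal{L}_X g\right) + \tfrac12\bigl(d^*(X\lrcorner\varphi)\bigr)^{\sharp}\lrcorner\psi.
\]

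Next I would separate this identity into its $\Omega^3_1\oplus\Omega^3_{27}$ and $\Omega^3_7$ components. Since $i_\varphi(S^2T^*M) \subset \Omega^3_1(M)\oplus\Omega^3_{27}(M)$ and $\{Y\lrcorner\psi : Y\in C^\infty(TM)\} = \Omega^3_7(M)$, the splitting gives the two independent equations
\[
i_\varphi(h) = i_\varphi\!\left(\tfrac13\lambda g + \tfrac12\mathcal{L}_X g\right), \qquad \bigl(d^*(X\lrcorner\varphi)\bigr)^{\sharp}\lrcorner\psi = 0.
\]
Both of the maps in question are injective: $i_\varphi$ is injective on $S^2T^*M$ (this is a standing fact used throughout the paper, cf.~the discussion of $j_\varphi$ after \eqref{ivarphi-def}), and $Y\mapsto Y\lrcorner\psi$ is an isomorphism $C^\infty(TM)\xrightarrow{\sim}\Omega^3_7(M)$. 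Consequently
\[
h = \tfrac13\lambda g + \tfrac12\mathcal{L}_X g, \qquad d^*(X\lrcorner\varphi) = 0,
\]
and substituting the explicit formula for $h$ gives exactly \eqref{soliton-metric}.

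There is no real obstacle here: the proof is essentially a bookkeeping exercise that relies entirely on previously established pieces, namely the identification \eqref{hodge-Lap-varp}--\eqref{h-tensor-1} for $\Delta_\varphi\varphi$ on closed $\GG_2$ structures, the decomposition formula \eqref{L-varphi0} for $\mathcal{L}_X\varphi$, and the type decomposition of $\Lambda^3T^*M$ induced by $\varphi$. The only mild point worth double-checking is that no $\Omega^3_7$ contribution sneaks in from $\lambda\varphi$ or $i_\varphi(\mathcal{L}_X g)$, which is immediate since $\varphi\in\Omega^3_1$ and $i_\varphi$ maps $S^2T^*M$ into $\Omega^3_1\oplus\Omega^3_{27}$. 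The Killing-type condition $d^*(X\lrcorner\varphi)=0$ then emerges automatically as the vanishing of the $\Omega^3_7$ component, so no separate argument is required for it.
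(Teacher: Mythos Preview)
Your proposal is correct and follows essentially the same approach as the paper: both arguments express each term of \eqref{solition-eq1} via the type decomposition, use \eqref{L-varphi0} for $\mathcal{L}_X\varphi$, observe that the $\Omega^3_7$ component must vanish (yielding $d^*(X\lrcorner\varphi)=0$), and then apply injectivity of $i_\varphi$ to the remaining $\Omega^3_1\oplus\Omega^3_{27}$ part to obtain \eqref{soliton-metric}. The only cosmetic difference is that the paper first argues $\mathcal{L}_X\varphi\in\Omega^3_1\oplus\Omega^3_{27}$ and then substitutes, whereas you substitute first and then project, but the logic is identical.
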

\proof
We know from \S \ref{sec:hodge-lap} that for closed $\GG_2$ structures $\varphi$,
\begin{equation*}
  \Delta_{\varphi}\varphi=i_{\varphi}(h)\in \Omega^3_1(M)\oplus \Omega^3_{27}(M),
\end{equation*}
where $h$ is a symmetric $2$-tensor satisfying
\begin{equation*}
  h_{ij}=-Ric_{ij}-\frac 13|T|^2g_{ij}-2T_i^{\,\,k}T_{kj}.
\end{equation*}
Since $\lambda\varphi\in\Omega^3_1(M)$, from the Laplacian soliton equation \eqref{solition-eq1} we know that
\begin{equation*}
  \mathcal{L}_X\varphi=d(X\lrcorner\varphi)\in\Omega^3_1(M)\oplus \Omega^3_{27}(M).
\end{equation*}
Thus, from \eqref{L-varphi0},  we have
\begin{equation}\label{L-varphi-4}
\mathcal{L}_X\varphi=i_{\varphi}(\frac 12\mathcal{L}_Xg)\quad\text{and}\quad d^*(X\lrcorner\varphi)=0.
\end{equation}
Substituting the first equation of \eqref{L-varphi-4} into the Laplacian soliton equation \eqref{solition-eq1}, and noting that
\begin{equation*}
  \Delta_{\varphi}\varphi=i_{\varphi}(h),\quad \lambda\varphi=i_{\varphi}(\frac 13\lambda g),
\end{equation*}
we get
\begin{equation*}
  i_{\varphi}(h-\frac 13\lambda g-\frac 12\mathcal{L}_Xg)=0.
\end{equation*}
Since $i_{\varphi}$ is injective, the above equation implies that
\begin{equation*}
h-\frac 13\lambda g-\frac 12\mathcal{L}_Xg=0,
\end{equation*}
which is equivalent to \eqref{soliton-metric}.
\endproof

Recall that Ricci solitons $(g,X,\lambda)$ are given by $Ric=\lambda g+\mathcal{L}_Xg$,
so we see that \eqref{soliton-metric} can be viewed as a perturbation of the
Ricci soliton equation using the torsion tensor $T$.  We also re-iterate that the non-existence of compact Laplacian solitons of the form $(\varphi,0,\lambda)$
is somewhat surprising given that we have many compact Ricci solitons of the form $(g,0,\lambda)$ since these correspond to Einstein metrics.

As an application of Proposition \ref{prop-soliton-metric}, we can give a short proof of the main result in \cite{Lin}.
\begin{prop}
\emph{(a)} There are no compact shrinking Laplacian solitons.

\noindent \emph{(b)} The only compact steady Laplacian solitons are given by torsion-free $\GG_2$ structures.
\end{prop}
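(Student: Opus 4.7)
The plan is to trace the soliton equation for the metric established in Proposition \ref{prop-soliton-metric} and combine the resulting scalar identity with the compactness of $M$. Starting from
\begin{equation*}
-R_{ij}-\tfrac 13|T|^2g_{ij}-2T_i^{\,\,k}T_{kj}=\tfrac 13\lambda g_{ij}+\tfrac 12(\mathcal{L}_Xg)_{ij},
\end{equation*}
I would contract with $g^{ij}$. On the left, Corollary \ref{scalar-cor} gives $-R=|T|^2$, the middle term contributes $-\tfrac{7}{3}|T|^2$, and for the last term I would use that $T$ is skew-symmetric for a closed $\GG_2$ structure, so $g^{ij}T_i^{\,\,k}T_{kj}=T^{ij}T_{ji}=-|T|^2$; hence the left-hand side traces to $|T|^2-\tfrac{7}{3}|T|^2+2|T|^2=\tfrac{2}{3}|T|^2$. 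On the right, $\tfrac{1}{2}g^{ij}(\mathcal{L}_Xg)_{ij}=\Div(X)$, giving the scalar identity
\begin{equation*}
7\lambda+3\,\Div(X)=2|T|^2\ge 0,
\end{equation*}
which is precisely the identity flagged immediately after Proposition \ref{prop-soliton-metric} in the introduction.

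Since $M$ is compact, I would then integrate this pointwise identity against the volume form $\vol_g$. The divergence term vanishes by Stokes' theorem, leaving
\begin{equation*}
7\lambda\,\Vol(M,g)=\int_M 2|T|^2\,\vol_g\ge 0.
\end{equation*}
This inequality immediately forces $\lambda\ge 0$, so a compact shrinking soliton ($\lambda<0$) cannot exist, which is part (a). For part (b), setting $\lambda=0$ makes the right-hand side zero, and since $|T|^2\ge 0$ pointwise this yields $T\equiv 0$ on $M$, i.e.\ $\varphi$ is torsion-free.

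There is essentially no obstacle here once Proposition \ref{prop-soliton-metric} is in hand; the only point that requires a moment of care is verifying that the trace of $T_i^{\,\,k}T_{kj}$ equals $-|T|^2$ rather than $+|T|^2$, which relies on the skew-symmetry coming from $\tau\in\Omega^2_{14}(M)$ together with the conventions for $|T|^2$ used in Corollary \ref{scalar-cor}. Once the scalar identity $7\lambda+3\Div(X)=2|T|^2$ is established, compactness does all the remaining work, and in particular one does not need any of the (nontrivial) integration-by-parts arguments used in \cite{Lin}.
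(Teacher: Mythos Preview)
Your proof is correct and is essentially identical to the paper's own argument: the paper also traces the metric soliton equation from Proposition~\ref{prop-soliton-metric} to obtain $\tfrac{2}{3}|T|^2=\tfrac{7}{3}\lambda+\Div(X)$ (equivalently your $7\lambda+3\Div(X)=2|T|^2$) and then integrates over the compact $M$ to conclude. The only difference is that you spell out the term-by-term trace computation explicitly, whereas the paper states the traced identity directly.
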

\proof
Taking the trace of \eqref{soliton-metric}, we have
\begin{equation}\label{T.lambda.X.eq}
  \frac 23|T|^2=\frac 73\lambda+\Div(X).
\end{equation}
When the soliton is defined on a compact manifold $M$, integrating the above equation gives
\begin{equation*}
  \lambda Vol_g(M)=\frac 27\int_M|T|^2 vol_g\geq 0.
\end{equation*}
So $\lambda\geq 0$, and $\lambda=0$ if and only if $T\equiv 0$.
\endproof

\begin{rem} Observe that  \eqref{T.lambda.X.eq} immediately leads to the non-existence of nontrivial steady or shrinking Laplacian solitons with $\Div(X)=0$, thus strengthening Proposition \ref{prop-soliton-a}.
\end{rem}

In Ricci flow, every compact Ricci soliton is a gradient Ricci soliton, meaning that the vector field $X$ in that case satisfies $X=\nabla f$ for some function $f$.
This was proved by Perelman using the $\mathcal{W}$-functional and a logarithmic Sobolev inequality. In the Laplacian
flow the situation is quite different and there is currently no reason to suspect that an analogous result
to the Ricci flow will hold. In fact, we see from \eqref{curl-eq}--\eqref{W-eq} and Proposition \ref{prop-soliton-metric} that if $(\varphi,\nabla f,\lambda)$ is a Laplacian soliton then $\nabla f\lrcorner T=0$.  
It is thus currently an interesting open question whether \emph{any} non-trivial compact Laplacian soliton is a gradient Laplacian soliton.

\section{Concluding remarks}\label{sec:conclusion}

The research in this paper motivates several natural questions that form objectives for future study.  We list some of these problems here.

\begin{enumerate}
\item Show that torsion-free $\GG_2$ structures are dynamically stable under the Laplacian flow.  This has been proved by the authors in
\cite{Lotay-Wei} using the theory developed in this article.
    \item Prove a noncollapsing result along the Laplacian flow for closed $\GG_2$ structures as in Perelman's work \cite{perel} on Ricci flow.  This would mean, in particular,
    that our compactness theory would give rise to well-defined blow-ups at finite-time singularities, which would further allow us to relate singularities of the
flow to Laplacian solitons.
  \item Study the behavior of the torsion tensor near the finite singular time $T_0$ of the Laplacian flow.
Since for closed $\GG_2$ structures $\varphi$, we have $\Delta_{\varphi}\varphi=d\tau$, Theorem \ref{mainthm-longtime-II} says that $d\tau$ will blow up when $t\nearrow T_0$ along the Laplacian flow. The question is whether the torsion tensor $T$, or equivalently $\tau$, will blow up when $t\nearrow T_0$.  Since $|T|^2=-R$, this is entirely analogous to the question in Ricci flow as to whether the scalar curvature will blow up at a finite-time singularity.
This is true for Type-I Ricci flow  on compact manifolds by Enders--M\"uller--Topping \cite{Ender-M-T2010} and K\"{a}hler--Ricci flow by Zhang \cite{zhang}, but it is still open in general and currently forms an active topic of research.
  \item Find some conditions on the torsion tensor under which the Laplacian flow for closed $\GG_2$ structures will exist for all time and converge to a torsion-free $\GG_2$ structure. Based on the work of Joyce \cite{joyce2000}, it is expected that a reasonable condition to impose is that the initial $\GG_2$ structure $\varphi_0$ is closed and has sufficiently small torsion, in a suitable sense.  The Laplacian flow would then provide a parabolic method for
  proving the fundamental existence theory for torsion-free $\GG_2$ structures (c.f.~\cite{joyce2000}).  We can already show that such a result holds in \cite{Lotay-Wei} assuming the work of Joyce,
but it would also be desirable to find a proof only using the flow.
  \item Study the space of gradient Laplacian solitons on a compact manifold.  As mentioned earlier, this would show the similarities or differences
with the analogous theory for Ricci solitons, which it would be instructive to study (see \cite{Cao-surv} for a recent survey on Ricci solitons).
  \item Construct nontrivial examples of Laplacian solitons.  Recent progress on this problem has been made by Bryant \cite{bryant-private}, and also forms
  a topic of current investigation by the authors.
\end{enumerate}

\bibliographystyle{Plain}

\end{document}